\documentclass[a4paper,11pt,reqno]{amsart}
\usepackage{latexsym, ifthen, xspace}
\usepackage{amsmath, amssymb, amsthm}
\usepackage{enumerate, calc, bbm}
\usepackage[dvipsnames]{xcolor}
\usepackage{paralist}
\usepackage[colorlinks=true, pdfstartview=FitV, linkcolor=blue,
citecolor=blue, urlcolor=blue, pagebackref=false]{hyperref}
\usepackage{microtype}
\microtypesetup{protrusion=true,expansion=true}

\usepackage[text={33pc,605pt},centering]{geometry}    %11pt
\usepackage{orcidlink}

\usepackage{graphicx}

\usepackage{mathtools}
\usepackage{tikz}
\usepackage{mathrsfs}

\usepackage{comment} 

\usetikzlibrary{decorations.markings}
\usetikzlibrary{shapes.geometric}
\tikzstyle{vertex}=[circle, draw, inner sep=0pt, minimum size=6pt]

\newtheorem{theorem}{Theorem}[section]
\newtheorem{lemma}[theorem]{Lemma}
\newtheorem{proposition}[theorem]{Proposition}
\newtheorem{assumption}[theorem]{Assumption}
\newtheorem{condition}[theorem]{Condition}
\newtheorem{corollary}[theorem]{Corollary}

\theoremstyle{definition}
\newtheorem{definition}[theorem]{Definition}
\newtheorem{example}[theorem]{Example}

\theoremstyle{remark}
\newtheorem{remark}[theorem]{Remark}

\numberwithin{equation}{section}

\DeclareMathAlphabet{\mathsl}{OT1}{cmss}{m}{sl}
\SetMathAlphabet{\mathsl}{bold}{OT1}{cmss}{bx}{sl}

\newcommand{\de}{\ensuremath{\delta}}

\newcommand{\si}{\ensuremath{\sigma}}

\newcommand{\om}{\ensuremath{\omega}}

\newcommand{\ve}{\ensuremath{\varepsilon}}

\newcommand{\vp}{\ensuremath{\varphi}}

\newcommand{\Ga}{\ensuremath{\Gamma}}

\newcommand{\Si}{\ensuremath{\Sigma}}

\newcommand{\Om}{\ensuremath{\Omega}}
\newcommand{\cB}{\ensuremath{\mathcal B}}
\newcommand{\cC}{\ensuremath{\mathcal C}}
\newcommand{\cD}{\ensuremath{\mathcal D}}

\newcommand{\cF}{\ensuremath{\mathcal F}}

\newcommand{\cI}{\ensuremath{\mathcal I}}

\newcommand{\cL}{\ensuremath{\mathcal L}}

\newcommand{\cP}{\ensuremath{\mathcal P}}
\newcommand{\cQ}{\ensuremath{\mathcal Q}}

\newcommand{\bbB}{\ensuremath{\mathbb B}}

\newcommand{\bbE}{\ensuremath{\mathbb E}}

\newcommand{\bbM}{\ensuremath{\mathbb M}}
\newcommand{\bbN}{\ensuremath{\mathbb N}}

\newcommand{\bbP}{\ensuremath{\mathbb P}}
\newcommand{\bbQ}{\ensuremath{\mathbb Q}}
\newcommand{\bbR}{\ensuremath{\mathbb R}}

\newcommand{\bbV}{\ensuremath{\mathbb V}}
\newcommand{\bbW}{\ensuremath{\mathbb W}}
\newcommand{\bbX}{\ensuremath{\mathbb X}}
\newcommand{\bbY}{\ensuremath{\mathbb Y}}
\newcommand{\bbZ}{\ensuremath{\mathbb Z}}

\DeclareMathAlphabet{\bbmsl}{U}{bbm}{m}{sl}

\newcommand{\N}{\mathbb{N}}
\newcommand{\R}{\mathbb{R}}
\newcommand{\Z}{\mathbb{Z}}
\newcommand{\E}{\mathbb{E}}
\newcommand{\p}{\mathbb{P}}
\newcommand{\X}{\mathbb{X}}

\newcommand{\md}{\ensuremath{\mathrm{d}}}
\newcommand{\mD}{\ensuremath{\mathrm{D}}}

\newcommand{\Norm}[2]{%
  \ensuremath{%
    \mathchoice{\big\lVert #1 \big\rVert}
    {\lVert #1 \rVert}
    {\lVert #1 \rVert}
    {\lVert #1 \rVert}_{\raisebox{-.0ex}{$\scriptstyle #2$}}
  }
}

\DeclareMathOperator{\prob}{\mathbb{P}} %law random environment
\DeclareMathOperator{\PP}{\mathbf{P}} %law random environment conditioned on origin in infinite cluster

\DeclareMathOperator{\mean}{\mathbb{E}}
\DeclareMathOperator{\EE}{\mathbf{E}}

\newcommand{\ldef}{\ensuremath{\mathrel{\mathop:}=}}
\newcommand{\rdef}{\ensuremath{=\mathrel{\mathop:}}}

\newcommand{\simom}[2]{#1 \overset{  \omega  }\sim  #2}
\newcommand{\simomxy}{\simom{x}{y}}

\newcommand{\indicator}{\mathbbm{1}}

\newcommand{\indizero}{\indicator_{\{0 \in \mathcal{C}_\infty(\omega)\}}}

\newcommand{\indizeroo}{\indicator_{\{0 \in \mathcal{C}_\infty(\omega)\}}}

\newtheorem{prop}[theorem]{Proposition}
\newtheorem{coro}[theorem]{Corollary}
\allowdisplaybreaks[2]
\expandafter\def\expandafter\UrlBreaks\expandafter{\UrlBreaks\do\+\do\&\do\=\do\?\do\_\do\#\do\%\do\-\do\:}

\begin{document}
\emergencystretch=1em

\title[Rough walks in random conductances]{Invariance Principles for Rough Walks in Random Conductances}

% Remove any unused author tags.

% author one information
\author{Johannes B\"aumler}
\address{University of California, Los Angeles}
\curraddr{520 Portola Plaza,
  Los Angeles, CA, USA}
\email{jbaeumler@math.ucla.edu}
\thanks{}

% author two information
\author{Noam Berger}
\address{Technische Universit\"at M\"unchen}
\curraddr{Boltzmannstraße 3, 85748 Garching}
\email{noam.berger@tum.de}
\thanks{}

% author three information
\author{Tal Orenshtein}
\address{Universit\`a degli Studi di Milano-Bicocca}
\curraddr{Via Roberto Cozzi 55,
  20125 Milano, Italy}
\email{tal.orenshtein@unimib.it}
\thanks{}

% author four information
\author[M.\ Slowik]{Martin Slowik \orcidlink{0000-0001-5373-5754}}
\address{University of Mannheim}
\curraddr{Mathematical Institute, B6, 26, 68159 Mannheim}
\email{slowik@math.uni-mannheim.de}
\thanks{}

\subjclass[2020]{60K37, 60L20, 60F17; 35B27, 60G44, 82B43, 82C41, 60J27}

\keywords{Random conductance model, invariance principle, long-range jumps, homogenization, Kipnis-Varadhan inequality, annealed law, quenched law, martingale approximation, corrector, rough path topology, p-variation, iterated integrals, area anomaly.}

\date{\today}

\dedicatory{}

\begin{abstract}
We establish annealed and quenched invariance principles for random walks in random conductances lifted to the $p$-variation rough path topology, allowing for degenerate environments and long-range jumps. Our proof is based on a unified structural strategy where pathwise convergence is viewed as a natural upgrade of the classical theory. This approach decouples the martingale lift from terms involving the integrals with respect to the corrector and the quadratic covariations. In the quenched regime, we show that the existence of a stationary potential for the corrector with $2+\epsilon$ moments is sufficient to ensure the vanishing of the corrector in $p$-variation for any $p > 2$. This input, combined with our structural framework, provides a direct and modular pathway to rough path convergence. We further provide a transfer lemma to construct this potential from spatial moment bounds. While presently verified in the literature primarily for nearest-neighbor settings, our formulation isolates the exact analytic input required for pathwise convergence in more general environments.  
  \end{abstract}

\maketitle

%\tableofcontents

\section{Introduction}

The purpose of this work is to identify and develop a minimal and structural probabilistic mechanism that yields rough path invariance principles for random walks in random conductances. We treat both the annealed and quenched regimes and allow for degenerate environments and long-range jumps. Our results show that rough convergence is governed entirely by the martingale–corrector decomposition and that the second-order structure of the limit is encoded by the energy of the corrector.

Beyond the extension of classical invariance principles to the rough path topology, the primary aim of this work is to present a unifying strategy where pathwise convergence is viewed as a natural upgrade of the classical theory. In the annealed setting, reversibility furnishes a forward–backward representation that enables the control of the corrector lift. In the quenched setting, this structure is replaced by the existence of a stationary potential for the corrector having $2+\epsilon$ moments. In both regimes, we employ a decoupling principle that isolates the martingale lift from the corrector-dependent integral terms, bypassing the treatment of joint convergence.

This modularity allows us to identify that rough convergence is essentially governed by the same fundamental decomposition that appears in the classical theory, provided one employs the appropriate $p$-variation estimates. A key ingredient of the argument is the control of second-level quantities via martingale-transform inequalities, providing bounds intrinsic to rough path analysis. This yields precisely the variation control required for rough tightness while simplifying the identification of the limiting area anomaly.

The requirement of a stationary potential with $2+\epsilon$ moments in the quenched setting represents a technical refinement that aligns the rough path upgrade with the minimal integrability of the classical regime. Recent breakthroughs in quantitative homogenization by Gloria, Neukamm and Otto \cite{GloriaOtto2011}, Mourrat \cite{Mourrat2012}, Dario \cite{dario2018optimal}, and Andres and Neukamm \cite{AndresNeukamm2019} have established that the corrector often possesses all moments in dimensions $d \ge 3$; however, we demonstrate that apriori, such high-order regularity is not a prerequisite for pathwise scaling limits.   

By utilizing a refined ergodic theorem for the supremum norm and leveraging $p$-variation estimates of Lépingle–BDG type, we prove that $2+\epsilon$ moments suffice to control the second-level quantities. This suggests that the "all-moments" regularity often found in the literature, while available in specific settings, is not the fundamental threshold for rough tightness. Moreover, we show that these settings suffice to guarantee the limiting Brownian motion has a non-degenerate covariance matrix. 

To extend the applicability of the quenched result, we provide a transfer lemma: we show that sufficiently strong spatial moment bounds on the corrector, together with mild volume regularity, imply the existence of the required stationary potential. This formulation identifies the analytic input required for pathwise convergence.

Invariance principles in rough path topology naturally arise in the study of stochastic differential equations by approximating the Brownian motion driving the equation by another process. As was already observed in the 1965 work of Wong and Zakai, when considering the weak convergence in the Skorohod topology of the process alone \cite{eugene1965relation} the limit depends on the type of approximation. However, when considering a two-level object: the process and its iterated integrals (or alternatively its antisymmetric part, which is sometimes called its Lévy area) in an appropriate $p$-variation rough path space, continuity is obtained. In fact, it turns out that in the limit of the approximation the second level may yield an additional drift term, which is known as the area correction / anomaly.

There are in recent years a growing number of results in various fields aiming at proving invariance principles in the rough path topology, characterizing the area anomaly and when it is non-vanishing.
For fast-slow dynamical systems and deterministic approximations, Kelly \cite{kelly2016rough} and Kelly and Melbourne \cite{kelly2016smooth} proved invariance principles for approximations of SDEs, while Chevyrev, Friz, Korepanov, and Melbourne \cite{chevyrev2020superdiffusive} established superdiffusive rough limits.
In the context of stochastic averaging, Hairer and Li \cite{hairer2022generating} generated diffusions from systems driven by fractional Brownian motion, Friz and Kifer \cite{kifer2024almost} proved almost sure diffusion approximations, and Gottwald and Melbourne \cite{gottwald2024time} clarified the Lévy area correction for time-reversible dynamical systems.
The work by Engel, Friz, and Orenshtein \cite{EFO24} review various stationary examples of such invariance principles and present the way the different frameworks can be characterized in terms of a Kubo-Green formula for the covariance matrix and the area correction.
Further generalizations of the rough path framework to new stochastic processes include extensions to multi-dimensional Volterra processes (Gehringer, Li and Sieber \cite{gehringer2022functional}), a rough functional Breuer-Major theorem for Gaussian sequences (Elad Altman, Klose and Perkowski \cite{altman2026rough}), and foundational $p$-variation estimates for martingale transforms (Zorin-Kranich \cite{zorin2020weighted} and Friz and Zorin-Kranich \cite{friz2020rough}).
These tools have been increasingly applied to singular SPDEs, including periodic homogenization for the generalized Parabolic Anderson Model (Chen, Fehrman and Xu \cite{chen2026periodic}) and Langevin dynamics on fluctuating Helfrich surfaces (Djurdjevac, Kremp and Perkowski, \cite{djurdjevac2025rough}; see also the PhD theses of Kremp, 2022, and Gehringer, 2022).

Foundational discrete invariance principles in the rough path topology began with the rough Donsker theorem by Breuillard, Friz, and Huesmann \cite{breuillard2009random}, which was subsequently generalized to random walks and Lévy processes on Lie groups (Chevyrev \cite{chevyrev2018random}), general càdlàg semimartingales \cite{ChevyrevFriz2019}, and supported by characteristic function techniques for geometric rough paths (Chevyrev and Lyons \cite{chevyrev2016characteristic}).

The explicit deterministic area anomaly was subsequently isolated in discrete geometric settings by Lopusanschi and Simon for Markov chains on periodic graphs \cite{LS18} and hidden Markov walks \cite{lopusanschi2020area} (see also \cite{lopusanschi2018ballistic}), as well as by Ishiwata, Kawabi and Namba for non-symmetric walks on nilpotent covering graphs \cite{ishiwata2020central,ishiwata2021central}.

For random walks in random environment in the ballistic regime, the annealed result was proven using a delayed regenerative structure \cite{orenshtein2021rough,lopusanschi2018ballistic}.

% Finally, the annealed result in the random conductance model was proven by Deuschel, Orenshtein and Perkowski \cite{deuschel2021additive} for nearest-neighbor random walks in i.i.d.\ uniformly elliptic conductances as an application of a Kipnis-Varadhan type invariance principle for additive functionals of Markov processes in the rough path topology.

Finally, the annealed result in the random conductance model was established by Deuschel, Orenshtein and Perkowski \cite{deuschel2021additive} for nearest-neighbor random walks in i.i.d.\ uniformly elliptic conductances. That work treated the model as an application of a general Kipnis-Varadhan type invariance principle for additive functionals in the rough path topology. In this context, the present work can be viewed as a substantial advancement of that program. By moving beyond the theory of additive functionals to a dedicated structural framework, we are able to provide the first quenched result and remove the constraints of uniform ellipticity and nearest-neighbor jumps, providing a comprehensive treatment of Gaussian limits for the reversible regime.

\subsection{The model}
Consider an undirected graph with vertex set $\mathbb{Z}^{d}$, $d \geq 1$, and edge set $E = \{\{x,y\} : x,y \in \mathbb{Z}^{d},\; x \ne y\}$. In particular, each vertex of the graph has infinite degree. For $\omega \in \Omega = \left[0,\infty \right)^E$ and $e \in E$, we call $\omega(e)$ the \emph{conductance} of the edge $e$. Let $(\Omega, \cF) = ([0, \infty)^E, \cB([0,\infty)^{E})$ be the measure space which is equipped  with the Borel-$\si$-algebra. 

For any given $\omega \in \Omega$, the random walk $X = (X_{t})_{t \geq 0}$ is defined to be a continuous-time Markov chain on $\mathbb{Z}^{d}$ with generator, $L^{\omega}$, acting on bounded functions $f\colon \mathbb{Z}^{d} \to \bbR$ by 
\begin{align}\label{eq:process_generator}
  (L^{\om} f)(x)
  \;=\;
  \sum_{y \in \mathbb{Z}^{d}} \om(\{x,y\})\, \big( f(y) - f(x) \big)
  \;=\;
  \sum_{\substack{y \in \mathbb{Z}^{d}\\ \simomxy }} \om(\{x,y\})\, \big( f(y) - f(x) \big),
\end{align}
where $\simomxy$ denotes $\om(\{x,y\})>0$. 
We denote by $P_x^{\om}$ the law of the process $X$ on $\cD([0, \infty), \bbR^d)$ -- the space of $\bbR^d$-valued c\`adl\`ag functions on $[0, \infty)$ -- starting at time $0$ in $x$.  The expectation corresponding to $P_x^{\om}$ will be denoted by $E_x^{\om}$.  Notice that $X$ is \emph{reversible} with respect to the counting measure on $\mathbb{Z}^{d}$.

For $\omega \in \Omega$ and $x\in \mathbb{Z}^{d}$
we denote by $\mathcal{C}_{x}(\omega)$ the $\omega$-cluster of $x$, that is the connected-component of $x$ in the graph with vertex set $\mathbb{Z}^{d}$ whose edges are the strictly positive conductances of $\om$. More explicitly, $\mathcal{C}_{x}(\omega)$ contains $x$ together with all vertices $y\in \mathbb{Z}^{d}$ so that there is a path from $x$ to $y$ using only edges satisfying $\omega(e)>0$. We say that $x$ has an infinite $\omega$-cluster if the set $\mathcal{C}_{x}(\omega)$ is infinite. We write $\tau_x : \Omega \to \Omega$ for the translation of the environment by $x$, i.e. $(\tau_x\omega)(\left\{u,v\right\}) = \omega\left(\left\{u+x,v+x\right\}\right)$.
We also set
\begin{align}\label{weighteddegree}
  \mu^{\om}(x) \;\ldef\; \sum\nolimits_{y \in \mathbb{Z}^{d}} \om(\{x,y\}) \in [0,+\infty].
\end{align}
Henceforth, let $\prob$ be a probability measure on $(\Omega, \cF)$, and we write $\mean$ to denote the expectation with respect to $\prob$. For the rest of the paper we assume the following.  

\begin{assumption}\label{ass:general}
  The following conditions hold for $\prob$.
  \begin{enumerate}[(i)]
   \item There exists a unique infinite cluster $\prob$-a.s., which is denoted by $\mathcal{C}_{\infty}= \mathcal{C}_{\infty} \left( \omega \right)$. 
  \item $\prob$ is stationary and ergodic with respect to translations $\{\tau_x : x \in \mathbb{Z}^{d}\}$ of $\,\mathbb{Z}^{d}$.
  \item $\E\!\big[\mu^{\om}(0)\big] < \infty\,$ and $\,\mean\!\big[\sum_{x \in \mathbb{Z}^{d}} \om(\{0,x\})\, |x|^2 \big] < \infty$. 
      \end{enumerate}
\end{assumption}
 We are interested in the random walk on the infinite cluster. To do so, first avoid heavy notation by assuming without loss of generality that there exists a unique infinite cluster \emph{for every} $\omega\in\Om$. Then we set 
 \[
 \Omega_0 = \left\{\omega \in \Omega :  0 \in \mathcal{C}_\infty(\omega)\right\} \subset \Omega
 \]
 to be the set of all environments whose infinite cluster contains the origin.

 Conditions (i) and (ii) of Assumption \ref{ass:general} imply that $\prob\left( \Omega_0\right) = \prob\left( 0 \in \mathcal{C}_\infty \right) > 0 $. Hence, we can define the probability measure $\mathbf{P}: \mathcal{B}(\Omega) \to \left[0,1\right]$ by
\begin{equation*}
  \mathbf{P}(A) = \prob(A\;|\;\Omega_0).
\end{equation*}
We write $\mathbf{E}$ for the expectation under $\mathbf{P}$. 
Notice that by construction, for $\mathbf{P}$-a.e.\ $\omega$
the random walk starting at the origin, that is the continuous time Markov chain with the generator defined in \eqref{eq:process_generator}
and the initial condition $P_{0}^{\omega}(X_0 = 0)=1$, will stay on $\mathcal{C}_\infty$ at all times $P_{0}^{\omega}$-almost surely. 
\subsection{Harmonic embedding and the corrector}\label{subsec:harm}
To fix notation relevant in the main results we will now briefly recall the abstract construction of the corrector and the harmonic coordinates. 
\begin{definition}\label{def:cocycle_property}
  A measurable function, also called a random field, $\Psi\!: \Om \times \mathbb{Z}^{d} \to \bbR$ satisfies the cocycle property if for $\prob$-a.e. $\om$, it holds that
  \begin{align*}
    \Psi(\tau_x\om, y-x)
    \;=\;
    \Psi(\om, y) - \Psi(\om, x),
    \qquad \text{for } x,y\in\mathbb{Z}^d \text{ with } y\in\mathcal{C}_{x}(\omega).
  \end{align*}
  We write $L^2_\mathrm{cov}(\prob)$ to denote the set of all functions $\Psi\!: \Om \times \mathbb{Z}^{d} \to \bbR$ such that $\Psi$ satisfies the cocycle property and
  \begin{align*}
    \Norm{\Psi}{L_\mathrm{cov}^2(\prob)}^2
    \;\ldef\;
    \mean\!\Big[
    {\textstyle \sum_{x \in \mathbb{Z}^{d}}}\, \om(\{0, x\})\, |\Psi(\om,x)|^2
    \Big]
    \;<\;
    \infty.
  \end{align*}
\end{definition}
It can be checked that $L_{\mathrm{cov}}^2(\prob)$ is a Hilbert space (cf. \cite{BP07, MP07}).  Accordingly, we write
\begin{align*}
  \langle \Phi, \Psi \rangle_{L^2_{\mathrm{cov}}(\prob)}
  \;=\;
  \mean\!\Big[
  {\textstyle \sum_{x \in \mathbb{Z}^{d}}}\, \om(\{0, x\})\, \Phi(\om,x) \Psi(\om,x)
  \Big].
\end{align*}
to denote the scalar product between $\Psi, \Phi \in L^2_{\mathrm{cov}}(\prob)$.  Further, for a function $\vp\!: \Om \to \bbR$ we define a (horizontal) gradient $\mD \vp\!: \Om \times \mathbb{Z}^{d} \to \bbR$ by
\begin{align*}
  \mD \vp (\om,x)
  \;=\;
  \vp(\tau_x \om) - \vp(\om),
  \qquad x \in \mathbb{Z}^{d}.
\end{align*}
Obviously, if the function $\vp$ is bounded, then $\mD \vp \in L_{\mathrm{cov}}^2(\prob)$.  A \emph{local} function on $\Om$ is bounded and depends only on the value of $\om$ at a finite number of edges.  Following \cite{MP07}, the closure in $L^2_{\mathrm{cov}}(\prob)$ of the set of gradients of local functions is called $L^2_{\mathrm{pot}}$, whereas the orthogonal complement  of $L^2_{\mathrm{pot}}$ in $L^2_{\mathrm{cov}}(\prob)$ is called $L^2_{\mathrm{sol}}$.
Notice that by convention $L^2_{\mathrm{pot}}$ and $L^2_{\mathrm{sol}}$ will be always subspaces of $L_{\mathrm{cov}}^2(\prob)$, hence their notation lacks  mentioning the probability measure on $\Omega$.

\begin{remark}[Remark about convention of notation]\label{rem:conditional-l2cov-norm}
 We define $\Norm{\Psi}{L_{\mathrm{cov}}^2(\PP)}$ and $\langle \Phi, \Psi \rangle_{L^2_{\mathrm{cov}}(\PP)}$ analogously, with the expectation with respect to $\mean$ being replaced by the conditional expectation $\EE$. Throughout the paper, whenever we consider the norms under the conditional probability $\PP$ we shall state it explicitly. Otherwise, we write $L_{\mathrm{cov}}^2$, that is the probability measure $\prob$ is omitted.       
\end{remark}

The following remark will be useful when we shall move from random fields defined on $\Omega_0$ to random fields defined on $\Omega$ and vice versa.      
\begin{remark}[Remark about extension and restriction]\label{rem:extension-and-restriction}
 Every $\Psi\in L_{\mathrm{cov}}^2(\prob)$ can be considered as a function $\Psi\in L_{\mathrm{cov}}^2(\PP)$ by taking the restriction to $\Omega_0 \times \Z^d$. 
 Further, if $\Psi\in L_{\mathrm{pot}}^2$ then $\Psi\indizeroo\in L_{\mathrm{pot}}^2$, and analogously  
 if $\Psi\in L_{\mathrm{sol}}^2$ then $\Psi\indizeroo\in L_{\mathrm{sol}}^2$.
 Conversely, any $\Psi\!: \Om_0 \times \mathbb{Z}^{d} \to \bbR$ that satisfies the cocycle property from Definition \ref{def:cocycle_property} for all $x,y\in\cC_{\infty}$ can be  extended to a function $\bar\Psi\!: \Om \times \mathbb{Z}^{d} \to \bbR$ by the identification  
 $\bar\Psi=\Psi\indizeroo$. 
 With this identification, we consider $\Psi$ to be in $L_{\mathrm{cov}}^2(\prob)$, $L_{\mathrm{pot}}^2$ or $L_{\mathrm{sol}}^2$ whenever its extension $\bar\Psi$ is in $L_{\mathrm{cov}}^2(\prob)$,  $L_{\mathrm{pot}}^2(\prob)$ or $L_{\mathrm{sol}}^2$, respectively.

 \end{remark}
\begin{lemma}[{\cite[Prop.~3.7]{Bi11}}]
  For all $\Psi \in L^2_{\mathrm{sol}}$ and $\prob$-a.e.\ $\om$, we have
  \begin{align}\label{eq:harmonic}
    \big(L^{\om} \Psi(\om, \cdot)\big)(x) \;=\;  0
    \qquad \text{for all $x \in \mathbb{Z}^{d}$.}
  \end{align}
\end{lemma}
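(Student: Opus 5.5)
The plan is to show that $\Psi \in L^2_{\mathrm{sol}}$ is orthogonal to all gradients of local functions, and to read off the harmonicity \eqref{eq:harmonic} from this weak statement via stationarity. Fix a bounded local function $\vp$. Using the cocycle property $\Psi(\om,-x) = -\Psi(\tau_{-x}\om, x)$ and the shift invariance of $\prob$, I would rewrite the pairing as
\begin{align*}
0 = \langle \Psi, \mD\vp\rangle_{L^2_{\mathrm{cov}}}
&= \mean\Big[\sum_{x} \om(\{0,x\})\Psi(\om,x)\big(\vp(\tau_x\om)-\vp(\om)\big)\Big]
\\
&= -2\,\mean\Big[\vp(\om)\sum_x \om(\{0,x\})\Psi(\om,x)\Big].
\end{align*}
For the term with $\vp(\tau_x\om)$, substitute $\om \mapsto \tau_{-x}\om$ for each fixed $x$ and then reindex $x\mapsto -x$. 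Fubini is justified because $\vp$ is bounded and $\sum_x \om(\{0,x\})|\Psi(\om,x)| \le \mu^\om(0)^{1/2}\big(\sum_x \om(\{0,x\})\Psi(\om,x)^2\big)^{1/2}$ is integrable by Cauchy--Schwarz, using Assumption~\ref{ass:general}(iii) and $\Psi\in L^2_{\mathrm{cov}}$. The same estimate shows that the function $g(\om) \ldef \sum_x \om(\{0,x\})\Psi(\om,x) = (L^\om\Psi(\om,\cdot))(0)$ belongs to $L^1(\prob)$, and in particular the series defining it converges absolutely for a.e.\ $\om$; here $\Psi(\om,0)=0$ by the cocycle property.

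Next, since $\mean[\vp\, g]=0$ for all bounded local $\vp$, and local functions generate $\cF$, a monotone class argument gives $\mean[\vp g]=0$ for all bounded measurable $\vp$. Taking $\vp=\operatorname{sign}(g)$ yields $g=0$ $\prob$-a.s.

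Finally, I transfer this to every $x$. Summing over $y \in \mathcal C_x(\om)$ (the other $y$ have $\om(\{x,y\})=0$) and applying the cocycle property,
\begin{align*}
(L^\om\Psi(\om,\cdot))(x) &= \sum_y \om(\{x,y\})\big(\Psi(\om,y)-\Psi(\om,x)\big)
\\
&= \sum_z (\tau_x\om)(\{0,z\})\Psi(\tau_x\om,z) = g(\tau_x\om).
\end{align*}
Since $\mathbb Z^d$ is countable and $\prob$ is shift invariant, $g(\tau_x\om)=0$ for all $x$ simultaneously, $\prob$-a.s.

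The main obstacle is the integrability bookkeeping needed for Fubini and for the change of variables, since each vertex has infinite degree. This is exactly where $\mean[\mu^\om(0)]<\infty$ enters, and Cauchy--Schwarz as above should suffice. A second subtlety is that the cocycle identity only holds within clusters, but every term it is applied to carries a factor $\om(\{0,x\})>0$, so $x$ is in the same cluster and the identity is available.
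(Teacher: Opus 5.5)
Your proof is correct and is essentially the standard argument behind the cited result \cite[Prop.~3.7]{Bi11}; the paper gives no proof beyond this citation. Orthogonality to $\mathrm{D}\varphi$ for local $\varphi$ plus shift invariance gives $\mathbb{E}[\varphi\, g]=0$ with $g(\omega)=(L^\omega\Psi(\omega,\cdot))(0)$, and the cocycle identity transfers $g=0$ to every $x$. Your adaptations to this setting are sound: the Cauchy--Schwarz bound via $\mathbb{E}[\mu^\omega(0)]<\infty$ for Fubini and absolute convergence, the within-cluster use of the cocycle (justified by the factor $\omega(\{0,x\})>0$), and the monotone class step, which is needed since $g$ is only in $L^1$.
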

The \emph{increment field} $\Pi\!: \Om \times \mathbb{Z}^{d} \to \bbR^d$ is defined by $\Pi(\om, x) \ldef x$.  Write $\Pi^j$ for the $j$-th component of $\Pi$.  Since $\Pi^j$ satisfies trivially the cocycle property for all $\om \in \Om$ and $x, y \in \mathbb{Z}^{d}$ and, in view of Assumption~\ref{ass:general}(iii), $\|\Pi^j\|_{L^2_{\mathrm{cov}}} < \infty$, we have that $\Pi^j \in L^2_{\mathrm{cov}}$ for any $j = 1, \ldots, d$.  For any $j = 1, \ldots, d$, we define $\chi^j\!: \Om \times \mathbb{Z}^{d} \to \bbR$ to be the unique element in $L^2_{\mathrm{pot}}$ such that $\Phi^j \ldef \Pi^j - \chi^j \in L^2_{\mathrm{sol}}$.
\begin{definition}\label{def:corrector+matrix}
  The function $\Phi = (\Phi^1, \ldots, \Phi^d)\!: \Om \times \mathbb{Z}^{d} \to \bbR^d$ is called the \emph{harmonic coordinates}  and the function $\chi = (\chi^1, \ldots, \chi^d)\!: \Om \times \mathbb{Z}^{d} \to \bbR^d$ is called the \emph{corrector}.  Moreover, set
  \begin{align}\label{eq:def:Si+Ga}
    \Si^2
    \;\ldef\;
    \big( \langle \Phi^i, \Phi^j \rangle_{L^2_{\mathrm{cov}}(\PP)} \big)_{i,j = 1}^d
    \qquad \text{and} \qquad
    \Ga
    \;\ldef\;
    -\frac{1}{2}  \big( \langle \chi^i, \chi^j \rangle_{L^2_{\mathrm{cov}}(\PP)} \big)_{i,j = 1}^d.
  \end{align}
\end{definition}
\begin{remark}
  The non-degeneracy of $\Sigma^2$ follows under additional conditions, e.g $\mean[(1/\om(\{0,x\}))\indicator_{\om(\{0,x\})>0}] < \infty$ for all $x \in \mathbb{Z}^{d}$ with $|x| = 1$. Further sufficient conditions for non-degeneracy of $\Si^2$ is available in \cite[Lemma~5.5]{FHS17}.
  $\ell^{1}$-sublinearity of the corrector implies non-degeneracy of $\Sigma^{2}$, cf \cite{DNS18}. However, $\Gamma$ is only positive semidefinite. For instance, $\Gamma=0$ for the line model, that is whenever the conductances are constant in any direction but the directions are independent. Indeed, in this case the walk is already a martingale and so the corrector is identically zero.
\end{remark}

\subsection{Invariance principles for rough walks}
We are interested in proving an \emph{invariance principle} for the lifted random walk in the $p$-variation rough path topology.
For this purpose, let us first briefly recall the definition of the homogeneous $p$-variation rough path norm. 

Let $T>0$ and define 
\begin{align*}
  \Delta_{[0,T]}\coloneqq\{(s,t):0\le s<t\le T\}.
\end{align*}
Let $(E,|\cdot|_{E})$ be a normed space. Consider  
$\Xi : \Delta_{[0,T]} \to E$.
For $0<p<\infty$
we define the $p$-variation of $\Xi$ by
\begin{align}\label{eq:def_p-var-norm}
  \|\Xi\|_{p\text{-var}, [0,T]}
  \;: =\;
  \bigg(
  \sup_{\cP} \sum_{[s,t] \in \cP} |\Xi_{s,t}|_{E}^p
  \bigg)^{\!\!1/p}
  \;\in\;
  [0, +\infty],
\end{align}
where the supremum is taken over all finite partitions $\cP$ of $[0,T]$ and the summation is over all intervals $[s,t] \in \cP$. 

In the same way 
\begin{align}\label{eq:def_sup-var-norm}
  \|\Xi\|_{\infty\text{-var}, [0, T]} 
  \;\ldef\;  
  \sup_{(s,t) \in \Delta_{[0,T]}} |\Xi_{s,t}|_E
  \;\in\;
  [0, +\infty],
\end{align}
We also identify a path $\Xi:[0,T]\to E$ with a function of two variables $\Delta_{[0,T]} \to E$ by using the convention  
\begin{align*}
  \Xi_{s,t}:=\Xi_{t}-\Xi_{s}, \quad (s,t)\in\Delta_{[0,T]}.
\end{align*}
Using this convention, we also define the $p$-variation of $\Xi$, where $p\in(0,+\infty]$, as in (\ref{eq:def_p-var-norm}) or (\ref{eq:def_sup-var-norm}), respectively.
Note that for any $0 < p \leq q < \infty$, we have that
\begin{equation}\label{eq:inequalities on norms}
  \|\Xi\|_{\infty\text{-var},[0,T]} \leq \|\Xi\|_{q\text{-}\mathrm{var}, [0,T]} \leq \|\Xi\|_{p\text{-}\mathrm{var},[0,T]}.
\end{equation}
For a path $\Xi:[0,T]\to E$ denote by
\begin{align*}
  \|\Xi\|_{\mathrm{unif}, [0,T]}
  :=
  \sup_{t \in [0,T]} |\Xi_{t}|_E
\end{align*}
the standard uniform norm. 
We note that the $p$-variation norm $\|\cdot\|_{p\text{-var},[0,T]}$ of any constant path in $E$ is $0$ and so it is not a norm. However, we can define a norm on $\left\{  \Xi:[0,T]\to E\right\}$  by
\begin{align*}
  \|\Xi\|_{p\text{-var}, [0, T]} + | \Xi_0| .
\end{align*}
is a norm for every $p\in(0,\infty]$. In fact, the case $p=\infty$, this norm is equivalent to the uniform norm. Indeed, by the triangle inequality
\begin{align*}
  \|\Xi\|_{\mathrm{unif}, [0,T]}
  \le
  \|\Xi\|_{\infty\text{-var}, [0, T]}
  + | \Xi_0 |
  \le
  3  \|\Xi\|_{\mathrm{unif}, [0,T]},
\end{align*}
whenever all terms above are well-defined and finite.

Let $\cD([0,T], \bbR^d)$ be the collection of c\`adl\`ag functions $X$ from $[0, T]$ into the metric space $(\bbR^d, |\cdot|)$ equipped with the Euclidean norm.
Similarly,
let $\cD(\Delta_{[0,T]}, \bbR^{d \times d})$ be the collection of c\`adl\`ag functions $\bbmsl{X}$ from $\Delta_{[0,T]}$ into the metric space $(\bbR^{d \times d}, |\cdot|)$ equipped with the Euclidean operator norm, where we say that $\bbmsl{X}$ is c\`adl\`ag (or continuous) whenever so are all the paths $[0,T-s]\ni t\mapsto \bbmsl{X}_{s,s+t}$, $s\in[0,T]$.

\begin{definition}\label{def:p-variation rough path space} 
  For $p \in [2,3)$, the space 
  $\cD_{p\text{-var}}([0,T], \bbR^d\times \bbR^{d \times d})$ of c\`adl\`ag
  % (resp. the space $\cC_{p\text{-var}}([0,T], \bbR^d\times \bbR^{d \times d})$ of continuous) 
  $p$-variation rough paths is defined by the subspace of all 
  % (resp. continuous) 
  $(X, \bbX) \in \cD([0,T], \bbR^d) \times \cD(\Delta_{[0,T]}, \bbR^{d \times d})$ satisfying Chen's relation, that is,
  \begin{equation}\label{eq:Chen}
    \bbX_{r,t} - \bbX_{r,s} - \bbX_{s,t} = X_{r,s} \otimes X_{s,t}
  \end{equation}
  for all $0 \leq r \leq s \leq t \leq T$, and
  \begin{align}
    |X_0| \,+\,
    \|X\|_{p\text{-var}, [0,T]}
    \,+\, \|\bbX\|_{p/2\text{-var}, [0,T]}^{1/2}
    \;<\;
    \infty.
  \end{align}
  Above and in the following we use the notation $x \otimes y \in \bbR^{d \times d}$ for the tensor product of $x=(x^1,...,x^d), y=(y^1,...,y^d) \in \bbR^d$. More explicitly, $x \otimes y$ is the $d\times d$ matrix whose $(i,j)$ entry is $x^i y^j\in\bbR$. 
  We endow %
  $\cD_{p\text{-var}}([0,T], \bbR^d\times \bbR^{d \times d})$ with 
  the $p$-variation Skorohod distance
  \begin{align*}
    \sigma_{p\text{-var},[0,T]}((X,\mathbb X), (Y, \bbY)) \coloneqq \inf_{\lambda \in \Lambda_T} \big\{&|\lambda| \vee \big( |X_0-Y_0| + \|X - Y \circ \lambda\|_{p\text{-var},[0,T]}\\
    & + \|\bbX - \bbY\circ(\lambda,\lambda)\|_{p/2\text{-var},[0,T]}^{1/2} \big) \big\},
  \end{align*}
  where $\Lambda_T$ are the strictly increasing bijective functions from $[0,T]$ onto itself, and $|\lambda| = \sup_{t \in [0,T]} |\lambda(t) - t|$. 
\end{definition}

The Skorohod distance $\sigma_{\text{unif},[0,T]}$ can be defined similarly, except with the $p$-variation replaced by the uniform norm and the square root of the $p/2$-variation replaced by the infinity variation. See \cite[Section~5]{FZ18} for details.
We shall use the notation 
$D\!\big([0,T],\R^d\times\R^{d\times d}\big)$
for the space of all c\`adl\`ag functions  
$[0,T]\to\R^d\times\R^{d\times d}$
endowed with the standard (AKA the $J_1$) Skorohod topology (cf.\ \cite[Chapter 12]{billingsley1999convergence}. 
In particular, by taking the bijection function $\lambda(t)=t$, we have that for deterministic continuous $(X, \bbX)$ and $(Y, \bbY)$
\begin{align}\label{Skorohod-dist-to-determ-cont-is-bounded-by-uniform-dist}
  \sigma_{\text{unif},[0,T]}((X,\mathbb X), (Y, \bbY)) & 
  \le  
  \|X - Y \|_{\text{unif}, [0,T]} 
  + \|\bbX - \bbY\|_{\infty\text{-var}, [0,T]}\\
  & \le 
  |X_0-Y_0| 
  + \|X - Y \|_{\infty\text{-var}, [0,T]} 
  + \|\bbX - \bbY\|_{\infty\text{-var}, [0,T]}
\end{align}

\begin{remark}
  \label{rmk:one-parameter}Note that by Chen's relation $\mathbb{X}_{s, t}
  =\mathbb{X}_{0, t} -\mathbb{X}_{0, s} - X_{0, s} \otimes X_{s, t}$ whenever
  $0 \leqslant s \leqslant t \leqslant T$, and therefore
  \begin{align*}
    \| \mathbb{X}-\mathbb{Y} \|_{\infty\text{-var}, [0,T]} & = \sup_{0 \leqslant s < t
      \leqslant T} | \mathbb{X}_{s, t} -\mathbb{Y}_{s, t} |
    \le 2 \| \mathbb{X}_{0, \cdot} -\mathbb{Y}_{0, \cdot}
    \|_{\text{unif}, [0,T]}\\
    & \quad + 2 (\| X \|_{\text{unif}, [0,T]} \vee \| Y \|_{\text{unif}, [0,T]}) \| X - Y    \|_{\text{unif}, [0,T]}.
  \end{align*}
  Consequently, the (`classical' Skorohod) distance in $D([0,T],\R^d\times\R^{d\times d})$ of the (one-parameter) paths $(X,
  \mathbb{X}_{0, \cdot}),(Y, \bbY_{0,\cdot}):[0,T]\to \bbR^d\times \bbR^{d \times d} $ controls the Skorohod distance $\sigma_{\text{unif},[0,T]}$ of $(X,\bbX)$ and $(Y,\bbY)$. 
\end{remark}
For $X,Y\in D([0,T],\bbR^d)$ we use the notation $\int_0^t Y_{s-} \otimes \md X_s$ for the left-point Riemann integral, that is
\begin{align}\label{def:left-point-integral}
  \int_0^t Y_{s-} \otimes \md X_s \coloneqq {\int_{(0,t]} Y_{s-}\otimes \md X_s \coloneqq }
  {\lim_{n \to \infty}\big\{\sum_{[u,v]\in\cP_n} Y_{u} \otimes \big(X_{v} - X_{u}\big)\big\},}
\end{align}
whenever this limit is well-defined along an implicitly fixed deterministic sequence of partitions $(\cP_n)$ of $[0,t]$ with mesh size going to zero. Whenever $X$ and $Y$ are also of finite variation, the limit exists and is independent on the choice of the partitions as long as their mesh size converges to zero. This will be always the case for the processes considered in the paper, before taking the scaling limit.  
 Note that if $X$ is a semimartingale and $Y$ is adapted to
the same filtration, then this definition coincides with the Itô integral.
Note that if $X$ is a semimartingale and $Y$ is adapted to the same filtration, then this definition coincides with the It\^o integral.
We remark also that for the iterated integrals
\begin{align*}
  \bbX_{s,t}\coloneqq
  \int_s^t X_{s,u-} \otimes \md X_u 
  = 
  \int_{(s,t]} X_{u-}\otimes \md X_u - X_{s}\otimes X_{s,t},
\end{align*}
$(X,\bbX)$ satisfy Chen's relation \eqref{eq:Chen}. Moreover, so does $(X,\tilde\bbX)$ for $\tilde \bbX_{s,t} \coloneqq \bbX_{s,t} + Y_{s,t}$, whenever  $Y_{s,t}=Y_t-Y_s$ are the increments of a path $[0,T]\to \bbR^{d \times d}$ (since in this case $Y_{r,t}-Y_{s,r}-Y_{s,t}=0$). In particular, this holds for $Y$ of the form $Y_{s,t} \coloneqq (t-s) \Gamma$, for any fixed matrix $\Gamma$.

\section{Main results: invariance principles for the lifted walk}

\subsection{The annealed invariance principle}
Remember the convention
\begin{align}\label{eq:path_increments}
  X_{s,t} \;\ldef\; X_t - X_s
\end{align}
for the increments of a path $X$. %, and $X_{0-} \ldef X_0$.
We also use the standard convention $X_{t-} \ldef \lim_{s \uparrow t} X_s$ for $t>0$.
Note that Assumption~\ref{ass:general} 
% and \ref{ass:process} 
ensures that the stochastic process $X$ is honest, that is, for $\PP$-a.e.~$\om$, it does not explode in finite time $P_0^{\om}$-almost surely. 
In particular, $P_0^{\om}$-a.e.~realization of $X$ gives rise to a piecewise constant path which has only finitely many discontinuities on any finite time horizon, cf. Equation~\ref{eq:number:jumps}.  Hence, the doubly-indexed stochastic process $\bbmsl{X}$ %$(\bbmsl{X}_{s,t} : (s,t)\in \Delta_{[0,T]})$
on $\bbR^{d \times d}$ which is given as the It\^o sum
\begin{align}\label{eq:path_iterated_integral}
  \Delta_{[0,T]}\ni(s,t)
  \;\longmapsto\;
  \bbmsl{X}_{s,t}
  \;\ldef\;
  \int_s^t X_{s,r-} \otimes \md X_r
  \;\ldef\;
  \sum_{s < r \leq t} X_{s,r-} \otimes X_{r-,r}
\end{align}
is well-defined since in the sum over times $r$ only a finite number of terms $X_{r-,r}$ are non-zero. Here and throughout, we use the convention $\sum_{r \in I} a_r\coloneqq\sum_{r \in I, a_r\neq 0}a_r$ for an index set $I$ and a function $a_r$ which is non-zero for at most countably many $r\in I$.

We will call the process $(X, \bbmsl{X}) = ((X_{s,t}, \bbmsl{X}_{s,t}) : (s,t)\in\Delta_{[0,T]})$ the \emph{lifted walk}. We shall also call it \emph{It\^o lifted walk} to stress the type of integral used.
We will also use the notation $u^{\otimes 2}\coloneqq u \otimes u$ for $u \in \R^d$.

We shall now state the first main result of the paper, the \emph{annealed or averaged invariance principle} for the lifted random walk in the $p$-variation rough path topology.

We denote by $\prob_0$ the marginal law of the walk starting at $0$ whenever the environment sampled from $\prob$. That is 
\[
\prob_0(\cdot):=\mean [ P^\om_0(\cdot)].
\]
Correspondingly, $\mean_0$ is the expectation with respect to $\PP_0$. Analogously
\[
\PP_0(\cdot):=\EE [ P^\om_0(\cdot)]
\]
and $\EE_0$ is the expectation with respect to $\PP_0$.
Notice that in particular
\[
\PP_0(\cdot):=\prob_0 ( \cdot \,|\, \Om_0) \quad \text{ and }\quad
\EE_0[\cdot]:=\mean_0 [ \cdot \,|\, \Om_0].
\]

Given any process $X$ and $n \in \bbN$, we define the rescaled process by
\begin{align}\label{eq:def_rescaled_process}
  X_t^n \ldef \frac{1}{\sqrt n} X_{n t}
  \; \text{ and }
  \bbmsl{X}_{s,t}^n \ldef \int_s^t X_{s,r-}^n \otimes \md X_r^n = \frac{1}{n} \int_{ns}^{nt} X_{ns,r-} \otimes \md X_r.
\end{align}

\begin{theorem}\label{thm:annealed}
  Suppose that Assumption~\ref{ass:general} 
  % and \ref{ass:process} 
  holds. Then, under $\PP_0$, for any $p > 2$ and $T > 0$
  \begin{align*}
    % \Big(
    % \Big(
    % X_t^n, {\displaystyle \int_0^t X_{s-}^n \otimes \md X_s^{n}}
    % \Big)
    % : t \geq 0
    % \Big)
    \big(
    X^n, \bbX^{n}
    \big)
    \underset{n \to \infty}{\;\Longrightarrow\;}
    \Big(
    B, \big( {\displaystyle \int_s^t B_{s,r} \otimes \md B_r} + (t-s)\, \Ga
    \big)
    _{(s,t) \in \Delta_{[0,T]}}
    \Big)
    % \big( \big( B_t, \bbB_t + t \Ga \big) : t \geq 0 \big)
  \end{align*}
  in $\cD_{p\text{-var}}([0,T], \bbR^d\times \bbR^{d \times d})$, where $(B_t : t \geq 0)$ is a Brownian motion on $\bbR^d$ with deterministic covariance matrix $\Si^2$ and $\Ga \in \bbR^{d \times d}$ - also called area anomaly - are as given in Equation~\eqref{eq:def:Si+Ga}. Further, the integration with respect to $B$ is in the It\^o sense. 
\end{theorem}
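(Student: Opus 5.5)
We decompose the walk along the harmonic coordinates. Write $M_t \ldef \Phi(\om, X_t)$ and $\chi_t \ldef \chi(\om, X_t)$, so that $X_t = M_t + \chi_t$. Since $\Phi^j \in L^2_{\mathrm{sol}}$ is $L^\om$-harmonic by \eqref{eq:harmonic}, $M$ is a square-integrable $P_0^\om$-martingale for $\PP$-a.e.\ $\om$. Its jumps are stationary functionals of the environment process $\tau_{X_t}\om$, which is reversible and ergodic with respect to $\PP$. The Itô lift splits bilinearly:
\begin{align*}
  \bbX_{s,t} = \bbM_{s,t} + \int_s^t M_{s,r-}\otimes \md \chi_r + \int_s^t \chi_{s,r-}\otimes \md M_r + \int_s^t \chi_{s,r-}\otimes\md\chi_r .
\end{align*}
We rewrite the corrector--corrector and mixed terms via the quadratic covariation. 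The identity $\int \chi_{s,r-}\otimes\md\chi_r + (\int \chi_{s,r-}\otimes\md\chi_r)^{\!\top} = \chi_{s,t}^{\otimes 2} - [\chi]_{s,t}$ and the product rule $\int M_{s,r-}\otimes\md\chi_r = M_{s,t}\otimes\chi_{s,t} - (\int \md M_r \otimes \chi_{s,r-})^{\!\top}\!$-type $- [M,\chi]_{s,t}$ show that, after scaling, every term is one of the following: a martingale-transform integral $\frac1n\int \chi^{(n)}_{s,r-}\otimes \md M_r$; a product of increments involving $\chi^n_{s,t}$; or a bracket term $\frac1n [\,\cdot\,,\chi]$. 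The only exception is the antisymmetric part of $\int\chi\otimes\md\chi$, which we handle separately.

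We carry out the key steps in order.

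\emph{Step 1: martingale part.} The annealed CLT for $M^n$, with covariance $\Si^2$, follows from the ergodic theorem applied to $\frac1n[M]_{nt}\to t\Si^2$. For rough convergence of $(M^n,\bbM^n)$ in $p$-variation we invoke the rough Donsker theorem for càdlàg semimartingales of Chevyrev--Friz \cite{ChevyrevFriz2019}. Its UCV/tightness condition requires the bracket convergence above, together with uniform control of $\E[\,[M^n]_T]$, both of which are given by stationarity. This yields $(M^n,\bbM^n)\Rightarrow(B,\int B\otimes\md B)$ with Itô integration.

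\emph{Step 2: corrector is negligible.} The classical Kipnis--Varadhan argument gives $\sup_{t\le T}|\chi^n_t|\to 0$ in $\PP_0$-probability. The forward--backward martingale decomposition of the reversible environment process writes the potential field $\chi$ as one half of the difference of a forward and a time-reversed martingale, and approximating $\chi$ by gradients of local functions makes the remainder small in $L^2$. To upgrade this to $p$-variation we need $\|\chi^n\|_{p\text{-var},[0,T]}\to 0$ for $p>2$. We obtain it by interpolation: $\|\chi^n\|_{p\text{-var}}\le \|\chi^n\|_{q\text{-var}}^{q/p}\,(2\|\chi^n\|_{\infty})^{1-q/p}$ for $2<q<p$. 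Tightness of the $q$-variation follows from the forward--backward representation together with Lépingle's $p$-variation BDG inequality, applied to both martingales.

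\emph{Step 3: cross and bracket terms.} The mixed martingale transforms $\frac1n\int\chi_{s,r-}\otimes\md M_r$ are controlled in $p/2$-variation by Lépingle/Friz--Zorin-Kranich martingale-transform inequalities \cite{friz2020rough}. Their integrand size is governed by $\|\chi^n\|_\infty\to0$, while $\frac1n[M]_{nT}$ stays bounded, so these terms vanish. The brackets converge by the ergodic theorem: $\frac1n[\chi]_{nt}\to t\,\langle\chi^i,\chi^j\rangle_{L^2_{\mathrm{cov}}(\PP)}$ and $\frac1n[M,\chi]_{nt}\to t\,\langle\Phi^i,\chi^j\rangle = 0$ by orthogonality of $L^2_{\mathrm{pot}}$ and $L^2_{\mathrm{sol}}$. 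Being monotone or of finite variation in $t$, these converge in $p/2$-variation to a deterministic linear drift.

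\emph{Step 4: antisymmetric part of $\int\chi\otimes\md\chi$.} This is the main obstacle, since $\chi$ is not a martingale. We first plan to use the forward--backward representation $\chi_t = \frac12(N_t - \hat N_{t})$, where $\hat N$ is a martingale for the time-reversed process; this expresses the Lévy area of $\chi$ through martingale transforms whose integrands are $o(\sqrt n)$. The symmetric part then yields $-\frac12[\chi]$, producing the drift $t\Ga$ with $\Ga=-\frac12\langle\chi,\chi\rangle$. The backward integrals must be handled under the reversed filtration, with the tightness and vanishing estimates run separately in each time direction.

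\emph{Step 5: assembly.} We combine the pieces by a Slutsky-type argument in $\sigma_{p\text{-var}}$. The limit is continuous, so it suffices to show that the perturbations vanish in probability in the norms $\|\cdot\|_{p\text{-var}}$ and $\|\cdot\|_{p/2\text{-var}}^{1/2}$, and that the deterministic drift $(t-s)\Ga$ is added continuously.
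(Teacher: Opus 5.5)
Your proposal is correct. It has the same overall architecture as the paper:
\begin{itemize}
\item the split $X=M+\chi$, with Chevyrev--Friz for $(M^n,\bbM^n)$;
\item a forward--backward bound plus L\'epingle--BDG for the first level of the corrector;
\item Friz--Zorin-Kranich and summation by parts for the cross terms;
\item the ergodic theorem and $L^2_{\mathrm{pot}}\perp L^2_{\mathrm{sol}}$ for $Q(M^n,\chi^n)$.
\end{itemize}
Where you differ is the second level of the corrector.

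The paper applies the forward--backward decomposition only to genuine potentials $\psi(\tau_{X_t}\om)$ with $\psi\in L^2$, and reaches $\chi$ by density of bounded gradients. Proposition~\ref{prop:correct-estimate-for-pot} gives a uniform-in-$n$ $p/2$-variation bound, via Lemma~\ref{lem:correct-estimate-for-gradients} and the crude fixed-$n$ bound of Lemma~\ref{lem:apriori-estimate-for-pot}. Proposition~\ref{prop:ii-corrector-convergence-in-prob} then identifies the limit through the ergodic theorem for $\frac1n\int\psi(\tau_{X_{s-}}\om)\,\md\varphi(\tau_{X_s}\om)$ and the identity \eqref{eq:ii-formula}.

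You instead read off the symmetric part exactly, $\frac12(\chi^n_{s,t})^{\otimes 2}-\frac12 Q_{s,t}(\chi^n,\chi^n)$, so $\Ga$ comes straight from the ergodic theorem for the bracket. You then apply forward--backward to the cocycle $\chi$ itself. This is valid, but you should justify it. Set $V(\om)=\sum_x\om(\{0,x\})\chi(\om,x)$. The drift $\int_0^tV(\tau_{X_s}\om)\,\md s$ enters the Dynkin decompositions along the forward and the reversed walk with opposite signs. The reasons are that $(\tau_{X_T}\om,\,X_{(T-\cdot)-}-X_T)$ has the annealed law of $(\om,X)$ on $[0,T]$, and that the cocycle property negates reversed increments.

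Step~4 glosses over one point. In $\int_s^t\chi_{s,r-}\otimes\md\hat N_r$ the integrand is forward-adapted. To use Friz--Zorin-Kranich under the backward filtration you must first write $\chi_{s,r-}=\chi_{s,t}-\chi_{r,t}-\chi_{r-,r}$. This produces three pieces:
\begin{itemize}
\item a backward martingale transform with integrand $\chi_{r,t}$;
\item a boundary term $\chi_{s,t}\otimes\hat N_{s,t}$;
\item the bracket $-Q_{s,t}(\chi,\hat N)=Q_{s,t}(\chi,\chi)$, since $\Delta\hat N=-\Delta\chi$ when $N$ is the forward Dynkin martingale.
\end{itemize}
The bracket is of order $n$ and is exactly where $\Ga$ comes from. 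Because it is symmetric, it drops out of the area. In fact the same computation gives $\frac1n\int_{ns}^{nt}\chi_{ns,r-}\otimes\md\chi_r\to(t-s)\Ga$ without any symmetric/antisymmetric splitting.

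Two small repairs are needed.
\begin{itemize}
\item Friz--Zorin-Kranich is driven by $\E\|\chi^n\|_{p_1\text{-var}}$, not by $\|\chi^n\|_{\mathrm{unif},[0,T]}$. Your interpolation plus H\"older supplies $\E\|\chi^n\|_{p_1\text{-var}}\to0$, because the Kipnis--Varadhan step actually gives $\E\|\chi^n\|_{\mathrm{unif},[0,T]}\to0$ and the forward--backward/L\'epingle bound gives $\sup_n\E\|\chi^n\|_{q\text{-var}}<\infty$.
\item The martingale FCLT also needs the Lindeberg condition. It follows from stationarity, as in Proposition~\ref{prop:first_level_conv_of_hamonic}.
\end{itemize}

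Your route is shorter and shows directly why the anomaly is symmetric. The paper's route yields reusable bilinear estimates for all $\Psi,\Xi\in L^2_{\mathrm{pot}}$ and never needs a reversed decomposition for cocycles.
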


It is well known that the diffusively rescaled processes $(X^n)_{n\in\mathbb N}$
satisfy an annealed invariance principle under Assumption~\ref{ass:general}.
The first such results for random walks in random environments were
established by De~Masi, Ferrari, Goldstein and Wick for reversible
nearest-neighbor models \cite{DeMasi1989_JSP}. However, an extension including long-range jumps is rather straightforward.
More generally, the Kipnis-Varadhan theory \cite{kipnis1986central}
provides an invariance principle for additive functionals of
reversible Markov processes, and applies in particular to the random
conductance model under appropriate moment assumptions.
Extensions to long-range jumps follow by the same martingale
approximation argument once the corresponding $L^2_{\mathrm{cov}}$
corrector theory is available.

As mentioned in the introduction, the annealed invariance principle in the rough path topology for the lifted random walk in nearest-neighbor i.i.d.\ uniformly elliptic conductances was proven in \cite{deuschel2021additive} as a corollary from a Kipnis-Varadhan type invariance principle for additive functionals of Markov processes extended to the rough path topology, which was the main result of that paper.

\begin{remark}[On non-uniqueness of the infinite cluster]
If Assumption~\ref{ass:general}(i) (uniqueness of the infinite cluster)
is removed, but the remaining parts of Assumption~\ref{ass:general}
are retained, one can still obtain the annealed invariance principle
after conditioning on the event $\{0 \in \mathcal C_\infty(\omega)\}$.
However, in the absence of uniqueness, the law of the environment
conditioned on $\{0 \in \mathcal C_\infty(\omega)\}$ need not be ergodic.
In that case, the random walk started at the origin remains confined
to the infinite connected component $\mathcal C_0(\omega)$ of $0$,
and the effective coefficients in the limiting Brownian rough path
may depend on the ergodic component of the conditioned law selected
by $\omega$.
More precisely, the diffusion matrix and the area correction
matrix are shift-invariant quantities and are therefore deterministic on each ergodic component.  If the conditional law is a mixture of ergodic components, then they may become random under the conditional measure, with their values determined by
the ergodic component associated with $\mathcal C_0(\omega)$.
In particular, uniqueness of the infinite cluster guarantees that the
conditional law remains ergodic and hence that the limiting Brownian
rough path has deterministic coefficients.
\end{remark}

\subsection{The quenched invariance principle}
The second main result is a \emph{quenched invariance principle} for the lifted random walks. 

The quenched classical invariance principle (that is, the invariance pronciple for the process alone, in the Skorohod topology) is substantially more delicate than the annealed one. For uniformly elliptic nearest-neighbor conductances, it was proved by
Sidoravicius and Sznitman \cite{sidoravicius2004quenched} and 
by Berger and Biskup \cite{berger2007quenched}.
For supercritical percolation clusters, quenched invariance principles
were established by Mathieu and Piatnitski \cite{MP07} and by
Barlow \cite{barlow2004random} under suitable heat kernel bounds.
In degenerate ergodic environments, the quenched invariance principle
was obtained by Andres, Deuschel and Slowik \cite{ADS15} and further
developed in subsequent works such as \cite{DNS18}.
A comprehensive overview on the random conductance model
can be found in the still relevant 2011 survey of Biskup \cite{Bi11}.

Beyond the settings of Assumption \ref{ass:general}, here we shall need to assume additionally that there exists a potential for the corrector with finite moments.
\begin{condition}[Regular potential]\label{cond:moment-potential}
The corrector from Definition \ref{def:corrector+matrix} satisfies 
  $\chi=\mD\phi$ on $\Omega_0$ for a function $\phi: \Omega_0 \rightarrow \R^d$ so that $\mD\phi\in L_{\mathrm{cov}}^2(\PP)$ and moreover, there is some $\epsilon>0$ so that  
  \begin{align}\label{eq:conditionSP}
      \kappa
      %\;=\; \kappa(\epsilon)
      \;:=\;
      \E\bigg[
  \sum_{x\in\bbZ^d }\omega(0,x) \big(
  |\phi(\tau_x\omega)|^2 + |\phi(\omega)|^{2+\epsilon} 
  \big)\indizeroo
  \bigg]
  \;<\; \infty. 
  \end{align}
\end{condition}

With this condition on the corrector one can then prove a quenched invariance principle for the lifted random walk.

\begin{theorem}\label{theorem}
  Let $X$ be the random walk defined by \eqref{eq:process_generator}, let $\X$ be the It\^o sum defined by \eqref{eq:path_iterated_integral} and consider the rescaled process (\ref{eq:def_rescaled_process}). Assume Condition \ref{cond:moment-potential} holds. Then for any $T > 0$ and $p > 2$ and for $\mathbf{P}$-almost every $\omega$ under $P_{0}^{\omega}$ 
  \begin{align*}
    % \Big(
    % \Big(
    % X_t^n, {\displaystyle \int_0^t X_{s-}^n \otimes \md X_s^{n}}
    % \Big)
    % : t \geq 0
    % \Big)
    \big(
    X^n, \bbX^{n}
    \big)
    \underset{n \to \infty}{\;\Longrightarrow\;}
    \Big(
    B, \big( {\displaystyle \int_s^t B_{s,r} \otimes \md B_r} + (t-s)\, \Ga
    \big)
    _{(s,t) \in \Delta_{[0,T]}}
    \Big)
    % \big( \big( B_t, \bbB_t + t \Ga \big) : t \geq 0 \big)
  \end{align*}
  in $\cD_{p\text{-var}}([0,T], \bbR^d\times \bbR^{d \times d})$, where $B_t$ is a $d$-dimensional Brownian motion with covariance matrix $\Sigma^2$ 
  and area anomaly $\Gamma$, coincide with the ones in Theorem~\ref{thm:annealed}. 
 \end{theorem}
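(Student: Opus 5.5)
Writing $X_t = \Phi(\om, X_t) + \chi(\om, X_t)$ on $\cC_\infty$, I would prove the statement by decomposing the lifted walk into a martingale lift plus corrector terms, and show that all corrector contributions vanish in $p$-variation except for a quadratic covariation term, which converges to $(t-s)\Ga$. Set $M_t \ldef \Phi(\om, X_t)$, a $P_0^\om$-martingale by \eqref{eq:harmonic}, and $R_t \ldef \chi(\om,X_t) = \phi(\tau_{X_t}\om) - \phi(\om)$ under Condition~\ref{cond:moment-potential}. Expanding the Itô sum \eqref{eq:path_iterated_integral} bilinearly gives
\begin{align*}
  \bbX_{s,t} = \bbM_{s,t} + \int_s^t M_{s,r-}\otimes \md R_r + \int_s^t R_{s,r-}\otimes \md M_r + \int_s^t R_{s,r-}\otimes \md R_r .
\end{align*}
After an integration by parts, the last term equals $R_{s,t}^{\otimes 2}$ minus its transpose-type counterparts plus $\sum_{s<r\le t} (\Delta R_r)^{\otimes 2}$. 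Similarly, the first mixed term equals $M_{s,t}\otimes R_{s,t} - \int R_{s,r-}\otimes\md M_r$ (with appropriate transposition) minus the covariation $[M,R]_{s,t}$.

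\textbf{Step 1 (martingale lift).} Under $P_0^\om$, for $\PP$-a.e.\ $\om$, the rescaled $(M^n,\bbM^n)$ converges in $\cD_{p\text{-var}}$ to $(B,\int B\otimes\md B + \tfrac12\cdot 0)$ with Itô lift. I would obtain this from the quenched martingale CLT via the ergodic theorem for the environment seen from the particle: $[M^n]_t \to t\Si^2$ almost surely. The rough upgrade for càdlàg martingales comes from the Chevyrev--Friz theory \cite{ChevyrevFriz2019}: uniform tightness of $p$-variation of $M^n$ and $p/2$-variation of $\bbM^n$ follows from Lépingle's BDG inequality together with bounds on $\E[[M^n]_T]$. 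Jumps vanish in sup norm, which I would show by a Lindeberg-type argument using the ergodic theorem.

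\textbf{Step 2 (corrector in $p$-variation).} I would show $\|R^n\|_{p\text{-var},[0,T]}\to0$ in $P_0^\om$-probability. Since $R^n_t = n^{-1/2}(\phi(\tau_{X_{nt}}\om)-\phi(\om))$, interpolation gives
\[
\|R^n\|_{p\text{-var}}^p \le \|R^n\|_{\infty\text{-var}}^{p-2}\,\|R^n\|_{2\text{-var}}^2 .
\]
The $2$-variation is bounded by the $2$-variation of $n^{-1/2}\phi(\tau_{X_{n\cdot}}\om)$, and I would control it by decomposing the stationary process $\phi(\tau_{X_t}\om)$ into a martingale plus a finite-variation part. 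The bracket of that martingale is controlled by $n^{-1}\int_0^{nT}\sum_x \om(X_r,X_r+x)|\phi(\tau_{x+X_r}\om)-\phi(\tau_{X_r}\om)|^2\,\md r$, which is bounded by the ergodic theorem and the first part of $\kappa$. The sup norm is where the $2+\epsilon$ moment enters: a refined ergodic theorem should give $\max_{r\le nT}|\phi(\tau_{X_r}\om)| = o(\sqrt n)$ almost surely. I would derive this from summability of $\E[\,\#\{\text{visits}\}\cdot \indicator\{|\phi|>\delta\sqrt n\}]\lesssim n\,\PP(|\phi|>\delta\sqrt n)$ after controlling the number of jumps. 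This sup-norm step is the main obstacle, because only $2+\epsilon$ moments are available and a union bound over $nT$ sites must beat $n^{-(2+\epsilon)/2}$. I would first try a Borel--Cantelli argument along dyadic $n$, weighted by local time, using the weighted form of \eqref{eq:conditionSP}.

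\textbf{Step 3 (second-level corrector terms).} The terms $M^n\otimes R^n$ and $R^n\otimes R^n$ vanish in $p/2$-variation by Step 2 and Hölder's inequality for variation norms. The integrals $\int R^n_{s,r-}\otimes\md M^n_r$ vanish by Lépingle/Zorin-Kranich martingale-transform $p$-variation bounds \cite{friz2020rough}, using $\sup|R^n|\to0$ and the tightness of $[M^n]$. The remaining term is the covariation $[R^n,M^n]+[R^n,R^n]$ (suitably arranged). This is an additive functional of the environment process, so the ergodic theorem gives almost-sure convergence of $n^{-1}\sum_{r\le nt}\Delta R_r\otimes\Delta(M+R/2\dots)_r$ to $t\,\EE[\sum_x\om(0,x)\,(\chi\otimes\Phi + \tfrac12\chi\otimes\chi)]\cdot$. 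Using $\langle\chi,\Phi\rangle_{L^2_{\mathrm{cov}}}=0$ by orthogonality of $L^2_{\mathrm{pot}}$ and $L^2_{\mathrm{sol}}$, the surviving contribution should be exactly $(t-s)\Ga$, after checking the signs from the integration by parts. Since this limit is monotone-type (a difference of increasing processes), $p/2$-variation convergence follows from uniform convergence.

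Finally, combining Steps 1--3 with continuity of the translation $(X,\bbX)\mapsto(X,\bbX+\text{small})$ in $\sigma_{p\text{-var}}$ yields the claim.
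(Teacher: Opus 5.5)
Your overall architecture matches the one the paper runs through Lemma~\ref{lem:abstract-limit-specific}: the martingale lift via Chevyrev--Friz, the mixed terms via summation by parts plus the Friz--Zorin-Kranich bound for $I(R^n,M^n)$, and $Q(M^n,R^n)\to 0$ by orthogonality. The gap is in your treatment of the pure corrector term $I_{s,t}(R^n,R^n)=\int_s^t R^n_{s,r-}\otimes\md R^n_r$. Summation by parts only gives
\[
I_{s,t}(R^n,R^n)+I_{s,t}(R^n,R^n)^\top \;=\; R^n_{s,t}\otimes R^n_{s,t}-Q_{s,t}(R^n,R^n),
\]
so it determines only the symmetric part of $I(R^n,R^n)$. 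The ``transpose-type counterpart'' is the same object transposed and does not disappear. This differs from the mixed terms, where one of the two integrals is a martingale transform and Friz--Zorin-Kranich kills it.

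The antisymmetric part, the L\'evy area of the corrector path, is \emph{not} controlled by $\|R^n\|_{p\text{-var}}\to 0$. Area is discontinuous in $p$-variation for $p\ge 2$, which is exactly the mechanism behind area anomalies, and in non-reversible models the anomaly can have a nonzero antisymmetric part. Your Step~3 therefore silently assumes $I(R^n,R^n)\approx-\tfrac12 Q(R^n,R^n)$, which is precisely what needs proof. Likewise, your ``monotone-type'' tightness argument covers only the $Q$-terms, not the area.

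The missing idea is to use the stationary potential to write the whole iterated integral as an additive functional of the environment chain plus a boundary term. On $\Om_0$,
\[
I_{0,t}(R^{n,i},R^{n,j})=\frac1n\sum_{0<u\le nt}\phi^i(\tau_{X_{u-}}\om)\big(\phi^j(\tau_{X_u}\om)-\phi^j(\tau_{X_{u-}}\om)\big)-\frac1n\,\phi^i(\om)\big(\phi^j(\tau_{X_{nt}}\om)-\phi^j(\om)\big).
\]
Apply Lemma~\ref{lem:ergodic in uniform norm} to $f(\om,\tilde\om)=\phi^i(\om)(\phi^j(\tilde\om)-\phi^j(\om))$, then the reversibility/translation-invariance identity \eqref{eq:ii-formula}. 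This gives the limit $t\,\EE[\sum_x\om(\{0,x\})\phi^i(\om)(\phi^j(\tau_x\om)-\phi^j(\om))]=-\tfrac t2\langle\chi^i,\chi^j\rangle_{L^2_{\mathrm{cov}}(\PP)}=t\,\Ga^{ij}$, which is symmetric; this is exactly where reversibility kills the area. Tightness in $p/2$-variation then follows from a crude $1$-variation bound of $I(R^n,R^n)$ by $\frac1n$ times the sum of $|\phi|^2$ over the successive values of the path, controlled by $\kappa$ and the ergodic theorem (Lemma~\ref{lem:quenched-R-area}).

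Step~2 is also broken as written, though easily repaired. The bracket of the Dynkin martingale of $\phi(\tau_{X_t}\om)$ controls its $q$-variation only for $q>2$ (L\'epingle), not its $2$-variation. After rescaling, that martingale converges to a Brownian motion with covariance $\langle\chi,\chi\rangle_{L^2_{\mathrm{cov}}(\PP)}$, so its $2$-variation is not even tight unless $\chi=0$. Moreover, the finite-variation part $n^{-1/2}\int_0^{n\cdot}\cL\phi(\tau_{X_r}\om)\,\md r$ cancels the martingale to leading order, so it behaves equally badly, and you do not control it at all. What works, and makes your ``main obstacle'' disappear, is the pointwise bound $\|f\|^q_{q\text{-var}}\le 2^q\sum_m|f_m|^q$ over the successive values of a piecewise constant path:
\[
\|R^n\|^p_{p\text{-var},[0,T]}\;\le\;2^p\,n^{-p/2}\Big(|\phi(\om)|^p+\sum_{0<u\le nT}|\phi(\tau_{X_u}\om)|^p\,\indicator_{X_u\ne X_{u-}}\Big).
\]
For $2<p\le 2+\epsilon$, the L\'evy system formula and the ergodic theorem, using $\kappa<\infty$, bound its $E_0^\om$-expectation by $C(\om)\,n^{1-p/2}$. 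This is the paper's Lemma~\ref{lem:quenched-R-pvar}. It needs no interpolation and no almost-sure sup-norm sublinearity, and convergence in probability would suffice for weak convergence anyway.
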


%%%%%%%%%%%%%%%%%%%
%%%%%%%%%%%%%%

\begin{remark}
    Suppose we are in the settings of Theorem \ref{theorem}. That is, in addition to Assumption \ref{ass:general}, we assume that \eqref{eq:conditionSP} holds true (even with $\epsilon=0$). Then, the covariance matrix $\Sigma^2$ from Theorem \ref{theorem} is non-degenerate.    
    \end{remark}
\begin{proof}
    Assume by contradiction that 
    $v^\top\Sigma^2 v=0$ for some $v\in\bbR^d$ nonzero. Then 
 \begin{align*}
    0=v^\top\Sigma^2 v\Si^2
    \;=\;
    v^\top \big( \langle \Phi^i, \Phi^j \rangle_{L^2_{\mathrm{cov}}(\PP)} \big)_{i,j = 1}^d v
  \;=\;
    v^\top \big( \langle \Pi^i-\chi^i, \Pi^j-\chi^j \rangle_{L^2_{\mathrm{cov}}(\PP)} \big)_{i,j = 1}^d v
    \end{align*}
and therefore, using Condition \ref{cond:moment-potential} and the definition of $\Pi$, we obtain 
\begin{align*}
    \mean \big[ \sum_{x\in\bbZ^d}\omega(0,x) \big(v \cdot \big( x + \phi(\omega)-\phi(\tau_x\omega)\big)\big)^2
    \indizeroo\big]
    \;=\; 
    0,  
    \end{align*}
    where $v\cdot \phi(\omega):=\sum_{i=1}^d v_i\phi^i(\omega)$.
  This implies that on the event  $\{0\in\cC_\infty\}$, we have 
  \[
  v\cdot x = v\cdot \phi(\tau_x\omega) - v\cdot \phi(\omega),
  \] 
  whenever $\omega(0,x)>0$. Summing over finite paths one deduces the last equality holds on $\{0\in\cC_\infty\}$ $\prob$ a.s.\ for every $x\in\cC_\infty$, and in particular
  \[
  \mu^\omega(x)|v\cdot x|^2 = \mu^\omega(x) |v\cdot \phi(\tau_x\omega) - v\cdot \phi(\omega)|^2
  \le 
  2 
 \mu^\omega(x) |v\cdot \phi(\tau_x\omega)|^2 +2\mu^\omega(x) | v\cdot \phi(\omega)|^2.
  \] 
Now, let $B_N=\{-N,...-1,0,1,...,N\}^d$. 
Let
\[
M_N:= \frac{1}{N^d}\sum_{x\in B_N} \left(v \cdot \frac{x}{N}\right)^2 \mu^\omega(x)\indicator_{\{x\in\cC_\infty\}}.
\]
By the ergodic theorem with weighted
average (cf.\ Theorem 2.3 of Faggionato \cite{faggionato2025ergodic}), we have 
\begin{align*}
    M_N  
    \;\xrightarrow[N\to\infty]{\prob-
    \text{a.s.}}\; & 
    2^d \mean[\mu^\omega(0) \indizeroo]
    \int_{[-1,1]^d} (v \cdot u)^2 \md u \\
&    \;=\;  \mean[\mu^\omega(0) \indizeroo] 
    |v|^2 2^{2d-1}/3
    =: \ell.
    \end{align*}
Notice that $\mu^\omega(0)>0$ on $\{0\in\cC_\infty\}$. Therefore $\prob(0\in\cC_\infty)>0$ implies $\ell>0$.   
Also, 
\[
\PP(\lim_{N\to\infty}M_N= \ell) = 1.
\]
 On the other hand, the ergodic theorem implies that
\begin{align*}
    &L_N:= \frac{1}{N^d}\sum_{x\in B_N}  \indicator_{\{x\in\cC_\infty\}}\mu^\omega(x)
    \;\xrightarrow[N\to\infty]{\prob-
    \text{a.s.}}\;
     3^d \mean [\mu^\omega(0) \indizeroo] 
    \end{align*}
and
 \begin{align*}
 & \tilde L_N:= \frac{1}{N^d}\sum_{x\in B_N} |v\cdot \phi(\tau_x\omega)|^2 \mu^\omega(x) \indicator_{\{x\in\cC_\infty\}}      
 \;\xrightarrow[N\to\infty]{\prob-
    \text{a.s.}}\;
  3^d \mean [|v\cdot \phi(\omega)|^2\mu^\omega(0)  \indizeroo]. 
 \end{align*}
Therefore

\begin{comment}
\begin{align*}
    &\indizeroo|v\cdot \phi(\omega)|^2 \frac{1}{N^d}\sum_{x\in B_N}  \indicator_{\{x\in\cC_\infty\}}\mu^\omega(x)
    \;\xrightarrow[N\to\infty]{\prob-
    \text{a.s.}}\;
    \indizeroo|v\cdot \phi(\omega)|^2 3^d \mean [\mu^\omega(0) \indizeroo].    
    \end{align*}
 For the second term, again by the ergodic theorem 
 \begin{align*}
 & \indizeroo \mu^\omega(0) \frac{1}{N^d}\sum_{x\in B_N} |v\cdot \phi(\tau_x\omega)|^2 \indicator_{\{x\in\cC_\infty\}}      
 \;\xrightarrow[N\to\infty]{\prob-
    \text{a.s.}}\;
    \indizeroo \mu^\omega(0) 3^d \mean [|v\cdot \phi(\omega)|^2  \indizeroo]. 
 \end{align*}
\end{comment}
 
Notice that
 \begin{align*}
N^2 M_N \indizeroo 
& 
\;=\; \indizeroo  \frac{1}{N^d}\sum_{x\in B_N} \mu^\omega(x) |v\cdot (\phi(\tau_x\omega)-\phi(\omega))|^2 \indicator_{\{x\in\cC_\infty\}}\\
& 
\;\le\;
2 \indizeroo ( \tilde L_N + |v\cdot \phi(\omega)|^2 L_n ).
    \end{align*}
In particular we obtain that 
 \begin{align*}
 \prob\left(\sup_{N\ge 1} \{N^2 M_N\indizeroo\}  
\;<\;
\infty\right)=1.
    \end{align*}
Therefore  
\begin{align*}
 \PP\left(\sup_{N\ge 1}\{ N^2 M_N\} 
\;<\;
\infty\right)=1,
\end{align*}
which contradicts the fact that $\ell>0$. This concludes the proof.   
\end{proof}

%%%%%%%%%
%%%%%%%%%%%%%%%
%%%%%%%%%%%%%%%%%%%

\section{Sources of regular stationary potential: Historical background and a construction}\label{sec:Souces-SP}

In reversible nearest-neighbor random conductance models, the corrector is
classically constructed in the Hilbert space $L^2_{\mathrm{cov}}(\prob)$ of
square-integrable covariant cocycles; see \cite{kipnis1986central} and, for the
random conductance model, the $L^2_{\mathrm{cov}}$ framework in
\cite{BP07,MP07,Bi11}.  In our notation (cf.\ Definition~\ref{def:corrector+matrix}),
one defines $L^2_{\mathrm{pot}}$ as the closure in $L^2_{\mathrm{cov}}$ of
gradients of local functions and sets $L^2_{\mathrm{sol}}:=(L^2_{\mathrm{pot}})^\perp$.
The corrector $\chi$ is then the unique element in $L^2_{\mathrm{pot}}$ such that
$\Pi-\chi\in L^2_{\mathrm{sol}}$, that is, $\chi$ is the $L^2_{\mathrm{cov}}$-limit
of gradients of local functions.  Equivalently, there exists a potential $\phi$
in the associated energy space (the completion of local functions with respect to
the seminorm $\|\mD \phi\|_{L^2_{\mathrm{cov}}}$) such that
\begin{align}\label{eq:hist_energy_potential}
  \chi(\omega,x)=\mD\phi(\omega,x)=\phi(\tau_x\omega)-\phi(\omega)
  \qquad \text{in } L^2_{\mathrm{cov}}.
\end{align}
The potential $\phi$ is unique up to an additive constant.

It is important to stress that the representation \eqref{eq:hist_energy_potential}
arises at the level of the above energy space and does \emph{not} automatically
imply that $\phi\in L^2(\Omega)$, nor any higher moment bounds.  Such
integrability properties typically require additional assumptions on the law of
the environment, for instance spectral gap or Poincar\'e inequalities for the
shift action.

In uniformly elliptic nearest-neighbor i.i.d.\ environments, Gloria-Otto \cite{GloriaOtto2011}
prove quantitative estimates and high moments for the massive (regularized)
corrector, using the classical $L^2_{\mathrm{cov}}$ theory as an underlying
framework.  A closely related environment-space viewpoint, based on resolvent
equations for regularized potentials, is developed by Mourrat \cite{Mourrat2012}.

For degenerate nearest-neighbor conductance models under suitable spectral gap
and moment assumptions, Andres-Neukamm \cite{AndresNeukamm2019} construct directly a potential 
 $\phi$ with finite moments of arbitrary order and establish explicitly
the representation $\chi=\mD\phi$. In fact, to the best of our knowledge, their and our quenched results are the only applications of the moment bounds for the corrector which is the subject of various results in quantitative homogenization theory as mentioned above.  

The transfer lemma proved below is conceptually different from the classical $L^2$ projection argument (Künnemann-type decomposition) used in uniformly elliptic settings \cite{Kunnemann1983}. Our construction is based on spatial averaging and moment bounds for the corrector, and does not rely on spectral or Hilbert space methods.

\subsection{From spatial corrector estimates to stationary potentials}

In the quenched part of the paper we assume Condition~\ref{cond:moment-potential}, i.e.\
the existence of a stationary potential $\phi$ with $\chi=\mD\phi$ on $\Omega_0$
and suitable moment control.  As discussed above, such a potential is available
in several nearest-neighbor settings under additional structural assumptions
(e.g.\ spectral gap hypotheses, as in the work of Andres-Neukamm \cite{AndresNeukamm2019}).

In several important models, such as supercritical percolation clusters
\cite{barlow2004random,berger2007quenched,dario2018optimal},
quantitative information on the corrector is obtained first in its spatial
formulation, namely as moment bounds for $\chi(\omega,x)$ (uniformly in $x$
on the event $\{x\in\mathcal C_\infty(\omega)\}$), together with geometric
control of the underlying infinite cluster.  The purpose of the next result
is to provide a direct mechanism that upgrades such \emph{spatial} corrector
bounds to a stationary potential satisfying
Condition~\ref{cond:moment-potential}.

The interest of this transfer lemma is twofold.  First, it does not rely on
Hilbert space projection arguments or spectral gap assumptions for the shift
action.  Second, it is modular: whenever spatial moment estimates for the
corrector and mild volume regularity are available, regardless of how they are
proved, the result produces a stationary potential with corresponding moment
bounds.  This isolates the analytic input needed for the quenched rough path
invariance principle from the specific method used to construct the corrector.

\begin{lemma}[Transfer Lemma: stationary potential from a spatial corrector]
\label{lem:transfer-principle}

Assume $d \ge 3$ and let $\chi:\Omega_0 \times \mathbb Z^d \to \mathbb R^d$
be the corrector from Definition~\ref{def:corrector+matrix}.
Assume the following.

\smallskip
\noindent
\textbf{ (i) Uniform spatial moment bound.}
There exists $\epsilon_0>0$ such that
\begin{align}
\sup_{x \in \mathbb Z^d}
\EE\!\left[
|\chi(\omega,x)|^{\,2+\epsilon_0}
\mathbf 1_{\{x \in \mathcal C_\infty(\omega)\}}
\right]
< \infty .
\label{eq:spatial-moment}
\end{align}

\smallskip
\noindent
\textbf{ (ii) Volume lower deviation.}
Let
\[
B_n(\omega)
:=
\{-n,\ldots,n\}^d \cap \mathcal C_\infty(\omega).
\]
There exist constants $c,\alpha>0$ such that
\begin{align}
\prob\big(|B_n| \le c n^d \big)
\le
e^{-n^\alpha}
\qquad \text{for all sufficiently large } n .
\label{eq:volume-deviation}
\end{align}

\smallskip
Then there exists a measurable function
$\phi:\Omega_0 \to \mathbb R^d$ such that

\begin{align}
\chi(\omega,x)
=
\mD\phi(\omega,x)
=
\phi(\tau_x \omega)-\phi(\omega)
\qquad
\text{for all } x \in \mathcal C_\infty(\omega),
\end{align}

and moreover $\phi$ satisfies Condition~\ref{cond:moment-potential} with $0<\epsilon<\epsilon_0$.
\end{lemma}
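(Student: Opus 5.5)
The natural candidate for the potential is minus the spatial average of the corrector over a large box, passed to a limit. For $\omega\in\Omega_0$ and $n\ge1$ set
\[
\phi_n(\omega) \;:=\; -\frac{1}{|B_n(\omega)|}\sum_{y\in B_n(\omega)} \chi(\omega,y).
\]
The cocycle property of $\chi$ on $\mathcal C_\infty$ gives, for $x\in\mathcal C_\infty(\omega)$,
\[
\phi_n(\tau_x\omega)-\phi_n(\omega) \;=\; \chi(\omega,x) \;+\; \Big(\text{avg}_{B_n(\omega)}\chi(\omega,\cdot)-\text{avg}_{x+B_n(\tau_x\omega)}\chi(\omega,\cdot)\Big).
\]
Indeed, $\chi(\tau_x\omega,y)=\chi(\omega,x+y)-\chi(\omega,x)$, and $B_n(\tau_x\omega)=(B_n(\omega)\text{ shifted})\cap\dots$ up to a boundary layer. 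Since the two boxes $B_n(\omega)$ and $x+B_n(\tau_x\omega)$ differ only in a shell of width $|x|$, the error term is controlled by the boundary fraction times the size of $\chi$ on the shell. The plan is therefore to show that $(\phi_n)$ is bounded in $L^{2+\epsilon}(\PP)$, to extract a weak limit $\phi$ along a subsequence (or a Cesàro average, via Banach--Saks/Komlós for almost sure convergence), and then to check that $\mD\phi=\chi$.

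\textbf{Step 1: uniform moments.} On the event $\{|B_n|>cn^d\}$, Jensen's inequality gives
\[
|\phi_n|^{2+\epsilon_0}\le (cn^d)^{-1}\sum_{y\in\{-n,\dots,n\}^d}|\chi(\omega,y)|^{2+\epsilon_0}\indicator_{\{y\in\mathcal C_\infty\}},
\]
whose expectation is bounded by \eqref{eq:spatial-moment} uniformly in $n$. On the complementary event, which by \eqref{eq:volume-deviation} has probability at most $e^{-n^\alpha}$, we use the crude bound $|\phi_n|\le\max_{y\in\{-n,\dots,n\}^d}|\chi(\omega,y)|\indicator_{\{y\in\mathcal C_\infty\}}$. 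Hölder's inequality with exponents adapted to $2+\epsilon<2+\epsilon_0$ then bounds the contribution by
\[
\Big(\sum_y\EE|\chi(\cdot,y)|^{2+\epsilon_0}\Big)^{(2+\epsilon)/(2+\epsilon_0)}\,e^{-cn^\alpha}\;\lesssim\; n^{d}e^{-cn^\alpha}\to0.
\]
This is exactly where the loss $\epsilon<\epsilon_0$ enters. We conclude that $\sup_n\EE|\phi_n|^{2+\epsilon}<\infty$.

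\textbf{Step 2: identification.} Fix $x$. We need the error term in the displayed identity to tend to $0$ in $L^1(\PP)$, or at least weakly. It is bounded by the sum of two pieces:
\[
\frac{C|x|n^{d-1}}{cn^d}\max_{\text{shell}}|\chi| \qquad\text{and}\qquad \Big|\frac{1}{|B_n(\omega)|}-\frac{1}{|B_n(\tau_x\omega)|}\Big|\sum|\chi|.
\]
The averaged moment bounds only give $\EE$ of the shell sum of $|\chi|$ of order $n^{d-1}$, so the first piece is $O(|x|/n)$ in $L^1$ on the good volume event. The bad event is handled as in Step 1. The weak limit $\phi$ then satisfies $\phi(\tau_x\omega)-\phi(\omega)=\chi(\omega,x)$ $\PP$-a.s. on $\{x\in\mathcal C_\infty\}$ for each fixed $x$, and hence simultaneously for all $x$, since $\mathbb Z^d$ is countable. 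Measurability of $\phi$ is immediate from the weak-limit construction, by choosing a version.

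\textbf{Step 3: Condition~\ref{cond:moment-potential}.} This is the main obstacle. We need $\E[\sum_x\omega(0,x)(|\phi(\tau_x\omega)|^2+|\phi(\omega)|^{2+\epsilon})\indizeroo]<\infty$, which involves weights $\omega(0,x)$ that may be unbounded and correlated with $\phi$. Assumption~\ref{ass:general}(iii) only gives $\mu^\omega(0)\in L^1$. The plan is as follows.
\begin{itemize}
\item Bound $|\phi(\tau_x\omega)|^2\le 2|\phi(\omega)|^2+2|\chi(\omega,x)|^2$, and note that $\sum_x\omega(0,x)|\chi(\omega,x)|^2$ has finite expectation because $\chi\in L^2_{\mathrm{cov}}$. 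This reduces everything to controlling $\E[\mu^\omega(0)|\phi|^{2+\epsilon}]$.
\item Control this by running the construction of Steps 1 and 2 with the weight $\mu^\omega(0)$ built in. Replace $\PP$ by the tilted measure $\mu^\omega(0)\,\md\PP/\EE[\mu^\omega(0)]$ and use the weighted version of \eqref{eq:spatial-moment}. I expect this to require either interpreting (i) with the $\omega(0,\cdot)$-weight, or a Hölder argument spending part of the $\epsilon_0$ margin against extra integrability of $\mu^\omega(0)$.
\end{itemize}
If neither of these works directly, I would first try the case of bounded conductances, where this step is trivial, and then isolate precisely which weighted moment is needed in general.
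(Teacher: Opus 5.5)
Your Steps 1 and 2 follow the paper's proof. You use the same $\phi_n=-|B_n|^{-1}\sum_{y\in B_n}\chi(\omega,y)$ and get a uniform $L^{2+\epsilon}(\PP)$ bound from \eqref{eq:spatial-moment} and \eqref{eq:volume-deviation}. The paper does this with Minkowski plus Hölder against the factor $(2n+1)^d/|B_n|$; your good/bad split is equivalent, and the loss $\epsilon<\epsilon_0$ enters the same way. You then extract a weakly convergent subsequence and prove an $O(1/n)$ boundary-layer bound for $\phi_n(\tau_x\omega)-\phi_n(\omega)-\chi(\omega,x)$. Two small points:
\begin{itemize}
\item Your first error piece should be $|B_n(\omega)|^{-1}\sum_{y\in\text{shell}}|\chi(\omega,y)|$, as your text then uses. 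The displayed $|\text{shell}|\cdot\max_{\text{shell}}|\chi|/|B_n|$ is something (i) only bounds by $O(n^{-1+(d-1)/(2+\epsilon_0)})$, which need not vanish for $d\ge4$.
\item Passing the weak limit through $\phi_{n_k}\circ\tau_x$ on $\{x\in\mathcal C_\infty\}$ uses shift-invariance of $\prob$. You and the paper both leave this implicit.
\end{itemize}

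The genuine gap is Step 3, which you leave open, so your proposal does not prove the final clause of the lemma. The paper's proof does not close it either. It ends once $\phi\in L^{2+\epsilon}(\PP)$ and $\chi=\mD\phi$ are established, and never checks the $\omega(0,x)$-weighted integrability in \eqref{eq:conditionSP}. Later, in the proof of Lemma~\ref{lem:quenched-R-area}, it silently uses $\mu^\omega(0)\le 2db$, i.e.\ bounded nearest-neighbour conductances as in Example~\ref{ex:percolation_transfer_cor}.

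Your reduction is correct. On $\Omega_0\cap\{\omega(0,x)>0\}$ one has $x\in\mathcal C_\infty$, so $|\phi(\tau_x\omega)|^2\le 2|\phi(\omega)|^2+2|\chi(\omega,x)|^2$, and $\chi\in L^2_{\mathrm{cov}}$ handles the second term. What remains is $\E[\mu^\omega(0)|\phi(\omega)|^{2+\epsilon}\indizeroo]<\infty$. When $\mu^\omega(0)$ is bounded this is immediate from Step 1, and that bounded case is exactly what the paper's argument delivers.

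Under only $\E[\mu^\omega(0)]<\infty$, neither your argument nor the paper's gives the weighted bound, so an extra hypothesis is genuinely needed, as you suspected. For example, running Step 1 with every $\epsilon'<\epsilon_0$ and using weak lower semicontinuity gives $\phi\in L^s(\PP)$ for all $s<2+\epsilon_0$. Hölder then closes Step 3 whenever $\mu^\omega(0)\in L^r(\prob)$ for some $r>(2+\epsilon_0)/\epsilon_0$, after taking $\epsilon>0$ small enough.
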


\begin{proof}
      We shall first construct a function $\phi$ so that its gradient is $\chi $. Remember $B_n(\om)= \left\{-n,\ldots,n\right\}^d\cap \mathcal{C}_\infty(\omega)$ and define
  \begin{equation*}
    \phi_n\left(\omega\right) = -  \left( \frac{1}{|B_n(\om)|}\sum_{x \in B_n(\om)} \chi  \left( \omega , x \right) \right) \indicator_{B_n(\omega)\ne\emptyset}.
  \end{equation*}
  Let $0<\epsilon<\epsilon_0$. For $q:=2+\epsilon$, let $\beta>0$ so that $2+\epsilon_0 = q(1+\beta)$. Using the triangle and the H\"older inequality we can observe that
  \begin{align*}
    \textbf{E}_0 & \left[\left\| \frac{1}{|B_n(\om)|} \sum_{x\in B_n(\om)} \chi  \left( \omega , x \right) \right\|^q \indicator_{B_n(\omega)\ne\emptyset}\right]^{1/q}  
    \\
    & = 
    \frac{1}{(2n+1)^d} \textbf{E}_0\left[\left\|  \sum_{x\in \left\{-n,\ldots,n\right\}^d} \frac{(2n+1)^d}{|B_n(\om)|}  \mathbbm{1}_{\left\{x \in \mathcal{C}_\infty \right\}} \chi  \left( \omega,x \right) \indicator_{B_n(\omega)\ne\emptyset}\right\|^q \right]^{1/q} \\
    & \leq 
    \frac{1}{(2n+1)^d}  \sum_{x\in \left\{-n,\ldots,n\right\}^d} \textbf{E}_0\left[\left\| \frac{(2n+1)^d}{|B_n(\om)|}  \mathbbm{1}_{\left\{x \in \mathcal{C}_\infty \right\}} \chi  \left( \omega, x \right) \indicator_{B_n(\omega)\ne\emptyset} \right\|^q \right]^{1/q} \\
    & \leq 
    \frac{1}{(2n+1)^d} \sum_{x\in \left\{-n,\ldots,n\right\}^d} \textbf{E}_0\left[\left\| \mathbbm{1}_{\left\{x \in \mathcal{C}_\infty \right\}} \chi  \left( \omega, x \right) \right\|^{q(1+\beta)} \right]^{\frac{1}{q(1+\beta)}} 
    \textbf{E}_0\left[ \left( \frac{(2n+1)^d}{|B_n(\om)|} \right)^{q\frac{1+\beta}{\beta}} \indicator_{B_n(\omega)\ne\emptyset}\right]^{\frac{\beta}{q(1+\beta)}} \ .
  \end{align*}
From the uniform spatial moment assumption in (i) it follows that 
  \begin{align*}
    \frac{1}{(2n+1)^d} \sum_{x\in \left\{-n,\ldots,n\right\}^d} \textbf{E}_0
    \left[
    \left\| \mathbbm{1}_{\left\{x \in \mathcal{C}_\infty \right\}} \chi  \left( \omega , x \right) \right\|^{q(1+\beta)} \right]^{\frac{1}{q(1+\beta)}} 
  \end{align*} 
  is uniformly bounded. In order to bound the second term 
  \begin{align*}
    \textbf{E}_0\left[ \left( \frac{(2n+1)^d}{|B_n(\om)|} \right)^{q\frac{1+\beta}{\beta}} \indicator_{B_n(\omega)\ne\emptyset}\right]^{\frac{\beta}{q(1+\beta)}}
  \end{align*}
  observe that the probability that $|B_n(\om)|$ is smaller than $c n^d$ is stretched exponentially small by the volume lower deviation assumption (ii). Consequently,  $\textbf{E}_0\left[ \left( \frac{(2n+1)^d}{|B_n(\om)|} \right)^{q\frac{1+\beta}{\beta}} \indicator_{B_n(\omega)\ne\emptyset}\right]^{\frac{\beta}{q(1+\beta)}}$ is uniformly bounded.\\
  So $(\phi_n)_{n\geq 1}$ is a bounded sequence in $L^q(\mathbf{P})$. In particular, there is a subsequence $(\phi_{n_k})_{k \geq 1}$ that converges weakly in $L^q$ (remember the theorem of Banach-Alaoglu and that in $L^q$ for $1<q<\infty$ weak- and weak-$\star$-topologies are the same) to some limit $\phi$.

  Let $\omega$ be such that $0\in \mathcal{C}_\infty(\omega)$, and let $x \in \mathcal{C}_\infty(\omega)$. We claim that
  \begin{equation}\label{1.3}
    \phi(\tau_x\omega) = \phi(\omega) + \chi (\omega,x) \ .
  \end{equation}
  Indeed, note that 
  \begin{equation*}
    \lim_{n \to \infty}\left( \phi_n(\tau_x\omega) - \phi_n(\omega) \right) =   \chi (\omega,x) \ .	
  \end{equation*}
  in $L^2(\Omega,\mathbf{P})$ and thus also 	in the weak sense. To see this, write the difference 
  \begin{equation*}
    E_n(\omega,x):= \phi_n(\tau_x\omega) - \phi_n(\omega)  -   \chi (\omega,x).	
  \end{equation*}	
  Using the cocycle property, we have
    \[ 
E_n(\omega,x)=
   \frac{1}{|B_n(\om)|}\sum_{y \in B_n(\om)} \chi  \left( \omega , y \right)\indicator_{B_n(\om)\ne \emptyset}   
   - \frac{1}{|B_n(\tau_x\om)+x|}\sum_{y \in B_n(\tau_x\om)+x} \chi  \left( \omega , y \right)\indicator_{B_n(\tau_x\om)\ne \emptyset}. 
     \]    
A consequence of Cauchy-Schwarz inequality yields
  \begin{align*}
    |E_n(\omega,x)|^2
    \le&
    \frac{|B_n(\om) \,\triangle\, (B_n(\tau_x\om)+x)|}{|B_n(\om)|^2}
    \sum_{y \in B_n(\om) \,\triangle\, (B_n(\tau_x\om)+x)}
    |\chi(\omega,y)|^2\,\indicator_{B_n(\omega)\ne\emptyset}\\
    &+
    \frac{|(B_n(\tau_x\om)+x)\triangle B_n(\om)|^2}{|B_n(\om)|\cdot|(B_n(\tau_x\om)+x)|^2}
  \sum_{y \in B_n(\omega)} |\chi  \left( \omega , y \right)|^2\indicator_{B_n(\omega)\ne\emptyset}\indicator_{(B_n(\tau_x\omega)+x)\ne\emptyset},
  \end{align*}
  where $ B_n(\om) \,\triangle\, (B_n(\tau_x\om)+x)$ is the symmetric difference of the sets $B_n(\om)$ and $B_n(\tau_x\om)+x$.
  Let 
  \begin{align*}
    \rho:= \sup_{y\in\mathbb{Z}^{d}}\,
    \mathbf{E}_0 
    [
    |\chi(\omega,y)|^2 \mathbbm{1}_{\left\{y \in \mathcal{C}_\infty \right\}}  
    ] 
  \end{align*}
  which is finite by assumption (i).
  Using again the fact the probability that $|B_n(\omega)|$ is smaller than 
  $c n^d$ 
  is stretched-exponentially small by assumption (ii) while the symmetric difference of $B_n(\omega)$ and $B_n(\tau_x\omega)+x$ being of order at most $n^{d-1}$   we have a constant $C$ such that 
  \begin{align*}
    \mathbf{E}_0\!\left[\,|E_n(\omega,x)|^2\,\right]
    \le
    \rho  \,C \,\bigg(\frac{n^{d-1}}{n^d}\bigg)^2\;\xrightarrow[n\to\infty]{} 0.
  \end{align*}
  Therefore (\ref{1.3}) holds and $\chi=\mD\phi$. 
  \end{proof}

\begin{remark}
  If condition (i) of uniform spatial bound holds for all moments, that is for all $\epsilon_0>0$, then $\phi$ can be constructed to have all moments. Indeed, the argument of the proof holds for all integers $q>2$  and therefore by constructing a diagonal sequence we can furthermore make sure that the weak convergence to $\phi$ takes place for all integer $q>2$ simultaneously, and thus $\textbf{E}_0\left[\left\| \phi \right\|^q\right]< \infty$ for all $q \geq 2$.   
\end{remark}

\medskip

The previous proposition provides a way to verify
Condition~\ref{cond:moment-potential} from spatial moment bounds for the
corrector and mild volume regularity, without invoking spectral gap
assumptions on the environment.  In particular, once
Condition~\ref{cond:moment-potential} is available, the quenched rough path
invariance principle can be proved independently of how the potential
$\phi$ is constructed.

As an additional consequence of the spatial moment bounds, one can
combine them with quenched heat kernel estimates to obtain uniform
in time moment bounds for the corrector evaluated along the walk.
This estimate is not required in the proof of the quenched rough path
invariance principle, but it follows naturally from the same input
and may be of independent interest.

Below we shall use the symbol $" \lesssim "$; for two functions $f,g:M \rightarrow \R_{\geq 0}$ defined on some set $M$ we will write $f(x) \lesssim g(x)$ if there exists a constant $C\in(0,\infty)$ such that $f(x) \leq C g(x)$ for all $x \in M$. Whenever we would like to stress that the constant $C$ depends on $\omega$, we shall use the symbol $" \lesssim_{\om} "$. 

\begin{corollary}[Uniform quenched moments of the corrector along the walk]
\label{cor:uniform-quenched-moments}
Fix $q\ge 1$. Assume that the corrector $\chi$ satisfies the uniform spatial
moment bound from Lemma~\ref{lem:transfer-principle}\emph{(i)}, namely
\begin{align}\label{eq:cor_spatial_moment}
\sup_{x \in \mathbb Z^d}
\EE\!\left[
\|\chi(\omega,x)\|^{\,q}\,
\mathbf 1_{\{x \in \mathcal C_\infty(\omega)\}}
\right]
< \infty.
\end{align}
Assume moreover the following quenched heat kernel upper bound: there exist
(random) constants $S=S(\omega)\in(0,\infty)$ and deterministic constants $c,C>0$
such that for $\prob$-a.e.\ $\omega$, for all $t\ge S(\omega)$ and all
$x\in\mathbb Z^d$,
\begin{align}\label{eq:HK_assumption_cor}
  P_0^\omega(X_t=x)\le C\,t^{-d/2}\exp\!\left(-c\frac{|x|^2}{t}\right).
\end{align}
Assume in addition that there exist deterministic constants $b<\infty$ and
$\tilde c>0$ such that for $\prob$-a.e.\ $\omega$, for all $t\ge 0$ and all
$x\in\mathbb Z^d$ with $|x|>3dbt$,
\begin{align}\label{eq:offdiag_assumption_cor}
  P_0^\omega(X_t=x)\le e^{-\tilde c |x|}.
\end{align}
Then, for $\prob$-a.e.\ $\omega$,
\begin{align}\label{eq:finiteness_cor}
  \sup_{t\ge 0}E_0^\omega\big[\|\chi(\omega,X_t)\|^q\big]<\infty.
\end{align}
\end{corollary}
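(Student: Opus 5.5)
We write $E_0^\omega[\|\chi(\omega,X_t)\|^q]=\sum_{x}P_0^\omega(X_t=x)\,\|\chi(\omega,x)\|^q\,\mathbf 1_{\{x\in\mathcal C_\infty\}}$, which is legitimate because the walk stays on $\mathcal C_\infty(\omega)$. The task is then to control the random weights $g(x):=\|\chi(\omega,x)\|^q\mathbf 1_{\{x\in\mathcal C_\infty(\omega)\}}$ against the heat kernel. Assumption \eqref{eq:cor_spatial_moment} only controls each $g(x)$ in mean, uniformly in $x$, so a pointwise almost sure bound on $g$ is not directly available. The plan is therefore to prove an almost sure polynomial growth bound on $g$. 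For any fixed $\delta>0$, Markov's inequality gives $\EE\big[\sum_{x}(1+|x|)^{-d-\delta}g(x)\big]<\infty$. Hence, for $\PP$-a.e.\ $\omega$, there is a random constant $K(\omega)<\infty$ with $g(x)\le K(\omega)(1+|x|)^{d+\delta}$ for all $x$. If one wants a bound that does not grow in $t$, one must use averages over balls instead of pointwise growth. This is the main obstacle, and I address it below.

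\emph{Step 1 (short times and far field).} For $t\le S(\omega)$, we split according to whether $|x|\le 3dbS(\omega)$. On this finite ball, $g$ is bounded by a random constant. Outside it, \eqref{eq:offdiag_assumption_cor} gives the summable bound $\sum_{|x|>3dbt}e^{-\tilde c|x|}K(\omega)(1+|x|)^{d+\delta}<\infty$. The far field $|x|>3dbt$ is handled in the same way for every $t$.

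\emph{Step 2 (diffusive regime, $t\ge S(\omega)$).} We decompose $\mathbb Z^d$ into dyadic annuli $A_k=\{x:2^{k-1}\sqrt t<|x|\le 2^k\sqrt t\}$, together with the ball $A_0=\{|x|\le\sqrt t\}$. By \eqref{eq:HK_assumption_cor},
\begin{align*}
E_0^\omega[g(X_t)]\;\le\; C\sum_{k\ge0} e^{-c4^{k-1}}\,2^{kd}\,\frac{1}{|A_k^*|}\sum_{x\in A_k^*}g(x),
\end{align*}
where $A_k^*$ denotes the ball of radius $2^k\sqrt t$. It therefore suffices to show that the ball averages $\sup_{R\ge1}R^{-d}\sum_{|x|\le R}g(x)$ are finite almost surely.

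The spatial ergodic theorem is not available for this, because $g(x)$ is not of the stationary form $h(\tau_x\omega)$. Instead, I would try a Borel--Cantelli argument along $R=2^m$. The mean of each average is bounded by $\sup_x\EE[g(x)]$, so Markov's inequality alone gives only a logarithmically growing bound $\lesssim_\omega m^{1+\delta}$. To get uniform boundedness, I would use the stronger $q'$-moment for some $q'>q$: the statement allows $q$ to be arbitrary, so one can apply the hypothesis at a larger exponent. One also needs some decorrelation, for instance by writing $\chi(\omega,x)=\chi(\omega,x)-\chi(\omega,0)+\chi(\omega,0)$ and using the cocycle property $\chi(\omega,x)-\chi(\omega,0)=-\chi(\tau_x\omega,-x)$. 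If that fails, the fallback is to accept the polynomial bound from the first paragraph. In that case, the Gaussian tails in $k$ and the factor $t^{-d/2}$ leave a residual growth of order $t^{\delta/2}$. This shows the desired statement only up to $t^{\delta}$ for arbitrarily small $\delta$, and closing that gap is where I expect the real difficulty to lie.
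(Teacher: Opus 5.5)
Your reduction matches the paper's proof: dyadic annuli at scale $\sqrt t$, with \eqref{eq:HK_assumption_cor} in the diffusive range and \eqref{eq:offdiag_assumption_cor} in the far field. Everything then hinges on
\[
M(\omega)=\sup_{n}\frac{1}{|B_n(\omega)|}\sum_{x\in B_n(\omega)}\|\chi(\omega,x)\|^q<\infty\qquad\text{a.s.}
\]
You never establish this. Your proposal ends with a bound that grows in $t$: $t^{\delta/2}$, or polylogarithmic if you combine your Borel--Cantelli estimate with the annuli. That is not the statement.

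The routes you suggest for closing the gap do not work:
\begin{itemize}
\item $q$ is fixed in the statement and the moment hypothesis holds only at that $q$, so you cannot move to some $q'>q$.
\item The rewriting $\chi(\omega,x)=-\chi(\tau_x\omega,-x)$ gives nothing new, since $\chi(\omega,0)=0$, and it is still not of the form $h(\tau_x\omega)$.
\item No combination of moment bounds and Borel--Cantelli can suffice for a general field. Take a field equal to $\log m$ on the $m$-th dyadic annulus with probability $1/m$, independently in $m$. All its moments are bounded uniformly in $x$, yet its ball averages are a.s.\ unbounded by the second Borel--Cantelli lemma.
\end{itemize}

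The missing idea is to use the gradient structure of $\chi$. The paper obtains $M(\omega)<\infty$ by appealing to the spatial ergodic theorem. You are right that this theorem does not apply to $\|\chi(\omega,x)\|^q$ as it stands; a stationary potential, as in Lemma~\ref{lem:transfer-principle}, is what makes it apply. Here is a construction for $q>1$.
\begin{itemize}
\item Set $\theta=\prob(0\in\mathcal C_\infty)$ and $\Lambda_n=\{-n,\dots,n\}^d$, and define $\phi_n(\omega)=-\theta^{-1}(2n+1)^{-d}\sum_{x\in\Lambda_n}\chi(\omega,x)\mathbf 1_{\{x\in\mathcal C_\infty\}}$. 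By Minkowski and \eqref{eq:cor_spatial_moment}, these are bounded in $L^q(\PP)$.
\item By the cocycle property, for $y\in\mathcal C_\infty(\omega)$, the difference $\phi_n(\tau_y\omega)-\phi_n(\omega)$ equals $\chi(\omega,y)\,|\mathcal C_\infty\cap(\Lambda_n+y)|/(\theta(2n+1)^d)$ plus boundary terms of $L^q$-norm $O(|y|/n)$. The factor multiplying $\chi(\omega,y)$ is at most $\theta^{-1}$ and tends to $1$ a.s.
\item Hence any weak $L^q$-limit $\phi$ satisfies $\chi(\omega,y)=\phi(\tau_y\omega)-\phi(\omega)$ for all $y\in\mathcal C_\infty(\omega)$. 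Normalizing by $\theta(2n+1)^d$ rather than $|B_n|$ avoids the volume-deviation hypothesis of the lemma.
\item Then $\|\chi(\omega,x)\|^q\le 2^{q-1}\big(\|\phi(\tau_x\omega)\|^q+\|\phi(\omega)\|^q\big)$ on $\mathcal C_\infty$. The multiparameter ergodic theorem for the stationary field $\|\phi(\tau_x\omega)\|^q$, together with $|B_n|/(2n+1)^d\to\theta>0$, gives $M(\omega)<\infty$.
\end{itemize}
With that input, your annulus computation gives the uniform-in-$t$ bound with no $t^{\delta}$ loss.
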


\begin{proof}
We prove \eqref{eq:finiteness_cor}.  We show the bound for $t$ large enough; the
case of bounded $t$ is straightforward.
Let $B_n(\om)= \left\{-n,\ldots,n\right\}^d\cap \mathcal{C}_\infty(\omega)$.
By the spatial ergodic theorem and \eqref{eq:cor_spatial_moment} we have the a.s.\ limit,
\begin{align*}
  \lim_{n\to\infty}\frac{1}{|B_n(\om)|} \indicator_{B_n(\om)\ne\emptyset}\sum_{x\in B_n(\om)}\|\chi(\omega,x)\|^q
 <\infty,
\end{align*}
and hence a.s.
\begin{align}\label{Msup}
  M=M(\omega):=\sup_{n}\frac{1}{|B_n(\om)|} \indicator_{B_n(\om)\ne\emptyset}\sum_{x\in B_n(\om)}\|\chi(\omega,x)\|^q<\infty.
\end{align}
In particular, for any $R\ge 1$,
\begin{align}\label{eq:boxsum_bound}
  \sum_{x\in B_R(\om)}\|\chi(\omega,x)\|^q\le M\,|B_R(\om)|\le M(2R+1)^d.
\end{align}

Fix $t>S(\omega)$ and set $N=\lfloor \sqrt t\rfloor$.  For $k\ge 1$ define the
annuli $A_k(\omega):=B_{2^kN}(\omega)\setminus B_{2^{k-1}N}(\omega)$ and $A_0(\omega):=B_N(\omega)$.  Let $K$ be the
smallest integer such that $2^K N\ge 3dbt$.  Moreover, for $k\ge 1$ set
$D_k(\omega):=B_{2^k 3dbt}(\omega)\setminus B_{2^{k-1}3dbt}(\omega)$.  Then
\begin{align*}
  E_0^\omega\big[\|\chi(\omega,X_t)\|^q\big]
  &=\sum_{x\in\mathbb Z^d}P_0^\omega(X_t=x)\,\|\chi(\omega,x)\|^q\\
  &\le \sum_{k=0}^K\sum_{x\in A_k(\omega)}P_0^\omega(X_t=x)\,\|\chi(\omega,x)\|^q
      +\sum_{k=1}^\infty\sum_{x\in D_k(\omega)}P_0^\omega(X_t=x)\,\|\chi(\omega,x)\|^q.
\end{align*}
For $x\in A_k(\omega)$ we use \eqref{eq:HK_assumption_cor} and $|x|\ge 2^{k-1}N$:
\begin{align*}
  P_0^\omega(X_t=x)\lesssim_{\omega} t^{-d/2}\exp\!\left(-c\frac{N^2 2^{2k}}{t}\right).
\end{align*}

For $x\in D_k(\omega)$ we use \eqref{eq:offdiag_assumption_cor} and $|x|\ge 2^{k-1}3dbt$:
\begin{align*}
  P_0^\omega(X_t=x)\lesssim_{\omega} e^{-\tilde c\,2^k dbt}.
\end{align*}
Combining these bounds with \eqref{eq:boxsum_bound} yields 
\begin{align*}
  E_0^\omega\big[\|\chi(\omega,X_t)\|^q\big]
  &\lesssim_{\omega}
  \sum_{k=0}^K t^{-d/2}\exp\!\left(-c\frac{N^2 2^{2k}}{t}\right)
  \sum_{x\in A_k}\|\chi(\omega,x)\|^q
  +\sum_{k=1}^\infty e^{-\tilde c\,2^k dbt}\sum_{x\in D_k(\omega)}\|\chi(\omega,x)\|^q\\
  &\le
  M(\omega)\sum_{k=0}^K (2^{k+1}N+1)^d\,t^{-d/2}\exp\!\left(-c\frac{N^2 2^{2k}}{t}\right)
  +M(\omega)\sum_{k=1}^\infty (2^k dbt)^d e^{-\tilde c\,2^k dbt}.
\end{align*}
Since $N=\lfloor \sqrt t\rfloor$, the first series is bounded by a constant
independent of $t$,
\begin{align*}
  \sum_{k=0}^\infty 2^{kd} e^{-c2^{2k}}<\infty,
\end{align*}
and the second series is uniformly bounded in $t$ by exponential decay,
\begin{align*}
  \sup_{t\ge 1}\sum_{k=1}^\infty (2^k t)^d e^{-\tilde c\,2^k t}<\infty.
\end{align*}
This proves \eqref{eq:finiteness_cor} for all large $t$, and hence for all
$t\ge 0$.
\end{proof}

Notice that the last theorem implies that (and in fact, is equivalent to) the function $\phi$ having uniformly bounded moments over the process of the environment seen from the walker:   
\begin{equation}\label{finiteness-phi}
  \sup_t E_{0}^{\omega}\left[ \left\| \phi(\tau_{X_t}\omega) \right\|^q \right] < \infty 	\end{equation}
for $\mathbf{P}$-a.e. $\omega$ and all $1\leq q < \infty$. 
Indeed, if $\chi=D\phi$, then	
\begin{align*}
  \sup_t E_{0}^{\omega} \left[ \left\| \phi(\tau_{X_t}\omega) \right\|^q \right]    
  & \lesssim_{\omega}
  \sup_t E_{0}^{\omega} \left[ \left\| \chi(\omega,X_t) \right\|^q \right]  +  \left\| \phi(\omega) \right\|^q <\infty.  
\end{align*}

\begin{example}[Supercritical percolation with uniformly elliptic weights]
\label{ex:percolation_transfer_cor}
Consider i.i.d.\ nearest-neighbor conductances on $\mathbb Z^d$ of the form
\[
\omega(e)\in \{0\}\cup[a,b],\qquad 0<a\le b<\infty,
\]
and assume $\prob(\omega(e)>0)>p_c(d)$ so that $\mathcal C_\infty(\omega)$ exists
and is unique. Under the conditional probability measure $\PP=\prob(\,\cdot \mid 0\in\mathcal C_\infty)$, sharp (in particular, uniform in $x$) moment estimates for the
spatial corrector on the infinite cluster are available; see Dario
\cite{dario2018optimal}.  Moreover, the required volume lower deviation for
$|B_n|$ holds with exponential tails; see Penrose-Pisztora
\cite{penrose1996large}.  Consequently, the assumptions of
Lemma~\ref{lem:transfer-principle} are satisfied and we obtain a
stationary potential $\phi$ on $\Omega_0$ with $\chi=\mD\phi$ and
Condition~\ref{cond:moment-potential}.

In addition, quenched heat kernel upper bounds for the random walk on
$\mathcal C_\infty(\omega)$ were proved by Barlow \cite{barlow2004random} (strictly speaking the result of \cite{barlow2004random} deals with the case $a=b=1$ but the adaptation to the weighted supercritical percolation $0<a\le b<\infty$ is straight-forward). 
Combined with the spatial moment estimate from Lemma~\ref{lem:transfer-principle}(i),
this verifies the assumptions of Corollary~\ref{cor:uniform-quenched-moments},
and hence yields
\[
\sup_{t\ge 0}E_0^\omega\big[\|\chi(\omega,X_t)\|^q\big]<\infty
\qquad\text{for $\prob$-a.e.\ $\omega$.}
\]
\end{example}

\section{Key lemma in unified settings}
In order to pinpoint what allows the lifted invariance principle in each of the two settings, annealed and quenched, our strategy is to formulate sufficient conditions in unified settings, while limiting the conditions to the level of the process as much as possible. This may be of independent interest as it may be useful for proving similar results for other models.

For $f,g\in D([0,T],\bbR)$ we shall use the notation  
\begin{equation}\label{eq:Istfg}
  I_{s,t}(f,g)
  \;\coloneqq\;
  \int_{(s,t]} f_{u-}\,\md g_u \;-\; f_s\, g_{s,t},
  \qquad 0\le s<t\le T,
\end{equation}
that is, the left–point Riemann integral of $f$ with respect to $g$
on $(s,t]$ defined in \eqref{def:left-point-integral}, after subtracting the term $f_s\,g_{s,t}$. 

\smallskip
\noindent
In particular, when $f$ and $g$ are piecewise constant c\`adl\`ag functions,
\begin{align}\label{eq:def-I}
  I_{s,t}(f,g)
  \;=\;
  \sum_{s<u\le t} f_{u-}\, g_{u-,u}
  \;-\;
  f_s\, g_{s,t},
\end{align}
where the sum runs over all jump times of $g$ in $(s,t]$.

\smallskip
\noindent
Moreover, summation by parts yields the identity
\begin{equation}\label{eq:summation-by-parts}
  f_{s,t}\, g_{s,t}
  \;=\;
  I_{s,t}(f,g)
  \;+\;
  I_{s,t}(g,f)
  \;+\;
  Q_{s,t}(f,g),
\end{equation}
where $Q_{s,t}(f,g)$ denotes the quadratic covariation term
\begin{align}\label{eq:def-Q}
  Q_{s,t}(f,g)
  \;\coloneqq\;
  \sum_{s<u\le t} f_{u-,u}\, g_{u-,u}.
\end{align}

For $X,Y\in D([0,T],\bbR^d)$ we shall use consistently the abuse of notation 
$I(X,Y)$ and $Q(X,Y)$ for the matrix whose entries are $I(X^i,Y^j)$ and $Q(X^i,Y^j)$, respectively so that 
\eqref{eq:summation-by-parts} becomes 
\begin{equation}\label{eq:multidim-summation-by-parts}
  I_{s,t}(X,Y) + I_{s,t}(Y,X)^\top
  \;=\;
  X_{s,t}\,\otimes Y_{s,t}
  \;-\;
  Q_{s,t}(X,Y),
\end{equation}
where for a matrix $A$ we write $A^\top$ for its transpose.

We will now see that, in both of our settings, the conditions are significantly simplified. For the first level, in addition to the Lindeberg condition and control of the quadratic variation of the martingale, we need the vanishing limit of the $p$-variation norm of the corrector. For the second level, we require $p/2$-variation tightness and convergence in probability in the uniform norm for the iterated integral of the corrector, as well as vanishing limiting quadratic covariation of the corrector and the martingale in the $p/2$-variation norm.
Here is the statement.

\begin{lemma}\label{lem:abstract-limit-specific}
  Let $X = (X_t : t \geq 0), M = (M_t : t \geq 0), \text{ and } R = (R_t : t \geq 0)$ be c\`adl\`ag random processes on $\bbR^d$ with respect to the filtration $\cF=\{\cF_t: t\ge 0\}$, so that $X$ is $\mathbb{Z}^{d}$-valued, and $X_t = M_t+R_t$. Assume further 
  that $M$ is a square integrable martingale and 
  that $X_0=M_0=R_0=0$ almost surely. Let 
  \begin{align}\label{eq:rescaled-X-M-R}
    M_t^n \ldef \frac{1}{\sqrt n} M_{n t},
    \; 
    R_t^n \ldef \frac{1}{\sqrt n} R_{n t}
    \; \text{ and }
    X_t^n \ldef M_t^n+R_t^n = \frac{1}{\sqrt n} X_{n t}.
  \end{align}
  Define 
  \begin{align}\label{eq:rescaled-I-for-X-M-R}
    \bbM^n \ldef I(M^n,M^n),
    \; 
    \bbR^n \ldef I(R^n,R^n),
    \; \text{ and }
    \bbX^n \ldef I(X^n,X^n).
  \end{align}
  Let $p>2$.  Assume the following three conditions hold:  
  \smallskip

  \noindent\textbf{(1) Martingale.}
  \begin{enumerate}[(i)]

  \item Uniformly controlled variations. The UCV condition holds:
    \begin{equation}\label{ass:UCV}
      \sup_{n} \bbE \left[ [M^{n,i},M^{n,j}]_{T} \right]  %\rightarrow  (\Si^2)^{i,j} \,T, \quad \text{ a s} n\to\infty,
      \le C \, T.
    \end{equation}

  \item Limit of quadratic variations. There is a $d\times d$ symmetric real matrix $\Si^2$ so that for all $1\le i,j\le d$ and $T>0$
    \begin{equation}\label{ass:LimitQV}
      \big\|\big( [M^{n,i},M^{n,j}]_{t} -
      (\Si^2)^{i,j} \,t\big)_{t\in[0,T]} \big\|_{\mathrm{unif}, [0,T]} \xrightarrow[n\to\infty]{\;\prob\;} 0, 
    \end{equation}
    where $[M^{n,i},M^{n,j}]_t=Q_{0,t}(M^{n,i},M^{n,j})=\sum_{0 < s \leq t}  M^{n,i}_{s-,s}\, M^{n,j}_{s-,s}$ denotes the quadratic covariation of $M^{n,i}$ and $M^{n,j}$, the $i$-th and $j$-th, respectively, component of the process $M^n$.

  \item Lindeberg condition:
    for all $\delta > 0$, $T>0$ and $v\in\bbZ^d$  
the compensator of          \begin{equation}\label{ass:Lindeberg}
      \sum_{0 < s \leq T}  (v\cdot M^{n}_{s-,s})^2 \mathbbm{1}_{|v\cdot M^{n}_{s-,s}| > \delta} %\xrightarrow[n\to\infty]%{\;\prob\;} 0. 
    \end{equation}
    converges to $0$ in $\prob$-probability as $n\to\infty$.
     \end{enumerate}

  \noindent\textbf{(2) Corrector.}
  \begin{enumerate}[(i)]
  \item For every $i$ we have
    \begin{align*}
      \E \big[\,\|R^{n,i}\|_{p\text{-}\mathrm{var},[0,T]}\,\big]
      \;\longrightarrow\; 0
      \quad\text{as }n\to\infty.
    \end{align*}
  \item For every $i$ and $j$ the family $\{\|(\bbR^n)^{i,j}\|_{p/2\text{-}\mathrm{var},[0,T]}\}_{n\ge1}$ is tight under $\prob$.

  \item There is a deterministic $d\times d$ real matrix $\Gamma$ so that for every $i$ and $j$ we have
    \begin{align*} 
      \big\|    \big( (\bbR^n_{s,t})^{i,j} - (t-s)\Ga^{i,j}\big)_{(s,t)}
      \big\|_{\infty\text{-}\mathrm{var},[0,T]}
      \;\longrightarrow\; 0
      \quad\text{in }\prob\text{-probability as }n\to\infty.
    \end{align*}
  \end{enumerate}

  \smallskip
  \noindent\textbf{(3) Mixed terms covariation}:

  \noindent Vanishing quadratic covariations of mixed terms in $p/2$-variation norm:
  With
  \begin{align*}
    % \bbV^n \ldef I(R^n,M^n), 
    % \qquad 
    \bbQ^n_{s,t} \ldef Q_{s,t} (M^n,R^n), \quad (s,t)\in\Delta_{[0,T]},
  \end{align*}
  where $Q$ is defined in (\ref{eq:def-Q}), we have
  \begin{align*}
    \E\!\Big[
    \Norm{\bbQ^n}{p/2\text{-var},[0,T]}^{1/2}
    \Big]
    \xrightarrow[n\to\infty]{} 0.
  \end{align*}
  \smallskip
  Then for every $p<p'<3$
  the sequence 
  \begin{align*}
    \big( \Norm{(X^n,\bbX^n)}{p'\text{-var},[0,T]}\big)_{n\ge 1} 
  \end{align*} 
  is tight and moreover
  \begin{align*}
    (X^n,\bbX^n)
    \;\underset{n \to \infty}{\;\Longrightarrow\;} 
    \big(B,\; \bbB + \cdot\Ga\big)
    \quad\text{in }
    \cD_{p'\text{-var}}([0,T], \bbR^d\times \bbR^{d \times d}),
  \end{align*}
  where $B$ is a Brownian motion with covariance matrix $\Si^2$, 
  $\bbB$ is its It\^o iterated integral and
  \begin{align*}
    \big(B,\; \bbB + \cdot\Ga\big) = 
    \big((B_t)_{t\in [0,T]},\; (\bbB_{s,t} + (t-s)\Ga)_{(s,t)\in\Delta_{[0,T]}}\big).
  \end{align*}
  Further, assume that $\Ga$ is symmetric.
  Let 
  \begin{align*}
    \bar\bbX^n_{s,t} \ldef \bbX^n_{s,t}+\frac12 Q_{s,t}(X^n,X^n).
  \end{align*}
  Then, for every $p<p'<3$
  \begin{align*}
    (X^n,\bar\bbX^n)
    \;\underset{n \to \infty}{\;\Longrightarrow\;} 
    \big(B,\; \bbB^{\mathrm{STR}} \big)
    \quad\text{in }
    \cD_{p'\text{-var}}([0,T], \bbR^d\times \bbR^{d \times d}),
  \end{align*}
  where $B$ is as above, a Brownian motion with covariance matrix $\Si^2$, and $\bbB^{\mathrm{STR}}$
  is its Stratonovich iterated integral.
\end{lemma}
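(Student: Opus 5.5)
The plan is to decompose the second level along $X^n=M^n+R^n$ by bilinearity of $I$:
\begin{align*}
\bbX^n = \bbM^n + \bbR^n + I(M^n,R^n) + I(R^n,M^n).
\end{align*}
Using \eqref{eq:multidim-summation-by-parts}, we rewrite $I(M^n,R^n) = M^n_{s,t}\otimes R^n_{s,t} - Q(M^n,R^n) - I(R^n,M^n)^\top$. The cross terms then reduce to $M^n\otimes R^n$, $\bbQ^n$, and the antisymmetrized integral $I(R^n,M^n) - I(R^n,M^n)^\top$. This is the decoupling announced in the introduction: only $\bbM^n$ needs a genuine limit theorem, and every other term should either vanish or converge to a deterministic object.

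\emph{Step 1: the martingale lift.} Conditions (1)(ii)--(iii) give, by the martingale functional CLT, $M^n\Rightarrow B$ in the $J_1$ topology. Condition (1)(i) is the UCV condition, so Jakubowski--Mémin--Pagès (or Kurtz--Protter) yields joint convergence $(M^n,\bbM^n)\Rightarrow(B,\bbB)$ in the Skorohod topology. For $p$-variation tightness of $(M^n,\bbM^n)$ I would invoke the Lépingle/BDG-type $p$-variation estimates for martingales and their iterated integrals (Chevyrev--Friz, Friz--Zorin-Kranich), which give $\E\|\bbM^n\|_{p/2\text{-var}}^{1/2}\lesssim \E[[M^n]_T]^{1/2}\le CT^{1/2}$. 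Interpolation then upgrades Skorohod convergence plus uniform $p$-variation bounds to convergence in $p'$-variation for every $p'>p$.

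\emph{Step 2: the corrector and mixed terms.} By (2)(i), $R^n\to0$ in $p$-variation in $L^1$. Hence $\|M^n\otimes R^n\|_{p/2\text{-var}}\le \|M^n\|_{p\text{-var}}\|R^n\|_{p\text{-var}}\to0$ in probability, and $\bbQ^n\to0$ in $p/2$-variation by (3). Condition (2)(ii) gives tightness of $\bbR^n$ in $p/2$-variation, and (2)(iii) gives $\bbR^n\to(t-s)\Gamma$ uniformly; interpolation between these yields convergence in $p'/2$-variation.

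\emph{Main obstacle.} The remaining difficulty is the martingale-transform term $I(R^n,M^n)$, an integral of a vanishing integrand against the martingale. I would try to show $\|I(R^n,M^n)\|_{p/2\text{-var}}\to0$ in probability using the $p$-variation bound for martingale transforms (Zorin-Kranich / Friz--Zorin-Kranich), which controls $\|\int R^n_{-}\,\md M^n\|_{p/2\text{-var}}$ by $\|R^n\|_{p\text{-var}}$ (or a sup-norm) times $[M^n]_T^{1/2}$, in $L^{1/2}$ after Hölder. If the dependence on $R^n$ is only through a random adapted quantity, I would stop at a level where $\|R^n\|_{\mathrm{unif}}>\eta$ and use (2)(i) to make that event small. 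Since all error terms converge to deterministic limits, Slutsky gives
\begin{align*}
(X^n,\bbX^n)\Rightarrow(B,\bbB+\cdot\,\Gamma)
\end{align*}
in $\cD_{p'\text{-var}}$, and tightness of $\|(X^n,\bbX^n)\|_{p'\text{-var}}$ follows from the bounds above. Chen's relation is preserved throughout.

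\emph{Stratonovich part.} We have $Q(X^n,X^n)=Q(M^n,M^n)+\bbQ^n+(\bbQ^n)^\top+Q(R^n,R^n)$. The first term converges to $(t-s)\Sigma^2$ by (1)(ii), with bounded $1$-variation since it is increasing. The mixed terms vanish by (3). For $Q(R^n,R^n)$, apply \eqref{eq:multidim-summation-by-parts} to $R^n$: it equals $R^n\otimes R^n-\bbR^n-(\bbR^n)^\top\to -2(t-s)\Gamma$, using that $\Gamma$ is symmetric. Therefore
\begin{align*}
\bar\bbX^n\to \bbB+(t-s)\Gamma+\tfrac12(t-s)\Sigma^2-(t-s)\Gamma=\bbB+\tfrac12(t-s)\Sigma^2=\bbB^{\mathrm{STR}}.
\end{align*}
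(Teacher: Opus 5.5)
Your proposal is correct and follows essentially the paper's route. Both arguments use the same summation-by-parts reduction of the cross terms to $I(R^n,M^n)-I(R^n,M^n)^\top+M^n\otimes R^n-\bbQ^n$, the Friz--Zorin-Kranich martingale-transform bound for $I(R^n,M^n)$, Cauchy--Schwarz for $M^n\otimes R^n$, Kurtz--Protter/Chevyrev--Friz for the martingale lift, and the identity $Q(R^n,R^n)=R^n\otimes R^n-\bbR^n-(\bbR^n)^\top\to-2(t-s)\Gamma$ in the Stratonovich part. The only cosmetic difference is that you interpolate each piece and then add them in $p'$-variation, whereas the paper first assembles one-parameter Skorohod convergence plus $p$-variation tightness (Lemma~\ref{lem:abstract-limit-general}) and applies the Friz--Zhang interpolation once at the end.
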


We shall now prepare for the proof of the Lemma, which will be given in the end of this section. The starting point is the abstract result \cite[Proposition 6.1]{FZ18}. We shall use a slight modification of the formulation of \cite[Lemma 2.3]{deuschel2021additive}, with the difference being that when convergence in probability is assumed rather than convergence in law, then it is the case also in the assertion. Here is the formulation.
\begin{proposition}[Friz-Zhang 2018]  \label{lem:rp-conv}
  Let $ (Z^n, \mathbb{Z}^n)$ be a sequence of c\`adl\`ag rough paths and let $p<3$. Assume that
  there exists a c\`adl\`ag rough path $(Z, \mathbb{Z})$ such that $(Z^n,
  \mathbb{Z}^n_{0,\cdot}) \rightarrow (Z, \mathbb{Z}_{0,\cdot})$ in law (or probability) in the Skorohod (resp. uniform) topology and that the family of real valued
  random variables $(\| (Z^n, \mathbb{Z}^n) \|_{p, [0, T]})_n$ is tight. Then
  $(Z^n, \mathbb{Z}^n) \rightarrow (Z, \mathbb{Z})$ in law (or probability) in the
  $p'$-variation Skorohod (resp. uniform) topology for all $p' \in (p, 3)$.
\end{proposition}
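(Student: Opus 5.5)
The proof is a deterministic interpolation estimate, combined with tightness. The ``in law / Skorohod'' half is the statement of \cite[Proposition~6.1]{FZ18}, in the formulation of \cite[Lemma~2.3]{deuschel2021additive}. So the plan is to prove the ``in probability / uniform'' half directly, and to note that the same mechanism, after a Skorohod representation, gives the law version.

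\emph{Step 1 (interpolation).} For a two-parameter function $A:\Delta_{[0,T]}\to E$ and $0<r<r'$, every partition $\cP$ satisfies
\[
\sum_{[s,t]\in\cP}|A_{s,t}|^{r'}\le \|A\|_{\infty\text{-var},[0,T]}^{\,r'-r}\sum_{[s,t]\in\cP}|A_{s,t}|^{r}.
\]
Hence $\|A\|_{r'\text{-var}}\le \|A\|_{\infty\text{-var}}^{1-r/r'}\|A\|_{r\text{-var}}^{r/r'}$. We apply this with $A=Z^n-Z$ and $(r,r')=(p,p')$, and with $A=\bbZ^n-\bbZ$ and $(r,r')=(p/2,p'/2)$. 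The difference $\bbZ^n-\bbZ$ does not satisfy Chen's relation, but we do not need it to. By the elementary inequality $|a+b|^{p/2}\le 2^{p/2}(|a|^{p/2}+|b|^{p/2})$, summed over a partition, we get
\[
\|\bbZ^n-\bbZ\|_{p/2\text{-var}}\lesssim \|\bbZ^n\|_{p/2\text{-var}}+\|\bbZ\|_{p/2\text{-var}}.
\]
The same bound holds at the first level.

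\emph{Step 2 (uniform convergence of the two-parameter objects).} By Remark~\ref{rmk:one-parameter}, the convergence in probability of $(Z^n,\bbZ^n_{0,\cdot})$ to $(Z,\bbZ_{0,\cdot})$ in uniform norm gives
\[
\|Z^n-Z\|_{\infty\text{-var}}+\|\bbZ^n-\bbZ\|_{\infty\text{-var}}\to 0 \quad\text{in probability}.
\]
This uses that $\|Z\|_{\mathrm{unif}}$ is a.s.\ finite and that $\|Z^n\|_{\mathrm{unif}}$ is tight, which follows from the assumed tightness of the $p$-variation norms together with $Z^n_0\to Z_0$.

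\emph{Step 3 (conclusion).} Combine Steps~1 and~2. The $p'$-variation distance is bounded by a factor tending to $0$ in probability times a factor that is tight. The tight factor is the $p$-variation norms of $(Z^n,\bbZ^n)$ plus the a.s.\ finite norm of $(Z,\bbZ)$. For the latter, $\|(Z,\bbZ)\|_{p}<\infty$ a.s.\ because the $p$-variation norm is lower semicontinuous under uniform (or Skorohod) convergence, so a.s.\ finiteness passes to the limit. A product of a tight sequence and a sequence converging to $0$ in probability converges to $0$ in probability. This gives convergence in probability in the $p'$-variation uniform distance, which dominates $\sigma_{p'\text{-var}}$ (take $\lambda=\mathrm{id}$).

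For the Skorohod/law variant, the same argument applies once time changes are accounted for. Pass to a Skorohod representation in which $(Z^n,\bbZ^n_{0,\cdot})\to(Z,\bbZ_{0,\cdot})$ a.s.\ in the $J_1$ topology, jointly with the tight norms. Then choose $\lambda_n\in\Lambda_T$ with $|\lambda_n|\to0$ such that the time-changed paths converge uniformly, and apply Steps~1--3 to $(Z^n\circ\lambda_n,\bbZ^n\circ(\lambda_n,\lambda_n))$. This works because $p$-variation is invariant under reparametrisation.

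I expect this last point to be the main obstacle. One has to realise the tight norms on the same probability space as the a.s.\ Skorohod convergence, so that the bounded factor in Step~3 stays controlled along the representation. One also has to check that Chen's relation transports the one-parameter $J_1$ convergence to two-parameter uniform convergence after the time change. For the latter, Remark~\ref{rmk:one-parameter} applies verbatim to the time-changed paths.
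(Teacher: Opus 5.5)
Your proposal is correct and follows essentially the route the paper relies on. The paper gives no proof of its own; it defers to \cite[Proposition~6.1]{FZ18}, as formulated in \cite[Lemma~2.3]{deuschel2021additive}. That argument is the same interpolation $\|A\|_{r'\text{-var}}\le\|A\|_{\infty\text{-var}}^{1-r/r'}\|A\|_{r\text{-var}}^{r/r'}$ at both levels, combined with tightness of the $p$-variation norms and a time change for the Skorohod case.

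The obstacle you flag at the end is harmless. Through Chen's relation, $\|(Z^n,\bbZ^n)\|_{p,[0,T]}$ is a measurable functional of $(Z^n,\bbZ^n_{0,\cdot})$. Its law, and hence its tightness, is therefore preserved automatically under the Skorohod representation.
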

%\begin{proof} As mentioned in the proof of \cite[Lemma 2.3]{deuschel2021additive} this follows from a simple interpolation argument as in the proof of Theorem 6.1 in \cite{FZ18}.
%\end{proof}

\begin{lemma}\label{lem:abstract-limit-general}
  Let $X = (X_t : t \geq 0), M = (M_t : t \geq 0),R = (R_t : t \geq 0)$ be c\`adl\`ag random processes on $\bbR^d$ with respect to the filtration $\cF=\{\cF_t: t\ge 0\}$, so that $X$ is $\mathbb{Z}^{d}$-valued, and $X_t = M_t+R_t$. Assume further 
  % that $M$ is a squared integrable martingale and 
  that $X_0=M_0=R_0=0$ almost surely. Let 
  $M^n,R^n,X^n,\bbM^n,\bbR^n$ and $\bbX^n$ be defined as in (\ref{eq:rescaled-X-M-R}) and (\ref{eq:rescaled-I-for-X-M-R}).
  Let $p>2$.  Assume the following three conditions hold:  
  \smallskip

  \noindent\textbf{(a) The component $M$.}
  \begin{align*}
    \big(\Norm{(M^n,\bbM^n)}{p\text{-var},[0,T]}\big)_n
  \end{align*}
  is a tight sequence and furthermore
  \begin{align*}
    (M^n , \, \bbM^n_{0,\cdot})
    \;\underset{n \to \infty}{\;\Longrightarrow\;} 
    (B,\bbB_{0,\cdot})
    \quad\text{in } 
    D\!\big([0,T],\R^d\times\R^{d\times d}\big),
  \end{align*}
  where $B$ is a Brownian motion with covariance matrix $\Si^2$ and 
  $\bbB$ is its It\^o iterated integral.

  \smallskip
  \noindent\textbf{(b) $R$ and mixed terms vanish in the uniform norm.}
  With
  \begin{align*}
    \bbW^n \ldef I(R^n,M^n) + I(M^n,R^n),
  \end{align*}
  we have
  \begin{align*}
    \big\|
    (R^n,\; \bbR^n_{0,\cdot} + \bbW^n_{0,\cdot} - t\Ga)
    \big\|_{\mathrm{unif}, [0,T]}
    \;\xrightarrow[n\to\infty]{\;\prob\;} 0,
  \end{align*}
  where
  $\Gamma$ is a deterministic $d\times d$ real matrix.

  \smallskip
  \noindent\textbf{(c) Uniform $p$-variation control of $R$ and mixed terms.}

  \noindent (i) Uniform bound on the pair $(R^n,\bbR^n)$:
  \begin{align*}
    \sup_{n}\,
    \E\!\Big[
    \Norm{(R^n,\bbR^n)}{p\text{-var},[0,T]}
    \Big] 
    < \infty .
  \end{align*}

  \noindent (ii) Tightness of $\bbW^n$:
  \begin{align*}
    \text{ every entry of} 
    \quad
    \big\{
    \Norm{\bbW^n}{p/2\text{-var},[0,T]}
    \big\}_{n\ge1}
    \quad \text{is a tight sequence of real valued random variables}.
  \end{align*}
  \smallskip
  Then for every $p<p'<3$
  \begin{align*}
    (X^n,\bbX^n)
    \;\underset{n \to \infty}{\;\Longrightarrow\;} 
    \big(B,\; \bbB + \cdot\Ga\big)
    \quad\text{in }
    \cD_{p'\text{-var}}([0,T], \bbR^d\times \bbR^{d \times d}),
  \end{align*}
  where 
  \begin{align*}
    \big(B,\; \bbB + \cdot\Ga\big) = 
    \big((B_t)_{t\in [0,T]},\; (\bbB_{s,t} + (t-s)\Ga)_{(s,t)\in\Delta_{[0,T]}}\big).
  \end{align*}
\end{lemma}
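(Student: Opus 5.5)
The plan is to apply Proposition~\ref{lem:rp-conv} (Friz--Zhang) to $(Z^n,\mathbb Z^n)=(X^n,\bbX^n)$ with limit $(B,\bbB+\cdot\Ga)$. Two inputs are needed: convergence in law of the one-parameter pair $(X^n,\bbX^n_{0,\cdot})$ in the Skorohod topology on $D([0,T],\R^d\times\R^{d\times d})$, and tightness of $\Norm{(X^n,\bbX^n)}{p\text{-var},[0,T]}$. Both follow from the algebraic decomposition of the lift. Since $I$ is bilinear and $X^n=M^n+R^n$,
\begin{align*}
  \bbX^n \;=\; I(X^n,X^n) \;=\; \bbM^n + \bbR^n + I(M^n,R^n) + I(R^n,M^n) \;=\; \bbM^n + \bbR^n + \bbW^n .
\end{align*}
This identity holds for all $(s,t)\in\Delta_{[0,T]}$, since each $I_{s,t}$ is bilinear in its two arguments.

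\textbf{Convergence of the one-parameter paths.} Write
\begin{align*}
  (X^n,\bbX^n_{0,\cdot}) \;=\; (M^n,\bbM^n_{0,\cdot}) + \big(R^n,\; \bbR^n_{0,\cdot}+\bbW^n_{0,\cdot}-t\Ga\big) + (0,\; t\Ga).
\end{align*}
By (a), the first summand converges in law to $(B,\bbB_{0,\cdot})$ in $D$. By (b), the second tends to $0$ in probability in the uniform norm, and the third is deterministic and continuous. Adding to a Skorohod-convergent sequence a perturbation that vanishes uniformly in probability, plus a fixed continuous path, preserves convergence in law; this is a Slutsky-type argument, using that addition is continuous at continuous limits and that $\sigma_{\mathrm{unif}}\le\|\cdot\|_{\mathrm{unif}}$. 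Hence
\begin{align*}
  (X^n,\bbX^n_{0,\cdot}) \;\Longrightarrow\; (B,\;\bbB_{0,\cdot}+t\Ga).
\end{align*}
The limit is a genuine rough path, since $(B,\bbB)$ satisfies Chen's relation \eqref{eq:Chen} and adding the increments $(t-s)\Ga$ preserves it, as remarked earlier in the paper.

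\textbf{Tightness of the $p$-variation norms.} By the triangle inequality, and using $\sqrt{a+b+c}\le\sqrt a+\sqrt b+\sqrt c$,
\begin{align*}
  \|X^n\|_{p\text{-var}} + \|\bbX^n\|_{p/2\text{-var}}^{1/2}
  \;\le\;& \|M^n\|_{p\text{-var}} + \|R^n\|_{p\text{-var}} \\
  &+ \|\bbM^n\|_{p/2\text{-var}}^{1/2} + \|\bbR^n\|_{p/2\text{-var}}^{1/2} + \|\bbW^n\|_{p/2\text{-var}}^{1/2}.
\end{align*}
The triangle inequality is valid because $p/2>1$, so $\|\cdot\|_{p/2\text{-var}}$ is a seminorm. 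The terms in $M^n,\bbM^n$ are tight by (a). The terms in $R^n,\bbR^n$ have uniformly bounded expectations by (c)(i), hence are tight by Markov's inequality. The entries of $\bbW^n$ are tight by (c)(ii), and the matrix norm is controlled by the sum of the entries. A finite sum of tight real random variables is tight, so Proposition~\ref{lem:rp-conv} yields convergence in $\cD_{p'\text{-var}}$ for every $p'\in(p,3)$.

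\textbf{Main obstacle.} The only delicate point is the bookkeeping with the two-parameter objects. Condition (b) controls only the one-parameter paths $\bbR^n_{0,\cdot}+\bbW^n_{0,\cdot}$, whereas the rough path needs all $(s,t)$. This is exactly why one passes through Proposition~\ref{lem:rp-conv}, which requires one-parameter convergence only; the two-parameter control is then supplied by Chen's relation, as in Remark~\ref{rmk:one-parameter}. One must also check the precise convention for ``the $p$-variation of $(R^n,\bbR^n)$'' in (c)(i), namely that it is the homogeneous norm $\|R^n\|_{p\text{-var}}+\|\bbR^n\|^{1/2}_{p/2\text{-var}}$, and the tightness argument is adjusted trivially if another convention is used.
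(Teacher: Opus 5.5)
Your proposal is correct and follows the paper's proof essentially step for step. The steps are the same: the bilinear decomposition $\bbX^n=\bbM^n+\bbR^n+\bbW^n$, Slutsky's theorem in the Skorohod topology (Theorem~\ref{thm:slutsky}) for the one-parameter paths $(X^n,\bbX^n_{0,\cdot})$, tightness of $\|(X^n,\bbX^n)\|_{p\text{-var},[0,T]}$ inherited from the three components, and then Proposition~\ref{lem:rp-conv}. Your explicit triangle-inequality bound (valid since $p/2>1$) together with the entrywise control of $\bbW^n$ simply spells out the tightness step that the paper states in one line.
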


\begin{proof}
  % By the monotonicity in $p$ of the $\|\cdot\|_{p\text{-var},[0,T]}$, see (\ref{eq:inequalities on norms}), and since all the processes are assumed to start at 0, we have that   
  % \begin{align*}
  %   (M^n , \, \bbM^n)
  %   \;\underset{n \to \infty}{\;\Longrightarrow\;} 
  %   (B,\bbB)
  %   \quad\text{in } 
  %   D_{p\text{-var}}\!\big([0,T],\R^d\times\R^{d\times d}\big)
  % \end{align*}
  % 
  % 
  % Since a.s.\ convergence implies also convergence in probability, 
  We have 
  \begin{align*}
    \mathbf{Y}^n\ldef (R^n,\; \bbR^n_{0,\cdot} +  \bbW^n_{0,\cdot})
    \;\xrightarrow[n\to\infty]{\;\bbP\;} (0,\cdot\Ga)
  \end{align*}
  in the uniform topology. 
  Therefore, together with 
  \begin{align*}
    (M^n , \, \bbM^n_{0,\cdot})
    \;\underset{n \to \infty}{\;\Longrightarrow\;} 
    (B,\bbB_{0,\cdot})
    \quad\text{in } 
    D\!\big([0,T],\R^d\times\R^{d\times d}\big),
  \end{align*}
  we have by Slutsky's theorem in the Skorohod topology, Theorem~\ref{thm:slutsky}, 
  that  
  \begin{align*}
    (X^n , \, \bbX^n_{0,\cdot})
    = (M^n + R^n, \, \bbM^n_{0,\cdot} + \bbR^n_{0,\cdot} + \bbW^n_{0,\cdot})
    \;\underset{n \to \infty}{\;\Longrightarrow\;} 
    (B,\bbB_{0,\cdot}+{\cdot}\Ga)
  \end{align*}
  in $ D\!\big([0,T],\R^d\times\R^{d\times d}\big)$.
  On the other hand, since 
  \begin{align*}
    (X^n_{s,t},\bbX^n_{s,t})
    =
    (M^n_{s,t}+R^n_{s,t}, \bbM^n_{s,t} + \bbR^n_{s,t} + \bbW^n_{s,t})
    =
    (R^n_{s,t},\bbR^n_{s,t}) + (M^n_{s,t}, \bbM^n_{s,t})   + (0,\bbW^n_{s,t}),
  \end{align*} 
  and $X_0=M_0=R_0=0$ so that 
  \begin{align*}
    \Norm{(X^n,\bbX^n)}{p\text{-var},[0,T]} = \Norm{X^n}{p\text{-var},[0,T]} + \Norm{\bbX^n}{p/2\text{-var},[0,T]}^{1/2},  
  \end{align*}
  tightness of $\left(\Norm{(X^n,\bbX^n)}{p\text{-var},[0,T]}\right)_n$ follows from that of 
  $\left(\Norm{(M^n,\bbM^n)}{p\text{-var},[0,T]}\right)_n$ and 
  $\left(\Norm{(R^n,\bbR^n)}{p\text{-var},[0,T]}\right)_n$ together with that of
  $\left(\Norm{\bbW^n}{p/2\text{-var},[0,T]}\right)_n$.
  The proof is now concluded by applying Proposition \ref{lem:rp-conv}.    
\end{proof}

Next, we go back to the settings of Lemma~\ref{lem:abstract-limit-specific}. In particular $M$ is a a c\`adl\`ag square integrable $d$-dimensional martingale with $M_0=0$. 
We remark that the Burkholder-Davis-Gundy inequality together with L\'epingle inequality implies the following.
For any $p > 2$ there exists $c = c(p) < \infty$ such that
\begin{align}\label{eq:Lepingle:BDG}
  \E_0\!\big[
  \|M^n\|_{p\text{-}\mathrm{var}, [0, T]}
  \big]
  &\;\leq\;
  c\,
  \E_0\!\big[
  \|M^n\|_{\infty\text{-var}, [0, T]}
  \big]
  % \nonumber\\begin{align*}1ex]
  % &\;\leq\;
  % c\,
  % \E_0\!\big[
  % \|M^n\|_{\infty\text{-var}, [0, T]}^2
  % \big]^{1/2}
  \;\leq\;
  2c\,
  \E_0\!\big[
  [ M^n ]_T
  \big]^{1/2}.
\end{align}
We will refer to the inequalities in \eqref{eq:Lepingle:BDG} as the L\'epingle-BDG-inequality.
We will need the following Theorem by Friz and Zorin-Kranich which is an improvement of L\'epingle-BDG-inequality.
\begin{theorem} [A special case of \cite{friz2020rough} Theorem 1.1]\label{thm:Friz-Zorin-Kranich}
  Let $p>2$ and let $2<p_1<p$ so that $\frac2p < \frac12 + \frac{1}{p_1}$.
  Let $G$ be a c\`adl\`ag adapted process and $M$ be a c\`adl\`ag martingale, both with respect to same filtration $\{ \cF_t : t \geq 0 \}$.
  Set
  \begin{align*}
    I_{s,t} \coloneqq I_{s,t}(G,M), %\int_s^{t} G_{s,u-} \, \md M_u.
  \end{align*}
  where $I_{s,t}(\cdot,\cdot)$ was defined in (\ref{eq:def-I}).
  Then,
  \begin{align}\label{eq:Friz-Zorin-Kranich}
    \E\big[
    \big\|
    I
    \big\|_{p/2\text{-}\mathrm{var}, [0,T]}
    ^\frac12 \big]
    \;\leq\;
    C_0(p,p_1) \
    \E\bigg[
    \big\|
    G
    \big\|_{p_1\text{-}\mathrm{var}, [0,T]}
    \bigg] ^\frac12
    \E\bigg[
    \big[
    M
    \big]_T
    ^\frac12
    \bigg] ^\frac12,
  \end{align}
  where $C_0(p,p_1)>0$ is a constant depending only on $p$ and $p_1$.
\end{theorem}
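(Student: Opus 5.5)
The plan is to derive the bound directly from the general martingale-transform $p$-variation estimate of Friz and Zorin-Kranich \cite[Theorem~1.1]{friz2020rough}, then split the expectation with Cauchy--Schwarz. That result, applied with the parameters $q\ldef p/2$ for the variation exponent of the integral and $p_1$ for the variation exponent of the integrand, states the following. Whenever $\frac1q<\frac12+\frac1{p_1}$, which is exactly our standing hypothesis $\frac2p<\frac12+\frac1{p_1}$, for every $0<r<\infty$ and every càdlàg adapted $G$ and càdlàg martingale $M$ one has
\begin{align*}
  \E\Big[\big\|(\textstyle\int_s^t G_{s,u-}\,\md M_u)_{(s,t)}\big\|_{q\text{-var},[0,T]}^{\,r}\Big]^{1/r}
  \;\le\; C(q,p_1,r)\,
  \E\Big[\big(\|G\|_{p_1\text{-var},[0,T]}\,[M]_T^{1/2}\big)^{r}\Big]^{1/r}.
\end{align*}
In the general form of their theorem, $\|G\|_{p_1\text{-var}}$ may be replaced by $\sup|G|+\|G\|_{p_1\text{-var}}$, and the estimate is stated through a weighted or localized version. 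The first step is therefore to identify our object with theirs. By \eqref{eq:def-I},
\[
I_{s,t}(G,M)=\sum_{s<u\le t}(G_{u-}-G_s)\,M_{u-,u}=\int_{(s,t]}G_{s,u-}\,\md M_u,
\]
so only the increments $G_{s,u-}$ enter. Since $\sup_{s\le u\le t}|G_{s,u}|\le\|G\|_{\infty\text{-var},[0,T]}\le\|G\|_{p_1\text{-var},[0,T]}$ by \eqref{eq:inequalities on norms}, any term of the form $\sup|G|$ in the cited bound can be replaced by $\|G\|_{p_1\text{-var},[0,T]}$. This is consistent with the left-hand side being invariant under $G\mapsto G+c$.

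The second step is to apply the estimate with $r=\tfrac12$, which gives
\[
\E\big[\|I\|_{p/2\text{-var},[0,T]}^{1/2}\big]\le C\,\E\big[\|G\|_{p_1\text{-var},[0,T]}^{1/2}\,[M]_T^{1/4}\big].
\]
Cauchy--Schwarz applied to the product $\|G\|^{1/2}_{p_1\text{-var}}\cdot[M]_T^{1/4}$ then yields $\E[\|G\|_{p_1\text{-var}}]^{1/2}\,\E[[M]_T^{1/2}]^{1/2}$, which is \eqref{eq:Friz-Zorin-Kranich} with $C_0(p,p_1)$ depending only on $p$ and $p_1$, since $r=\tfrac12$ is fixed.

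The main obstacle is checking that the cited theorem indeed covers the case $r<1$ (in particular $r=\tfrac12$) for general càdlàg martingales that need not be continuous, and that it covers the two-parameter $p/2$-variation of the rough integral rather than only the one-parameter variation of $t\mapsto\int_0^tG\,\md M$.

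If only $r\ge1$ or an $L^2$-type version is directly available, I would fall back on a good-$\lambda$/stopping argument. In that case I would first localize with stopping times $\tau_k=\inf\{t:\|G\|_{p_1\text{-var},[0,t]}\vee[M]_t>k\}$. Since $G$ and $M$ are càdlàg, the overshoot at $\tau_k$ is controlled by the last jump. I would then apply Lenglart's domination inequality, which turns an $L^1$ domination of an adapted increasing process into $L^r$ domination for every $r\in(0,1)$. This would produce the $r=\tfrac12$ bound, after which the Cauchy--Schwarz step proceeds as above.
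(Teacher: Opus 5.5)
Your identification of $I_{s,t}(G,M)$ with $\int_{(s,t]}G_{s,u-}\,\md M_u$ and your handling of a possible $\sup|G|$ term are fine. The gap is the central step. Everything rests on the coupled bound $\E\big[\|I\|_{p/2\text{-}\mathrm{var},[0,T]}^{1/2}\big]\le C\,\E\big[\|G\|_{p_1\text{-}\mathrm{var},[0,T]}^{1/2}\,[M]_T^{1/4}\big]$ (your case $r=\tfrac12$). This inequality is false for c\`adl\`ag martingales with jumps, so neither the citation nor the Lenglart fallback can deliver it.

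To see this, take $M_0=0$, $M$ constant on $[0,t_0]$, and the deterministic step $G=\indicator_{[t_0,T]}$. Then $I_{s,t}=M_t$ when $s<t_0<t$, and $I_{s,t}=0$ otherwise. Hence $\|I\|_{p/2\text{-}\mathrm{var},[0,T]}=\sup_t|M_t|$ and $\|G\|_{p_1\text{-}\mathrm{var},[0,T]}=1$. Your bound therefore collapses to the BDG inequality $\E[\sup_t|M_t|^{1/2}]\le C\,\E[[M]_T^{1/4}]$ at exponent $\tfrac12$, which fails once jumps are allowed. For example, after $t_0$ let $M$ make $n$ steps. Each step goes up by $1/n$ with probability $1-\epsilon/n$, and otherwise down by $(1-\epsilon/n)/\epsilon\le 1/\epsilon$, after which $M$ freezes. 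Then $\sup_t|M_t|\ge 1$ with probability at least $1-\epsilon$, while $\E[[M]_T^{1/4}]\le 2n^{-1/4}+\epsilon^{1/2}$.

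The same example disposes of the fallback. Lenglart's inequality upgrades $L^1$-domination to $L^r$-domination, $r<1$, only when the dominating increasing process is predictable. Your process $\|G\|_{p_1\text{-}\mathrm{var},[0,t]}[M]_t^{1/2}$ jumps together with $M$, so it is not predictable. In the example, Davis' inequality gives exactly the $L^1$-domination $\E[\sup_{t\le\tau}|M_t|]\le C\,\E[[M]_\tau^{1/2}]$ at every stopping time, and yet the $L^{1/2}$ conclusion fails.

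So the product of separate expectations on the right of \eqref{eq:Friz-Zorin-Kranich} is not cosmetic. Cauchy--Schwarz shows it is weaker than your coupled bound, and it is the weaker, decoupled form that is true. The paper's statement is the specialization of the Friz--Zorin-Kranich estimate in its H\"older-type form \cite{friz2020rough}. There the moments of $\|G\|_{p_1\text{-}\mathrm{var},[0,T]}$ and of $[M]_T^{1/2}$ are taken separately, each at exponent $1$, which gives exponent $\tfrac12$ on $\|I\|_{p/2\text{-}\mathrm{var},[0,T]}$. No Cauchy--Schwarz step is involved.

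If you only had a coupled estimate at exponent $1$, you would need a Davis decomposition $M=M'+M''$.
\begin{itemize}
\item The part $M''$ has integrable variation, $\E[\mathrm{TV}(M'')]\lesssim\E[\sup_t|\Delta M_t|]\le\E[[M]_T^{1/2}]$. Hence $\|I(G,M'')\|_{1\text{-}\mathrm{var}}\le\|G\|_{\infty\text{-}\mathrm{var}}\,\mathrm{TV}(M'')$, and Cauchy--Schwarz applies after taking square roots. This is where the decoupling is indispensable.
\item The part $M'$ has jumps bounded by $4\sup_{s<t}|\Delta M_s|$. So $[M']_t$ is dominated by a left-continuous adapted, hence predictable, increasing process. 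Lenglart can then legitimately be applied to $I(G,M')$ with dominating process $C\,\|G\|_{p_1\text{-}\mathrm{var},[0,t)}\big([M']_{t-}+16\sup_{s<t}|\Delta M_s|^2\big)^{1/2}$, followed by Cauchy--Schwarz.
\end{itemize}
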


We are now ready to prove Lemma~\ref{lem:abstract-limit-specific}. 

\begin{proof}[Proof of  Lemma~\ref{lem:abstract-limit-specific}]
  The proof of the It\^o part follows from Lemma~\ref{lem:abstract-limit-general} once we show conditions (a),(b) and (c) of that lemma. We will start with the martingale part. First, the sequence of martingales $(M^n)_{n \in \bbN}$ converges in law in the Skorohod topology $\cD ([0, T], \bbR^d)$ to a Brownian motion with deterministic covariance matrix $\Si^2$ as defined in \eqref{eq:def:Si+Ga}. Indeed, this is a consequence of the martingale functional central limit theorem by Helland, see \cite[Theorem~5.1a)]{He82} using the limiting quadratic variations (\ref{ass:LimitQV}) together with the Lindeberg condition (\ref{ass:Lindeberg}). Using the UCV condition (\ref{ass:UCV}), condition (a) of Lemma~\ref{lem:abstract-limit-general} is a consequence of Theorem 4.11 of Chevyrev-Friz \cite{ChevyrevFriz2019} (whose proof can be seen by applying the main result of Kurtz-Protter \cite{KP91} together with the L\'epingle-BDG-inequalities (\ref{eq:Lepingle:BDG}) and (\ref{eq:Friz-Zorin-Kranich})).

  Next, we treat $\bbW^n$.
  Applying the summation by parts identity (\ref{eq:multidim-summation-by-parts}), we have
  for all $(s,t)\in\Delta_{[0,T]}$,
  \begin{equation}\label{eq:IBP-MR}
    I_{s,t}(M^n,R^n)
    +
    I_{s,t}(R^n,M^n)^\top
    \;=\;
    M^n_{s,t}\otimes R^n_{s,t}
    -
    Q_{s,t}(M^n,R^n),
  \end{equation}
  where,
 as defined in (\ref{eq:def-Q}),
\begin{align*}
    Q_{s,t}(M^n,R^n)
    \;=\;
    \sum_{s<u\le t} M^n_{u-,u} \otimes  R^n_{u-,u}.
  \end{align*}
  Thus, with 
  \begin{align*}
    \bbV^{n}\ldef I(R^n,M^n),
  \end{align*}
  we have
  \begin{align*}
    \bbW^n_{s,t} 
    & \;=\;
    I_{s,t}(R^n,M^n)
    \;+\;
    I_{s,t}(M^n,R^n) \\
    & \;=\;
    \bbV^{n}_{s,t}   
    \;-\; 
    (\bbV^{n})^\top_{s,t}
    \;+\; 
    M^n_{s,t} \otimes R^n_{s,t}
    \;-\; 
    Q_{s,t}(M^n,R^n).
  \end{align*}
  By the Vanishing quadratic covariations assumption (3) we have
  \begin{align*}
    \E\!\Big[
    \Norm{
      \bbQ^n}{\infty\text{-var},[0,T]}^{1/2}
    \Big]
    \xrightarrow[n\to\infty]{} 0.
  \end{align*}
  Hence it is left to show that \begin{align*}
    \E\!\Big[
    \Norm{\bbV^n}{\infty\text{-var},[0,T]}^{1/2}
    \Big]
    \xrightarrow[n\to\infty]{} 0 
    \quad 
    \text{ and }
    \quad 
    \E\!\Big[
    \Norm{(M^n_{s,t} \otimes R^n_{s,t})_{(s,t)}}{\infty\text{-var},[0,T]}^{1/2}
    \Big]
    \xrightarrow[n\to\infty]{} 0. 
  \end{align*}

  By Theorem~\ref{thm:Friz-Zorin-Kranich} with
  \begin{align*}
    G \coloneqq R^n,
    \qquad
    M \coloneqq M^n,
  \end{align*}
  for any $2<p_1<p$ with $\frac{2}{p}<\frac{1}{2}+\frac{1}{p_1}$ there exists $C_0=C_0(p,p_1)>0$ such that
  \begin{align*}
    \E\big[
    \|(\bbV^n)^{i,j}\|_{p/2\text{-}\mathrm{var},[0,T]}^{1/2}
    \big]
    \;\le\;
    \E\big[\,\|R^{n,i}\|_{p_1\text{-}\mathrm{var},[0,T]} \big]^{1/2}
    C_0\,\sup_m \E\big[\,[ (M^m)^j]_T \big]^{1/4} 
  \end{align*}
  which vanishes as $n\to\infty$ by the UCV assumption (\ref{ass:UCV}) together with assumption (i) of (2) Corrector. 
  On the other hand, assumptions (2) (i) and (iii) on the Corrector imply for every $i$ and $j$ we have
  \begin{align*} 
    \big\| 
    \big( 
    R^{n,i}_t,   
    (\bbR^n_{0,t})^{i,j} - t\Ga^{i,j}
        \big)_{t}
    \big\|_{\mathrm{unif}, [0,T]}
    \;\xrightarrow[n\to\infty]{\,\bbP\,}\; 0.
  \end{align*}
  Additionally, Cauchy-Schwarz inequality gives
  \begin{align*}
    \Norm{M \otimes R}{p/2\text{-var},[0,T]}
    \le 
    \,  \Norm{M}{p\text{-var},[0,T]}
    \otimes
    \Norm{R}{p\text{-var},[0,T]}. 
  \end{align*}
  Thanks to the UCV assumption (\ref{ass:UCV}), L\'epingle-BDG-inequality \eqref{eq:Lepingle:BDG} and assumption (i) of (2) Corrector, we can apply Cauchy-Schwarz inequality again to obtain 
  \begin{align*}
    \E\big[ \Norm{M^n \otimes R^n}{p/2\text{-var},[0,T]}^{1/2} \big] 
    \;\le\;
    \E\big[\,\|R^n\|_{p\text{-}\mathrm{var},[0,T]} \big]^{1/2}
   \otimes \,\sup_m \E\big[\,\|M^m\|_{p\text{-}\mathrm{var},[0,T]} \big]^{1/2}  
  \end{align*}
  which vanishes as $n\to\infty$. Thus we have proved simultaneously condition (b) and condition (c) (ii) of Lemma~\ref{lem:abstract-limit-general} 
  \begin{align*}
    \big\|
    (R^n,\; \bbR^n_{0,\cdot} + \bbW^n_{0,\cdot} - \cdot\Ga)
    \big\|_{\mathrm{unif}, [0,T]}
    \;\xrightarrow[n\to\infty]{\;\prob\;} 0 
  \end{align*}
  and 
  \begin{align*}
    \{\Norm{\bbW^n}
    {p/2\text{-var},[0,T]}\}_n\quad \text { is tight. }
  \end{align*}
  Finally, the uniform bound on the corrector pair, condition (c) (i) of Lemma~\ref{lem:abstract-limit-general} follows by assumption (2) (i) and (ii). 
  Hence we have shown the conditions of Lemma~\ref{lem:abstract-limit-general} hold and this concludes the proof of the It\^o part. 

  For the Stratonovich part, first note that the Stratonovich and It\^o iterated integrals of $B$ differ by half the quadratic variation:   
  \begin{align*}
    \bbB^{\mathrm{STR}}_{s,t}\;=\;\bbB_{s,t} + \frac12 (t-s)\Sigma^2.
  \end{align*}
  Next, summation by parts (\ref{eq:multidim-summation-by-parts}) gives us
  \begin{equation*}
    I_{s,t}(X^n,X^n) + \frac12 Q_{s,t}(X^n,X^n)
    \;=\;
    \frac12 I_{s,t}(X^n,X^n)
    - \frac12 I_{s,t}(X^n,X^n)^\top
    - \frac12 X^n_{s,t}\otimes X^n_{s,t}. 
  \end{equation*}
  Hence, the sequence 
  \begin{align*}
    \left(\Norm{(X^n,\bar\bbX^n)}{p\text{-var},[0,T]}\right)_n  
  \end{align*}
  is tight. 
  Now, 
  \begin{align*}
    Q (X^n,X^n) = Q (M^n,M^n) + Q (R^n,R^n) +
    Q (M^n,R^n) + Q (M^n,R^n)^\top.
  \end{align*}
  We know
  \begin{align*}
    \E\!\Big[
    \Norm{
      Q (M^n,R^n)}{\infty\text{-var},[0,T]}^{1/2}
    \Big]
    \xrightarrow[n\to\infty]{} 0.
  \end{align*}
  Further
  \begin{align*}
    Q (R^n,R^n) = - I(R^n,R^n)- I(R^n,R^n)^\top + R^n\otimes R^n.  
  \end{align*}
  and Cauchy-Schwartz inequality together with monotonicity of the p-variation norm in $p$ yields
  \begin{align*}
    \E\!\Big[
    \Norm{
      (R^n\otimes R^n)}{\infty\text{-var},[0,T]}^{1/2}
    \Big]
    \leq
    \E\!\Big[
    \Norm{
      R^n}{p\text{-var},[0,T]}
    \Big] ^{1/2} \otimes
    \E\!\Big[
    \Norm{ R^n}{p\text{-var},[0,T]}
    \Big]^{1/2}
    \xrightarrow[n\to\infty]{\bbP} 0
  \end{align*}
and
  \begin{align*}
    \Norm{
      \big( Q_{0,\cdot} (R^n,R^n) + \cdot(\Ga + \Ga^\top)\big)
    }{\text{unif},[0,T]}
    \xrightarrow[n\to\infty]{\bbP} 0.
  \end{align*}
  Therefore, since $\Ga$ is symmetric,   
  \begin{align*}
    \Norm{
      \big( 0, \frac12 Q_{0,t} (X^n,X^n)- (\frac12\Si^2-\Ga) t \big)
    }{\text{unif},[0,T]}
    \xrightarrow[n\to\infty]{\bbP} 0
  \end{align*} 
 Therefore, Slutsky's theorem in the Skorohod topology, Theorem~\ref{thm:slutsky}, gives us 

  \begin{align*}
    (X^n , \, \bar\bbX^n_{0,\cdot})
    = (X^n, \, \bbX^n_{0,\cdot} + \frac12 Q_{0,\cdot}(X,X))
    \underset{n \to \infty}{\Longrightarrow}
    (B,\bbB_{0,\cdot}+{\cdot}(\Ga + \frac12\Si^2-\Ga))
    = (B,\bbB_{0,\cdot}^{\mathrm{STR}}) \end{align*}
  in $ D\!\big([0,T],\R^d\times\R^{d\times d}\big)$.
  Applying Proposition \ref{lem:rp-conv}, the proof is now completed.   
\end{proof}

\section{The annealed settings - qualitative approach}
In this section we will prove Theorem \ref{thm:annealed}. Recall we suppose that Assumption~\ref{ass:general} holds true.

\subsection{Harmonic coordinates}
Recall that $X_t = \Pi(\om, X_t) = \Phi(\om, X_t)+ \chi(\om, X_t)$. 
Define, for any $t \geq 0$ and $\om \in \Om$, 
\[
M_t \ldef \Phi(\om, X_t)\indizeroo\quad \text{and} \quad 
R_t \ldef \chi(\om, X_t)\indizeroo.
\]  
In view of \eqref{eq:harmonic}, it follows that, for $\prob$-a.e.\ $\om$, the process $M \coloneqq (M_t : t \geq 0)$ is a $(P_0^{\om}, \{\cF_t : t \geq 0\})$-martingale, where $\cF_t \ldef \si(X_s : s \leq t)$ is the natural filtration generated by the process $X$.  In particular, the predictable quadratic variation process is given by
\begin{align}\label{eq:qv_process}
  \langle M^i, M^j \rangle_t
  \;=\;
  \int_0^t
  \Big(
  {\textstyle \sum_{x \in \mathbb{Z}^{d}}}\, \om(\{0,x\})\,
  \Phi^i(\om, x)\, \Phi^j(\om, x)
  \indizeroo \Big) \circ \tau_{X_s}\;
  \md s.
\end{align}
Note that the predictable quadratic variation process is written in terms of the environment process $(\tau_{X_t} \om : t \geq 0)$ which is a Markov process taking values in $\Om$ with generator $\cL\!: \mathop{\mathrm{dom}}(\cL) \to L^2(\Om,\prob)$,
\begin{align*}
  (\cL \vp)(\om)
  \;=\;
  \sum_{x \in \mathbb{Z}^{d}} \om(\{0,x\})\, \big( \vp(\tau_x \om) - \vp(\om) \big)
\end{align*}
where $\mathop{\mathrm{dom}}(\cL) = \{  \vp \in L^2(\prob) : \|\mD \vp\|_{L^2_{\mathrm{cov}}} < \infty\}$.  The following lemma summarizes the properties of the environmental process, cf. \cite[Prop.~2.3]{Bi11} and \cite[Prop.~2.1]{ADS15}.
\begin{lemma}\label{lemma:inv:environment_process}
  The measure $\prob$ is stationary, reversible and ergodic for $(\tau_{X_t} \om : t \geq 0)$.
\end{lemma}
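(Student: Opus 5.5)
The plan is to verify the three properties directly from the structure of the environment process as a Markov process on $\Om$ with the generator $\cL$ above. I will first check \emph{reversibility}, i.e.\ that $\cL$ is symmetric in $L^2(\prob)$, since stationarity follows from it at once (take one test function to be $1$). For bounded $\vp,\psi$ I will compute
\begin{align*}
\mean\big[\psi\,\cL\vp\big]
=\mean\Big[\sum_{x}\om(\{0,x\})\,\psi(\om)\big(\vp(\tau_x\om)-\vp(\om)\big)\Big].
\end{align*}
In the term containing $\vp(\tau_x\om)\psi(\om)$ I will substitute $\om\mapsto\tau_{-x}\om$, using translation invariance of $\prob$ (Assumption~\ref{ass:general}(ii)) and the identity $\om(\{0,x\})\circ\tau_{-x}=\om(\{-x,0\})$. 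Reindexing $x\mapsto -x$ turns it into $\mean[\sum_x \om(\{0,x\})\vp(\om)\psi(\tau_x\om)]$. This yields the symmetric Dirichlet form
\begin{align*}
\mean[\psi\,\cL\vp]=-\tfrac12\,\mean\Big[\sum_x\om(\{0,x\})\,\mD\vp(\om,x)\,\mD\psi(\om,x)\Big].
\end{align*}
Interchanging sum and expectation is justified by $\mean[\mu^\om(0)]<\infty$ (Assumption~\ref{ass:general}(iii)), which also guarantees that bounded functions lie in $\mathop{\mathrm{dom}}(\cL)$. Because the jump rates are integrable, the process is non-explosive, and the generator is essentially self-adjoint on bounded functions. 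Symmetry of the semigroup then follows, hence reversibility, and with $\psi\equiv1$ we get $\mean[\cL\vp]=0$, i.e.\ stationarity.

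For \emph{ergodicity}, I will take an invariant set $A$, meaning $P_t\indicator_A=\indicator_A$ $\prob$-a.s.\ for all $t$. Then $\indicator_A$ is in the kernel of the Dirichlet form, so $\om(\{0,x\})\big(\indicator_A(\tau_x\om)-\indicator_A(\om)\big)=0$ a.s.\ for every $x$. Applying the same at shifted points via stationarity, $\indicator_A$ is constant along every edge with positive conductance, and therefore constant along the cluster of the origin. The key step, which I expect to be the main obstacle, is upgrading this to full $\tau$-invariance of $A$. One issue is that $\prob$ charges environments with $0\notin\cC_\infty$, where the walk lives on a finite cluster and the chain is not irreducible in any useful sense. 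So I would prove ergodicity with respect to the conditioned measure $\PP$ on $\Om_0$, as in \cite[Prop.~2.1]{ADS15}, which is presumably the intended meaning. On $\Om_0$, uniqueness and infiniteness of $\cC_\infty$ give that for every $x$, $\indicator_A(\tau_x\om)=\indicator_A(\om)$ whenever $x\in\cC_\infty(\om)$. I would then argue that $A\cap\Om_0$ differs from a $\tau$-invariant set only by a null set. Concretely, define $\tilde A=\{\om:\tau_y\om\in A\text{ for some }y\in\cC_\infty(\om)\}$. This set is shift invariant, since $\cC_\infty(\tau_x\om)=\cC_\infty(\om)-x$, and it agrees with $A$ on $\Om_0$.

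Ergodicity of $\prob$ under shifts then gives $\prob(\tilde A)\in\{0,1\}$, hence $\PP(A)\in\{0,1\}$. On the complement of $\Om_0$, each finite cluster yields a finite irreducible chain, so invariant sets are again unions of cluster-constant events. I would either restrict the statement to $\PP$ or note that the paper only uses the lemma on $\Om_0$.
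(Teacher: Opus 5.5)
The paper does not prove this lemma; it only cites \cite[Prop.~2.3]{Bi11} and \cite[Prop.~2.1]{ADS15}. Your architecture is the standard argument behind those references: symmetry of the environment dynamics under $\prob$, then showing that an invariant event is constant along the cluster of the origin and replacing it by the exactly shift-invariant event $\tilde A$. Your ergodicity part via $\tilde A$ is correct.

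Your caveat about $\prob$ should be stated as a correction, not a hedge. If $\prob(\Om_0)<1$, then $\Om_0$ is itself invariant for $(\tau_{X_t}\om)$, because $0\in\cC_\infty(\tau_{X_t}\om)$ iff $X_t\in\cC_\infty(\om)$ iff $0\in\cC_\infty(\om)$. So $\prob$ is stationary and reversible but not ergodic, and ergodicity holds only for $\PP$. In the setting of \cite{ADS15} all conductances are a.s.\ positive, so $\Om_0$ has full measure there and the issue does not arise. Your sentence about finite irreducible chains on $\Om\setminus\Om_0$ does not rescue ergodicity under $\prob$ and should be dropped.

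The genuine gap is in the reversibility and stationarity step. You pass from $\mean[\psi\,\cL\vp]=\mean[\vp\,\cL\psi]$ for bounded $\vp,\psi$ to symmetry of the semigroup by asserting that $\cL$ is essentially self-adjoint on bounded functions. Under only $\mean[\mu^\om(0)]<\infty$ this is not even well-posed. The bound $|\cL\vp|\le 2\|\vp\|_\infty\,\mu^\om(0)$ only gives integrability, so bounded functions need not lie in the $L^2(\prob)$-domain of the generator. Moreover, symmetry of a form on a dense class does not by itself identify the semigroup of the actual process. Your claim that integrable rates give non-explosion is likewise only asserted, and your stationarity argument ($\mean[\cL\vp]=0$) inherits both problems.

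A clean repair avoids operator theory. Write $P_tg(\om)=\sum_x p^\om_t(0,x)\,g(\tau_x\om)$ for the minimal process, with explosion time $\zeta$.
\begin{itemize}
\item The kernel is symmetric, $p^\om_t(x,y)=p^\om_t(y,x)$, as a monotone limit of chains killed outside finite boxes with symmetric $Q$-matrices. It is also covariant: $p^{\tau_{-x}\om}_t(0,x)=p^\om_t(-x,0)$.
\item Substituting $\om\mapsto\tau_{-x}\om$ and $x\mapsto -x$ in $\mean[f\,P_tg]=\sum_x\mean[f(\om)\,g(\tau_x\om)\,p^\om_t(0,x)]$ gives $\mean[f\,P_tg]=\mean[g\,P_tf]$.
\item With $f\equiv1$ and $g=\mu^\om(0)$ (by monotone convergence) this yields $\mean[P_s\mu^\om(0)]=\mean[\mu^\om(0)\,P_s1]\le\mean[\mu^\om(0)]$.
\item By the L\'evy system formula, the $\prob_0$-expected number of jumps before $t\wedge\zeta$ is then at most $t\,\mean[\mu^\om(0)]$. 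This rules out explosion, so $P_t1=1$ and stationarity follows.
\end{itemize}
For ergodicity you likewise do not need the Dirichlet form. From $P_t\indicator_A=\indicator_A$ you get $E_0^\om[(\indicator_A(\tau_{X_t}\om)-\indicator_A(\om))^2]=0$. Since $P_0^\om(X_t=x)>0$ whenever $\om(\{0,x\})>0$, $\indicator_A$ is constant across every open edge.
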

For $1\le i,j\le d$ we denote 
\begin{align*}
  [M^{i},M^{j}]_t=Q_{0,t}(M^{i},M^{j})=\sum_{0 < s \leq t}  M^{i}_{s-,s}\, M^{j}_{s-,s}
\end{align*} 
the quadratic covariation of $M^{i}$ and $M^{j}$, the $i$-th and $j$-th component of the process $M$, respectively.

\begin{prop}[Martingale part: annealed Lindeberg, UCV and QV limit]\label{prop:first_level_conv_of_hamonic}
  For any $p > 2$ and $T > 0$, the sequence of $d$-dimensional martingales $(M^n)_{n \in \bbN}$ satisfies:
  \begin{enumerate}[(i)]

  \item Uniformly controlled variations. The UCV condition holds:
    \begin{equation}\label{annealed:UCV}
      \sup_{n} \bbE_0 \left[ [M^{n,i},M^{n,i}]_{T} \right]  %\rightarrow  (\Si^2)^{i,j} \,T, \quad \text{ a s} n\to\infty,
      \le C \, T,
    \end{equation}
    where $M^{n,i}$ is the $i$-th component of the process $M^n$.
  \item Limit of quadratic covariations. There is a $d\times d$ symmetric real matrix $\Si^2$ so that for all $1\le i,j\le d$ and $T>0$
    \begin{equation}\label{annealed:LimitQV}
      \big\|\big( [M^{n,i},M^{n,j}]_{t} -
      (\Si^2)^{i,j} \,t\big)_{t\in[0,T]} \big\|_{\mathrm{unif}, [0,T]} \xrightarrow[n\to\infty]{\;\PP_0\;} 0. 
    \end{equation}

  \item\label{annealed:Lindeberg} Lindeberg condition:
    for all $\delta > 0$, $T>0$ and $v\in\bbZ^d$  
    the compensator of \begin{equation*}
      \sum_{0 < s \leq T}  (v\cdot M^{n}_{s-,s})^2 \mathbbm{1}_{|v\cdot M^{n}_{s-,s}| > \delta} %\xrightarrow[n\to\infty]%{\;\p_0\;} 0.  
    \end{equation*}
converges to $0$ in $\p_0$-probability as $n\to\infty$.
  \end{enumerate}
\end{prop}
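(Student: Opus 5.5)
The whole proof rests on the stationarity and ergodicity of the environment process $(\tau_{X_t}\om)$ under $\PP$ (Lemma~\ref{lemma:inv:environment_process}, applied to the conditioned measure on $\Omega_0$, which is invariant since the walk stays in $\mathcal C_\infty$). The key observation is that the jumps of $M$ are given by the cocycle: a jump of $X$ from $x$ to $y$ produces $M_{s-,s}=\Phi(\tau_x\om,y-x)$. Consequently, every functional of jumps has an explicit compensator, obtained by integrating a function of the environment process. For a measurable $F:\Omega\times\Z^d\to[0,\infty)$, the compensator of $\sum_{0<s\le t}F(\tau_{X_{s-}}\om,X_{s-,s})$ is
\begin{align*}
\int_0^t \big(\textstyle\sum_{x}\om(\{0,x\})F(\om,x)\big)\circ\tau_{X_r}\,\md r .
\end{align*}
In particular, the compensator of $[M^i,M^j]$ is $\langle M^i,M^j\rangle$ as in \eqref{eq:qv_process}, with integrand $h^{ij}(\om)=\sum_x\om(\{0,x\})\Phi^i(\om,x)\Phi^j(\om,x)\indizeroo$. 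Since $\Phi\in L^2_{\mathrm{cov}}$, the function $h^{ij}$ lies in $L^1(\PP)$, and $\EE[h^{ij}]=(\Si^2)^{ij}$.

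\emph{UCV (i).} After rescaling, $[M^{n,i}]_T=\frac1n[M^i]_{nT}$. Taking $\EE_0$, replacing $[M^i]$ by its compensator, and using stationarity gives
\begin{align*}
\EE_0\big[[M^{n,i}]_T\big]=\tfrac1n\,\EE_0\Big[\int_0^{nT}h^{ii}(\tau_{X_r}\om)\,\md r\Big]=T\,\EE[h^{ii}]=T\,(\Si^2)^{ii}.
\end{align*}
This holds for every $n$, so the bound is uniform.

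\emph{QV limit (ii).} I will split the difference into two parts:
\begin{align*}
[M^{n,i},M^{n,j}]_t-(\Si^2)^{ij}t=\big([M^{n,i},M^{n,j}]_t-\langle M^{n,i},M^{n,j}\rangle_t\big)+\big(\langle M^{n,i},M^{n,j}\rangle_t-(\Si^2)^{ij}t\big).
\end{align*}
For the second part, $\langle M^{n,i},M^{n,j}\rangle_t=\frac1n\int_0^{nt}h^{ij}(\tau_{X_r}\om)\,\md r$. By Birkhoff's ergodic theorem for the stationary ergodic environment process with $h^{ij}\in L^1$, this converges $\PP_0$-a.s. to $t(\Si^2)^{ij}$ for each fixed $t$. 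Uniformity in $t\in[0,T]$ then follows from a Dini/Pólya-type argument, since the limit $t(\Si^2)^{ij}$ is continuous and linear. For off-diagonal entries, where $h^{ij}$ may change sign, I apply this argument separately to the positive and negative parts of $h^{ij}$.

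The first part, $N^n=[M^{n,i},M^{n,j}]-\langle\cdot\rangle$, is a purely discontinuous martingale but need not be $L^2$, so it is the main obstacle. I would truncate. Fix $K$ and split each jump term into $\{|\Phi|\le K\}$ and $\{|\Phi|>K\}$.
\begin{itemize}
\item The truncated part is an $L^2$ martingale whose predictable bracket is at most $\frac1n\int_0^{nT}g_K(\tau_{X_r}\om)\,\md r$ with $g_K=\sum_x\om(\{0,x\})|\Phi(\om,x)|^4\mathbf 1_{|\Phi|\le K}\le K^2h$. Its expectation is at most $K^2T\,\EE[h]$, so the second moment of $N^n$ restricted to this part is $O(K^2/n)$. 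Doob's inequality then makes it vanish uniformly in $t$.
\item The untruncated part (both the sum and its compensator) is controlled in $L^1$ by $2T\,\EE\big[\sum_x\om(\{0,x\})|\Phi(\om,x)|^2\mathbf 1_{|\Phi|>K}\big]$, using stationarity again. This tends to $0$ as $K\to\infty$ by dominated convergence, because $\Phi\in L^2_{\mathrm{cov}}$.
\end{itemize}
Letting first $n\to\infty$ and then $K\to\infty$ gives uniform convergence in probability.

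\emph{Lindeberg (iii).} Write $u=v\cdot\Phi$. The compensator in question equals
\begin{align*}
\frac1n\int_0^{nT}\Big(\sum_x\om(\{0,x\})\,u(\om,x)^2\,\mathbf 1_{|u(\om,x)|>\delta\sqrt n}\Big)\circ\tau_{X_r}\,\md r .
\end{align*}
This quantity is nonnegative. For $n\ge m$ the indicator threshold $\delta\sqrt n$ exceeds $\delta\sqrt m$, so the $\EE_0$-expectation of the compensator is at most $T\,\EE\big[\sum_x\om(\{0,x\})u^2\mathbf 1_{|u|>\delta\sqrt m}\big]$. That bound tends to $0$ as $m\to\infty$ by dominated convergence. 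Convergence in $L^1(\PP_0)$ then gives convergence in probability.
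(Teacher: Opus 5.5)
Your proof is correct. Parts (i) and (iii) coincide with the paper's argument: stationarity of the environment process together with the L\'evy system formula, and dominated convergence for the Lindeberg compensator.

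For (ii) you take a genuinely different route.

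\textbf{The paper's route.} It deduces (ii) from Lemma~\ref{cor:ergodic_p-var}. That lemma applies an ergodic theorem directly to the jump sum $\sum_{0<s\le nt} f(\tau_{X_{s-}}\om,\tau_{X_s}\om)$, with $f$ built from $\Phi^i\Phi^j$. To write the jump $X_{s-,s}$ as a function of the pair of environments, the paper needs $\om$ to be non-periodic. Otherwise it enlarges $\Omega$ by i.i.d.\ Bernoulli labels.

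\textbf{Your route.} You split $[M^{n,i},M^{n,j}]$ into two pieces.
\begin{itemize}
\item The compensator $\langle M^{n,i},M^{n,j}\rangle$ is an additive functional of the environment process alone. Continuous-time Birkhoff plus the monotonicity (P\'olya) argument therefore applies, with no periodicity issue.
\item The compensated remainder is a martingale, which you control by truncating the jumps at level $K$. Doob's $L^2$ inequality gives $O(K^2T\,\EE[h]/n)$ for the small jumps. The large jumps are bounded in $L^1$ by $2T\,\EE\big[\sum_x\om(\{0,x\})|\Phi(\om,x)|^2\indicator_{|\Phi(\om,x)|>K}\big]$, uniformly in $n$.
\end{itemize}

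\textbf{Comparison.} Your argument is more elementary and self-contained. It gives exactly the uniform convergence in probability that (ii) requires, and in fact also convergence in $L^1(\PP_0)$. The paper's route needs the path-reconstruction machinery, but in return one lemma yields a.s.\ and $L^1$ convergence in $p/2$-variation. That same lemma is reused for the mixed covariation condition (3) of Lemma~\ref{lem:abstract-limit-specific} and in the quenched section.

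\textbf{A minor slip.} The predictable bracket of the rescaled truncated martingale is $n^{-2}\int_0^{nT}g_K(\tau_{X_r}\om)\,\md r$, not $n^{-1}\int_0^{nT}g_K(\tau_{X_r}\om)\,\md r$. Its expectation is at most $K^2T\,\EE[h]/n$. Your stated conclusion $O(K^2/n)$ is nonetheless correct.
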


\begin{proof}
  (i) follows from stationarity. For example, it can be read from the formula for predictable quadratic variation (\ref{eq:qv_process}) and the fact its expected value coincides with that of the quadratic variation (since their difference is a martingale). Hence
  \begin{align*}
    \E_0\big[\big[ M^{n,i}, M^{n,j} \big]_T\big]
    & \;=\; 
    \E_0\big[\langle M^{n,i}, M^{n,j} \rangle_T\big]\\
    & \;=\;
    \frac{1}{n}\E_0\bigg[\int_0^{nT}
    \Big(
    {\textstyle \sum_{x \in \mathbb{Z}^{d}}}\, \om(\{0,x\})\,
    \Phi^i(\om, x)\, \Phi^j(\om, x)
    \indizeroo\Big) \circ \tau_{X_s}\;
    \md s\bigg].
    \end{align*}
    Therefore,
     \begin{align*}
    \EE_0\big[\big[ M^{n,i}, M^{n,j} \big]_T\big]
    &\;=\;
    T    \langle \Phi^i,\Phi^j \rangle_{L_{\mathrm{cov}}^2(\PP)}  .
  \end{align*}

  For (ii), Limit of quadratic covariations, for all $1\le i,j\le d$ this is a special case of Lemma~\ref{cor:ergodic_p-var} applied to the processes $\Psi=\Phi^i$ and $\Xi=\chi^j$. Indeed, by the monotonicity of the $q$-variations norms we can take the uniform norms which converge in $L^1(\PP_0)$ and in particular in $\PP_0$-probability.

  It remains to show the Lindeberg condition (iii). 
For this purpose, let us first recall the L\'evy system theorem, cf.~\cite[Theorem~VI.28.1]{RW00}. In our settings it states that for any measurable function $f\!: \mathbb{Z}^{d} \times \mathbb{Z}^{d} \to \bbR$ that vanishes on the diagonal, for $\prob$-a.e.\ $\om$
  \begin{align}\label{eq:Levy:system}
    E_{0}^{\omega}
    \bigg[
    \sum_{0 < s \leq T} f(X_{s-}, X_s)
    \bigg]
    \,=\,
    E_{0}^{\omega}
    \bigg[
    \int_{(0,T]}
    \sum_{y \in \mathbb{Z}^{d}} \om(\{X_{s-}, y\})\, f(X_{s-}, y)\,
    \md s
    \bigg]
  \end{align}
  and moreover,
  the process
  \begin{align}\label{eq:Levy:system:martingale}
    \sum_{0 < s \leq t} f(X_{s-}, X_s)
    \,-\,
    \int_{(0,t]}
    \sum_{y \in \mathbb{Z}^{d}} \om(\{X_{s-}, y\})\, f(X_{s-}, y)\,
    \md s
  \end{align}
  is a local $E_{0}^{\omega}$-martingale whenever the left term has a $E_0^{\om}$ finite mean.
In fact, recall that, by definition, $M_t \ldef \Phi(\om, X_t)\indizeroo$ and $\Phi(\om,0) = 0$ for $\prob$-a.e. $\omega$.  Hence, by choosing 
  \[
  f(x,y) = (v\cdot\Phi(\tau_x\om,y-x))^2\,\indizeroo \,\indicator_{|v\cdot\Phi(\tau_x \om,y-x)| > \de \sqrt{n}}
  \]
  we find that the the compensator of \begin{equation*}
      \sum_{0 < s \leq T}  (v\cdot M^{n}_{s-,s})^2 \mathbbm{1}_{|v\cdot M^{n}_{s-,s}| > \delta}   
    \end{equation*}
is 
 \begin{equation*}
   \frac{1}{n}
    \int_0^{T n}
    \sum_{y \in \mathbb{Z}^{d}} \om(\{0, y\})\, (v\cdot\Phi(\om, y))^2\,
    \indizeroo\,
  \indicator_{v\cdot|\Phi(\om, y)| > \de \sqrt{n}} \circ \tau_{X_{s-}}\;
    \md s.
 \end{equation*}
 Therefore
  \begin{align*}
&    
    \E_0\!\bigg[\frac{1}{n}
    \int_0^{T n}
    \sum_{y \in \mathbb{Z}^{d}} \om(\{0, y\})\, (v\cdot\Phi(\om, y))^2\,
    \indizeroo\,
  \indicator_{v\cdot|\Phi(\om, y)| > \de \sqrt{n}} \circ \tau_{X_{s-}}\;
    \md s
    \bigg]
    \\
    &\mspace{36mu}=\;
    T\,
    \mean\!\bigg[
    \sum_{y \in \mathbb{Z}^{d}} \om(\{0,y\})\, (v\cdot\Phi(\om, y))^2\,
    \indizeroo\,
    \indicator_{|v\cdot\Phi(\om,y)| > \de \sqrt{n}}
    \bigg]
    \;\underset{n \to \infty}{\longrightarrow}\;
    0,
  \end{align*}
  by stationarity and dominant convergence, which concludes the proof of (\ref{annealed:Lindeberg}). 
   \end{proof}
\subsection{Corrector}

\subsubsection{Corrector p-var vanishes in probability}
We now address the question of sublinearity of the corrector along the trajectory of the process.  We will show that for \emph{any} element of $L^2_{\mathrm{pot}}$ the $p$-variation of the corresponding process under diffusive rescaling vanishes in $L^1(\p_0)$.  
\begin{prop}[Corrector vanishes in $p$-variation]\label{prop:sublinearity:pvar}
  For any $\Psi \in L^2_{\mathrm{pot}}$, consider the process $\Psi(\om, X) \equiv (\Psi(\om, X_t) : t \geq 0)$.  Then, for any $T > 0$ and $p > 2$,
  \begin{align*}
    \lim_{n \to \infty}
    \E_0\!\Big[
    \big\|
    \tfrac{1}{\sqrt n} \Psi(\om, \sqrt n X^n)
    \big\|_{p\text{-}\mathrm{var},\, [0, T]}
    \Big]
    \;=\;
    0.
  \end{align*}
\end{prop}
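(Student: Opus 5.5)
The plan is to approximate $\Psi$ in $L^2_{\mathrm{cov}}$ by gradients of local functions and to treat the two resulting pieces separately. The first piece, a gradient of a bounded function, is controlled by a crude jump-counting bound. The second piece, which is small in $L^2_{\mathrm{cov}}$, is controlled by a bound on the $p$-variation that is uniform in $n$; this bound comes from a forward–backward martingale decomposition (Lyons–Zheng type), made possible by reversibility (Lemma~\ref{lemma:inv:environment_process}), combined with the L\'epingle-BDG-inequality \eqref{eq:Lepingle:BDG}. The triangle inequality for $\|\cdot\|_{p\text{-var}}$ then finishes the argument.

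\emph{Step 1 (local gradients).} Let $\varphi$ be local, so $\|\varphi\|_\infty<\infty$, and set $\Psi=\mD\varphi$. By the cocycle property, $\Psi(\om,X_t)=\varphi(\tau_{X_t}\om)-\varphi(\om)$ is a piecewise constant path bounded by $2\|\varphi\|_\infty$. Let $N_{nT}$ be its number of jumps in $[0,nT]$. In any partition, only the intervals containing a jump contribute, so
\begin{align*}
\big\|\Psi(\om,X_{n\cdot})\big\|_{p\text{-var},[0,T]}\le 2\|\varphi\|_\infty\,N_{nT}^{1/p}.
\end{align*}
By the L\'evy system formula \eqref{eq:Levy:system} and stationarity, $\E_0[N_{nT}]\le nT\,\E[\mu^\om(0)]$, which is finite by Assumption~\ref{ass:general}(iii). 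Jensen's inequality then gives
\begin{align*}
\E_0\big[\|\tfrac1{\sqrt n}\Psi(\om,X_{n\cdot})\|_{p\text{-var},[0,T]}\big]\lesssim \|\varphi\|_\infty\, n^{1/p-1/2}\to0,
\end{align*}
since $p>2$.

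\emph{Step 2 (uniform energy bound).} The aim is a constant $C=C(p)$ such that, for every local $\varphi$ and every $n$,
\begin{align*}
\E_0\big[\|\tfrac1{\sqrt n}\mD\varphi(\om,X_{n\cdot})\|_{p\text{-var},[0,T]}\big]\le C\sqrt T\,\|\mD\varphi\|_{L^2_{\mathrm{cov}}}.
\end{align*}
\begin{itemize}
\item \emph{Forward decomposition.} Write $\mD\varphi(\om,X_t)=\mathcal N_t+\int_0^t(\cL\varphi)(\tau_{X_s}\om)\,\md s$. Here $\mathcal N$ is the compensated sum of the jumps $\mD\varphi(\tau_{X_{s-}}\om,X_{s-,s})$, and $\cL\varphi$ is bounded because $\varphi$ is local and $\E\mu^\om(0)<\infty$.
\item \emph{Backward decomposition.} Fix the horizon $nT$ and reverse time, $\hat X_u=X_{(nT-u)-}$. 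Since the environment process is stationary and reversible under $\prob$, the reversed path has the same annealed law. Antisymmetry of the cocycle gives $\varphi(\tau_{X_{nT-u}}\om)-\varphi(\tau_{X_{nT}}\om)=\hat{\mathcal N}_u+\int(\cL\varphi)\,\md s$ with the same drift.
\item \emph{Cancellation.} Subtracting the two decompositions eliminates the drift and writes the path as $\tfrac12(\mathcal N_t-\hat{\mathcal N}_{nT}+\hat{\mathcal N}_{nT-t})$.
\item \emph{Estimate.} Both martingales have $\E_0[\,[\cdot]_{nT}]=nT\|\mD\varphi\|^2_{L^2_{\mathrm{cov}}}$. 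The $p$-variation is invariant under time reversal. Applying \eqref{eq:Lepingle:BDG} to each martingale yields the claimed bound.
\end{itemize}

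\emph{Step 3 (density).} Choose local $\varphi_k$ with $\mD\varphi_k\to\Psi$ in $L^2_{\mathrm{cov}}$. Along the path, $\mD\varphi_k(\om,X_t)=\sum_{s\le t}\mD\varphi_k(\tau_{X_{s-}}\om,X_{s-,s})$. By the L\'evy system formula and stationarity, $\E_0\sum_{s\le nT}|(\mD\varphi_k-\Psi)(\tau_{X_{s-}}\om,X_{s-,s})|\le nT\,\E\big[\sum_x\om(\{0,x\})|\mD\varphi_k-\Psi|(\om,x)\big]$. This tends to $0$ by Cauchy–Schwarz together with $\E\mu^\om(0)<\infty$. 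Hence, along a subsequence, $\mD\varphi_k(\om,X_\cdot)\to\Psi(\om,X_\cdot)$ uniformly on $[0,nT]$ almost surely, and $p$-variation is lower semicontinuous under pointwise convergence. Fatou's lemma then transfers the Step 2 bound to $\Psi-\mD\varphi_k$:
\begin{align*}
\E_0\big[\|\tfrac1{\sqrt n}(\Psi-\mD\varphi_k)(\om,X_{n\cdot})\|_{p\text{-var},[0,T]}\big]\le C\sqrt T\,\|\Psi-\mD\varphi_k\|_{L^2_{\mathrm{cov}}}.
\end{align*}
Combining this with Step 1, letting $n\to\infty$ first and then $k\to\infty$, gives the claim.

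The main obstacle is Step 2, specifically making the time-reversal rigorous. One must check that the reversed process started from $\prob$ under $\prob_0$ has the same law, including the restriction to $\Omega_0$ and the indicator $\indizeroo$, and that the backward compensator coincides with the forward one. I would carry this out at the level of the environment process on $[0,nT]$, using the reversibility in Lemma~\ref{lemma:inv:environment_process}.
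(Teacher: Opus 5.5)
Your proposal is correct and follows essentially the same route as the paper: jump counting for a bounded gradient (giving the $n^{1/p-1/2}$ decay), a uniform-in-$n$ bound $C\sqrt{T}\,\|\cdot\|_{L^2_{\mathrm{cov}}}$ on the remainder coming from the forward--backward martingale decomposition and the L\'epingle-BDG inequality, and density in $L^2_{\mathrm{pot}}$. The only difference is organizational: the paper proves the uniform bound for general $\Psi\in L^2_{\mathrm{pot}}$ by splitting $\Psi(\om,X_t)$ into its Dynkin martingale plus a drift, and controls the drift by a Kipnis--Varadhan inequality in $p$-variation (Lemmas~\ref{lemma:KV:ineq:pvar} and~\ref{lemma:p-var-estimated-for-L2pot}); you instead apply the Lyons--Zheng decomposition to the whole path of a local gradient and extend to $L^2_{\mathrm{pot}}$ by closure via Fatou and lower semicontinuity, which also works (one small correction: $\cL\varphi$ is only integrable, with $|\cL\varphi|\le 2\|\varphi\|_\infty\,\mu^\om(0)$, not bounded, but integrability is all your argument uses).
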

The proof of Proposition~\ref{prop:sublinearity:pvar} relies on the following estimate which we will refer to as Kipnis-Varadhan inequality.
\begin{lemma}[Kipnis-Varadhan inequality in $p$-variation]\label{lemma:KV:ineq:pvar}
  There exists $c < \infty$ such that for any $T > 0$, $p > 2$ and $\Psi \in L^2_{\mathrm{pot}}$,
  \begin{align}\label{eq:KV:ineq:pvar}
    \E_0\!\bigg[
    \Big\|
    \int_0^{\cdot}
    \big(L^{\om} \Psi(\om, \cdot)\big)(0) \circ \tau_{X_s}\,
    \md s\,
    \Big\|_{p\text{-}\mathrm{var}, [0,T]}
    \bigg]
    \;\leq\;
    2c\, \sqrt{T}\, \Norm{\Psi}{L^2_{\mathrm{cov}}}.
  \end{align}
\end{lemma}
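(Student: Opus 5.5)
We will use the classical forward--backward martingale decomposition, available because the environment process is reversible and stationary under $\prob$ by Lemma~\ref{lemma:inv:environment_process}. First we reduce to gradients of local functions. The left-hand side of \eqref{eq:KV:ineq:pvar} is controlled linearly by $\Norm{\Psi}{L^2_{\mathrm{cov}}}$, and $L^2_{\mathrm{pot}}$ is the closure of $\{\mD\vp : \vp \text{ local}\}$. Hence it suffices to prove the bound for $\Psi=\mD\vp$ with $\vp$ bounded and local. We then pass to limits in $L^1(\prob_0)$, using that the $p$-variation is lower semicontinuous under uniform convergence of paths on $[0,T]$; for this we apply the estimate to differences $\mD\vp_k-\mD\vp_l$. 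For such $\Psi$ we have $(L^\om\Psi(\om,\cdot))(0)=(\cL\vp)(\om)$. Write $\eta_t:=\tau_{X_t}\om$ and $A_t:=\int_0^t \cL\vp(\eta_s)\,\md s$.

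Step 1 (forward martingale). By Dynkin's formula,
\[
\vp(\eta_t)-\vp(\eta_0)=A_t+N_t,
\]
where $N$ is a $P_0^\om$-martingale. Its quadratic variation is $[N]_t=\sum_{s\le t}(\vp(\eta_s)-\vp(\eta_{s-}))^2$, and by the Lévy system formula \eqref{eq:Levy:system} together with stationarity,
\[
\E_0[[N]_T]=T\,\E\Big[\textstyle\sum_x\om(\{0,x\})|\mD\vp(\om,x)|^2\Big]=T\Norm{\Psi}{L^2_{\mathrm{cov}}}^2 .
\]

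Step 2 (backward martingale). Fix $T$ and consider the time-reversed environment process $\hat\eta_t:=\eta_{(T-t)-}$. Under the annealed law $\prob_0$ this is again a Markov process with generator $\cL$, by stationarity and reversibility (Lemma~\ref{lemma:inv:environment_process}). Applying Step 1 to $\hat\eta$ gives
\[
\vp(\eta_{T-t})-\vp(\eta_T)=\hat A_t+\hat N_t,
\]
with $\hat A_t=A_T-A_{T-t}$ and $\hat N$ a martingale in the reversed filtration having the same quadratic variation bound. Adding the forward identity on $[s,t]$ and the backward identity on the corresponding reversed interval, the boundary terms $\vp(\eta_t)-\vp(\eta_s)$ cancel. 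This yields the pathwise representation
\[
2A_{s,t}=-N_{s,t}-\big(\hat N_{T-s}-\hat N_{T-t}\big),
\]
so that $A=-\tfrac12(N+\tilde N)$, where $\tilde N_t:=\hat N_{T}-\hat N_{T-t}$ is a time-reversed martingale. Because $A$ is continuous, jump conventions at countably many times are harmless.

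Step 3 ($p$-variation bound). The $p$-variation on $[0,T]$ is invariant under time reversal $t\mapsto T-t$ for increments. Therefore
\[
\|A\|_{p\text{-var}}\le\tfrac12\big(\|N\|_{p\text{-var}}+\|\hat N\|_{p\text{-var}}\big).
\]
Applying the Lépingle--BDG inequality \eqref{eq:Lepingle:BDG} to each martingale, followed by Jensen's inequality, gives
\[
\E_0\|A\|_{p\text{-var}}\le c\big(\E_0[[N]_T]^{1/2}+\E_0[[\hat N]_T]^{1/2}\big)\le 2c\sqrt T\,\Norm{\Psi}{L^2_{\mathrm{cov}}}.
\]
(A factor of $2$ of slack is absorbed into the constant.) The constant is independent of $p$ only if $c(p)$ is bounded for $p>2$ bounded away from 2; otherwise we read $c=c(p)$.

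The main obstacle is Step 2: making the time reversal rigorous under $\prob_0$ rather than under a quenched law, and checking that $\hat N$ is genuinely a martingale with respect to the reversed filtration with the stated bracket. This requires that the annealed path law of $\eta$ on $[0,T]$ be reversible, which follows from reversibility of $\prob$ for the environment process. A second, less serious, point is restricting to $\Omega_0$ via $\indizeroo$: the indicator is invariant along the walk, so the argument runs verbatim under $\PP_0$ as well.
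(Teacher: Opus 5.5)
Your proposal is correct and is essentially the paper's proof. It uses the same ingredients: reduction to $\mD\psi$ with $\psi$ bounded; the forward--backward representation $-2\int_0^t\cL\psi(\tau_{X_s}\om)\,\md s=M_t-\tilde N_t$, which comes from reversibility of the environment process; Minkowski's inequality plus the L\'epingle--BDG inequality for each martingale; and $\E_0[[M]_T]=\E_0[[N]_T]=T\Norm{\mD\psi}{L^2_{\mathrm{cov}}}^2$. Your caveat that the constant is really $c=c(p)$ is also correct, since the L\'epingle constant degenerates as $p\downarrow 2$.

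The one step you leave implicit is identifying the limit in the approximation. Applying the estimate to differences $\mD\vp_k-\mD\vp_l$ only shows the sequence is Cauchy. To know it converges to the additive functional of $\Psi$, you need the elementary bound $\E_0\big[\big\|\int_0^\cdot (L^\om(\Psi-\mD\vp_k)(\om,\cdot))(0)\circ\tau_{X_s}\,\md s\big\|_{1\text{-var},[0,T]}\big]\le T\,\E[\mu^\om(0)]^{1/2}\Norm{\Psi-\mD\vp_k}{L^2_{\mathrm{cov}}}$. This is exactly how the paper disposes of the remainder, and with it the Cauchy argument becomes unnecessary.
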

\begin{proof}
  Our proof is similar to the one given in \cite[Lemma~2.4]{KLO12}, cf.\ \cite[Corollary~3.5]{GP18}.  Since $\Psi \in L^2_{\mathrm{pot}}$, for every $\ve>0$ there exists a bounded function $\psi_\ve\!: \Om \to \bbR$ such that $\left\| \mD \psi_\ve - \Psi \right\|_{L^2_{\mathrm{cov}}} < \ve$.  Let us introduce the notation
  \begin{align*}
    R_\ve(\om)
    \;\ldef\;
    \sum_{x \in \mathbb{Z}^{d}} \om(\{0,x\})\,
    \big| \Psi(\om, x) - \mD \psi_\ve(\om, x) \big|,
    \qquad \ve > 0 .
  \end{align*}
  Then, for any $\ve > 0$,
  \begin{align}\label{eq:pvar:approx}
    \E_0\!&\bigg[
    \Big\|
    \int_0^{\cdot}
    \big(L^{\om} (\Psi - \mD \psi_\ve)(\om, \cdot)\big)(0)
    \circ \tau_{X_s}\,
    \md s\,
    \Big\|_{p\text{-}\mathrm{var}, [0,T]}
    \bigg]
    \nonumber\\ 
    &\leq\;
    \E_0\!\bigg[
    \Big\|
    \int_0^{\cdot}
    \big(L^{\om} (\Psi - \mD \psi_\ve)(\om, \cdot)\big)(0)
    \circ \tau_{X_s}\,
    \md s\,
    \Big\|_{1\text{-}\mathrm{var}, [0,T]}
    \bigg]
    \;\leq\;
    T\, \mean\!\big[R_\ve(\om)\big],
  \end{align}
  where we used in the last step both Fubini's theorem and the stationarity of the process as seen from the particle, cf.\ Lemma~\ref{lemma:inv:environment_process}.  Since $\mean[R_\ve(\om)]$ is bounded from above by $\sqrt{\mean[\mu^{\om}(0)]} \Norm{\Psi - \mD \psi_\ve}{L^2_{\mathrm{cov}}}$, the right-hand side of \eqref{eq:pvar:approx} vanishes as $\ve \to 0$.

  On the other hand, since, for any $\ve > 0$, the function $\psi_\ve$ belongs to the domain of the generator $\cL$ and
  \begin{align*}
    (L^{\om} \mD \psi_\ve(\om, \cdot))(x)
    \;=\;
    (\cL \psi_\ve)(\tau_x \om)
    \qquad \forall\, x \in \mathbb{Z}^{d},
  \end{align*}
  we are left with establishing an upper bound of
  \begin{align*}
    \E_0\!\bigg[
    \Big\|
    \int_0^{\cdot} (\cL \psi_\ve)(\tau_{X_s} \om)\, \md s\,
    \Big\|_{p\text{-}\mathrm{var}, [0,T]}
    \bigg].
  \end{align*}
  Let $\{ \cF_t : t \geq 0 \}$ be the natural filtration of $X$, that is $\cF_t = \si(X_s : s \leq t)$.  Then,
  \begin{align*}
    M_t^{\psi_\ve}
    &\ldef\;
    \mD \psi_\ve(\om, X_t) - \mD \psi_\ve(\om, X_0)
    - \int_0^t \big(L^{\om} \mD \psi_\ve(\om, \cdot) \big)(X_s)\, \md s
    \\
    &\;=\;
    \psi_\ve(\tau_{X_t} \om) - \psi_\ve(\tau_{X_0} \om)
    - \int_0^t \big(\cL \psi_\ve\big)(\tau_{X_s} \om)\, \md s
  \end{align*}
  is a $(\prob, \{\cF_t : t \geq 0\})$-martingale.  Similarly, denote by $\{\cF_t^T : t \in [0, T]\}$ the augmented backward filtration generated by $(X_{T-t} : t \in [0,T])$.  Since $\cL$ is a symmetric operator in $L^2(\prob)$, the process $(N_t^{\psi_\ve} : t \in [0, T])$ defined by
  \begin{align*}
    N_t^{\psi_\ve}
    \;\ldef\;
    \psi_\ve(\tau_{X_{T-t}} \om) - \psi_\ve(\tau_{X_T} \om)
    - \int_0^t \big(\cL \psi_\ve\big)(\tau_{X_{T-s}} \om)\, \md s
  \end{align*}
  is a $(\prob, \{\cF_t^{T} : t \in [0, T]\})$-martingale.  Notice that a change of variables yields
  \begin{align*}
    N_{T}^{\psi_\ve} - N_{T-t}^{\psi_\ve}
    \;=\;
    \psi_\ve(\tau_{X_0} \om) - \psi_\ve(\tau_{X_t} \om)
    - \int_0^t \big(\cL \psi_\ve\big)(\tau_{X_{s}} \om)\, \md s.
  \end{align*}
  Thus, the additive functional appearing in both the forward and backward martingale can be expressed as
  \begin{align*}
    - 2 \int_0^t \big(\cL \psi_\ve\big)(\tau_{X_{s}} \om)\, \md s
    \;=\;
    M_t^{\psi_\ve} + N_T^{\psi_\ve} - N_{T-t}^{\psi_\ve}
    \;\rdef\;
    M_t^{\psi_\ve} - \tilde{N}_t^{\psi_\ve}.
  \end{align*}
  Thus, by using Minkowski's inequality and the L\'epingle-BDG-inequality  (\ref{eq:Lepingle:BDG}), there exists $c < \infty$ such that
  \begin{align}\label{eq:pvar:approx:martingale}
    &\E_0\!\bigg[
    \Big\|
    \int_0^{\cdot} \big(\cL \psi_\ve\big)(\tau_{X_s} \om)\, \md s\,
    \Big\|_{p\text{-}\mathrm{var}, [0,T]}
    \bigg]
    \nonumber\\
    &\leq\;
    \frac{1}{2}\,
    \Big(
    \E_0\!\big[
    \|M^{\psi_\ve}\|_{p\text{-}\mathrm{var}, [0, T]}
    \big]
    \,+\,
    \E_0\!\big[
    \|\tilde{N}^{\psi_\ve}\|_{p\text{-}\mathrm{var}, [0, T]}
    \big]
    \Big)
    \nonumber\\
    &\leq\;
    % \frac{c}{2}\,
    % \Big(
    % \E_0\!\Big[ \| M^{\psi_\ve} \|_{\infty\text{-var}, [0, T]} \Big]
    % \,+\,
    % \E_0\!\Big[ \| N^{\psi_\ve} \|_{\infty\text{-var}, [0,T]} \Big]
    % \Big)
    c\,
    \Big(
    \E_0\!\big[ [ M^{\psi_\ve} ]_{T}
    \big]^{\!1/2}
    \,+\,
    \E_0\!\big[ [ N^{\psi_\ve} ]_{T}
    \big]^{\!1/2}
    \Big).
  \end{align}
  
  Note that as $M^{\psi_\ve}$ and $N^{\psi_\ve}$ are Dynkin martingale with respect to the same infinitesimal generator by reversibility of the process seen from the process, by the Dynkin formula their predictable quadratic variation coincide. Moreover it has the form 
  \begin{align}\label{eq:predic-qv:of:dynkin}
    \E_0\!\big[
    \langle M^{\psi_\ve} \rangle_T
    \big]
    \;=\;
    \E_0\big[
    \langle N^{\psi_\ve} \rangle_T
    \big]
    \;=\;
    T\,     \E_0\big[\cL (\psi_\ve^2) -2
    \psi_\ve \cL \psi_\ve 
    \big]
    \;=\;
    T\, \Norm{\mD \psi_\ve}{L^2_{\mathrm{cov}}}^2,
  \end{align}
  where for the last equality we used the  computation in (\ref{eq:ii-formula})  together with
  $\E_0\big[\cL (\psi_\ve^2) \big]=0$, which follows by stationarity of the process seen from the particle.
  As the predictable quadratic variation and the quadratic variation have the same mean we get %
  \begin{align}\label{eq:qv:of:dynkin}
    \E_0\!\big[
    [ M^{\psi_\ve} ]_T
    \big]
    \;=\;
    \E_0\big[
    [ N^{\psi_\ve} ]_T
    \big]
    \;=\;
    T\, \Norm{\mD \psi_\ve}{L^2_{\mathrm{cov}}}^2.
  \end{align}
  Since \begin{align*}
    T\Norm{\mD \psi_\ve}{L^2_{\mathrm{cov}}}^2 \underset{\ve \to 0}{\;\longrightarrow\;} T\, \Norm{\Psi}{L^2_{\mathrm{cov}}}^2,
  \end{align*}
  the assertion follows from \eqref{eq:pvar:approx} and \eqref{eq:pvar:approx:martingale}.
\end{proof}
\begin{remark}
  Along the line of the proof of Lemma~\ref{lemma:KV:ineq:pvar} it easily follows that
  \begin{align*}
    \E_0\!\bigg[
    \Big\|
    \int_0^{\cdot}
    \big(L^{\om} \Psi(\om, \cdot)\big)(0) \circ \tau_{X_s}\,
    \md s\,
    \Big\|_{\infty\text{-var}, [0,T]}
    \bigg]
    \;\leq\;
    \frac{3}{2}\, \sqrt{T}\, \Norm{\Psi}{L^2_{\mathrm{cov}}}.
  \end{align*}
\end{remark}
\begin{lemma}\label{lemma:p-var-estimated-for-L2pot}
  There exists $0<c < \infty$ such that for any $T > 0$, $p > 2$ and $\Psi \in L^2_{\mathrm{pot}}$,
  \begin{align}\label{eq:ub-p-var-ell-two-pot}
    \E_0\!\bigg[
    \big\|
    \Psi(\om, X_\cdot)
    \big\|_{p\text{-}\mathrm{var}, [0, T]}
    \bigg]
    \;\leq\;
    c \sqrt{T}\, \Norm{\Psi}{L^2_{\mathrm{cov}}}.
  \end{align}
\end{lemma}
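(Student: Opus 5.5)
We decompose $\Psi(\om,X_t)$ into a martingale and an additive functional, then bound each piece by results already in hand. Since $\Psi\in L^2_{\mathrm{pot}}$ satisfies the cocycle property and $X_0=0$, Dynkin's formula (equivalently, the L\'evy system \eqref{eq:Levy:system:martingale} applied to $f(x,y)=\Psi(\tau_x\om,y-x)$) gives the decomposition
\begin{align*}
  \Psi(\om,X_t)
  \;=\;
  M^{\Psi}_t \;+\; \int_0^t \big(L^{\om}\Psi(\om,\cdot)\big)(0)\circ\tau_{X_s}\,\md s .
\end{align*}
Here $M^{\Psi}_t := \sum_{0<s\le t}\Psi(\tau_{X_{s-}}\om, X_s-X_{s-}) - \int_0^t (L^\om\Psi(\om,\cdot))(0)\circ\tau_{X_s}\,\md s$ is a $P_0^\om$-martingale for $\prob$-a.e.\ $\om$. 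To justify that $M^\Psi$ is a genuine (not merely local) square-integrable martingale, and that the drift integral is well defined, I would first carry out the argument for $\mD\psi_\ve$ with $\psi_\ve$ bounded. In that case everything is explicit. Afterwards I would pass to the limit $\ve\to0$ in $L^1(\prob_0)$ for the $p$-variation norm, using the approximations already employed in the proof of Lemma~\ref{lemma:KV:ineq:pvar}. Alternatively, one can work directly with the $L^2_{\mathrm{cov}}$ bound below, which controls everything.

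By the triangle inequality for the $p$-variation seminorm,
\begin{align*}
  \E_0\big[\|\Psi(\om,X_\cdot)\|_{p\text{-var},[0,T]}\big]
  \le
  \E_0\big[\|M^\Psi\|_{p\text{-var},[0,T]}\big]
  +\E_0\Big[\Big\|\int_0^\cdot (L^\om\Psi(\om,\cdot))(0)\circ\tau_{X_s}\,\md s\Big\|_{p\text{-var},[0,T]}\Big].
\end{align*}
The second term is bounded by $2c\sqrt T\,\Norm{\Psi}{L^2_{\mathrm{cov}}}$ directly by the Kipnis--Varadhan inequality, Lemma~\ref{lemma:KV:ineq:pvar}.

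For the first term I apply the L\'epingle--BDG inequality \eqref{eq:Lepingle:BDG}, which reduces the bound to $2c\,\E_0[[M^\Psi]_T]^{1/2}$. Since the drift part is continuous, the jumps of $M^\Psi$ are exactly $\Psi(\tau_{X_{s-}}\om,X_s-X_{s-})$. Hence $[M^\Psi]_T=\sum_{0<s\le T}\Psi(\tau_{X_{s-}}\om,X_s-X_{s-})^2$, and by the L\'evy system formula \eqref{eq:Levy:system} together with the stationarity of the environment process (Lemma~\ref{lemma:inv:environment_process}),
\begin{align*}
  \E_0\big[[M^\Psi]_T\big]
  = \E_0\Big[\int_0^T\sum_{x}\om(\{0,x\})\Psi(\om,x)^2\circ\tau_{X_s}\,\md s\Big]
  = T\,\Norm{\Psi}{L^2_{\mathrm{cov}}}^2 .
\end{align*}
Combining the two bounds yields \eqref{eq:ub-p-var-ell-two-pot} with a constant independent of $T$, $p$ and $\Psi$. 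This requires the constant in the L\'epingle inequality to be chosen uniformly for $p$ bounded away from $2$, or else absorbed into a $p$-dependent $c$, as the statement implicitly allows.

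The main obstacle I expect is the rigorous justification of the martingale decomposition for a general $\Psi\in L^2_{\mathrm{pot}}$. For such $\Psi$, the function $(L^\om\Psi(\om,\cdot))(0)$ is a priori only in $L^1(\prob)$, via Cauchy--Schwarz using $\mean[\mu^\om(0)]<\infty$, and the integrability needed for Dynkin's formula is not automatic. I would handle this by approximation. First, establish the whole bound for $\Psi_\ve=\mD\psi_\ve$, where the decomposition is the standard Dynkin martingale $M^{\psi_\ve}$. Then apply the bound to the differences $\Psi_\ve-\Psi_{\ve'}$, which shows that $\Psi_\ve(\om,X_\cdot)$ is Cauchy in $L^1(\prob_0;\,p\text{-var})$. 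Finally, identify the limit with $\Psi(\om,X_\cdot)$ pointwise in $t$: the cocycle property expresses $\Psi(\om,X_t)$ as a finite sum of jump increments, and the $L^2_{\mathrm{cov}}$ convergence together with the L\'evy system formula gives $L^2(\prob_0)$ convergence of these sums. Lower semicontinuity of the $p$-variation norm then lets the bound pass to the limit.
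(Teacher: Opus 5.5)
Your proposal is correct and is essentially the paper's proof. You write $\Psi(\om,X_\cdot)$ as the Dynkin martingale $M^\Psi$ plus the additive functional $\int_0^\cdot (L^\om\Psi(\om,\cdot))(0)\circ\tau_{X_s}\,\md s$, bound the latter by Lemma~\ref{lemma:KV:ineq:pvar} and the former by L\'epingle--BDG; your computation $\E_0\big[[M^\Psi]_T\big]=T\,\Norm{\Psi}{L^2_{\mathrm{cov}}}^2$ and the approximation by $\mD\psi_\ve$ only make explicit what the paper leaves implicit. Your caveat on the constant is justified: both arguments inherit the $p$-dependent L\'epingle constant, which blows up as $p\downarrow 2$, so what is actually proved is the bound with $c=c(p)$.
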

\begin{proof}
  Let
  \begin{align*}
    M_t^{\Psi}
    \ldef\;
    \Psi (\om, X_t) %- \Psi (\om, X_0)
    -
    \int_0^t
    \big(L^{\om} \Psi (\om, \cdot)\big)(X_s)\,
    \md s
  \end{align*}
  be the $(P_0^{\om}, \{\cF_t : t \geq 0\})$-Dynkin martingale with respect to the natural augmented filtration generated by the process $X$.  Recall that $\Psi (\om, 0) = 0$ for $\prob$-a.e. realization $\om$.  Hence,
  \begin{align*}
    &\E_0\!\bigg[
    \big\|
    \Psi (\om,  X_\cdot)
    \big\|_{p\text{-}\mathrm{var}, [0, T]}
    \bigg]
    \nonumber\\
    &\mspace{36mu}\leq\;
    \E_0\!\bigg[
    \big\|M^{\Psi}\big\|_{p\text{-}\mathrm{var}, [0, T n]}
    \bigg]
    \nonumber\\
    &\mspace{72mu}+\,
    \E_0\!\bigg[
    \Big\|
    \int_0^{\cdot}
    \big(L^{\om} \Psi (\om, \cdot) \big)(0)
    \circ \tau_{X_s}\,
    \md s
    \Big\|_{p\text{-}\mathrm{var}, [0, T ]}
    \bigg]
    \nonumber\\
    &\mspace{36mu}\overset{\!\!\eqref{eq:KV:ineq:pvar}\!\!}{\leq\;}
    \E_0\!\Big[
    \big\| M^{\Psi} \big\|_{p\text{-}\mathrm{var}, [0, T ]}
    \Big]
    \,+\,
    2 c \sqrt{T}\,
    \Norm{\Psi}{L^2_{\mathrm{cov}}}.
  \end{align*}
  Using the L\'epingle-BDG-inqueality, we conclude that
  \begin{align*}
    \E_0\!\bigg[
    \big\|
    \Psi (\om, X_\cdot)
    \big\|_{p\text{-}\mathrm{var}, [0, T]}
    \bigg]
    \;\leq\;
    c^\prime \sqrt{T}\, \Norm{\Psi}{L^2_{\mathrm{cov}}},
  \end{align*}
  as required.
\end{proof}

\begin{proof}[Proof of Proposition~\ref{prop:sublinearity:pvar}]
  Fix some $T > 0$ and $\de > 0$.  Moreover, for a given $\Psi \in L^2_{\mathrm{pot}}$ let $(\psi_\ve : \ve > 0) \subset L^{\infty}(\prob)$ be such that $\left\| \mD \psi_\ve - \Psi \right\|_{L^2_{\mathrm{cov}}} < \ve$.  In the sequel, we consider separately the processes
  \begin{align*}
    \mD \psi_\ve(\om, X)
    &\;\equiv\;
    \big(\mD \psi_\ve(\om, X_t) : t \geq 0\big),
    \\
    (\Psi - \mD \psi_\ve)(\om, X)
    &\;\equiv\;
    \big((\Psi - \mD \psi_\ve)(\om, X_t): t \geq 0\big).
  \end{align*}

  First, by applying \eqref{eq:Levy:system} to the function $f(x,y) = \indicator_{x \ne y}$ we find that
  \begin{align}\label{eq:number:jumps}
    \E_0\!\bigg[
    \sum_{0<s \leq t} \indicator_{X_{s-} \ne X_s}
    \bigg]
    \;=\;
    \E_0\!\bigg[
    \int_{0}^t
    \sum_{y \in \mathbb{Z}^{d}} \om(X_{s-}, y)
    \indicator_{X_{s-} \ne y}\,
    \md s
    \bigg]
    \;=\;
    t \mean\!\big[\mu^{\om}(0)\big],
  \end{align}
  where we used in the last step again Fubini's theorem and and the stationarity of the process as seen from the particle.  Note that
  \begin{align*}
    &\big\| \mD \psi_\ve(\om, X) \big\|_{p\text{-}\mathrm{var}, [0, T n]}
    \;\leq\;
    4\, \Norm{\psi_\ve}{L^{\infty}(\prob)}\,
    \bigg(
    \sum_{0 < s \leq T n} \indicator_{X_{s-} \ne X_s}
    \bigg)^{\!\!1/p}.
  \end{align*}
  Hence, for any $p > 2$ we obtain
  \begin{align*}
    \E_0
    \!\bigg[
    \big\|
    \tfrac{1}{\sqrt{n}} \mD \psi_\ve(\om, \sqrt{n} X^n)
    \big\|_{p\text{-}\mathrm{var}, [0, T]}
    \bigg]
    & \leq \frac{4 \Norm{\psi_\ve}{L^{\infty}(\prob)}}{n^{1/2}} \E_0\!\bigg[
    \bigg(
    \sum_{0 < s \leq T n} \indicator_{X_{s-} \ne X_s}
    \bigg)^{\!\!1/p}
    \bigg]\\
    & \overset{\eqref{eq:number:jumps}}{\;\leq\;}
    \frac{4 \Norm{\psi_\ve}{L^{\infty}(\prob)}}{n^{1/2 - 1/p}}\,
    \Big(T \mean\!\big[\mu^{\om}(0)\big]\Big)^{\!1/p}
    \underset{n \to \infty}{\;\longrightarrow\;}
    0.
  \end{align*}
  On the other hand, applying Lemma~\ref{lemma:p-var-estimated-for-L2pot} to $\Psi - \mD \psi_\ve\in L^2_{\mathrm{pot}}$, we obtain
  \begin{align}
    \E_0\!\bigg[
    \big\|
    \tfrac{1}{\sqrt{n}} (\Psi-\mD\psi_\ve)(\om, \sqrt{n} X^n)
    \big\|_{p\text{-}\mathrm{var}, [0, T]}
    \bigg]
    \;\leq\;
    c^\prime \sqrt{T}\, \Norm{\Psi - \mD \psi_\ve}{L^2_{\mathrm{cov}}}
  \end{align}
  for some $c'>0$.
  Hence,
  \begin{align*}
    & \limsup_{n \to \infty}
    \E_0\!\Big[
    \big\|\tfrac{1}{\sqrt{n}}\Psi(\om, \sqrt{n} X^n)\big\|_{p\text{-}\mathrm{var}, [0, T]}
    \Big]
    \;\le\;
    c^\prime \sqrt{T}\, \Norm{\Psi - \mD \psi_\ve}{L^2_{\mathrm{cov}}}.
  \end{align*}
  Taking $\ve \to 0$ results in
  \begin{align*}
    & \lim_{n \to \infty}
    \E_0\!\Big[
    \big\|\tfrac{1}{\sqrt{n}}\Psi(\om, \sqrt{n} X^n)\big\|_{p\text{-}\mathrm{var}, [0, T]}
    \Big]
    \;=\;
    0,
  \end{align*}
  as required.
\end{proof}

\subsubsection{Tightness of iterated integral of the corrector}\label{subsubseq:tightness_ii_of_corrector}

The main goal of this section is to prove Proposition \ref{prop:correct-estimate-for-pot} below, which provides a key estimate of the integrals of the process in $L^2_\mathrm{pot}$ coordinates after rescaling.

For ease of notation, for $\Xi,\Psi\in L^2_{\mathrm{cov}}$
\begin{align}\label{eq:notation-i-Psi}
  \cI_{s,t}(\Psi,\Xi)\ldef
  I_{s,t}(F,G),
\end{align} 
where $F$ and $G$ are given by $F_t:=\Psi(\omega,X_t)$ and $G_t:=\Xi(\omega,X_t)$. 

\begin{prop}\label{prop:correct-estimate-for-pot}
  Let $\Psi,\Xi\in L^2_\mathrm{pot}$ and $p>2$. Then
  \begin{align*}
    % \label{eq:correct-estimate-for-pot}
    \notag \E_0\bigg[
    \Big\|
    \frac1n \cI(\Psi, \Xi)
    \Big\|_{p/2\text{-}\mathrm{var}, [0,nT]}
    ^{\frac12}
    \bigg]
    \;\leq\;
    C(p)^{\frac12}
    T^{\frac12}
    \Norm{\Psi}{L_\mathrm{cov}^2}^{\frac12}
    \Norm{\Xi}{L_\mathrm{cov}^2}^{\frac12}.
  \end{align*}
\end{prop}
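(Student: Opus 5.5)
The plan is to decompose the integrator $G_t=\Xi(\om,X_t)$ into martingale and additive-functional pieces, exactly as in the proof of Lemma~\ref{lemma:p-var-estimated-for-L2pot}. Write
\[
\Xi(\om,X_t)=M^{\Xi}_t+A^{\Xi}_t,\qquad A^{\Xi}_t\ldef\int_0^t \big(L^{\om}\Xi(\om,\cdot)\big)(0)\circ\tau_{X_s}\,\md s .
\]
By bilinearity, $\cI(\Psi,\Xi)=I(F,M^{\Xi})+I(F,A^{\Xi})$ with $F_t=\Psi(\om,X_t)$. Since $\|\cdot\|^{1/2}_{p/2\text{-var}}$ is subadditive up to a constant, it suffices to bound each piece separately.

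\emph{Martingale piece.} Apply Theorem~\ref{thm:Friz-Zorin-Kranich} with $G=F$ and $M=M^{\Xi}$, choosing some $2<p_1<p$ with $\frac2p<\frac12+\frac1{p_1}$. This gives
\[
\E_0\big[\|I(F,M^\Xi)\|^{1/2}_{p/2\text{-var},[0,nT]}\big]\le C_0\,\E_0\big[\|F\|_{p_1\text{-var},[0,nT]}\big]^{1/2}\,\E_0\big[[M^\Xi]^{1/2}_{nT}\big]^{1/2}.
\]
Lemma~\ref{lemma:p-var-estimated-for-L2pot}, applied with $p_1$, yields $\E_0[\|F\|_{p_1\text{-var},[0,nT]}]\le c\sqrt{nT}\|\Psi\|_{L^2_{\mathrm{cov}}}$. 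For the quadratic variation, Jensen and stationarity (as in \eqref{eq:qv:of:dynkin}) give $\E_0[[M^\Xi]_{nT}]\le nT\|\Xi\|^2_{L^2_{\mathrm{cov}}}$. Combining, the piece is of order $(nT)^{1/2}\|\Psi\|^{1/2}\|\Xi\|^{1/2}$. Dividing by $n$ inside the norm contributes a factor $n^{-1/2}$ after taking the square root, which is exactly the claimed bound.

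\emph{Drift piece.} This is the main obstacle, because $A^\Xi$ is not a martingale and the Friz--Zorin-Kranich bound does not apply directly. I would use the Kipnis--Varadhan forward--backward representation from the proof of Lemma~\ref{lemma:KV:ineq:pvar}. Approximate $\Xi$ by $\mD\xi_\ve$ with $\xi_\ve$ bounded, and write
\[
-2A^{\mD\xi_\ve}=M^{\xi_\ve}-\tilde N^{\xi_\ve}.
\]
The term $I(F,M^{\xi_\ve})$ is handled as above. For $I(F,\tilde N^{\xi_\ve})$, note that $\tilde N$ is a backward martingale, so $F$ is not adapted to its filtration. I would therefore use summation by parts, \eqref{eq:multidim-summation-by-parts}, to write
\[
I(F,\tilde N)=F\otimes\tilde N-I(\tilde N,F)-Q(F,\tilde N).
\]
Then I would reverse time: under the reversed filtration, $I(\tilde N,F)$ becomes an integral of the backward-adapted process $F_{T-\cdot}$ against the martingale $N$, up to endpoint and jump corrections. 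Reversibility (Lemma~\ref{lemma:inv:environment_process}) makes $F$ reversed equal in law to the forward process, and Theorem~\ref{thm:Friz-Zorin-Kranich} applies again.

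The remaining terms are controlled as follows. The product term obeys $\|F\otimes\tilde N\|^{1/2}_{p/2}\le\|F\|^{1/2}_p\|\tilde N\|^{1/2}_p$; Cauchy--Schwarz together with Lemma~\ref{lemma:p-var-estimated-for-L2pot} and the L\'epingle--BDG inequality \eqref{eq:Lepingle:BDG} then bound it. The covariation term satisfies $\|Q(F,\tilde N)\|_{p/2\text{-var}}\le\|Q\|_{1\text{-var}}\le([F]_{nT}[\tilde N]_{nT})^{1/2}$. Its expectation is bounded by Cauchy--Schwarz, since the L\'evy system \eqref{eq:Levy:system} gives $\E_0[[F]_{nT}]=nT\|\Psi\|^2_{L^2_{\mathrm{cov}}}$.

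All constants are uniform in $\ve$, so passing $\ve\to0$ gives the bound. For the error term $\Xi-\mD\xi_\ve$, I would use the crude $1$-variation bound \eqref{eq:pvar:approx}, combined with $\sup|F|$ controlled through Lemma~\ref{lemma:p-var-estimated-for-L2pot}. The delicate points I expect are the exact bookkeeping of jump terms under time reversal and making sure the $\ve$-error vanishes after the square root.
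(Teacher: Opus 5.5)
Your argument is correct in substance and uses the same mechanism as the paper. That mechanism is: a half-forward/half-backward martingale splitting of the integrator, Theorem~\ref{thm:Friz-Zorin-Kranich} for the forward martingale, time reversal plus Theorem~\ref{thm:Friz-Zorin-Kranich} for the backward one, Cauchy--Schwarz for the product term, and the L\'evy system for the jump covariation. The bookkeeping differs.

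The paper approximates \emph{both} $\Psi$ and $\Xi$ by bounded local gradients $\mD\psi,\mD\xi$. It disposes of $\cI(\Psi-\mD\psi,\Xi)$ and $\cI(\mD\psi,\Xi-\mD\xi)$ with the crude $1$-variation bound of Lemma~\ref{lem:apriori-estimate-for-pot}, which is of order $n^{1/2}\epsilon^{1/2}$ and hence negligible as $\epsilon\to0$ at fixed $n$. It proves the sharp bound only for two gradients, in Lemma~\ref{lem:correct-estimate-for-gradients}.

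You instead keep the integrand $\Psi(\om,X_\cdot)$ exact, apply Theorem~\ref{thm:Friz-Zorin-Kranich} directly to the Dynkin martingale $M^\Xi$, and regularize only the drift. Since $M^{\xi_\ve}$ is $L^2$-close to $M^\Xi$, your integrator $M^\Xi-\tfrac12M^{\xi_\ve}+\tfrac12\tilde N^{\xi_\ve}$ is, up to small errors, the paper's splitting $\tfrac12 M+\tfrac12\tilde N$. Your version saves one approximation and shows that the martingale part of $\Xi$ needs no regularization. The paper's version keeps the reversal step elementary: its integrand $\psi(\tau_{X_\cdot}\om)$ is a function of the environment process, so Lemma~\ref{lemma:inv:environment_process} is exactly what is needed.

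Two steps in your version need adjusting.

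First, drop the preliminary summation by parts. Time reversal does not exchange integrand and integrator. So reversing $I(\tilde N,F)$ gives an integral of $N$ against the reversed path $\hat F$, which is not a martingale, and you would need a second summation by parts before Theorem~\ref{thm:Friz-Zorin-Kranich} applies. Your phrase ``up to endpoint and jump corrections'' only closes the step if it silently includes that second swap. Reversing $I(F,\tilde N)$ directly is cleaner. Its integrand becomes $\hat F$ based at the right end of the reversed interval. Moving the base point to the left end and splitting $\hat F_v=\hat F_{v-}+\Delta\hat F_v$ then produces three pieces, exactly as in the proof of Lemma~\ref{lem:correct-estimate-for-gradients}:
an integral of the backward-adapted $\hat F_{v-}$ against $N$; a product term of the form $F_{s,t}\tilde N_{s,t}$; and a jump covariation.

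Second, your integrand is a cocycle, not a function of $\tau_{X_\cdot}\om$, so reversibility of the environment process alone does not literally suffice. You need that, under the annealed law, the pair $(\tau_{X_{nT}}\om,(X_{nT-u}-X_{nT})_{u\le nT})$ has the same law as $(\om,(X_u)_{u\le nT})$. This follows from reversibility of the walk with respect to counting measure together with stationarity of $\prob$.

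You should also take the backward filtration generated by these reversed objects. A filtration containing $\om$ itself would reveal the endpoint $X_{nT}$ of the reversed path and break the martingale property of $N$. With the right filtration, the cocycle identity $\Psi(\om,X_{nT-v})-\Psi(\om,X_{nT})=\Psi(\tau_{X_{nT}}\om,X_{nT-v}-X_{nT})$ makes the reversed integrand adapted. It also makes it equal in law to $F$, so Lemma~\ref{lemma:p-var-estimated-for-L2pot} controls its $p_1$-variation. The paper sidesteps this point by approximating $\Psi$ as well.
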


We shall now state two lemmas, prove the last proposition assuming the lemmas, and then turn to proving the lemmas.

The first lemma provides the correct bound for gradients. 

\begin{lemma}\label{lem:correct-estimate-for-gradients}
  Let $\psi,\varphi\in L^2(\prob)$ so that $\mD \psi, \mD \varphi \in L^2_{\mathrm{cov}}$. For all $p>2$
  \begin{align*}%\label{eq:correct-for-grad}
    \notag \E_0\bigg[
    \Big\|
    \cI\, ( \mD\psi,\mD \varphi)
    \Big\|_{p/2\text{-}\mathrm{var}, [0,T]}
    ^{\frac12}
    \bigg]
    \;\leq\;
    C(p)
    T^{\frac12}
    \Norm{D\varphi}{L_\mathrm{cov}^2}^{\frac12}
    \Norm{D\psi}{L_\mathrm{cov}^2}^{\frac12}.
  \end{align*}

\end{lemma}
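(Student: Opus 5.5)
We use the forward--backward (Kipnis--Varadhan) decomposition from the proof of Lemma~\ref{lemma:KV:ineq:pvar}, now at the level of the integrator $G_t=\mD\varphi(\om,X_t)=\varphi(\tau_{X_t}\om)-\varphi(\om)$. By density of bounded functions (as in that proof) and continuity of both sides in the respective norms, we first reduce to $\psi,\varphi$ bounded; continuity of the left side follows from the bilinear bound being proved, applied to differences. For bounded $\varphi$ we write, on $[0,T]$,
\begin{align*}
  G_t \;=\; \tfrac12\big(M^{\varphi}_t - \tilde N^{\varphi}_t\big),
\end{align*}
which follows by adding $G_t=M^\varphi_t+\int_0^t\cL\varphi(\tau_{X_s}\om)\,\md s$ and $G_t=-(N^\varphi_T-N^\varphi_{T-t})-\int_0^t\cL\varphi(\tau_{X_s}\om)\,\md s$, with $\tilde N^\varphi_t=N^\varphi_T-N^\varphi_{T-t}$. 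Here $M^\varphi$ is a forward martingale and $N^\varphi$ is a martingale for the backward filtration, and by \eqref{eq:qv:of:dynkin} both have expected quadratic variation $T\Norm{\mD\varphi}{L^2_{\mathrm{cov}}}^2$. By bilinearity, $\cI_{s,t}(\mD\psi,\mD\varphi)=\frac12 I_{s,t}(F,M^\varphi)-\frac12 I_{s,t}(F,\tilde N^\varphi)$, where $F_t=\mD\psi(\om,X_t)$.

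For the forward piece, Theorem~\ref{thm:Friz-Zorin-Kranich} with $G=F$ (adapted) and $M=M^\varphi$ gives
\begin{align*}
  \E_0\big[\|I(F,M^\varphi)\|_{p/2\text{-var}}^{1/2}\big]\le C_0\,\E_0\big[\|F\|_{p_1\text{-var},[0,T]}\big]^{1/2}\,\E_0\big[[M^\varphi]_T^{1/2}\big]^{1/2},
\end{align*}
and Lemma~\ref{lemma:p-var-estimated-for-L2pot} (with $p_1>2$) bounds the first factor by $(c\sqrt T\Norm{\mD\psi}{L^2_{\mathrm{cov}}})^{1/2}$. Jensen and \eqref{eq:qv:of:dynkin} bound the second factor by $(T\Norm{\mD\varphi}{L^2_{\mathrm{cov}}}^2)^{1/4}$. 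Together these give $C\,T^{1/2}\Norm{\mD\psi}{}^{1/2}\Norm{\mD\varphi}{}^{1/2}$.

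The backward piece is the main obstacle, since $F$ is not adapted to the backward filtration. We reverse time on $[0,T]$. The process $\hat X_u=X_{T-u}$ has, under $\E_0$, the same stationary reversible environment law by Lemma~\ref{lemma:inv:environment_process}, although it starts at $X_T$ rather than $0$. Since $\mD\psi$ is a gradient, $F_t=\psi(\tau_{X_t}\om)-\psi(\om)$, and the constant shift $\psi(\om)$ changes $I_{s,t}$ only by terms $c\,\tilde N_{s,t}$ that cancel by the definition of $I$, which subtracts $f_s g_{s,t}$. So $F$ is effectively a function of the environment process $\hat\om_u=\tau_{\hat X_u}\om$, hence adapted backward. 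Using the summation-by-parts identity \eqref{eq:summation-by-parts}, we rewrite $I_{s,t}(F,\tilde N)$ as
\begin{align*}
  F_{s,t}\tilde N_{s,t}-Q_{s,t}(F,\tilde N)-I_{s,t}(\tilde N,F).
\end{align*}
The last term, after time reversal, becomes a reversed-time integral $\hat I(\hat F,\hat N)$ plus boundary and jump corrections, to which Theorem~\ref{thm:Friz-Zorin-Kranich} applies in the backward filtration. The product term is handled by Cauchy--Schwarz, as in the proof of Lemma~\ref{lem:abstract-limit-specific}, combining Lemma~\ref{lemma:p-var-estimated-for-L2pot} with Lépingle--BDG \eqref{eq:Lepingle:BDG}. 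The covariation term is bounded by $([F]_T[\tilde N]_T)^{1/2}$, which is controlled in $p/2$-variation by its total variation; since $p/2>1$, Cauchy--Schwarz and $\E_0[F]_T\lesssim T\Norm{\mD\psi}{}^2$ (by the Lévy system \eqref{eq:Levy:system}) control it. Care is needed to check that the jump conventions (left limits) match under reversal; they do up to $Q$-type terms of the same form. Summing all contributions gives the claim with a constant $C(p)$.
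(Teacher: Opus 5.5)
Your proposal is correct and is essentially the paper's proof. The forward piece is the same: the splitting $\mD\varphi(\om,X_t)=\frac12(M^\varphi_t-\tilde N^\varphi_t)$, then Theorem~\ref{thm:Friz-Zorin-Kranich} with Lemma~\ref{lemma:p-var-estimated-for-L2pot} and \eqref{eq:qv:of:dynkin}. The backward piece also splits the same way, into a product term (Cauchy--Schwarz and L\'epingle--BDG), a backward-adapted integral after time reversal (Friz--Zorin-Kranich in the backward filtration), and a jump term (total variation and the L\'evy system). Your summation by parts is algebraically the paper's replacement of the base point $\psi(\om_s)$ by $\psi(\om_t)$, since $-Q_{s,t}(F,\tilde N)-I_{s,t}(\tilde N,F)=\int_{(s,t]}(F_{u-}-F_t)\,\md\tilde N_u$.

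The one flaw is the justification of your preliminary reduction to bounded $\psi,\varphi$. It is circular: the differences $\psi-\psi_\ve$ are unbounded, so the bound being proved cannot yet be applied to them. Use instead the a priori estimate of Lemma~\ref{lem:apriori-estimate-for-pot} together with the truncations $(\psi\wedge K)\vee(-K)$. Alternatively, drop the reduction altogether: $\cL\varphi\in L^1(\prob)$, so the Dynkin martingales are well defined directly for $\varphi\in L^2(\prob)$ with $\mD\varphi\in L^2_{\mathrm{cov}}$, which is how the paper proceeds.
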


The second lemma is an a priori bound for general $L^2_\mathrm{cov}$ functions.

\begin{lemma}\label{lem:apriori-estimate-for-pot}
  Let $\Psi,\Xi\in L^2_\mathrm{cov}$ and $p>2$. Then
  \begin{align*}
    \E_0\bigg[ 
    \big\|
    \frac1n \cI\,(\Psi,\Xi)
    \big\|_{p/2\text{-}\mathrm{var}, [0,nT]}^{\frac12}
    \bigg] 
    \; & \leq\;
    \E_0\big[ \mu_0(\omega) \big]\,
    n^{\frac12} T\,
    \Norm{\Psi}{L_\mathrm{cov}^2}^{\frac12}\,
    \Norm{\Xi}{L_\mathrm{cov}^2}^{\frac12}.
  \end{align*}
\end{lemma}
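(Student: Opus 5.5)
The idea is to use the crude bound of the $p/2$-variation by the $1$-variation, since the integral $\cI(\Psi,\Xi)$ is a finite sum of jump terms on any bounded interval. By \eqref{eq:inequalities on norms} with $p/2>1$, we have $\|\cI(\Psi,\Xi)\|_{p/2\text{-var},[0,nT]}\le \|\cI(\Psi,\Xi)\|_{1\text{-var},[0,nT]}$. Chen-type relations give $\cI_{s,t}=\cI_{r,t}-\cI_{r,s}-F_{r,s}G_{s,t}$, so $\cI$ is not additive. We therefore bound each increment directly. Writing $\cI_{s,t}(\Psi,\Xi)=\sum_{s<u\le t}F_{s,u-}G_{u-,u}$, we get
\begin{align*}
|\cI_{s,t}| \le \Big(\sup_{s\le u\le t}|F_{s,u}|\Big)\sum_{s<u\le t}|G_{u-,u}|
\le \sum_{s<v\le t}|F_{v-,v}|\,\sum_{s<u\le t}|G_{u-,u}|.
\end{align*}
Summing over a partition and using $\sum a_k b_k\le(\sum a_k)(\sum b_k)$ for nonnegative terms yields
\begin{align*}
\|\cI(\Psi,\Xi)\|_{1\text{-var},[0,nT]}\le A_\Psi A_\Xi,
\qquad A_\Psi:=\sum_{0<u\le nT}|\Psi(\om,X_u)-\Psi(\om,X_{u-})|.
\end{align*}

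Next, take square roots and apply Cauchy–Schwarz: $\E_0[(A_\Psi A_\Xi)^{1/2}]\le \E_0[A_\Psi]^{1/2}\E_0[A_\Xi]^{1/2}$. To compute $\E_0[A_\Psi]$, use the cocycle property $\Psi(\om,X_u)-\Psi(\om,X_{u-})=\Psi(\tau_{X_{u-}}\om,X_u-X_{u-})$ together with the Lévy system formula \eqref{eq:Levy:system}, Fubini, and stationarity of the environment seen from the particle (Lemma~\ref{lemma:inv:environment_process}). This gives
\begin{align*}
\E_0[A_\Psi]= nT\,\mean\Big[\sum_x \om(\{0,x\})|\Psi(\om,x)|\Big]\le nT\,\mean[\mu^\om(0)]^{1/2}\Norm{\Psi}{L^2_{\mathrm{cov}}},
\end{align*}
where the last step is Cauchy–Schwarz with respect to the weights $\om(\{0,x\})$.

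Combining the two factors gives $\E_0[\|\cI\|^{1/2}]\le nT\,\mean[\mu^\om(0)]^{1/2}\Norm{\Psi}{}^{1/2}\Norm{\Xi}{}^{1/2}$. Dividing $\cI$ by $n$ multiplies the square root by $n^{-1/2}$, leaving $n^{1/2}T\,\mean[\mu^\om(0)]^{1/2}\Norm{\Psi}{}^{1/2}\Norm{\Xi}{}^{1/2}$. This is the claimed bound, with the constant $\E_0[\mu_0]$ read up to the harmless replacement $\mean[\mu]^{1/2}\le \max(1,\mean[\mu])$, or after absorbing constants.

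The only delicate point is the partition-summation step, that is, ensuring the $1$-variation of the non-additive two-parameter $\cI$ is controlled by the product $A_\Psi A_\Xi$. The increment bound above handles this, since for disjoint intervals the jump sums are disjoint pieces of $A_\Psi$ and $A_\Xi$. The remaining steps, including the Lévy system computation and the integrability provided by Assumption~\ref{ass:general}(iii), are routine.
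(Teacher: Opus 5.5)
Your proposal is correct and is essentially the paper's proof. You pass from the $p/2$-variation to the $1$-variation, bound $\|\cI(\Psi,\Xi)\|_{1\text{-}\mathrm{var}}$ by the product of the $1$-variations of $\Psi(\omega,X_\cdot)$ and $\Xi(\omega,X_\cdot)$ as in \eqref{eq:ii-one-variation-simple-estimate}, apply Cauchy--Schwarz, and compute the expected $1$-variation via the L\'evy system and stationarity exactly as in Lemma~\ref{lem:one-variation-computation}.

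Your caveat about the constant is well founded. Both arguments actually produce $\E_0[\mu_0(\omega)]^{1/2}$, whereas the paper's last display silently writes $\E_0[\mu_0(\omega)]$. That literal form cannot hold in general, as rescaling the conductances $\omega\mapsto c\,\omega$ with small $c$ shows. The square-root version is all that Proposition~\ref{prop:correct-estimate-for-pot} needs, since there the bound is only used with a factor $\epsilon^{1/2}\to0$ at fixed $n$.
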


\begin{proof}[Proof of Proposition \ref{prop:correct-estimate-for-pot} assuming Lemmas \ref{lem:correct-estimate-for-gradients} and \ref{lem:apriori-estimate-for-pot}]
  Fix $n$. Let $\epsilon>0$ and approximate $\Psi$ and $\Xi$ by bounded local gradients: let
  $\psi,\xi:\Omega\to \bbR$ be local and bounded so that
  \begin{align*}
    \Norm{\Psi-D\psi}{L_\mathrm{cov}^2},\,
    \Norm{\Xi-D\xi}{L_\mathrm{cov}^2}
    \;\le\;\epsilon.
  \end{align*}
  Since $(\Psi-D\psi), D\psi, (\Xi-D\xi)\in L_\mathrm{pot}^2$, we may use the a priori
  estimate of Lemma~\ref{lem:apriori-estimate-for-pot} to get
  \begin{align*}
    &\E_0\bigg[
    \bigg\|
    \frac1n \cI\,(\Psi-\mD\psi, \Xi)
    \bigg\|_{p/2\text{-}\mathrm{var}, [0,nT]}^{\frac12}
    \bigg]
    \;\leq\;
    \E_0\big[ \mu_0(\omega) \big]\,
    T\, n^{\frac12}\,
    \epsilon^{\frac12}\,
    \Norm{\Xi}{L_\mathrm{cov}^2}^{\frac12}
  \end{align*}
  and
  \begin{align*}
    \E_0\Big[
    \Big\|
    \frac1n 
    \cI\,( \mD\psi,\Xi-D\xi)
    \Big\|_{p/2\text{-}\mathrm{var}, [0,nT]}^{\frac12}
    \Big]
    \;&\leq\;
    \E_0\big[ \mu_0(\omega) \big]\,
    T\, n^{\frac12}\,
    \Norm{D\psi}{L_\mathrm{cov}^2}^{\frac12}\,
    \epsilon^{\frac12}
    \\
    \;& \leq\;
    \E_0\big[ \mu_0(\omega) \big]\,
    T\, n^{\frac12}\,
    \big(\Norm{\Psi}{L_\mathrm{cov}^2} + \epsilon\big)^{\frac12}\,
    \epsilon^{\frac12}.
  \end{align*}
  Hence, 
  \begin{align*}
    \E_0\bigg[ 
    \big\|
    \frac1n \cI\,(\Psi,\Xi)
    \big\|_{p/2\text{-}\mathrm{var}, [0,nT]}^{\frac12}
    \bigg] 
    &
    \;\leq\;
    \E_0[\mu_0(\omega)]\,
    T n^{\frac12}\,
    \epsilon^{\frac12}\,
    \Norm{\Xi}{L_\mathrm{cov}^2}^{\frac12}
    \;+\;
    \E_0[\mu_0(\omega)]\,
    T n^{\frac12}\,
    \big(\Norm{\Psi}{L_\mathrm{cov}^2} + \epsilon\big)^{\frac12}\,
    \epsilon^{\frac12}
    \\
    &\qquad
    +\;
    \E_0\bigg[
    \big\|
    \frac1n \cI\,(\mD\psi,\mD\xi)
    \big\|_{p/2\text{-}\mathrm{var}, [0,nT]}^{\frac12}
    \bigg]
    \\
    &\;\leq\;
    \E_0 [ \mu_0(\omega) ]\,
    T n^{\frac12}\,
    \epsilon^{\frac12}\,
    \Norm{\Xi}{L_\mathrm{cov}^2}^{\frac12}
    \;+\;
    \E_0 [ \mu_0(\omega)]\,
    T n^{\frac12}\,
    \big(\Norm{\Psi}{L_\mathrm{cov}^2} + \epsilon\big)^{\frac12}\,
    \epsilon^{\frac12}
    \\
    &\qquad
    +\;
    C(p)\, T^{\frac12}\,
    \Norm{D\psi}{L_\mathrm{cov}^2}^{\frac12}\,
    \Norm{D\xi}{L_\mathrm{cov}^2}^{\frac12}
    \\
    &\;\leq\;
    \E_0 [ \mu_0(\omega)]\,
    T n^{\frac12}\,
    \epsilon\, \Norm{\Xi}{L_\mathrm{cov}^2}^{\frac12}
    \;+\;
    \E_0 [ \mu_0(\omega) ]\,
    T n^{\frac12}\,
    \big(\Norm{\Psi}{L_\mathrm{cov}^2} + \epsilon\big)^{\frac12}\,
    \epsilon^{\frac12}
    \\
    &\qquad\quad
    +\;
    C(p)\, T^{\frac12}\,
    \big(\Norm{\Psi}{L_\mathrm{cov}^2}+\epsilon\big)^{\frac12}\,
    \big(\Norm{\Xi}{L_\mathrm{cov}^2}+\epsilon\big)^{\frac12},
  \end{align*}
  where the second inequality follows by Lemma~\ref{lem:correct-estimate-for-gradients} with the change of variables $T\to Tn$.
  Taking $\epsilon\to0$, the first two terms vanish and the last term
  converges to the desired upper bound, which completes the proof.
\end{proof}

To prove Lemma~\ref{lem:apriori-estimate-for-pot}, we first state some easy estimates.

\begin{lemma}\label{lem:one-variation-computation}
  Let $\Psi\in L^2_\mathrm{cov}$. Then
  \begin{align*}
    \E_0\Bigg[
    \Big\|
    \Psi(\omega, X_{\cdot})
    \Big\|_{1\text{-}\mathrm{var}, [0,T]}
    \Bigg]
    \;\leq\;
    T\, \E_0\big[ \mu_0(\omega) \big]^{\frac12}\,
    \Norm{\Psi}{L_\mathrm{cov}^2}.
  \end{align*}
\end{lemma}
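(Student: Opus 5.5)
The process $t\mapsto \Psi(\omega,X_t)$ is piecewise constant, with jumps only at jump times of $X$. So its $1$-variation on $[0,T]$ equals the sum of the absolute values of its jumps:
\begin{align*}
  \big\|\Psi(\omega,X_\cdot)\big\|_{1\text{-}\mathrm{var},[0,T]}
  \;=\;
  \sum_{0<s\le T} \big|\Psi(\omega,X_s)-\Psi(\omega,X_{s-})\big|.
\end{align*}
On the infinite cluster the cocycle property gives $\Psi(\omega,X_s)-\Psi(\omega,X_{s-}) = \Psi(\tau_{X_{s-}}\omega, X_s-X_{s-})$. Hence the sum is of the form $\sum_{0<s\le T} f(X_{s-},X_s)$ with
\begin{align*}
  f(x,y) \;=\; \big|\Psi(\tau_x\omega,y-x)\big|,
\end{align*}
which vanishes on the diagonal because $\Psi(\cdot,0)=0$. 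This is the first step.

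The second step is to take expectations with the L\'evy system formula \eqref{eq:Levy:system}. This converts the expected jump sum into
\begin{align*}
  E_0^\omega\Big[\int_0^T \Big(\textstyle\sum_{y}\omega(\{0,y\})\,|\Psi(\omega,y)|\Big)\circ\tau_{X_s}\,\md s\Big].
\end{align*}
Averaging over the environment and using Fubini together with stationarity of the environment process (Lemma~\ref{lemma:inv:environment_process}), exactly as in the derivation of \eqref{eq:number:jumps}, reduces this to
\begin{align*}
  T\,\mean\Big[\textstyle\sum_y \omega(\{0,y\})\,|\Psi(\omega,y)|\Big].
\end{align*}
The same computation applies verbatim under $\PP$, since everything is restricted to $\Omega_0$ via $\indizeroo$.

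The third step is Cauchy--Schwarz with respect to the measure $\omega(\{0,y\})$ on $y$ and then in $\omega$:
\begin{align*}
  \textstyle\sum_y \omega(\{0,y\})\,|\Psi(\omega,y)|
  \;\le\;
  \mu^\omega(0)^{1/2}\Big(\textstyle\sum_y\omega(\{0,y\})\,|\Psi(\omega,y)|^2\Big)^{1/2}.
\end{align*}
Taking expectations yields the bound $\mean[\mu^\omega(0)]^{1/2}\,\Norm{\Psi}{L^2_{\mathrm{cov}}}$, which is the claim. This is the same estimate used to bound $\mean[R_\ve]$ in the proof of Lemma~\ref{lemma:KV:ineq:pvar}.

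The only point needing care is justifying the L\'evy system identity for a possibly non-integrable, nonnegative $f$. I would handle this by monotone convergence: truncate $f$ by $f\wedge K$, restricted to $|y-x|\le K$, apply \eqref{eq:Levy:system}, and let $K\to\infty$. The right-hand side is finite by Assumption~\ref{ass:general}(iii) and $\Psi\in L^2_{\mathrm{cov}}$.
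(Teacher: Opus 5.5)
Your proof is correct and follows the paper's argument essentially step for step: you identify the $1$-variation with the sum of absolute jumps, apply the L\'evy system formula, use the cocycle property together with stationarity of the environment seen from the walker, and finish with Cauchy--Schwarz. The differences are cosmetic. You invoke the cocycle property before the L\'evy system step rather than after it, and you add a truncation/monotone-convergence justification for nonnegative, possibly non-integrable $f$, which the paper leaves implicit.
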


\begin{proof}
  Let $f^\omega\!: \mathbb{Z}^{d} \times \mathbb{Z}^{d} \to \bbR$ be defined by $f^\omega(x,y)\ldef |\Psi(\omega, y)-\Psi(\omega, x)|$. Then  
  % We first define the function $\eta : \Omega \times \Omega \to \R^d$ by
  % \begin{align}\label{eq:fcn_jump_from_envi}
  %   \eta (\omega,\omega')
  %   \;=\;
  %   \sum_{y\in\mathbb{Z}^{d}} y\, \indicator_{\{\tau_{y}\omega %= \omega'\}}.
  % \end{align}
  we have
  \begin{align*}
    \E_0\Bigg[
    \Big\|
    \Psi(\omega, X_{\cdot})
    \Big\|_{1\text{-}\mathrm{var}, [0,T]}
    \Bigg]
    &=\;
    \E_0\Bigg[
    \Big\|
    \Big(
    \Psi(\omega, X_t)-\Psi(\omega, X_s)
    \Big)_{(s, t)\in\Delta_{[0,T]}}
    \Big\|_{1\text{-}\mathrm{var}, [0,T]}
    \Bigg]
    \\
    &=\;
    \E_0\Bigg[
    \sum_{0< u \le T,\; X_u\ne X_{u-}}
    \big| \Psi(\omega, X_u)-\Psi(\omega, X_{u-}) \big|
    \Bigg]
    \\
    &=\;
    \E_0\Bigg[
    \sum_{0< u \le T}
    f^\omega(X_{u-},X_{u})
    \Bigg]
    \\
    &=\;
    \E\Bigg[
    E_{0}^{\omega}
    \bigg[
    \int_{0}^{T}
    \sum_{x \in \mathbb{Z}^{d}} \om(\{X_{u-}, x\})\, f^\omega(X_{u-}, x)\,
    \md u
    \bigg]
    \Bigg]
    \\
    &=\;
    \E\Bigg[
    E_{0}^{\omega}
    \bigg[
    \int_{0}^{T}
    \sum_{x \in \mathbb{Z}^{d}} \om(\{X_{u-}, X_{u-}+x\})\, f^\omega(X_{u-}, X_{u-}+x)\,
    \md u
    \bigg]
    \Bigg]
    \\
    &=\;
    \int_{0}^{T}
    \E_0\Bigg[
    \sum_{x\in\mathbb{Z}^{d}}
    \tau_{X_{u-}}\omega(\{0,x\})\,
    \big|
    \Psi\big(\tau_{X_{u-}}\omega,x)\big)
    \big|
    \Bigg] \md u
    \\
    &=\;
    \int_{0}^{T}
    \E_0\Bigg[
    \sum_{x\in\mathbb{Z}^{d}} \omega(\{0,x\})\, |\Psi(\omega, x)|
    \Bigg] \md u
    \\
    &\leq\;
    T\, \E_0\big[ \mu_0(\omega) \big]^{\frac12}\,
    \Norm{\Psi}{L_\mathrm{cov}^2},
  \end{align*}
  where the forth equality is an application of the L\'evy system
  theorem (\ref{eq:Levy:system}), the fifth is a chage of variables in the summation, the sixth is the the cocycle property together with Fubini's theorem. Finally the last equality is by stationarity of the process from the point of view of the walker $\tau_{X_{t}}\omega$
  and the inequality is an application of the Cauchy-Schwarz inequality.
\end{proof}

Observe the following simple estimate. Let
$F,G:[0,T]\to\bbR$ be c\`adl\`ag functions with
$F_{u-,u}= 0= G_{u-,u}$ for all but finitely many $u\in[0,T]$. Let
\begin{align*}
  I_{s,t}(F,G)\coloneqq \sum_{ s<u\le t} F_{s,u-}\, G_{u-,u}.
\end{align*}
Then
\begin{align}\label{eq:ii-one-variation-simple-estimate}
  \|I(F,G)\|_{1\text{-}\mathrm{var}, [0,T]}
  \;\leq\;
  \|F\|_{1\text{-}\mathrm{var}, [0,T]}\,
  \|G\|_{1\text{-}\mathrm{var}, [0,T]}.
\end{align}
Indeed, for any partition $\pi$ of $[0,T]$ 
\begin{align*}
  \sum_{[s,t]\in \pi}\big| \sum_{ s<u\le t} F_{s,u-}\, G_{u-,u}\big| 
  \le 
  \sum_{[s,t]\in \pi} \sum_{ s<u\le t} \|F\|_{1\text{-}\mathrm{var}, [0,T]}\, |G_{u-,u}| 
  =
  \|F\|_{1\text{-}\mathrm{var}, [0,T]}\, 
  \|G\|_{1\text{-}\mathrm{var}, [0,T]}\, 
\end{align*}

We can now supply the

\begin{proof}[Proof of Lemma~\ref{lem:apriori-estimate-for-pot}]
  Let $\Psi,\Xi\in L^2_\mathrm{cov}$ and $p>2$. Then
  \begin{align*}
    &\E_0\Bigg[
    \Big\|
    \Big(
    \frac1n \int_s^{t}
    \big( \Psi(\omega, X_{u-})-\Psi(\omega, X_s) \big)\,
    \md \Xi(\omega, X_u)\,
    \Big)_{(s, t)\in\Delta_{[0,nT]}}
    \Big\|_{p/2\text{-}\mathrm{var}, [0,nT]}^{\frac12}
    \Bigg]
    \\
    &\qquad\leq\;
    n^{-\frac12}\,
    \E_0\Bigg[
    \Big\|
    \Big(
    \int_s^{t}
    \big( \Psi(\omega, X_{u-})-\Psi(\omega, X_s) \big)\,
    \md \Xi(\omega, X_u)\,
    \Big)_{(s, t)\in\Delta_{[0,nT]}}
    \Big\|_{1\text{-}\mathrm{var}, [0,nT]}^{\frac12}
    \Bigg]
    \\
    &\qquad\leq\;
    n^{-\frac12}\,
    \E_0\Bigg[
    \big\|
    \Psi(\omega, X_{\cdot})
    \big\|_{1\text{-}\mathrm{var}, [0,nT]}^{\frac12}
    \big\|
    \Xi(\omega, X_{\cdot})
    \big\|_{1\text{-}\mathrm{var}, [0,nT]}^{\frac12}
    \Bigg]
    \\
    &\qquad\leq\;
    n^{-\frac12}\,
    \E_0\Bigg[
    \big\|
    \Psi(\omega, X_{\cdot})
    \big\|_{1\text{-}\mathrm{var}, [0,nT]}
    \Bigg]^{\frac12}
    \E_0\Bigg[
    \big\|
    \Xi(\omega, X_{\cdot})
    \big\|_{1\text{-}\mathrm{var}, [0,nT]}
    \Bigg]^{\frac12}
    \\
    &\qquad\leq\;
    n^{-\frac12}\,
    \E_0\big[ \mu_0(\omega) \big]\,
    n T\,
    \Norm{\Psi}{L_\mathrm{cov}^2}^{\frac12}\,
    \Norm{\Xi}{L_\mathrm{cov}^2}^{\frac12}.
  \end{align*}
\end{proof}

Next, we turn to proving Lemma~\ref{lem:correct-estimate-for-gradients}.

\begin{proof}[Proof of Lemma~\ref{lem:correct-estimate-for-gradients}]
  Let $\psi,\varphi\in L^2(\prob)$ so that $\|\mD \psi\|_{L^2_{\mathrm{cov}}}, \|\mD \varphi \|_{L^2_{\mathrm{cov}}} < \infty$ and fix $p>2$.
  We use the forward-backward presentation.
  Let $\{ \cF_t : t \geq 0 \}$ be the natural filtration of $X$, that is $\cF_t = \si(X_s : s \leq t)$.  Then,
  \begin{align*}
    M_t^{\varphi}
    &\ldef\;
    \mD \varphi(\om, X_t) - \mD \varphi(\om, X_0)
    - \int_0^t \big(L^{\om} \mD \varphi(\om, \cdot) \big)(X_s)\, \md s
    \\
    &\;=\;
    \varphi(\tau_{X_t} \om) - \varphi(\tau_{X_0} \om)
    - \int_0^t \big(\cL \varphi\big)(\tau_{X_s} \om)\, \md s
  \end{align*}
  is a $(\prob, \{\cF_t : t \geq 0\})$-martingale.  Similarly, denote by $\{\cF_t^T : t \in [0, T]\}$ the augmented backward filtration generated by $(X_{T-t} : t \in [0,T])$.  Define the process $(N_t^{\varphi} : t \in [0, T])$ by 
  \begin{align*}
    N_t^{\varphi} \;\ldef\; \bar N_{t^+}^{\varphi}
    \;=\;
    \lim_{\epsilon\to 0} \bar N_{t+\epsilon}^{\varphi},
  \end{align*}
  where 
  \begin{align*}
    \bar N_t^{\varphi}
    \;\ldef\;
    \varphi(\tau_{X_{T-t}} \om) - \varphi(\tau_{X_T} \om)
    - \int_0^t \big(\cL \varphi\big)(\tau_{X_{T-s}} \om)\, \md s.
  \end{align*}
  Since $\mathcal{L}$ is symmetric in $L^2(\prob)$, the process $(\tau_{X_t}\omega)$ is
  reversible under the invariant measure. Consequently, the time-reversed process has the same law as the forward process, ensuring that $N^\varphi$ is a well-defined, square-integrable martingale with respect to the backward filtration.   
  Notice that a change of variables yields
  \begin{align*}
    N_{T}^{\varphi} - N_{T-t}^{\varphi}
    \;=\;
    \varphi(\tau_{X_0} \om) - \varphi(\tau_{X_t} \om)
    - \int_0^t \big(\cL \varphi\big)(\tau_{X_{s}} \om)\, \md s.
  \end{align*}
  Therefore
  \begin{align*}
    \varphi(\tau_{X_t} \om) - \varphi(\tau_{X_0} \om)
    \;=\;
    \frac12M_t^{\varphi} + \frac12 N_{T-t}^{\varphi} - \frac12 N_{T}^{\varphi}
  \end{align*}
  and in particular for $0< s<t\le T$
  \begin{align*}
    \varphi(\tau_{X_t} \om) - \varphi(\tau_{X_s} \om)
    \;=\;
    \frac12M_{s,t}^{\varphi} - \frac12 N_{T-t,T-s}^{\varphi}.
  \end{align*}
  For ease of notation let us write
  \begin{align*}
    \om_t\coloneqq\tau_{X_t} \om
    \;\text{ and }\;
    \om^*_t\coloneqq\tau_{X_{(T-t)^+}} \om = \tau_{X_{{T-t^-}}} \om.
  \end{align*}
  More explicitly,
  \begin{align*}
    \om^*_t(x,y) \coloneqq \lim_{\epsilon\downarrow0} \tau_{X_{{T-(t-\epsilon)}}} \om \,(x,y)
  \end{align*}
  for every $x,y\in\mathbb{Z}^d$.

  \begin{align*}
    2\int_s^{t}
    &
    ( D\psi(\omega, X_{u-})-D\psi(\omega, X_s) )\,
    \md D\varphi(\omega, {X_u})\, \\
    \; & =\;
    2\int_s^{t}
    ( \psi(\om_{u-})-\psi(\om_{s}) )\,
    \md \varphi(\om_u)\,
    \\
    \;& =\;
    2\sum_{s<u\le t: \om_{u-} \ne \om_{u}}	( \psi(\om_{u-})-\psi(\om_{s}) )\,
    (\varphi(\om_u)-\varphi(\om_{u-}))
    \\
    \;& =\;
    \int_s^{t}( \psi(\om_{u-})-\psi(\om_{s}) )\, \md M^\varphi_u
    \;+\;
    \int_s^{t}( \psi(\om_{u-})-\psi(\om_{s}) )\, \md N^\varphi_{T-u}
    \\
    \;& =\;
    \int_s^{t}( \psi(\om_{u-})-\psi(\om_{s}) )\, \md M^\varphi_u
    \;+\;
    \int_s^{t}( \psi(\om_{u-})-\psi(\om_t) )\,\md N^\varphi_{T-u}
    \\
    & \;+\; 
    ( \psi(\om_{t}) - \psi(\om_{s}) ) N^\varphi_{T-s,T-t}
    \\
    \;& =\;
    \int_s^{t}( \psi(\om_{u-})-\psi(\om_{s}) )\, \md M^\varphi_u
    \;-\;
    \int_{T-t}^{T-s}( \psi(\om^*_{v})-\psi(\om^*_{T-t}) )\,\md N^\varphi_{v} \\
    & \;+\;
    (\psi(\om^*_{T-s}) -\psi(\om^*_{T-t})) N^\varphi_{T-t,T-s}
    \\
    \;& =\;
    \int_s^{t}( \psi(\om_{u-})-\psi(\om_{s}) )\, \md M^\varphi_u
    \; - \;
    \int_{T-t}^{T-s}( \psi(\om^*_{v-})-\psi(\om^*_{T-t}) )\,\md N^\varphi_{v}
    \\
    & \;-\;
    \int_{T-t}^{T-s}( \psi(\om^*_{v})-\psi(\om^*_{v-}) )\,\md N^\varphi_{v}
    \;+\;
    (\psi(\om^*_{T-s}) -\psi(\om^*_{T-t})) N^\varphi_{T-t,T-s}
  \end{align*}
  Next, define $p<p_1=p_1(p)$ canonically so that $\frac{2}{p}<\frac12 + \frac{1}{p_1}$, e.g.\ by taking it to be the midpoint of $[p,\frac{2p}{4-p}]$. Using \eqref{eq:Friz-Zorin-Kranich} we obtain
  \begin{align*}
    \E_0\bigg[
    \Big\|
    \Big(
    \int_s^{t}( 
    &
    \psi(\om_{u-})-\psi(\om_{s}) )\, \md M^\varphi_u
    \Big)_{(s, t)\in\Delta_{[0,T]}}
    \Big\|_{p/2\text{-}\mathrm{var}, [0,T]}
    ^{\frac12}
    \bigg]
    \\
    \;& \leq\;
    C_0(p,p_1)
    \E_0\big[
    \big\|
    \psi(\om_{\cdot})
    \big\|_{p_1\text{-}\mathrm{var}, [0,T]}
    \big] ^\frac12
    \E_0\big[
    \big<
    M^\varphi
    \big>_T
    ^\frac12
    \big] ^\frac12
    \\
    \;& \leq\;
    C_0(p,p_1) c'(p_1)
    (\sqrt{T} \Norm{D\psi}{L_\mathrm{cov}^2})  ^\frac12
    (\sqrt{T}\Norm{D\varphi}{L_\mathrm{cov}}) ^\frac12 	,
    \\
    \;& =:\;
    C(p)
    \sqrt{T} \Norm{D\psi}{L_\mathrm{cov}^2}^\frac12   \Norm{D\varphi}{L_\mathrm{cov}^2}^\frac12 ,
  \end{align*}
  where we used Lemma~\ref{lemma:p-var-estimated-for-L2pot} for $D\psi(\om_{\cdot})\in L_\mathrm{pot}^2$ and \eqref{eq:qv:of:dynkin} for the predictable quadratic variation martingale estimate.
  Similarly,
  \begin{align*}
    \E_0\bigg[
    \Big\|
    \Big(
    \int_{T-t}^{T-s}( \psi(\om^*_{v-})-\psi(\om^*_{T-t}) )\,\md N^\varphi_{v}
    \Big)_{(s, t)\in\Delta_{[0,T]}}
    \Big\|_{p/2\text{-}\mathrm{var}, [0,T]}
    ^{\frac12}
    \bigg]
    \\
    \;\leq\;
    C(p)
    \sqrt{T} \Norm{D\psi}{L_\mathrm{cov}^2}^\frac12   \Norm{D\varphi}{L_\mathrm{cov}^2}^\frac12.
  \end{align*}
  Next, using Cauchy-Schwarz inequality twice we obtain
  \begin{align*}
    & \E_0\bigg[
    \Big\|
    \Big(
    (\psi(\om^*_{T-s}) -\psi(\om^*_{T-t}) N^\varphi_{T-t,T-s}
    \Big)_{(s, t)\in\Delta_{[0,T]}}
    \Big\|_{p/2\text{-}\mathrm{var}, [0,T]}
    ^{\frac12}
    \bigg]
    \\
    \;& \leq\;
    \E_0\bigg[
    \Big\|
    \Big(
    (\psi(\om^*_{T-s}) -\psi(\om^*_{T-t})
    \Big)_{(s, t)\in\Delta_{[0,T]}}
    \Big\|_{p\text{-}\mathrm{var}, [0,T]}
    ^{\frac12}
    \Big\|
    \Big(
    N^\varphi_{T-t,T-s}
    \Big)_{(s, t)\in\Delta_{[0,T]}}
    \Big\|_{p\text{-}\mathrm{var}, [0,T]}
    ^{\frac12}
    \bigg]
    \\
    \;& \leq\;
    \E_0\bigg[
    \Big\|
    \Big(
    (\psi(\om^*_{T-s}) -\psi(\om^*_{T-t})
    \Big)_{(s, t)\in\Delta_{[0,T]}}
    \Big\|_{p\text{-}\mathrm{var}, [0,T]}
    \bigg]
    ^{\frac12}
    \E_0\bigg[
    \Big\|
    \Big(
    N^\varphi_{T-t,T-s}
    \Big)_{(s, t)\in\Delta_{[0,T]}}
    \Big\|_{p\text{-}\mathrm{var}, [0,T]}
    \bigg]
    ^{\frac12}
    \\
    \;& \leq\;
    \bar C(p)
    \sqrt{T} \Norm{D\psi}{L_\mathrm{cov}^2}^\frac12   \Norm{D\varphi}{L_\mathrm{cov}^2}^\frac12,
  \end{align*}
  where we used Lemma~\ref{lemma:p-var-estimated-for-L2pot} for $D\psi\in L_\mathrm{pot}^2$ and Lepingle BDG inequality for the martingale term together with \eqref{eq:qv:of:dynkin} for the quadratic variation. Here $\bar C(p)$ is a product of the constant $c$ from Lemma~\ref{lemma:p-var-estimated-for-L2pot} and the constant appears in the upper bound of the Lepingle BDG inequality.
  Finally, 
  % since the absolutely continuous part of  $G$ do not contribute to $Q(F,G)$  
  \begin{align*}
    \E_0\bigg[
    \Big\|
    &
    \Big(
    \int_{T-t}^{T-s}( \psi(\om^*_{v})-\psi(\om^*_{v-}) )\,\md N^\varphi_{v}
    \Big)_{(s, t)\in\Delta_{[0,T]}}
    \Big\|_{p/2\text{-}\mathrm{var}, [0,T]}
    ^{\frac12}
    \bigg]
    \\
    \;& \leq\;
    \E_0\bigg[
    \Big\|
    \Big(
    \int_{T-t}^{T-s}( \psi(\om^*_{v})-\psi(\om^*_{v-}) )\,\md N^\varphi_{v}
    \Big)_{(s, t)\in\Delta_{[0,T]}}
    \Big\|_{1\text{-}\mathrm{var}, [0,T]}
    ^{\frac12}
    \bigg]
    \\
    \;& =\;
    \E_0\bigg[
    \sum_{0<v\le T} |\psi(\om^*_{v})-\psi(\om^*_{v-})|\cdot|\varphi(\om^*_{v})-\varphi(\om^*_{v-})|\indicator_{\om^*_{v-}\ne \om^*_{v}}
    \bigg]^{\frac12}
    \\
    \;& \le\;
    \E_0\bigg[
    \int_{0}^{T} \sum_x \om^*_{v}(0,x) |(\psi(\tau_x\om^*_{v})-\psi(\om^*_{v}))|\cdot|(\varphi(\tau_x\om^*_{v})-\varphi(\om^*_{v}))|\md v
    \bigg]^{\frac12}
    \\    
    \;& \le\;
    \sqrt T\Norm{D\psi}{L_\mathrm{cov}^2}^{\frac12} \Norm{D\varphi}{L_\mathrm{cov}^2}^{\frac12}
    % 
    % 
    % \;& \leq\;
    % \E_0\bigg[
    %	\sum_{0<v\le T} (\psi(\om^*_{v})-\psi(\om^*_{v-}))^2\indicator_{\om^*_{v-}\ne \om^*_{v}}
    %	\bigg]^{\frac14}
    % \E_0\bigg[
    %	[ N^\varphi]_T
    %	\bigg]
    % ^{\frac14}
    %	\\
    %	\;& \leq\;
    % \E_0\bigg[
    %	\int_{0}^{T} \sum_x \om^*_{v}(0,x) (\psi(\tau_x\om^*_{v})-\psi(\om^*_{v}))^2\md v
    %	\bigg]^{\frac14}
    %	(T\Norm{D\varphi}{L_\mathrm{cov}^2}^2)^{\frac14}
    % \\
    %	\;& =\;
    % \bigg(
    %	\int_{0}^{T} \E_0\bigg[\sum_x \om^*_{v}(0,x) (\psi(\tau_x\om^*_{v})-\psi(\om^*_{v}))^2\bigg]\md v
    %	\bigg)^{\frac14}
    %	(T\Norm{D\varphi}{L_\mathrm{cov}^2}^2)^{\frac14}
    % \\
    % \;& =\;
    % \sqrt{T} \Norm{D\psi}{L_\mathrm{cov}^2}^\frac12 \Norm{D\varphi {L_\mathrm{cov}^2}^\frac12,
  \end{align*}
  where for the equality we used the fact an absolutely continuous part does not influence the quadratic co-variation of bounded variation functions, the second inequality is an application L\'evy system theorem \eqref{eq:Levy:system} applied to the reversed process together with stationarity of $(\omega^*_t)$, the reversed process from the point of view of the walker. For the last inequality we used again stationarity and the Cauchy-Schwarz inequality. This concludes the proof.
\end{proof}

\subsubsection{Convergence in uniform norm for iterated integrals in probability}\label{subsubseq:Skoro_conver_ii_of_corrector}

% The argument here is as follows:
% \begin{enumerate}
% \item First show convergence in probability in Skorohod for the iterated integral of gradients.
%   
% \item Fix $\epsilon>0$. Approximate corrector $\chi$ by BOUNDED gradients $D\varphi$.
%   
% \item Write integral with 3 terms: only gradients, gradient against residue and residue against corrector.
%   
% \item Use key estimate for integrals involving the residues $\chi^i-D\varphi^i$.
%   
% \item Deduce convergence in probability in uniform norm from the one for the ii for gradients.
%   
% \end{enumerate}
% 

\begin{prop}\label{prop:ii-corrector-convergence-in-prob}
  Let $\Psi,\Xi\in L^2_\mathrm{pot}$. Then
  \begin{align}\label{eq:ii-corrector-convergence-in-prob}
    \E_0\Big[
    \Big\|
    \Big(
    \frac1n \int_0^{nt}
    \Psi(\omega, X_{s-})\,
    \md \Xi(\omega, {X_{s}})
    + \frac12 \big< \Psi, \Xi \big>_{L_\mathrm{cov}^2} t \,
    \Big)_{t\in {[0,T]}}
    \Big\|_{\mathrm{unif}, [0,T]}
    ^\frac12
    \Big]
    \;\xrightarrow[n\to\infty]{} 0.
  \end{align}
\end{prop}

\begin{lemma}\label{lem:bdd-gradients-uniformly-integrable-ii}
  Let $\psi,\varphi\in L^\infty(\prob)$ (and in particular $\|\mD \psi\|_{L^2_{\mathrm{cov}}}, \|\mD \varphi \|_{L^2_{\mathrm{cov}}} < \infty$ since $\E[\mu^\om(0)]<\infty$).	Then
  \begin{align}\label{eq:bdd-grad-unif-integ-ii}
    \E_0\Big[
    \Big\|
    \frac{1}{n}\int_{0}^{nt}\mD \psi(\omega,X_{s-}) \, \md \mD \varphi(\omega,X_{s})
    + \frac12 \big< \mD \psi, \mD \varphi \big>_{L_\mathrm{cov}^2} t
    \Big\|_{\mathrm{unif}, [0,T]}
    % ^\frac12
    \Big]
    \;\xrightarrow[n\to\infty]{} 0.
  \end{align}
\end{lemma}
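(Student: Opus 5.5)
Write $\om_s \ldef \tau_{X_s}\om$. Since $\mD\psi(\om,X_{s-}) = \psi(\om_{s-})-\psi(\om_0)$ and $\mD\varphi(\om,X_s)=\varphi(\om_s)-\varphi(\om_0)$, the integral in \eqref{eq:bdd-grad-unif-integ-ii} splits as
\begin{align*}
  \frac1n\int_0^{nt}\mD\psi(\om,X_{s-})\,\md\mD\varphi(\om,X_s)
  \;=\;
  \frac1n\int_0^{nt}\psi(\om_{s-})\,\md\varphi(\om_s)
  \;-\;\frac1n\,\psi(\om_0)\big(\varphi(\om_{nt})-\varphi(\om_0)\big).
\end{align*}
The second term is bounded uniformly in $t$ by $2\|\psi\|_\infty\|\varphi\|_\infty/n$, so it vanishes deterministically. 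For the first term I will use the Dynkin decomposition from the proof of Lemma~\ref{lemma:KV:ineq:pvar}:
\begin{align*}
  \varphi(\om_t)-\varphi(\om_0) \;=\; M^\varphi_t + \int_0^t(\cL\varphi)(\om_s)\,\md s .
\end{align*}
This gives
\begin{align*}
  \frac1n\int_0^{nt}\psi(\om_{s-})\,\md\varphi(\om_s)
  \;=\;
  \frac1n\int_0^{nt}\psi(\om_{s-})\,\md M^\varphi_s
  \;+\;\frac1n\int_0^{nt}(\psi\,\cL\varphi)(\om_s)\,\md s .
\end{align*}

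The stochastic integral is a square integrable martingale. Its quadratic variation is at most $\|\psi\|_\infty^2[M^\varphi]_{nT}$, and by \eqref{eq:qv:of:dynkin} this has mean $nT\|\psi\|_\infty^2\Norm{\mD\varphi}{L^2_{\mathrm{cov}}}^2$. Doob's $L^2$ inequality then bounds $\E_0$ of its supremum over $[0,T]$, divided by $n$, by $C\|\psi\|_\infty\sqrt{T}\Norm{\mD\varphi}{L^2_{\mathrm{cov}}}\,n^{-1/2}\to0$. (Here $\E_0$ is over the law where $\prob$ is stationary for the environment process; passing to $\PP_0$ only costs the factor $\prob(\Om_0)^{-1}$.)

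The drift term is an additive functional of the environment process. Its integrand satisfies $|\psi\,\cL\varphi|\le 2\|\psi\|_\infty\|\varphi\|_\infty\,\mu^\om(0)$, which is in $L^1(\prob)$ by Assumption~\ref{ass:general}(iii). Since $\prob$ is stationary and ergodic for $(\om_t)$ (Lemma~\ref{lemma:inv:environment_process}), Birkhoff's ergodic theorem gives, for each fixed $t$, convergence $\prob_0$-a.s.\ and in $L^1$ to $t\,\mean[\psi\,\cL\varphi]$. I then identify the constant by reversibility and symmetry of $\cL$ in $L^2(\prob)$:
\begin{align*}
  \mean[\psi\,\cL\varphi] \;=\; -\tfrac12\,\mean\Big[\textstyle\sum_x\om(\{0,x\})\,\mD\psi(\om,x)\,\mD\varphi(\om,x)\Big] \;=\; -\tfrac12\big\langle\mD\psi,\mD\varphi\big\rangle_{L^2_{\mathrm{cov}}}.
\end{align*}
This is the same computation as \eqref{eq:ii-formula}, obtained by symmetrizing $x\mapsto -x$ using stationarity under shifts.

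The main obstacle is upgrading the pointwise-in-$t$ convergence to uniform convergence on $[0,T]$ in $L^1$. My plan is to split $\psi\,\cL\varphi=g^+-g^-$ into positive and negative parts. Each map $t\mapsto\frac1n\int_0^{nt}g^\pm(\om_s)\,\md s$ is nondecreasing, and its limit $t\,\mean[g^\pm]$ is continuous and linear. A Dini/P\'olya-type argument applies: use convergence on a finite grid of $[0,T]$ plus monotonicity. This yields uniform convergence almost surely. The family is dominated by $\frac1n\int_0^{nT}|g|(\om_s)\,\md s$, which is uniformly integrable because it converges in $L^1$. That domination upgrades the almost sure uniform convergence to $L^1$. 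Combining the three pieces yields \eqref{eq:bdd-grad-unif-integ-ii}.
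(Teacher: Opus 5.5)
Your proof is correct. The boundary term $\frac1n\psi(\om)(\varphi(\om_{nt})-\varphi(\om))$ and the identification of the constant via \eqref{eq:ii-formula} are handled exactly as in the paper, but the core convergence step takes a different route.

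The paper never compensates. It reduces to Lemma~\ref{lem:l2-uniformly-integrable-ii} and treats $\sum_{0<s\le nt}\psi(\om_{s-})\big(\varphi(\om_s)-\varphi(\om_{s-})\big)$ directly as a jump functional $f(\om_{s-},\om_s)$, with $f(\om,\tilde\om)=\psi(\om)(\varphi(\tilde\om)-\varphi(\om))$. To this it applies the jump-sum ergodic theorem, Lemma~\ref{lem:ergodic in uniform norm}. That lemma uses unit-time blocks, the discrete Birkhoff theorem, a maximal lemma for the leftover block, and Lemma~\ref{lemma:stat-seq-ergodic-thm-implies-uniform-conv} for uniformity in $t$. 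The L\'evy system enters only to compute the mean.

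You instead subtract the compensator explicitly through the Dynkin decomposition. You kill the martingale transform with Doob's inequality. You then apply the continuous-time ergodic theorem to the additive functional $\frac1n\int_0^{nt}(\psi\,\cL\varphi)(\om_s)\,\md s$, getting uniformity from monotonicity of the positive and negative parts.

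What each route buys:
\begin{itemize}
\item Your route is more elementary and gives an explicit $n^{-1/2}$ rate for the fluctuation part.
\item It genuinely uses the boundedness of $\psi$. The martingale $\int\psi(\om_{s-})\,\md M^\varphi_s$ has expected quadratic variation $nT\,\mean\big[\psi(\om)^2\sum_x\om(\{0,x\})|\mD\varphi(\om,x)|^2\big]$. This need not be finite under the weaker hypotheses of Lemma~\ref{lem:l2-uniformly-integrable-ii}, namely $\mean[\psi^2\mu^\om(0)]<\infty$ and $\mD\varphi\in L^2_{\mathrm{cov}}$.
\item The paper's route needs only the $L^1$ condition \eqref{eq:assumption-integrability} on the jump functional, and it also gives $\prob_0$-a.s.\ convergence.
\item The paper reuses this generality later: for the unbounded potential $\phi$ in Lemma~\ref{lem:quenched-R-area}, and for the quadratic covariations in Lemma~\ref{cor:ergodic_p-var}.
\end{itemize}
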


\begin{proof}[Proof of the Proposition \ref{prop:ii-corrector-convergence-in-prob} assuming Lemma~\ref{lem:bdd-gradients-uniformly-integrable-ii}]
  Let $\epsilon>0$. It is enough to show there are $\psi,\varphi\in L^\infty(\prob)$ so that
  \begin{align*}
    \E_0\Big[
    \Big\|
    \Big(
    \frac1n
    \int_0^{nt}
    \Psi(\omega, X_{s-})\,
    \md \Xi(\omega, {X_{s}})
    -
    \frac1n
    \int_0^{nt}
    D \psi(\omega, X_{s-})\,
    \md D \varphi(\omega, {X_{s}}) \,\Big)_{t\in {[0,T]}}
    \Big\|_{\mathrm{unif}, [0,T]}
    ^\frac12
    \Big]
    \; \le \;
    \epsilon/3
  \end{align*}
  for all $n\in\N$, and so that additionally
  \begin{align*}
    (\frac12 \big|\big(\big< \Psi, \Xi \big>_{L_\mathrm{cov}^2} - \big< D\psi, D\varphi \big>_{L_\mathrm{cov}^2}\big)\big| T )^\frac12
    \; \le\;
    \epsilon/3.
  \end{align*}
  Indeed, take $\psi,\varphi\in L^\infty(\prob)$ with the last estimates, then, using \eqref{eq:bdd-grad-unif-integ-ii} for all $n$ large enough
  \begin{align*}
    \E_0\Big[
    \Big\|
    \Big(
    \frac1n \int_0^{nt}
    \Psi(\omega, X_{s-})\,
    \md \Xi(\omega, {X_{s}}) + \frac12 \big< \Psi, \Xi \big>_{L_\mathrm{cov}^2} t \,
    \Big)_{t\in {[0,T]}}
    \Big\|_{\mathrm{unif}, [0,T]}
    ^\frac12
    \Big]
    \;\le\;
    \epsilon/3 \; + \; \epsilon/3
    \\
    \; + \;
    \E_0\Big[
    \Big\|
    \Big(
    \frac1n \int_0^{nt}
    D\psi(\omega, X_{s-})\,
    \md D\varphi(\omega, {X_{s}}) + \frac12 \big< D\psi, D\varphi \big>_{L_\mathrm{cov}^2} t \,
    \Big)_{t\in {[0,T]}}
    \Big\|_{\mathrm{unif}, [0,T]}
    ^\frac12
    \Big]
    \; < \;
    \epsilon,
  \end{align*}
  as required.
  Next, assume without loss of generality that $\epsilon<1$. Since $\Psi,\Xi\in L^2_\mathrm{pot}$ it is possible to find $\psi,\varphi\in L^\infty(\prob)$ so that $\big\| \Psi-\mD\psi \big\|_{L_\mathrm{cov}^2}<\delta$ and $\big\| \Xi-\mD\varphi \big\|_{L_\mathrm{cov}^2}<\delta$, where $0<\delta < \frac{2(\epsilon/3)^2}{3T C(3/2)(\big\| \Psi \big\|_{L_\mathrm{cov}^2}+\big\| \Xi \big\|_{L_\mathrm{cov}^2})}$, where $C(p)$ is the constant guaranteed from Proposition \ref{prop:correct-estimate-for-pot}. It is easy to verify that the second condition holds with such a choice of $\delta$. For the second condition we note that
  \begin{align*}
    &
    \frac1n
    \int_0^{nt}
    \Psi(\omega, X_{s-})\,
    \md \Xi(\omega, {X_{s}})
    -
    \frac1n
    \int_0^{nt}
    D \psi(\omega, X_{s-})\,
    \md D \varphi(\omega, {X_{s}}) \,
    \\
    \;& = \;
    \frac1n
    \int_0^{nt}
    (\Psi - \mD \psi)(\omega, X_{s-}) 
    \md \Xi(\omega, {X_{s}})
    +
    \frac1n
    \int_0^{nt}
    \mD\psi(\omega, X_{s-})\,
    \md  (\Xi-\mD\varphi)(\omega, {X_{s}}).
  \end{align*}
  Using Proposition \ref{prop:correct-estimate-for-pot} together with the monotonicity of the norms, $\|\cdot\|_{\infty\text{-}\mathrm{var}, [0,T]}\le \|\cdot\|_{q\text{-}\mathrm{var}, [0,T]}$ for any $q>0$, we get also the first condition. This concludes the proof.
\end{proof}

\begin{lemma}\label{lem:l2-uniformly-integrable-ii}
  Let $\psi,\varphi:\Om\to\R$ be $\prob$-measurable functions so that $\|\mD \varphi \|_{L^2_{\mathrm{cov}}} < \infty$ and $\E[\psi^2(\om) \mu^\om(0)]<\infty$.
  Then
  \begin{align*}
    \Big\|
    \frac{1}{n}\int_{0}^{nt}\psi(\omega_{s-}) \, \md \varphi(\omega_{s})
    + \frac12 \big< \mD \psi, \mD \varphi \big>_{L_\mathrm{cov}^2} t
    \Big\|_{\mathrm{unif}, [0,T]}
        \;\xrightarrow[n\to\infty]{} 0
  \end{align*}
  both in $L^1(\prob_0)$ and $\prob_0$-a.s.
\end{lemma}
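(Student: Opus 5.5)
The plan is to reduce the stochastic integral to an additive functional of the environment process plus a martingale, and then apply the ergodic theorem for $(\om_t)=(\tau_{X_t}\om)$ from Lemma~\ref{lemma:inv:environment_process}.

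First I rewrite the integrator with the Dynkin martingale from the proof of Lemma~\ref{lem:correct-estimate-for-gradients}:
\[
\varphi(\om_t)-\varphi(\om_0)=M^\varphi_t+\int_0^t \cL\varphi(\om_s)\,\md s .
\]
This gives
\[
\int_0^{nt}\psi(\om_{s-})\,\md\varphi(\om_s)=\int_0^{nt}\psi(\om_{s-})\,\md M^\varphi_s+\int_0^{nt}\psi(\om_s)\,\cL\varphi(\om_s)\,\md s .
\]
If $\cL\varphi$ is not in $L^2$, I interpret the drift term through the L\'evy system \eqref{eq:Levy:system}. Its compensator density is
\[
g(\om)=\sum_x\om(\{0,x\})\,\psi(\om)\,\big(\varphi(\tau_x\om)-\varphi(\om)\big),
\]
and $g\in L^1(\prob)$ by Cauchy--Schwarz, since $\E[\psi^2\mu^\om(0)]<\infty$ and $\mD\varphi\in L^2_{\mathrm{cov}}$.

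Second, I handle the drift. By the ergodic theorem for the stationary ergodic environment process, $\frac1n\int_0^{nt}g(\om_s)\,\md s\to t\,\E[g]$ almost surely. The convergence is uniform in $t\in[0,T]$, because $g$ splits into positive and negative parts, each giving a monotone integral in $t$, and pointwise convergence of monotone functions to a continuous limit is uniform. Convergence also holds in $L^1$ by the $L^1$ ergodic theorem. To identify the constant, I use reversibility: the symmetrization $x\mapsto -x$, $\om\mapsto\tau_x\om$ gives
\[
\E\Big[\sum_x\om(\{0,x\})\,\psi(\om)\,\mD\varphi(\om,x)\Big]=-\tfrac12\,\big\langle \mD\psi,\mD\varphi\big\rangle_{L^2_{\mathrm{cov}}},
\]
which is exactly the constant claimed. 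Under $\prob_0$ the weight is $\prob$, so no conditioning subtleties arise.

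Third, I show that the martingale term $\frac1n\int_0^{n\cdot}\psi(\om_{s-})\,\md M^\varphi_s$ vanishes uniformly. In $L^1$ this follows from Doob/BDG: the expected quadratic variation is
\[
\E_0\Big[\sum_{s\le nT}\psi(\om_{s-})^2\big(\Delta\varphi(\om_s)\big)^2\Big]=nT\,\E\Big[\psi^2\sum_x\om(\{0,x\})\,\mD\varphi(\om,x)^2\Big].
\]
This requires $\psi^2|\mD\varphi|^2$ to be integrable, which is not directly implied by the hypotheses. So I first truncate $\psi$ to $\psi_K=(\psi\wedge K)\vee(-K)$, which costs an $L^1$ error controlled by the 1-variation bound as in Lemma~\ref{lem:one-variation-computation}. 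After truncation the martingale term is $O(n^{-1/2})$ in $L^2$, and letting $K\to\infty$ recovers the full statement, with $\E[g_K]\to\E[g]$ by dominated convergence.

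For almost sure convergence, I would apply the same decomposition and use a martingale strong law: $\frac1n\int_0^{n\cdot}\psi_K\,\md M^\varphi$ tends to $0$ a.s. because its predictable quadratic variation grows linearly, by the ergodic theorem. The truncation error is handled a.s. by the ergodic theorem applied to $\sum_x\om(\{0,x\})\,|\psi-\psi_K|\,|\mD\varphi|$.

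The main obstacle is this integrability and truncation step, specifically making the a.s. error from truncation uniformly small in $n$ via the ergodic theorem for the 1-variation majorant.
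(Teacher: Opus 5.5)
\paragraph{Verdict.} Your $L^1(\prob_0)$ argument is sound, but the almost-sure part has a gap at exactly the step you flag. Closing it requires the one tool that, applied directly, is the paper's whole proof.

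\paragraph{What works.} The truncation remainder is bounded in $L^1(\prob_0)$, uniformly in $n$ and $t$, by the L\'evy-system identity. The truncated martingale is $O(n^{-1/2})$ by Doob's inequality. The compensator is handled by the ergodic theorem for $(\om_t)$. Your symmetrization identifying the constant is exactly \eqref{eq:ii-formula}.

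\paragraph{The gap.} The remainder
\[
\frac1n\sum_{0<s\le nT}|\psi-\psi_K|(\om_{s-})\,\big|\varphi(\om_s)-\varphi(\om_{s-})\big|
\]
is a sum over jumps, not an additive functional $\frac1n\int_0^{nT}H_K(\om_s)\,\md s$. Birkhoff's theorem for the environment process, applied to
\[
H_K(\om)=\sum_x\om(\{0,x\})\,|\psi-\psi_K|(\om)\,|\mD\varphi(\om,x)|,
\]
controls only the compensator of this remainder. The difference is a martingale whose jumps are merely integrable, which is why you truncated in the first place. So the strong law via linear growth of the predictable quadratic variation, which you use for the truncated part, does not apply to it. A first-moment bound alone does not yield an almost-sure bound either. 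As written, the $\prob_0$-a.s.\ statement is not proved.

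\paragraph{The fix, and the paper's route.} If ``the ergodic theorem for the 1-variation majorant'' means an ergodic theorem for sums over jumps, that is Lemma~\ref{lemma:ergodic}. It applies Birkhoff to the stationary sequence $F(\tau_{X_k}\om)$ with $F(\om)=\sum_{0<s\le 1}f(\om_{s-},\om_s)$. Lemma~\ref{lem:ergodic in uniform norm} is its uniform-in-$t$ version. With this tool your remainder is fine, but the martingale decomposition and truncation become superfluous. Pathwise,
\[
\int_0^{nt}\psi(\om_{s-})\,\md\varphi(\om_s)=\sum_{0<s\le nt}f(\om_{s-},\om_s),
\qquad f(\om,\tilde\om)=\psi(\om)\big(\varphi(\tilde\om)-\varphi(\om)\big),
\]
and the paper applies Lemma~\ref{lem:ergodic in uniform norm} to this $f$ directly. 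Its hypothesis \eqref{eq:assumption-integrability} is exactly your Cauchy--Schwarz bound showing $g\in L^1(\prob)$. The limit constant is $\E[g]=-\frac12\langle\mD\psi,\mD\varphi\rangle_{L^2_{\mathrm{cov}}}$ by your symmetrization. This gives both the $L^1(\prob_0)$ and the $\prob_0$-a.s.\ convergence at once, with no truncation and no second-moment condition on $\psi\,\mD\varphi$.
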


\begin{proof}[Proof of Lemma~\ref{lem:bdd-gradients-uniformly-integrable-ii} assuming Lemma~\ref{lem:l2-uniformly-integrable-ii}]
  Let $\psi,\varphi\in L^\infty(\prob)$. Note that $\prob$-a.s we have
  \begin{align*}
    %	\E_0\Big[
    \Big\|
    \frac{1}{n} \psi(\omega)(\varphi(\omega_{nt})-\varphi(\omega))
    \Big\|_{\mathrm{unif}, [0,T]}
    % \Big]
    \le
    \frac{2\| \psi \|_{L^\infty(\prob)}\| \varphi \|_{L^\infty(\prob)}}{n}
    \;\xrightarrow[n\to\infty]{} 0.
  \end{align*}
  Hence it remains to show
  \begin{align*}
    \E_0\Big[
    \Big\|
    \frac{1}{n}\int_{0}^{nt}\psi(\omega_{s-}) \, \md \varphi(\omega_{s})
    + \frac12 \big< \mD \psi, \mD \varphi \big>_{L_\mathrm{cov}^2} t
    \Big\|_{\mathrm{unif}, [0,T]}
    % ^\frac12
    \Big]
    \;\xrightarrow[n\to\infty]{} 0,
  \end{align*}
  which is an application of Lemma~\ref{lem:l2-uniformly-integrable-ii} after noting that
  \begin{align*}
    \mean
    \!\big[
    \mu^\om(0)  \psi (\om)^2
    \big]
    & \; \le \;
    \mean\! [\mu^\om(0)]
    \| \psi^2\|_{L^\infty}
    <\infty.
  \end{align*}

\end{proof}

\begin{proof}[Proof of Lemma \ref{lem:l2-uniformly-integrable-ii}]
  We apply Lemma~\ref{lem:ergodic in uniform norm} to the function
  \begin{align*}
    f(\omega,\tilde\omega)
    \; := \;
    \psi(\omega) \, (\varphi(\tilde\omega)-\varphi(\omega))
  \end{align*}
  To do so we need to show that the condition \eqref{eq:assumption-integrability} holds for the function $f$ and then to compute
  $\mean
  \!\big[
  \sum_{x \in \mathbb{Z}^{d}}
  \om(\{0,x\}) f(\om, \tau_x \om)
  \big]
  =
  \mean
  \!\big[
  \sum_{x \in \mathbb{Z}^{d}}
  \om(\{0,x\}) \psi (\om) (\varphi(\tau_x \om)-\varphi(\omega))
  \big]
  $.
  For the former, by Cauchy-Schwartz inequality we obtain
  \begin{align*}
    \mean
    \!\big[
    \sum_{x \in \mathbb{Z}^{d}}
    \om(\{0,x\}) |f(\om, \tau_x \om)|
    \big]
    & \; = \;
    \mean
    \!\big[
    \sum_{x \in \mathbb{Z}^{d}}
    \om(\{0,x\}) |\psi (\om) (\varphi(\tau_x \om)-\varphi(\omega))|
    \big] \\
    & \; \le \;
    \mean
    \!\big[
    \mu^\om(0)  \psi (\om)^2
    \big]^{1/2}
    \|\mD \varphi\|_{L_\mathrm{cov}^2} < \infty.
  \end{align*}
  It is left to show that
  \begin{align}\label{eq:ii-formula}
    \mean
    \!\big[
    \sum_{x \in \mathbb{Z}^{d}}
    \om(\{0,x\}) \psi (\om) (\varphi(\tau_x \om)-\varphi(\omega))
    \big]
    \; = \;
    -\frac12 \big< \mD \psi, \mD \varphi \big>_{L_\mathrm{cov}^2}.
  \end{align}
  Indeed,
  \begin{align*}
    \mean
    \!\big[
    \sum_{x \in \mathbb{Z}^{d}}
    \om(\{0,x\}) \psi (\om) (\varphi(\tau_x \om)-\varphi(\omega))
    \big]
    \; = \; &
    \mean
    \!\big[
    \sum_{x \in \mathbb{Z}^{d}}
    \om(\{0,-x\}) \psi (\om) (\varphi(\tau_{-x} \om)-\varphi(\omega))
    \big]\\
    \; = \; &
    \mean
    \!\big[
    \sum_{x \in \mathbb{Z}^{d}}
    \tau_x\om(\{-x,0\}) \psi (\tau_x\om) (\varphi(\tau_x\tau_{-x} \om)-\varphi(\tau_x\omega))
    \big]\\
    \; = \; &
    \mean
    \!\big[
    \sum_{x \in \mathbb{Z}^{d}}
    \om(\{0,x\}) \psi (\tau_x\om) (\varphi(\om)-\varphi(\tau_x\omega))
    \big],
  \end{align*}
  where for the second equality we first used the symmetry of the conductances and then Fubini's theorem together with translation invariance for each summand.
  % The latter is valid since
  % \begin{align*}
  %   \mean
  %   \!\big[
  %   \sum_{x \in \mathbb{Z}^{d}}
  %   \om(\{0,x\}) |\psi (\om) (\varphi(\tau_x \om)-\varphi(\omega))|
  %   \big] ^2
  %   \; \le \; &
  %   \mean
  %   \!\big[
  %   \mu^\om(0) \psi (\om)^2 	\big]
  %   \|\mD \varphi\|_{L_\mathrm{cov}^2}^2\\
  %   \; \le \; &
  %   \mean
  %   \!\big[
  %   \mu^\om(0)\big] \| \psi \|_{L^\infty(\prob)}^2
  %   \|\mD \varphi\|_{L_\mathrm{cov}^2}^2
  %   <\infty.
  % \end{align*}
  % 
  Hence,
  \begin{align*}
    2\mean
    \!\big[
    \sum_{x \in \mathbb{Z}^{d}}
    \om(\{0,x\}) \psi (\om) (\varphi(\tau_x \om)-\varphi(\omega))
    \big]
    \; = \; &
    \mean
    \!\big[
    \sum_{x \in \mathbb{Z}^{d}}
    \om(\{0,x\}) (\psi (\om) - \psi (\tau_x\om)) (\varphi(\tau_x\omega)-\varphi(\om))
    \big]\\
    \; = \; &
    - \big< \mD \psi, \mD \varphi \big>_{L_\mathrm{cov}^2},
  \end{align*}
  which concludes the proof of the lemma.
\end{proof}

We are finally ready to prove the annealed result, Theorem~\ref{thm:annealed}.

\begin{proof}[Proof of Theorem~\ref{thm:annealed}]
  We will show the conditions of Lemma~\ref{lem:abstract-limit-specific}.
  First, condition (1) follows from Proposition \ref{prop:first_level_conv_of_hamonic}. 
  For condition (2): part (i) follows from Proposition \ref{prop:sublinearity:pvar} applied to the function $\Psi:=\chi\indizeroo$. Part (ii)  follows from Proposition \ref{prop:correct-estimate-for-pot} applied to the function $\Psi=\Xi:=\chi\indizeroo$. Part (iii) follows from Proposition \ref{prop:ii-corrector-convergence-in-prob} applied to the function $\Psi=\Xi:=\chi\indizeroo$.
  Finally, to see condition (3) note that  for $\Pi\indizeroo\in L^2_{\mathrm{cov}}$ its orthogonal representation is given by $\Pi\indizeroo=\Phi\indizeroo+\chi\indizeroo$ with $\Phi\indizeroo\in L^2_{\mathrm{sol}}$ and $\chi\indizeroo\in L^2_{\mathrm{pot}}$.  In particular, $\langle \Phi^i, \chi^j \rangle_{L^2_{\mathrm{cov}(\PP)}}=0$ for all $1\le i,j\le d$. Hence condition (3) follows by applying Lemma~\ref{cor:ergodic_p-var} with $\Psi=\Phi$ and $\Psi=\chi$. Therefore we have shown all the conditions of Lemma~\ref{lem:abstract-limit-specific} hold. This concludes the proof of the Theorem.
\end{proof}

\section{Using regular stationary potential: the quenched settings}\label{rw}

In this section we will prove Theorem \ref{theorem}. For that we place ourselves in the quenched settings. More explicitly, for the rest of this section, in addition to Assumption \ref{ass:general}, we assume that Condition \ref{cond:moment-potential} holds.

%\subsection{Proof of Quenched lifted invariance principle}

\begin{lemma}[Martingale part: quenched Lindeberg, UCV and QV limit]
  \label{lem:quenched-M}
  Fix $T>0$. Then, for $\prob$-a.e.\ $\om$ the following hold for $M_t \coloneqq \Phi(\om,X_t)\indizeroo$:
  \begin{enumerate}[(i)]
  \item \label{helland-copy}
    The Lindeberg condition: for all $\delta > 0$ and $v\in\bbZ^d$ the compensator of 
    \begin{equation*}
      \sum_{0 < s \leq T} (v\cdot M^n_{s-,s})^2 \mathbbm{1}_{|v\cdot M^n_{s-,s}| > \delta} %\;\xrightarrow[n\to\infty]%{\;P_0^\om\;} 0.
    \end{equation*}
    converges to $0$ in $P_0^\om$-probability as $n\to\infty$.
  \item 
    The UCV condition: There exists a constant $C(\om)<\infty$ such that
    \begin{equation}\label{eq:quenched-QV-bound}
      \sup_{n\in\bbN} E_0^\om\big[ [M^n]_T \big]
      \;\le\; C(\om) \, T,
    \end{equation}
    where $[M^n]$ denotes the quadratic variation of $M^n$.
  \item The rescaled quadratic covariation 
    \begin{equation}\label{eq:quenc-qv-conv-a.s}
      \sup_{0\le t\le T}\big|[ M^{n,i},M^{n,j}]_t -\big\langle \Phi^i\indizeroo,\Phi^j\indizeroo \big\rangle_{L^2_{\mathrm{cov}}} t\big|
     \;\xrightarrow[n\to\infty]{\;P_0^\om\text{ a.s.}\;} 0
    \end{equation}
 (and hence also in $P_0^\om$-probability). 
  \end{enumerate}
\end{lemma}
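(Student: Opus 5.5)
All three statements reduce, via the L\'evy system formula \eqref{eq:Levy:system}--\eqref{eq:Levy:system:martingale}, to time averages of functions of the environment process $\om_s=\tau_{X_s}\om$. We then invoke the quenched ergodic theorem for this process, which is available by Lemma~\ref{lemma:inv:environment_process}: $\PP$ is stationary and ergodic for $(\om_s)$, so Birkhoff's theorem holds $P_0^\om$-a.s.\ for $\PP$-a.e.\ $\om$. We record the local functions
\[
g_{ij}(\om)\ldef\sum_{x}\om(\{0,x\})\Phi^i(\om,x)\Phi^j(\om,x)\indizeroo
\qquad\text{and}\qquad
h_K(\om)\ldef\sum_x\om(\{0,x\})(v\cdot\Phi(\om,x))^2\indicator_{|v\cdot\Phi(\om,x)|>K}\indizeroo .
\]
Both lie in $L^1(\PP)$, since $\Phi\in L^2_{\mathrm{cov}}$.

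\textbf{(ii) UCV.} As in the annealed proof, $E_0^\om[[M^n]_T]=E_0^\om[\langle M^n\rangle_T]=\frac1n E_0^\om\!\int_0^{nT}g(\om_s)\,\md s$, where $g=\sum_i g_{ii}$. To control this uniformly in $n$ quenched, we use the a.s.\ convergence $\frac1{nT}\int_0^{nT}g(\om_s)\,\md s\to \EE[g]$. Upgrading this to convergence of $E_0^\om$-means requires uniform integrability. We obtain it by a truncation: write $g=g\wedge K+(g-K)^+$. The bounded part contributes at most $KT$. For the remainder we use the a.s.\ ergodic limit together with a maximal ergodic inequality, or alternatively Fatou and the $L^1(P_0^\om)$ form of the ergodic theorem for the stationary Markov process under the annealed law, followed by disintegration in $\om$. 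If an $L^1(P^\om_0)$ ergodic theorem is available (e.g.\ from Lemma~\ref{lem:ergodic in uniform norm} in the $L^1$ sense), the supremum over $n$ is finite for a.e.\ $\om$ because a convergent sequence is bounded.

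\textbf{(iii) Quadratic covariation.} We apply Lemma~\ref{lem:ergodic in uniform norm} (the ergodic theorem in uniform norm stated in the $\prob_0$-a.s.\ sense) to $f(\om,\tilde\om)=\Phi^i(\om,\cdot)\Phi^j(\om,\cdot)$ evaluated at the jump, i.e.\ $[M^i,M^j]_t=\sum_{s\le t}f(\om_{s-},\om_s)$. Integrability \eqref{eq:assumption-integrability} follows from $\Phi\in L^2_{\mathrm{cov}}$, and the limit is $\EE[g_{ij}]t=\langle\Phi^i,\Phi^j\rangle t$. Since a statement that holds $\prob_0$-a.s.\ holds $P_0^\om$-a.s.\ for $\prob$-a.e.\ $\om$, this yields \eqref{eq:quenc-qv-conv-a.s}. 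The uniformity in $t\in[0,T]$ comes from monotonicity in $t$ of the diagonal terms, a Dini/P\'olya type argument, and polarization for $i\neq j$.

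\textbf{(i) Lindeberg.} The compensator equals $\frac1n\int_0^{nT}h_{\de\sqrt n}(\om_s)\,\md s\le \frac1n\int_0^{nT}h_K(\om_s)\,\md s$ for any fixed $K$ once $\de\sqrt n\ge K$. By the quenched ergodic theorem this converges a.s.\ to $T\,\EE[h_K]$, which tends to $0$ as $K\to\infty$ by dominated convergence. Hence the $\limsup$ is $0$ a.s., which gives convergence in probability.

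The main obstacle is the UCV bound (ii). The a.s.\ ergodic theorem controls pathwise averages, but (ii) needs $E_0^\om$-expectations uniformly in $n$ for a fixed environment, and the walk under $P_0^\om$ is not stationary. If the $L^1(\prob_0)$ convergence from Lemma~\ref{lem:ergodic in uniform norm} does not transfer directly, we would instead use Condition~\ref{cond:moment-potential}: write $\Phi=\Pi-\mD\phi$ and bound the quadratic variation of the corrector part via the $2+\epsilon$ moments of $\phi$ along the walk.
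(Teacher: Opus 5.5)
Your argument for (i) is correct, and cleaner than the paper's. You dominate the compensator pathwise by $\frac1n\int_0^{nT}h_K(\om_s)\,\md s$ once $\de\sqrt n\ge K$, then apply Birkhoff for the environment process under $\PP_0$. This gives almost sure convergence to $0$ without ever passing through $E_0^\om$.

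For (iii) you follow the paper, but you should invoke Lemma~\ref{cor:ergodic_p-var} rather than Lemma~\ref{lem:ergodic in uniform norm} directly. The latter needs $f$ to be a function of the pair $(\om_{s-},\om_s)$. Reading off the jump $X_s-X_{s-}$ from that pair is exactly the aperiodicity issue that Lemma~\ref{cor:ergodic_p-var} resolves with the map $\eta$ and the enlarged environment. That lemma already controls the supremum over $t$, so your Dini/polarization step is superfluous.

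\textbf{The genuine gap is (ii).} You identify the obstacle correctly, but none of the remedies you list closes it. Set $A_n\ldef\frac1n\int_0^{nT}g(\om_s)\,\md s$.
\begin{itemize}
\item The $L^1(\prob_0)$ statement of Lemma~\ref{lem:ergodic in uniform norm} only gives $E_0^\om|A_n-T\EE[g]|\to0$ in $L^1$ of the environment law. That yields almost sure convergence along a subsequence, not $\sup_nE_0^\om[A_n]<\infty$ for a.e.\ $\om$.
\item A pathwise maximal inequality bounds $\sup_nA_n$ under $P_0^\om$, not $\sup_nE_0^\om[A_n]$. Fatou goes the wrong way.
\item Truncation only moves the problem to $(g-K)^+$.
\item The fallback via $\phi$ meets the same issue for the quadratic variations of $\Pi$ and of the corrector.
\end{itemize}

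The missing idea is that (ii) concerns an ergodic average of the \emph{semigroup}, not of the path. Set $P_sg(\om)\ldef E_0^\om[g(\tau_{X_s}\om)]$; then the L\'evy system formula and Tonelli give
\begin{align*}
  E_0^\om\big[[M^n]_T\big] \;=\; E_0^\om[A_n] \;=\; \frac1n\int_0^{nT}(P_sg)(\om)\,\md s .
\end{align*}
The semigroup $(P_s)$ is positive, satisfies $P_s1=1$ (the walk is non-explosive) and preserves $\PP$. Hence each $P_s$ is a contraction on $L^1(\PP)$ and on $L^\infty(\PP)$. Now apply the Hopf--Dunford--Schwartz maximal ergodic theorem to the discrete averages $\frac1m\sum_{k<m}P_1^kG$, where $G\ldef\int_0^1P_sg\,\md s\in L^1(\PP)$. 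These averages dominate $\frac1{nT}\int_0^{nT}P_sg\,\md s$ up to a factor $2$ when $nT\ge1$. This yields $\sup_n\frac1n\int_0^{nT}P_sg\,\md s<\infty$ $\PP$-a.s.; indeed the averages even converge to $\EE[g]$. That is exactly (ii).

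For the record, the paper's own proof of (ii) writes $E_0^\om[\sum_{0<s\le T}(M^n_{s-,s})^2]=\frac1n\sum_kF_0(\tau_{X_k}\om)$ and then applies the pathwise ergodic theorem. Read literally, this equates a number with a path-dependent average, so it passes over precisely the point you flagged. The semigroup argument above is what makes that step rigorous.
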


\begin{proof}
We first prove (i). 
By the L\'evy system theorem \eqref{eq:Levy:system} for the function
  \[
  f^\omega(x,y) = v\cdot\Phi(\tau_x\om,y-x)^2\,\indizeroo \,\indicator_{|v\cdot\Phi(\tau_x \om,y-x)| > \de \sqrt{n}}
  \]
  we find that the compensator of 
  \[  \sum_{0 < s \leq T}
     \big(v\cdot M_{s-,s}^n\big)^2\,
    \indicator_{|v\cdot M^n_{s-,s}| \,>\, \de}
  \]
  is
   \[
   \frac{1}{n}
    \int_0^{T n}
    \sum_{y \in \mathbb{Z}^{d}} \tau_{X_{s-}}\omega(\{0, y\})\, (v\cdot \Phi(\tau_{X_{s-}}\omega, y))^2\,
    \indizeroo\,
    \indicator_{|v\cdot \Phi(\om, y)| > \de \sqrt{n}}\;
    \md s.
    \]

For $N\ge 0$ set $F_N:\Omega\to\bbR$ by 
\[
F_N(\omega)= E_0^\omega\!\bigg[
    \int_0^{T}
    \sum_{y \in \mathbb{Z}^{d}}  g_N(v,\Phi,\omega,X_{s-},y)\;
    \md s
    \bigg],
    \]
where 
\[
 g_N(v,\Phi,\omega,x,y):=
\tau_{x} \omega(\{0, y\})\, (v\cdot\Phi(\tau_{x}\omega, y))^2\,
    \indizeroo\,
    \indicator_{|v\cdot\Phi(\om, y)| > N}\;
    \]
Note that,  $g_N(v,\Phi,\omega,x,y)$ is non increasing in $N$. 
by stationarity, 
\begin{align*}
%\mean\!\bigg[F_N(\omega)\bigg]
%\;\le\;
 \mean\!\bigg[F_0(\omega)\bigg]
&  \;=\;
 T \mean\!\bigg[
    \sum_{y \in \mathbb{Z}^{d}} \omega(\{0, y\})\, (v\cdot\Phi(\omega, y))^2\,
    \indizeroo\;
    \bigg] \\
&\;=\; 
T \Norm{v\cdot\Phi\indizeroo}{L_\mathrm{cov}^2(\prob)}^2
\;<\;
\infty.
\end{align*}
%In particular, $F_0(\omega)<\infty$ $\prob$-a.s.\ 
Since $g_N(v,\Phi,\omega,x,y)$ vanishes in he limit as $N\to\infty$, and as  $g_N(v,\Phi,\omega,x,y) \le g_0(v,\Phi,\omega,x,y) <\infty$ for all $N\in\bbN$, then by dominated convergence applied to the product of $\mean_0$, Lebesgue measure on $[0,T]$ and the counting measure on $\bbZ^d$, we obtain 
$\mean\!\bigg[F_N(\omega)\bigg]
\;\xrightarrow[N\to\infty]{}\; 0.$
Further, by the ergodic theorem for the measure $\prob$ under the shift $\omega\mapsto\tau_{X_1}\omega$  applied to $F_N$ we obtain 
 \begin{align*}
    \frac{1}{n}\sum_{k=0}^{n-1} F_N(\tau_{X_k}\omega) \xrightarrow[n\to\infty]{\prob\text{-a.s.}}  &\mean\!\big[F_N(\omega)\big].
\end{align*}
Therefore
  \begin{align*}
   0& \;\le\;
   \limsup_{n\to\infty} E^\omega_0\!\bigg[
      \frac{1}{n}
    \int_0^{T n}
    \sum_{y \in \mathbb{Z}^{d}} \tau_{X_{s-}}(\{0, y\})\, (v\cdot \Phi(\tau_{X_{s-}}, y))^2\,
    \indizeroo\,
    \indicator_{|v\cdot \Phi(\om, y)| > \de \sqrt{n}}\;
    \md s \bigg]
    \\
     & \;\le\;
   \limsup_{n\to\infty} 
  \frac{1}{n}\sum_{k=0}^{n-1} F_N(\tau_{X_k}\omega) 
  \;=\;  \mean\!\big[F_N(\omega)\big],
  \end{align*}
and as the term on the right hand side can be chosen to be arbitrarily small by fixing $N$ to be large enough, this concludes the proof of (\ref{helland-copy}).

  We proceed with the proof of item (ii). Considering the case $N=0$ for the function $F_N$ defined in the proof of (i) above, we have
  $\prob$-a.s.\ that
  \begin{align*}
   & 
    E^\omega_0\!\bigg[
    \sum_{0 < s \leq T}
     (M_{s-,s}^n)^2\,
    \bigg]
 \;=\; 
  \frac{1}{n}\sum_{k=0}^{n-1} F_0(\tau_{X_k}\omega) 
  \;\xrightarrow[n\to\infty]{\prob\text{-a.s.}}\;  \mean\!\big[F_0(\omega)\big]
  \;=\;
  T \Norm{\Phi\indizeroo}{L_\mathrm{cov}^2(\prob)}^2.
  \end{align*}
In particular , (\ref{eq:quenched-QV-bound}) holds with 
$C(\omega):=\sup_n   E^\omega_0\!\bigg[
    \sum_{0 < s \leq T}
     (M_{s-,s}^n)^2\,
    \bigg]
 $ which is finite $\prob$-a.s.

For (iii):  (\ref{eq:quenc-qv-conv-a.s})
  follows by  Lemma~\ref{cor:ergodic_p-var}. 
  \end{proof}

\begin{lemma}[Corrector: quenched $p$-variation of the first level]
  \label{lem:quenched-R-pvar}
  Let $T>0$ and $2<p\le 2+\epsilon$, where $\epsilon>0$ is the one guaranteed by Condition \ref{cond:moment-potential}. Then for $\mathbf{P}$-a.e.\ $\om$,
  \begin{align*}
    E_0^\om\big[\,\|R^n\|_{p\text{-}\mathrm{var},[0,T]}^p\,\big]
   \;\xrightarrow[n\to\infty]{} 0.
   \end{align*}
\end{lemma}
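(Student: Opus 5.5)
We will exploit the potential from Condition~\ref{cond:moment-potential}. On $\Omega_0$ we have $R_t=\chi(\om,X_t)=\phi(\tau_{X_t}\om)-\phi(\om)$. Hence for any partition $\cP$ of $[0,T]$,
\begin{align*}
  \sum_{[s,t]\in\cP}|R^n_{s,t}|^p
  \;=\; n^{-p/2}\sum_{[s,t]\in\cP}\big|\phi(\om_{nt})-\phi(\om_{ns})\big|^p,
  \qquad \om_u:=\tau_{X_u}\om .
\end{align*}
The key observation is that for $p>2$ the $p$-variation of a pure jump path can be controlled by its uniform norm and its $2$-variation:
\begin{align*}
  \|R^n\|_{p\text{-var},[0,T]}^p
  \;\le\;
  \big(2\|R^n\|_{\mathrm{unif},[0,T]}\big)^{p-2}\,\|R^n\|_{2\text{-var},[0,T]}^2 .
\end{align*}
We will not use this crude interpolation directly on $\|\cdot\|_{2\text{-var}}$, since that norm is not controlled. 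Instead we will decompose $\phi(\om_t)-\phi(\om_0)$ into the forward Dynkin martingale plus a drift, as in the proof of Lemma~\ref{lemma:KV:ineq:pvar}. Since $\phi$ need not lie in $\mathrm{dom}(\cL)$, we will use the forward--backward representation
\begin{align*}
  \phi(\om_t)-\phi(\om_s)=\tfrac12 M^\phi_{s,t}-\tfrac12 N^\phi_{T-t,T-s},
\end{align*}
justified by approximating $\phi$ by truncations and using $\mD\phi\in L^2_{\mathrm{cov}}(\PP)$.

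The plan then proceeds in three steps.

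\emph{Step 1 (uniform norm).} We show that for $\PP$-a.e.\ $\om$,
\begin{align*}
  n^{-1/2}\sup_{u\le nT}|\phi(\om_u)|\to 0
\end{align*}
in $L^{p}(P_0^\om)$, and even with a power $2+\epsilon$. Bounding $\sup_{u\le nT}|\phi(\om_u)|^{2+\epsilon}$ needs more than counting jumps. We will use a refined ergodic argument: by Lévy-system arguments and the moment bound $\E[\sum_x\om(0,x)|\phi(\tau_x\om)|^2]<\infty$, the sum over jump times
\begin{align*}
  \sum_{u\le nT}|\phi(\om_u)|^{2+\epsilon}\indicator_{\om_{u-}\neq\om_u}
\end{align*}
has quenched mean of order $n$. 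This follows from the ergodic theorem applied to $F(\om)=E_0^\om\big[\sum_{u\le T}|\phi(\om_u)|^{2+\epsilon}\big]$, exactly as for $F_N$ in Lemma~\ref{lem:quenched-M}. Since $\sup|a_u|^{2+\epsilon}\le\sum|a_u|^{2+\epsilon}$ over jump times, we obtain
\begin{align*}
  E_0^\om\big[\sup_{u\le nT}|\phi(\om_u)|^{2+\epsilon}\big]\lesssim_\om n .
\end{align*}
This bound is only $n^{1/(2+\epsilon)}\ll n^{1/2}$ after taking roots, which suffices. To get $o(1)$ rather than $O(1)$, we truncate: split $\phi=\phi\indicator_{|\phi|\le K}+\phi\indicator_{|\phi|>K}$. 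The bounded part is trivially small after dividing by $\sqrt n$. The large part has ergodic average $\E[\ldots|\phi|^{2+\epsilon}\indicator_{|\phi|>K}]$, which is small for large $K$.

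\emph{Step 2 (martingale pieces in $p$-variation with $p$-th moments).} We apply the Lépingle--BDG inequality in $L^p$ form,
\begin{align*}
  E\big[\|M\|_{p\text{-var}}^p\big]\lesssim E\big[[M]_T^{p/2}\big],
\end{align*}
to $n^{-1/2}M^\phi$ and $n^{-1/2}N^\phi$. Here $[M^\phi]_{nT}=\sum_{u\le nT}|\phi(\om_u)-\phi(\om_{u-})|^2$. The drift of $M^\phi$ is $\cL\phi$, which is not controlled, so we will instead realize $M^\phi$ and $N^\phi$ as sums of jumps minus compensators using the Lévy system. We will then bound $E\big[(n^{-1}[M^\phi]_{nT})^{p/2}\big]$ using $p/2\le 1+\epsilon/2$, the moment condition, and again truncation plus the quenched ergodic theorem. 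Because $[M^\phi]_{nT}/n$ converges a.s.\ to a constant, this only yields boundedness. To upgrade it to vanishing we will use the interpolation inequality displayed above with the uniform bound of Step 1 for the $(p-2)$ power and a bounded second moment of the $2$-variation piece. This is Hölder with exponents matched to $2+\epsilon\ge p$.

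\emph{Step 3 (combine).} Writing $R^n$ through the forward and backward martingales, we take $\|R^n\|_{p\text{-var}}^p\le(\ldots)^{p-2}(\ldots)^2$ with the two martingales in $2$-variation (the $2$-variation of a martingale being $\lesssim$ its quadratic variation plus an $\infty$-variation term via Lépingle at $p$ close to $2$, using a slightly smaller $p'>2$). We then conclude by Hölder. We expect the main obstacle to be the quenched backward martingale: under $P_0^\om$ the reversed walk is not stationary. We will handle this by replacing time reversal with a direct pathwise bound. Since $R^n$ has finitely many jumps, any increment over $[s,t]$ is bounded by $2\sup|R^n|$, and the sum of squared increments over a partition is controlled by $[R^n]$ plus the increments' cross terms. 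If this fails, we will fall back on a Lévy-system estimate of the $p$-variation via the jump sum $\sum|\Delta R^n|^p$ combined with the uniform bound.
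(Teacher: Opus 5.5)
There is a genuine gap in Steps 2--3, for three reasons.

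\emph{The $2$-variation is not controlled.} The $2$-variation of a martingale is not bounded by its quadratic variation. L\'epingle--BDG holds only for $p'$-variation with $p'>2$: Brownian motion has infinite $2$-variation, and the $2$-variation of a diffusively rescaled random walk is unbounded in $n$. So the factor $\|R^n\|^2_{2\text{-var},[0,T]}$ in your interpolation is never controlled. You say this yourself before Step 1, and then rely on it in Steps 2 and 3.

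\emph{The backward martingale is not available.} The identity $\phi(\om_t)-\phi(\om_s)=\tfrac12M^\phi_{s,t}-\tfrac12N^\phi_{T-t,T-s}$ is annealed: $N^\phi$ is a backward martingale only when the environment process starts from its invariant law, not under $P_0^\om$ for fixed $\om$. You flag this as the main obstacle. But the ``direct pathwise bound'' you offer instead (squared increments equal $[R^n]$ plus cross terms) is exactly the estimate that fails, since the cross terms are what make the $2$-variation large.

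\emph{The fallback is false.} No bound of $\|R^n\|_{p\text{-var}}$ by $\sum|\Delta R^n|^p$ and $\|R^n\|_{\mathrm{unif}}$ can hold. A path making $K$ up-and-down excursions of height $h$ in steps of size $\delta$ has $p$-th power of its $p$-variation at least $Kh^p$, yet its uniform norm is $h$ and $\sum|\Delta|^p=2Kh\delta^{p-1}$.

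The missing idea is elementary, and Step 1 already contains its main ingredient. Since $R^n$ is piecewise constant and $p$-variation only sees increments, $\|R^n\|_{p\text{-var},[0,T]}$ is the $p$-variation of the finite sequence of values $f_0=n^{-1/2}\phi(\om)$ and $f_m=n^{-1/2}\phi(\tau_{X_{u_m}}\om)$, where $u_1<u_2<\dots$ are the jump times in $(0,nT]$. For any $m_0<\dots<m_k$, the inequality $|a-b|^p\le 2^{p-1}(|a|^p+|b|^p)$ gives $\sum_j|f_{m_j}-f_{m_{j-1}}|^p\le 2^p\sum_m|f_m|^p$. Hence
\[
E_0^\om\big[\|R^n\|^p_{p\text{-var},[0,T]}\big]\le 2^p n^{-p/2}\Big(|\phi(\om)|^p+E_0^\om\Big[\sum_{0<u\le nT}|\phi(\tau_{X_u}\om)|^p\indicator_{X_u\ne X_{u-}}\Big]\Big).
\]
The L\'evy-system/ergodic argument of your Step 1, run with exponent $p\le 2+\epsilon$ in place of $2+\epsilon$, shows the expectation is $O_\om(n)$. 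So the right-hand side is $O_\om(n^{1-p/2})\to0$.

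This is the paper's proof. No martingale decomposition, time reversal, interpolation or truncation is needed; your Step 1 bound is already $o(1)$ without truncating. The moment that enters is $\E[\sum_x\om(\{0,x\})|\phi(\tau_x\om)|^p\indicator_{\{0\in\cC_\infty\}}]$. By translation invariance and symmetry of the conductances it equals $\E[\mu^\om(0)|\phi(\om)|^p\indicator_{\{0\in\cC_\infty\}}]\le\E[\mu^\om(0)]+\kappa$, not the second-moment term you cite.
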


\begin{proof}

  First notice that Minkowski inequality, for any $p>1$, the $p$-variation of a finite sequence of reals $f_0,f_1,f_2,...,f_N$  is bounded by twice their $\ell^p$ norm: 
  \begin{align*}
    \sup\bigg\{\sum_{j=1}^k |f_{n_{j-1},n_j}|^p : 0=n_0<n_1<...,n_k\le N,\,1\le k \le N \bigg\} \le 2^p \sum_{m=0}^N |f_{m}|^p. 
  \end{align*} 
  Indeed, by the elementary inequality $|a-b|^p \le 2^{p-1}(|a|^p+|b|^p)$, for any fixed $k\in\bbN$ and $0=n_0<n_1<...<n_k\le N$ we have
  \begin{align*}
    \sum_{j=1}^k |f_{n_{j-1},n_j}|^p \le 
    2^{p-1} 
    \sum_{j=1}^k (|f_{n_{j-1}}|^p + |f_{n_j}|^p)
    \le  2^{p} \sum_{j=0}^k |f_{n_{j}}|^p
    \le 2^p \sum_{m=0}^N |f_{m}|^p.
  \end{align*} 
  We apply this on $\Om_{0}$ to $R^n$ to get    
\begin{align*}
\|R^n\|_{p\text{-}\mathrm{var},[0,T]}^p
    & \;\le\;
    2^p 
    \sum_{0<u\le T} |n^{-1/2}\phi(\tau_{X_{(nu)-}}\omega)|^p
    \indicator_{X_{(nu)-}\ne X_{nu}}\\
   & \;=\;
    2^p \,n^{-p/2} \sum_{0<u\le nT} 
 |\phi(\tau_{X_{u-}}\omega)|^p
    \indicator_{X_u\ne X_{u-}}.
  \end{align*}
Set
\[
F(\omega):=E_{0}^{\omega} \left[\sum_{0<u\le T}  
 |\phi(\tau_{X_{u-}}\omega)|^p
    \indicator_{X_u\ne X_{u-}}\right]\indizeroo.
\]  
By the L\'evy system theorem \eqref{eq:Levy:system} for the function 
\[
f^{\omega}(x,y) := |\phi(\tau_{x}\omega)|^p\indicator_{x\ne y}
\]
we obtain
\[
F(\omega)
\;=\;
\int_{0}^T
E_{0}^{\omega} \left[
\sum_{x\in\bbZ^d}  \omega(\{X_{u-},X_{u-}+x\})
 |\phi(\tau_{X_{u-}}\omega)|^p
    \indicator_{x\ne 0}\right]\indizeroo\, d u.
\]
Therefore by stationarity of the process from the point of view of the walk we have
\begin{align*}
    \mean \left[F(\omega)\right]
& \;=\;
\mean_0 \left[\sum_{0<u\le T} 
 |\phi(\tau_{X_{u-}}\omega)|^p
    \indicator_{X_u\ne X_{u-}}\indizeroo\right]
    \;=\;
T\mean \big[ |\phi(\omega)|^p
    \mu^\om(0)\indizeroo
    \big]\\
&    \;\le\;
%2dbT\mean \left[ |\phi(\omega)|^p\indizeroo
%    \right]
 T\kappa   \;<\; \infty,
\end{align*}
where $\kappa$ is the constant from \eqref{eq:conditionSP} of Condition \ref{cond:moment-potential}.
By the ergodic theorem we obtain
\begin{align*}
 \frac1n
\sum_{k=0}^n
 F(\tau_{X_k}\omega) 
 \xrightarrow[n\to\infty]{\prob\text{-a.s.}} 
\mean\left[F(\omega)\right]
    <\infty,
  \end{align*}
and in particular on $\Om_0$
\begin{align*}
E_{0}^{\omega} \left[
\|R^n\|_{p\text{-}\mathrm{var},[0,T]}^p\right]
    & \;\le\;
    2^p \,n^{-p/2} E_{0}^{\omega} \left[
    \sum_{0<u\le nT} 
 |\phi(\tau_{X_{u}}\omega)|^p
    \indicator_{X_u\ne X_{u-}}\right]
 \xrightarrow[n\to\infty]{\prob\text{-a.s.}} 0.
  \end{align*}
This concludes the proof.   
\end{proof}
\begin{lemma}[Corrector iterated integrals and the matrix $\Ga$]
  \label{lem:quenched-R-area}
 Let $T>0$ and $2<p\le 2+\epsilon$, where $\epsilon>0$ is the one guaranteed by Condition \ref{cond:moment-potential}.
  Then, for $\mathbf{P}$-a.e.\ $\om$, and every $i,j\in\{1,\dots,d\}$  the following hold.
  \begin{enumerate}[(i)]
  \item We have
    \begin{align*} 
      \big\|    \big( (\bbR^n_{s,t})^{i,j} - (t-s)\Ga^{i,j}\big)_{(s,t)}
      \big\|_{\infty\text{-}\mathrm{var},[0,T]}
     \;\xrightarrow[n\to\infty]{\;P_0^\om\;} 0.
    \end{align*}
  \item The family $\{\|(\bbR^n)^{i,j}\|_{p/2\text{-}\mathrm{var},[0,T]}\}_{n\ge1}$ is tight under $P_0^\om$.
  \end{enumerate}
\end{lemma}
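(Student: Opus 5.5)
We write $R_t=\chi(\om,X_t)=\phi(\om_t)-\phi(\om)$ with $\om_t=\tau_{X_t}\om$, using Condition~\ref{cond:moment-potential} on $\Om_0$. The starting point is that the iterated integral of the corrector is expressed through $\phi$ along the environment process:
\begin{align*}
  \bbR^n_{s,t}
  \;=\;
  \frac1n\sum_{ns<u\le nt}\big(\phi(\om_{u-})-\phi(\om_{ns})\big)\otimes\big(\phi(\om_u)-\phi(\om_{u-})\big).
\end{align*}
Expanding the left factor, $\bbR^n_{0,t}=\frac1n\int_0^{nt}\phi(\om_{u-})\otimes \md\phi(\om_u)-\frac1n\phi(\om)\otimes(\phi(\om_{nt})-\phi(\om))$.

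\textbf{Part (i).} By Remark~\ref{rmk:one-parameter} it suffices to control the one-parameter path $\bbR^n_{0,\cdot}$ in the uniform norm, together with $\|R^n\|_{\mathrm{unif}}$. The latter tends to $0$ in $P_0^\om$-probability by Lemma~\ref{lem:quenched-R-pvar}.

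For the boundary term, $\frac1n|\phi(\om)|\sup_{t\le T}|\phi(\om_{nt})-\phi(\om)|$ is at most $n^{-1/2}|\phi(\om)|\,\|R^n\|_{\mathrm{unif}}+\frac1n|\phi(\om)|^2$. Both pieces tend to $0$ for fixed $\om$.

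For the main term, I will invoke Lemma~\ref{lem:l2-uniformly-integrable-ii} with $\psi=\phi^i$ and $\varphi=\phi^j$. Its hypotheses are $\|\mD\phi^j\|_{L^2_{\mathrm{cov}}}<\infty$ and $\E[(\phi^i)^2\mu^\om(0)\indizeroo]<\infty$, and both follow from Condition~\ref{cond:moment-potential}. That lemma is stated under the annealed law but gives $\prob_0$-a.s.\ convergence, which yields $P_0^\om$-a.s.\ convergence for $\prob$-a.e.\ $\om$. The drift it produces is $-\tfrac12\langle \mD\phi^i,\mD\phi^j\rangle t=-\tfrac12\langle\chi^i,\chi^j\rangle t=\Ga^{i,j}t$, which is exactly the required limit. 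One must check that working under $\PP$ rather than $\prob$ only changes the normalisation consistently with Definition~\ref{def:corrector+matrix}.

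\textbf{Part (ii).} I will bound the $p/2$-variation crudely but quenched. From the display,
\begin{align*}
  |\bbR^n_{s,t}|\;\le\;\|R^n\|_{\infty\text{-var},[s,t]}\sum_{ns<u\le nt}n^{-1/2}|\phi(\om_u)-\phi(\om_{u-})|.
\end{align*}
This seems too lossy, since the jump count is of order $n$. Instead I would use Chen's relation together with the superadditivity-type estimate
\begin{align*}
  \|\bbX\|_{p/2\text{-var}}\;\lesssim\;\|\bbX_{0,\cdot}\|_{p/2\text{-var}}+\|X\|_{p\text{-var}}^2.
\end{align*}
This reduces the problem to the path $t\mapsto\bbR^n_{0,t}$. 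That path is the sum of the boundary term, whose $p/2$-variation is bounded by $n^{-1/2}|\phi(\om)|\,\|R^n\|_{p/2\text{-var}}$, and the additive path $A^n_t=\frac1n\int_0^{nt}\phi(\om_{u-})\,\md\phi(\om_u)$.

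For $A^n$ I will decompose $\md\phi(\om_u)$ into its Dynkin martingale part $\md M^{\phi}_u$ and its drift $\cL\phi(\om_u)\,\md u$. This requires $\phi\in$ dom, which we do not have, so the plan is to work instead with jump-level quantities. The martingale transform $\frac1n\int\phi(\om_{u-})\,\md M^\phi$ is controlled by Theorem~\ref{thm:Friz-Zorin-Kranich} in its $p/2$-variation increments form. The compensator part is the absolutely continuous path $\frac1n\int_0^{nt}\sum_x\om_u(0,x)\phi(\om_u)(\phi(\tau_x\om_u)-\phi(\om_u))\,\md u$, which has finite $1$-variation. Its $1$-variation is bounded by $\frac1n\int_0^{nT}g(\om_u)\,\md u$ with $g\in L^1$ by Condition~\ref{cond:moment-potential} and Cauchy--Schwarz, and this converges a.s.\ by the ergodic theorem.

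\textbf{Main obstacle.} The main obstacle is the martingale transform: Theorem~\ref{thm:Friz-Zorin-Kranich} needs $E_0^\om\|\phi(\om_\cdot)\|_{p_1\text{-var}}$ relative to the scaling, i.e.\ $n^{-1/2}$ times it, to stay bounded. This is where the $2+\epsilon$ moment enters, via the $\ell^{p_1}$ bound from the proof of Lemma~\ref{lem:quenched-R-pvar} applied with $p_1\le 2+\epsilon$. The theorem also needs $E_0^\om[\,n^{-1}[M^\phi]_{nT}]$ bounded, which follows from the ergodic theorem exactly as in Lemma~\ref{lem:quenched-M}(ii). Tightness then follows from Markov's inequality.
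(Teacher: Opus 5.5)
\textbf{Part (i).} This follows the paper's argument. You use the same representation of $\bbR^n_{0,t}$ through $\phi$ and Lemma~\ref{lem:l2-uniformly-integrable-ii} for the main term. The $\mathbf{P}$-normalisation you flag is handled by Corollary~\ref{cor:conditional-ergodic-in-suo-norm}. Your explicit use of Remark~\ref{rmk:one-parameter} with $\|R^n\|_{\mathrm{unif}}\to0$ fills a step the paper leaves implicit. Your boundary bound is even simpler than the paper's ergodic argument, since that term is exactly $n^{-1/2}\phi^i(\om)R^{n,j}_t$.

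\textbf{Part (ii)} rests on two incorrect steps.

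First, the estimate $\|\bbX\|_{p/2\text{-var}}\lesssim\|\bbX_{0,\cdot}\|_{p/2\text{-var}}+\|X\|_{p\text{-var}}^2$ is false. Chen's relation only gives $|\bbX_{s,t}|\le|\bbX_{0,t}-\bbX_{0,s}|+|X_{0,s}|\,|X_{s,t}|$, and the second term is controlled by $\|X\|_{\mathrm{unif}}\|X\|_{p/2\text{-var}}$, not by $\|X\|^2_{p\text{-var}}$. A counterexample: take $d=1$, $X_0=0$ and $\bbX_{s,t}:=-X_sX_{s,t}$. Chen's relation holds and $\bbX_{0,\cdot}\equiv0$, but for a Brownian path and $2<p<4$ one has $\|X\|_{p\text{-var}}<\infty=\|\bbX\|_{p/2\text{-var}}$. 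In fact $\bbR^n_{0,\cdot}$ is the wrong object to measure in $p/2$-variation. Its increments contain $R^n_{0,s}\otimes R^n_{s,t}$, and $\|R^n\|_{p/2\text{-var}}$ is not controlled by Lemma~\ref{lem:quenched-R-pvar}.

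Second, Theorem~\ref{thm:Friz-Zorin-Kranich} bounds the based integral $I_{s,t}(G,M)=\int_{(s,t]}G_{u-}\,\md M_u-G_sM_{s,t}$. It does not bound the increments of the one-parameter transform $t\mapsto\int_0^tG_{u-}\,\md M_u$. The difference $G_sM_{s,t}$ has only $p$-variation regularity, so the claimed $p/2$-variation control of the martingale part of your $A^n$ does not follow.

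\textbf{The missing observation.} With $\chi=\mD\phi$, the crude bound you discarded works once the base-point term is kept in telescoped form, and no martingale decomposition is needed. With $\om_t=\tau_{X_t}\om$,
\[
(\bbR^n_{s,t})^{ij}=\frac1n\sum_{ns<u\le nt}\phi^i(\om_{u-})\big(\phi^j(\om_u)-\phi^j(\om_{u-})\big)-\frac1n\,\phi^i(\om_{ns})\big(\phi^j(\om_{nt})-\phi^j(\om_{ns})\big).
\]
Over any partition, the first part sums to at most $\frac1n\sum_{0<u\le nT}|\phi^i(\om_{u-})|\,|\phi^j(\om_u)-\phi^j(\om_{u-})|$. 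The second part vanishes on intervals without a jump. On the remaining intervals, $\om_{ns}$ and $\om_{nt}$ are environments just after distinct jumps (or $\om$ itself). So by $ab\le\frac12(a^2+b^2)$ the second part sums to at most $\frac{C}{n}\big(|\phi(\om)|^2+\sum_{\text{jumps }u\le nT}|\phi(\om_u)|^2\big)$. Hence
\[
\|\bbR^n\|_{p/2\text{-var}}\le\|\bbR^n\|_{1\text{-var}}\lesssim\frac1n\Big(|\phi(\om)|^2+\sum_{0<u\le nT,\,X_u\ne X_{u-}}\big(|\phi(\om_{u-})|^2+|\phi(\om_u)|^2\big)\Big).
\]
The L\'evy system formula, together with the quenched ergodic argument of Lemma~\ref{lem:quenched-R-pvar}, bounds the right-hand side uniformly in $n$ for $\mathbf{P}$-a.e.\ $\om$. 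This uses only the second-moment terms of $\kappa$ in Condition~\ref{cond:moment-potential}, and tightness follows. This is the paper's route.
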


\begin{proof}
  For the proof of (i) fix $i,j\in\{1,\dots,d\}$ and consider the scalar coordinates $R^{n,i},R^{n,j}$ of $R^n$. By definition, and since $R^{n,i}_{0}=\frac{1}{\sqrt{n}}\mD \phi^i(\omega,0)=0$, on $\om\in \Om_0$ we have $P_0^\om$-a.s.,
  \begin{align*}
    (\bbR^n_{0,t})^{ij}
    & \;=\;
    I_{0,t}(R^{n,i},R^{n,j})\\
    &  \;=\;
    \frac1n \sum_{0<u\le nt} \phi^i(\tau_{X_{u-}}\omega)\big(\phi^j(\tau_{X_{u}}\omega) - \phi^j(\tau_{X_{u-}}\omega
)\big) 
    -   
    \frac1n \phi^i(\omega)
    \big( \phi^{j}(\tau_{X_{nt}}\omega)-\phi^{j}(\omega)\big).
  \end{align*} 
  where  %$\om_{u-}=\tau_{X_{u-}}\om$, $\om_u=\tau_{X_u}\om$.
  Next we note that 
  $\phi^i$ is $\prob$-measurable function so that  $\E[(\phi^i)^2(\om) \mu^\om(0)]\le 2db \E[(\phi^i)^2(\om) ]<\infty$ by construction of $\phi$ (see the paragraph above (\ref{1.3})). Furthermore, by translation invariance, nearest neighbor jumps and bounded conductances we get $\mean[\|\mD \phi^i \|_{L^2_{\mathrm{cov}}}\indizeroo] \le 2(2d)^2b \E[|\phi^i(\om)|\indizeroo] < \infty$.
  As in the proof of Lemma~\ref{lem:bdd-gradients-uniformly-integrable-ii}, by Lemma~\ref{lem:l2-uniformly-integrable-ii}, now it is enough to show that
  \begin{align*}
    \Big\|
    \frac1n \phi^i(\omega)
    \big( \phi^{j}(\tau_{X_{nt}}\omega)-\phi^{j}(\omega)\big)\indizeroo
    \Big\|_{\mathrm{unif}, [0,T]}
    \;\xrightarrow[n\to\infty]{\;P_0^\om\;} 0.
  \end{align*}
  For that, we observe first that
   \[
   \mean
  \!\big[
  \sum_{x \in \mathbb{Z}^{d}}
  \om(\{0,x\}) (\phi^j(\tau_x \om)-\phi^j(\om))\indizeroo
  \big] = \cL (\phi^j\indizeroo) = 0 
  \]
  by stationarity. 
  Note also that
  \begin{align}\label{eq:prod-phi}
    \Big\|
    \frac{1}{n} &
    \phi^i(\omega)(\phi^j(\tau_{X_{nt}}\omega)
    -\phi^j(\omega))
    \indizeroo
    \Big\|_{\mathrm{unif}, [0,T]}\\
    \nonumber&\;=\;
    |\phi^i(\omega)|
    \Big\|
    \frac{1}{ n}(\phi^j(\tau_{X_{nt}}\omega)-\phi^j(\omega))\indizeroo
    \Big\|_{\mathrm{unif}, [0,T]}.
  \end{align}
  Applying Lemma~\ref{lem:ergodic in uniform norm} to the function
  \begin{align*}
    f(\omega,\tilde\omega)
    \; := \;
    (\phi^j(\tilde\omega)-\phi^j(\omega))\indizeroo,
  \end{align*}
  we see that the right term of the product in the right hand side of (\ref{eq:prod-phi}) goes to zero $P^\om_0$-a.s for $\prob$-a.e $\om$. This completes the proof.

  For~(ii), we shall show  
  $\sup_n E_0^\om [\|\bbR^n\|_{p/2\text{-}\mathrm{var},[0,T]}]<\infty$
  for $\mathbf{P}$-a.e.\ $\om$. 
  Use again the presentation  
  \begin{align*}
    (\bbR^n_{s,t})^{i,j}
    & \;=\;
    \frac1n \sum_{ns<u\le nt} \phi^i(\tau_{X_{u-}}\omega)\big(\phi^j(\tau_{X_{u}}\omega) - \phi^j(\tau_{X_{u-}}\omega)\big) 
    -   
    \frac1n \phi^i(\tau_{X_{ns}}\omega)
    \big( \phi^{j}(\tau_{X_{nt}}\omega)-\phi^{j}(\tau_{X_{ns}}\omega)\big)
  \end{align*} 
  for $\omega\in\{0\in\cC_{\infty}\}$, to get that  
  \begin{align*}
    \|(\bbR^n)^{i,j}\|_{1\text{-}\mathrm{var},[0,T]} & \le 
    \frac5n \sum_{0<u\le nT:\, X_u\ne X_{u-}} \bigg( ((\phi^i)(\tau_{X_{u}}\omega))^2 + ((\phi^j)(\tau_{X_{u}}\omega))^2\bigg) \\
    & \lesssim 
    \frac1n \sum_{0<u\le nT:\, X_u\ne X_{u-}} \bigg( (\chi^i (\omega,X_{u}))^2 + (\chi^j (\omega,X_{u}))^2 + (\phi^i(\omega))^2+(\phi^j(\omega))^2 \bigg).
  \end{align*} 
  Set
  \[
  F(\omega) := 
    E_0^\om  \left[ 
    \sum_{0<u\le T} 
    \bigg(
      \phi^i(\omega)^2+
      \phi^j(\omega)^2 + |\phi^i(\tau_{X_{u-}}\omega)|^2 + |\phi^j(\tau_{X_{u}}\omega)|^2 \bigg)  \indicator_{X_u\ne X_{u-}}\indizeroo\right].
  \]
 using the  L\'evy system theorem \eqref{eq:Levy:system}
 together with the ergodic theorem for the process from the point of view of the walk we have
 
\begin{align*}
  \mean[F(\omega)] &
  \;=\; 
    T \mean  \left[ 
    \big(
      \phi^i(\omega)^2+
      \phi^j(\omega)^2 
     \big) \mu^\omega(0) \indizeroo
    + \sum_{x\in\bbZ^d}\omega(0,x) \big(|\phi^i(\tau_x\omega)|^2 + |\phi^j(\tau_{x}\omega)|^2 \big) \indizeroo \right]    \\
   & \;\le\; 
    % 4db T \mean_0  
    % \left[ 
    %   \big(\phi^i(\omega)^2+
    %   \phi^j(\omega)^2)\big) \indizeroo 
    %   \right] 
    4T\kappa \;<\; \infty,  
\end{align*}
where $\kappa$ is the constant from \eqref{eq:conditionSP} of Condition \ref{cond:moment-potential}. 
 As in the proof of Lemma~\ref{lem:quenched-R-pvar}, by the ergodic theorem, we obtain
\begin{align*}
    E_0^\om [\|(\bbR^n)^{i,j}\|_{p/2\text{-}\mathrm{var},[0,T]}] &
    \le
    E_0^\om [\|(\bbR^n)^{i,j}\|_{1\text{-}\mathrm{var},[0,T]}] 
     \lesssim 
    \frac1n   \sum_{k=0}^{n-1}
    F(\tau_{X_k}\omega) \;\le\; C(\omega)<\infty  
  \end{align*} 
  $\prob$-a.s. 
  In particular, this shows tightness of $\|(\bbR^n)^{i,j}\|_{p/2\text{-}\mathrm{var},[0,T]}$ under $P^\omega_0$ and completes the proof. 
\end{proof}
The next lemma asserts that for $\mathbf{P}$-a.s. the $p/2$-variation of the quadratic covariation of the mixed terms vanishes in the limit almost surely in the quenched law. 
\begin{lemma}[Mixed quadratic covariation vanishes]
  \label{lem:quenched-mixed}
  Let $p>2$ and $T>0$. 
  Then, 
  \begin{align*}
    \big\| (Q_{s,t}(M^n,R^n))_{(s,t)\in\Delta_{[0,T]}} \big\|_{p/2\text{-}\mathrm{var},[0,T]}
    \;\xrightarrow[n\to\infty]{\;P_0^\om\text{-a.s.}\;} 0
  \end{align*}
  for $\mathbf{P}$-a.e.\ $\om$. 
In particular, $ \big\| (Q_{s,t}(M^n,R^n))_{(s,t)\in\Delta_{[0,T]}} \big\|_{p/2\text{-}\mathrm{var},[0,T]}
$ is tight under $P_0^\om$.
\end{lemma}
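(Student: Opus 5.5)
The quadratic covariation $Q(M^n,R^n)$ is a pure jump sum, so I would bound its $p/2$-variation by its $1$-variation and reduce everything to a single ergodic average along the walk. Since $p/2>1$ and $\|\cdot\|_{p/2\text{-var}}\le\|\cdot\|_{1\text{-var}}$ by \eqref{eq:inequalities on norms}, it suffices to show that the total variation of $Q(M^n,R^n)$ tends to $0$. For any partition of $[0,T]$, the triangle inequality gives $\sum_{[s,t]}|Q_{s,t}(M^n,R^n)|\le \sum_{0<u\le T}|M^n_{u-,u}|\,|R^n_{u-,u}|$. Hence
\begin{align*}
\big\|Q(M^n,R^n)\big\|_{1\text{-var},[0,T]}
\;\le\;
\frac1n\sum_{0<u\le nT}\big|\Phi(\tau_{X_{u-}}\omega,X_{u}-X_{u-})\big|\,\big|\chi(\tau_{X_{u-}}\omega,X_u-X_{u-})\big|,
\end{align*}
where the cocycle property is used to write the increments in terms of the environment seen from the walker.

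The plan is to show that this average converges $P_0^\om$-a.s., for $\PP$-a.e.\ $\om$, to a constant that is not automatically zero, and then to beat that constant by a truncation. Stationarity and the Lévy system \eqref{eq:Levy:system} give the limit $T\,\mean[\sum_x\om(\{0,x\})|\Phi(\om,x)||\chi(\om,x)|\indizeroo]$, which is finite by Cauchy--Schwarz. This is precisely the kind of additive functional of the environment process handled by the ergodic statements used earlier (Lemma~\ref{cor:ergodic_p-var} and Lemma~\ref{lem:ergodic in uniform norm}). However, the limit is $\langle|\Phi|,|\chi|\rangle$, not $\langle\Phi,\chi\rangle=0$, so the crude absolute-value bound alone does not vanish.

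To fix this I would split the jumps according to the size of the corrector increment. For a level $K$, write
\begin{align*}
|\chi|\le|\chi|\indicator_{|\chi|\le K}+|\chi|\indicator_{|\chi|>K}.
\end{align*}
\textbf{Large corrector jumps.} The contribution of $|\chi|>K$ has a.s.\ limit bounded by $T\|\Phi\|_{L^2_{\mathrm{cov}}}\,\mean[\sum_x\om(\{0,x\})|\chi|^2\indicator_{|\chi|>K}]^{1/2}$. This tends to $0$ as $K\to\infty$ by dominated convergence, exactly as in the Lindeberg part of Lemma~\ref{lem:quenched-M}.

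\textbf{Bounded corrector jumps.} Here I would not take absolute values jump by jump. Instead I would use that in $p/2$-variation with $p/2>1$, a sum of many small terms is small. For each partition interval, $|Q_{s,t}|^{p/2}\le \big(\sum_{s<u\le t}|M^n_{u-,u}|^2\big)^{p/4}\big(\sum_{s<u\le t}|R^n_{u-,u}|^2\indicator\big)^{p/4}$ by Cauchy--Schwarz. Summing over intervals and using superadditivity of $x\mapsto x^{p/4}$ whenever $p/4\ge 1$ (otherwise first interpolate), this is bounded by $\sup_u|R^n_{u-,u}|^{\,\cdot}$ times the quadratic variation $[M^n]_T$. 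On this piece $\sup_u|R^n_{u-,u}|\le K/\sqrt n\to0$, while $[M^n]_T$ converges a.s.\ by Lemma~\ref{lem:quenched-M}(iii). More precisely, I would aim for the bound
\begin{align*}
\|Q\|_{p/2\text{-var}}\le [M^n]_T^{1/2}\,[R^n]_T^{1/2-1/p}\,\big(\max_u|R^n_{u-,u}|\big)^{2/p}.
\end{align*}

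Combining the two pieces and letting first $n\to\infty$ and then $K\to\infty$ gives the a.s.\ convergence. Tightness then follows immediately from a.s.\ convergence.

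The main obstacle is the interpolation step: converting the smallness of the maximal jump of $R^n$ into smallness in $p/2$-variation while only having a.s.\ bounded quadratic variations. I would first try the Hölder-type bound
\begin{align*}
\sum_{[s,t]}|Q_{s,t}|^{p/2}\le \max_{[s,t]}|Q_{s,t}|^{p/2-1}\,\|Q\|_{1\text{-var}}.
\end{align*}
Here $\|Q\|_{1\text{-var}}$ stays a.s.\ bounded by the ergodic limit above, so it remains to show $\|Q\|_{\infty\text{-var}}\to0$. For that I would use the vanishing $\Phi$--$\chi$ orthogonality through Lemma~\ref{cor:ergodic_p-var}, which gives uniform convergence of $Q_{0,\cdot}(M^n,R^n)$ to $t\langle\Phi,\chi\rangle=0$. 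This route avoids the truncation entirely and is what I would attempt first.
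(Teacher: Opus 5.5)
Your final route is correct and is exactly the paper's argument. You bound $\sum|Q_{s,t}|^{p/2}\le\|Q\|_{\infty\text{-var}}^{p/2-1}\|Q\|_{1\text{-var}}$, with $\|Q\|_{1\text{-var}}$ a.s.\ bounded by the ergodic average of $|\Phi||\chi|$, and $\|Q\|_{\infty\text{-var}}\to0$ by the uniform ergodic theorem and $\langle\Phi^i,\chi^j\rangle_{L^2_{\mathrm{cov}}}=0$; this is Lemma~\ref{cor:ergodic_p-var} with $\Psi=\Phi^i$, $\Xi=\chi^j$. Drop the truncation detour, though: the bound $\|Q\|_{p/2\text{-var}}\le[M^n]_T^{1/2}[R^n]_T^{1/2-1/p}(\max_u|R^n_{u-,u}|)^{2/p}$ is false (for $N$ simultaneous scalar jumps of sizes $a,b>0$ the left side is at least $Nab$, the right side $N^{1-1/p}ab$), since small jumps cannot offset a nonzero drift of $Q$; only the orthogonality removes it.
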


\begin{proof}
 Recall that the quadratic covariation is
  \begin{align*}
    Q_{s,t}(M^n,R^n)
    \;=\;
    \frac1n \sum_{sn<u\le nt} M_{u-,u} \otimes R_{u-,u}
    \;=\;
    \frac1n Q_{sn,tn}(M,R).
  \end{align*}
  The orthogonal decomposition of the increment field on $\Omega_0$ is $\Pi \indizeroo=\Phi\indizeroo+\chi\indizeroo$ in $L^2_{\mathrm{cov}}$ (as in the proof of Theorem~\ref{thm:annealed}) and in particular,
  \begin{align*}
    \big\langle \Phi,\chi\big\rangle_{L^2_{\mathrm{cov}(\PP)}} \;=\; 0.
  \end{align*} 
  Applying Lemma~\ref{lem:ergodic in uniform norm}, it follows
  that for $\mathbf{P}$-a.e.\ $\om$,
  \begin{align*}
    \big\| (Q_{s,t}(M^n,R^n))_{(s,t)\in\Delta_{[0,T]}} \big\|_{p/2\text{-}\mathrm{var},[0,T]}
    \;\xrightarrow[n\to\infty]{\;P_0^\om\text{-a.s.}\;} 0.
  \end{align*}
\end{proof}

\begin{proof}[Proof of Theorem~\ref{theorem}]
  It is enough to prove the theorem for $p>2$ arbitrarily close to $2$. We will show it for $2<p<2+\epsilon$ with the $\epsilon$ from Condition \ref{cond:moment-potential}. We will show the conditions of Lemma~\ref{lem:abstract-limit-specific} holds.
  Condition (1) Martingale hold by Lemma~\ref{lem:quenched-M}. Condition (2) (i) follows from Lemma~\ref{lem:quenched-R-pvar}, whereas Conditions (2) (ii) and (iii) follow from Lemma~\ref{lem:quenched-R-area}. Finally, Condition (3) follows from Lemma~\ref{lem:quenched-mixed}.
  Hence, the proof of the Theorem is completed. 
  \end{proof}  

\section{Scalar covariance and area correction in supercritical percolation }\label{subsec:ScalarMarticiesInIID}

\begin{proposition}
In the supercritical percolation with uniformly elliptic weights settings of Example \ref{ex:percolation_transfer_cor}
the covariance matrix and the area correction become
  a multiple of the identity matrix $\Sigma^2=\sigma^2 I$ and $\Gamma=\gamma I$, where $\sigma^2>0$ and $\gamma\le 0$.
\end{proposition}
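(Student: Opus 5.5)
The plan is to use the lattice symmetries of the i.i.d.\ nearest-neighbor law and then read off the signs from the definitions in \eqref{eq:def:Si+Ga}. Let $\mathcal{G}$ be the hyperoctahedral group of orthogonal maps $g$ of $\mathbb{Z}^d$ (coordinate permutations and sign flips). Each $g$ acts on environments by $(g\omega)(\{x,y\}) := \omega(\{g^{-1}x, g^{-1}y\})$. Because the conductances are i.i.d.\ over nearest-neighbor edges, $\prob$ is $\mathcal{G}$-invariant. The event $\Omega_0$ is also $\mathcal{G}$-invariant, since $g$ maps the infinite cluster of $\omega$ onto that of $g\omega$ and fixes $0$. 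Hence $\PP$ is $\mathcal{G}$-invariant as well.

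\textbf{Step 1 (covariance of the decomposition).} Show that $\Pi\indizeroo = \Phi\indizeroo + \chi\indizeroo$ transforms covariantly under $\mathcal{G}$. Define the operator $(U_g\Psi)(\omega,x) := g\,\Psi(g^{-1}\omega, g^{-1}x)$ on vector-valued fields. One checks from $\tau_x(g\omega) = g(\tau_{g^{-1}x}\omega)$ that $U_g$ preserves the cocycle property. It is an isometry of $(L^2_{\mathrm{cov}})^d$: this uses $\mathcal{G}$-invariance of $\prob$, invariance of the edge weights $\omega(\{0,x\})$ under the relabeling, and orthogonality of $g$. It maps gradients of local functions to gradients of local functions, because $\mD\psi\circ g^{-1}$ is again a gradient of a local function. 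Hence $U_g$ preserves $L^2_{\mathrm{pot}}$ and therefore also its orthogonal complement $L^2_{\mathrm{sol}}$. Since $U_g\Pi = \Pi$, uniqueness of the orthogonal decomposition gives $U_g\chi = \chi$ and $U_g\Phi = \Phi$.

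\textbf{Step 2 (commuting with $\mathcal{G}$ gives scalar matrices).} From Step 1 and isometry, $g\,\Sigma^2 g^\top = \Sigma^2$ and $g\,\Gamma g^\top = \Gamma$ for all $g\in\mathcal{G}$. Taking the flip $g$ of coordinate $i$ alone gives $\Sigma^2_{ij} = -\Sigma^2_{ij}$ for $j\neq i$, so $\Sigma^2$ is diagonal. Taking transpositions of coordinates shows the diagonal entries coincide, so $\Sigma^2 = \sigma^2 I$. The same argument gives $\Gamma = \gamma I$.

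\textbf{Step 3 (signs).} Since $\Gamma = -\tfrac12$ times a Gram matrix, $\gamma = -\tfrac12\|\chi^1\|^2_{L^2_{\mathrm{cov}}(\PP)} \le 0$. For $\sigma^2>0$, invoke the remark following Theorem~\ref{theorem}: Example~\ref{ex:percolation_transfer_cor} together with Lemma~\ref{lem:transfer-principle} (here $d\ge 3$ is needed for Dario's bounds) furnishes Condition~\ref{cond:moment-potential}, so $\Sigma^2$ is non-degenerate. Alternatively, one can cite the known non-degeneracy of the percolation diffusivity from \cite{BP07,MP07}.

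The main obstacle is Step 1, specifically keeping the bookkeeping straight in $U_g$ and restricting to $\Omega_0$. One has to verify carefully that the action is compatible with $\indizeroo$ and with the cocycle identity on the cluster, and that the closure defining $L^2_{\mathrm{pot}}$ is preserved. The remaining steps are elementary linear algebra.
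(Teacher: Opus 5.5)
Your proof is correct. It rests on the same mechanism as the paper: invariance of the law, and of the decomposition $\Pi=\Phi+\chi$, under coordinate permutations and reflections, followed by reading the sign of $\gamma$ off the Gram structure. There are two differences worth noting.

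First, the paper only asserts that ``the construction of the corrector is invariant'' and uses this to make $\Sigma^2$ scalar. It then obtains $\Gamma$ indirectly from orthogonality, $\langle\chi,\chi\rangle_{L^2_{\mathrm{cov}}(\PP)}=\langle\Pi,\Pi\rangle_{L^2_{\mathrm{cov}}(\PP)}-\Sigma^2$, since $\langle\Pi,\Pi\rangle$ is visibly a multiple of the identity. Your operator $U_g$, together with uniqueness of the orthogonal decomposition, supplies the justification the paper leaves implicit, and lets you treat $\Gamma$ by the same symmetry argument. In exchange, the paper's route gives the explicit relation $\gamma=-\tfrac12(m^2-\sigma^2)$, and hence $\sigma^2\le m^2$. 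Here $m^2$ is the common diagonal entry of $\langle\Pi,\Pi\rangle$, namely $\EE[\om(\{0,e_1\})+\om(\{0,-e_1\})]=\EE[\mu^\om(0)]/d$, not $\EE[\mu^\om(0)]$ as written there.

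Second, for $\sigma^2>0$ the paper argues that $\sigma^2=0$ would make the harmonic embedding trivial, i.e.\ $\chi=\Pi$ on the cluster, which contradicts sublinearity of the corrector. That argument works for every $d\ge2$. Your primary route goes through the regular potential and the non-degeneracy remark after Theorem~\ref{theorem}. It is the same kind of contradiction, but it needs Lemma~\ref{lem:transfer-principle} and hence $d\ge3$. So for $d=2$ your proof genuinely depends on the fallback citation of the known positivity of the percolation diffusivity.
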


  \begin{proof} To see that $\Sigma$ is a multiple of the identity matrix, first observe that the construction of the corrector and the distribution of the environment are invariant under a relabeling of the coordinates, which implies that all off-diagonal entries have to be equal and that all on-diagonal entries have to be equal. Furthermore the construction of the corrector (and thus the martingale) and the distribution of the environment are invariant under the  symmetry of reflecting the $i$-th coordinate, i.e. $x_i \mapsto -x_i$. From this we see that for $i \neq j$
  \begin{align*}
   \E_0 
    \left[\sum_{0<s\leq 1} (M_{s-,s})^i(M_{s-,s})^j \right] 
    = 
    \E_0 \left[\sum_{0<s\leq 1} -(M_{s-,s})^i(M_{s-,s})^j \right] . 
  \end{align*}
  Hence  $\Sigma^{ij} 
    = - \Sigma^{ij}$
  which implies that all off-diagonal entries are 0. As the harmonic embedding $x \mapsto x - \chi(x,\omega)$ is not trivial (remember that the corrector grows sublinearly) we also get that $\sigma^2>0$.
  Next,
  \begin{align*}
    \Gamma = -\frac12 \big< \chi, \chi \big>_{L_\mathrm{cov}^2(\PP)}
    =
    -\frac12 (m^2-\sigma^2)I,
  \end{align*}
  $I$ is the identity matrix, $\sigma^2=\EE\![
  {\textstyle \sum_{|e|=1}}\, \om(e)\, |\Phi^1(\om,e)|^2]$ and $m^2= \EE\! [\mu^\om(0)]$.
  Indeed,
  we showed that $\Sigma = \sigma^2 I$.
  Notice that by independent and symmetry of the coordinates we also have
  \begin{align*}
    \big< \Pi, \Pi \big>_{L_\mathrm{cov}^2(\PP)} = \EE\![
    {\textstyle \sum_{|e|=1}}\, \om(e)\, \Pi(\om,e)^{\otimes 2} ] 
    =
    \EE\! [\mu^\om(0)] \, I = m^2 I.
  \end{align*}
  Therefore, the orthogonality $\big< \Phi\indizeroo, \chi\indizeroo \big>_{L_\mathrm{cov}^2(\PP)}=0$ yields 
  \begin{align*}
    \big< \chi, \chi \big>_{L_\mathrm{cov}^2(\PP)} =
    \big< \Pi, \Pi \big>_{L_\mathrm{cov}^2(\PP)}
    -
    \big< \Phi, \Phi \big>_{L_\mathrm{cov}^2(\PP)}.
    =  (m^2-\sigma^2) I,
  \end{align*}
  as claimed, and in particular
  \begin{align*}
    m^2-\sigma^2 =
    \EE\![
    {\textstyle \sum_{|e|=1}}\, \om(e)\, |\chi^1(\om,e)|^2]
    \ge 0.
  \end{align*}
\end{proof}

\section{The Stratonovich lift }\label{subsec:ComparisonII}

\begin{theorem}
  In the settings of Theorem~\ref{thm:annealed} (respectively Theorem $\ref{theorem}$),
  let 
  \begin{align*}
    \bar\bbX^n_{s,t} \ldef \int_{s}^t \tilde X^n_{s,u}\otimes \md X^n_{u},
  \end{align*}
  where $\tilde X^n$ is obtained from $X^n$ by linear interpolations between consecutive jumps and $\int_{s}^t \tilde X^n_{u}\otimes \md X^n_{u}$ is the Riemann-Stieltjes integral.
  Then, for every $p>2$
  \begin{align*}
    (X^n,\bar\bbX^n)
    \;\underset{n \to \infty}{\;\Longrightarrow\;} 
    \big(B,\; \bbB^{\mathrm{STR}} \big)
    \quad\text{in }
    \cD_{p\text{-var}}([0,T], \bbR^d\times \bbR^{d \times d}),
  \end{align*}
  where $B_t$ is the same $d$-dimensional Brownian motion from Theorem~\ref{thm:annealed} (respectively from Theorem $\ref{theorem}$), $\bbB^{\mathrm{STR}}_{s,t}\;=\;\int_{s}^{t} B_{s,u} \otimes \circ \md B_u$
  is its Stratonovich iterated integral, and the convergence holds under $\PP_0$ (respectively, under $P_{0}^{\omega}$ for $\mathbf{P}$-almost every $\omega$).
\end{theorem}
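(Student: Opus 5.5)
The plan is to reduce the statement to the Stratonovich part of Lemma~\ref{lem:abstract-limit-specific}. Both Theorem~\ref{thm:annealed} and Theorem~\ref{theorem} were proved by checking the hypotheses (1)--(3) of that lemma, under $\PP_0$ and under $P_0^\om$ for $\mathbf{P}$-a.e.\ $\om$ respectively. Its second assertion then gives
\[
(X^n,\bbX^n+\tfrac12 Q(X^n,X^n))\Longrightarrow (B,\bbB^{\mathrm{STR}})
\]
in $\cD_{p'\text{-var}}$ for every $p'>p$, provided $\Ga$ is symmetric. Symmetry of $\Ga$ is immediate from Definition~\ref{def:corrector+matrix}: $\Ga=-\frac12$ times the Gram matrix $(\langle\chi^i,\chi^j\rangle_{L^2_{\mathrm{cov}}(\PP)})_{i,j}$. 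Since $p>2$ is arbitrary and $p'$ may be taken arbitrarily close to $p$, it remains to identify $\bar\bbX^n$ with $\bbX^n+\frac12 Q(X^n,X^n)$.

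The main step is the pathwise identity
\begin{align*}
\bar\bbX^n_{s,t}=\bbX^n_{s,t}+\tfrac12 Q_{s,t}(X^n,X^n),\qquad (s,t)\in\Delta_{[0,T]}.
\end{align*}
I read the interpolation geometrically: each jump $X^n_{u-}\to X^n_u$ is traversed along the straight segment, and the Riemann--Stieltjes integral $\int\tilde X^n\otimes\md X^n$ is the limit of Riemann sums in which the integrand is evaluated on that segment. Between jumps $X^n$ is constant and contributes nothing. On a single segment the integral $\int_0^1 (X^n_{s,u-}+r\,X^n_{u-,u})\otimes X^n_{u-,u}\,\md r$ equals $X^n_{s,u-}\otimes X^n_{u-,u}+\frac12 X^n_{u-,u}\otimes X^n_{u-,u}$, that is, the midpoint rule. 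Summing over the finitely many jumps in $(s,t]$ gives the identity, using \eqref{eq:path_iterated_integral} and \eqref{eq:def-Q}. Each path has only finitely many jumps on $[0,nT]$ (cf.\ \eqref{eq:number:jumps}), so the sums are finite and no limiting procedure is needed.

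The main obstacle is to fix the interpolation convention so that the identity holds exactly; the rest is bookkeeping already done in the proof of Lemma~\ref{lem:abstract-limit-specific}. If $\tilde X^n$ instead interpolates linearly in time between consecutive jump times while the integrator $X^n$ remains the piecewise-constant path, the Riemann--Stieltjes integral only sees the value of $\tilde X^n$ at each jump time. That value depends on the convention and does not automatically produce the factor $\frac12$. In that case I would first try to show that the difference $\bar\bbX^n-\bbX^n-\frac12 Q(X^n,X^n)$ vanishes in $p/2$-variation in $L^{1/2}$, after which Slutsky's theorem (Theorem~\ref{thm:slutsky}) and Proposition~\ref{lem:rp-conv} would conclude as before. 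However, this difference is a sum of jump-size-squared terms with convention-dependent weights, so I would rather fix the geometric (segment-traversal) convention described above, under which the identity is exact.

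With the identity established, tightness of $\|(X^n,\bar\bbX^n)\|_{p\text{-var}}$ and the convergence of $\frac12 Q_{0,\cdot}(X^n,X^n)$ to $(\frac12\Si^2-\Ga)\,\cdot$ in the uniform norm are exactly the facts established in the Stratonovich part of the proof of Lemma~\ref{lem:abstract-limit-specific}. In the quenched case they are obtained from Lemmas~\ref{lem:quenched-M}--\ref{lem:quenched-mixed} in place of their annealed counterparts.
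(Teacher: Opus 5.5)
Your proposal is correct and follows the paper's proof. Both arguments use the symmetry of $\Ga$, the pathwise identity $\bar\bbX^n_{s,t}=\bbX^n_{s,t}+\frac12 Q_{s,t}(X^n,X^n)$, and the Stratonovich part of Lemma~\ref{lem:abstract-limit-specific}, whose hypotheses were already verified in the proofs of Theorems~\ref{thm:annealed} and~\ref{theorem}. The paper only asserts the identity, so your midpoint computation along the jump segments is a useful justification. Your caveat is also well taken: the identity needs the geometric (segment-traversal) reading of the interpolation, not a time-interpolated integrand integrated against the piecewise-constant integrator.
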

\begin{proof}
  Note that $\Gamma=-\frac12\langle \chi,\chi \rangle_{L^2_\mathrm{cov}(\PP)}$ is symmetric and
  \begin{align*}
    \bar\bbX^n_{s,t} \;=\; \bbX^n_{s,t}+\frac12 Q_{s,t}(X^n,X^n).
  \end{align*}
  Indeed, in the both the proofs of Theorems and \ref{thm:annealed} and \ref{theorem} we have shown the assumptions of Lemma~\ref{lem:abstract-limit-specific} are satisfied. The proof now follows from the Stratonovich part of Lemma~\ref{lem:abstract-limit-specific}.    
\end{proof}

\begin{remark}
  The fact the It\^o lift has a correction in the second level whereas the Stratonovich lift does not is not new and moreover, by now the area correction is well-understood: in fairly general stationary settings, the correction has a simple Green-Kubo type presentation \cite{EFO24}.          
\end{remark}

\section{Acknowledgments}
TO is grateful to Istituto Nazionale di Alta Matematica for financial support in the framework of INDAM-GNAMPA Project CUP-E53C22001930001.

\appendix

\section{Ergodic theorem}
In this appendix we provide an extension of the Birkhoff ergodic theorem along the trajectory of the process. In what follows the probability measure $\prob_0$ is the annealed law of the walk starting at the origin, that is given by $\prob_0(\cdot)=\mean[P^\om_0(\cdot)]$. $\mean_0$ is the corresponding expectation. The notation for $\PP_0$ and $\EE_0$, the conditional annealed law as as before. In particular, $\PP_0(\cdot)=\mean[P^\om_0(\cdot)| 0\in\mathcal{C}_{\infty}]$.   
\begin{lemma}\label{lemma:ergodic}
  Assume assumption \ref{ass:general}. Let $f\!: \Om \times \Om \to \bbR$ be a function that vanishes on the diagonal, that is $f(\om, \om) = 0$.  In addition, assume that
  \begin{align}\label{eq:assumption-integrability}
    \mean\!\big[\sum_{x \in \mathbb{Z}^{d}} \om(\{0,x\}) |f(\om, \tau_x \om)|\big] < \infty.
  \end{align}
  Then,
  \begin{align}\label{eq:ergodic}
    \lim_{t \to \infty}
    \frac{1}{t}
    \sum_{0 < s \leq t} f(\tau_{X_{s-}} \om, \tau_{X_s} \om )
    \;=\;
    \mean\!\bigg[
    \sum_{x \in \mathbb{Z}^{d}} \om(\{0,x\})\, f(\om, \tau_x \om)
    \bigg]
  \end{align}
  $\prob_0$-almost surely and in $L^1(\prob_0)$.

\end{lemma}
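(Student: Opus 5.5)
The plan is to reduce the statement to Birkhoff's ergodic theorem for the environment seen from the particle, via a martingale/compensator decomposition. By Lemma~\ref{lemma:inv:environment_process}, the environment process $\om_t=\tau_{X_t}\om$ is stationary, reversible and ergodic under $\prob_0$.

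Write
\[
A_t\ldef\sum_{0<s\le t} f(\om_{s-},\om_s).
\]
By the L\'evy system formula \eqref{eq:Levy:system}--\eqref{eq:Levy:system:martingale}, applied with $(x,y)\mapsto f(\tau_x\om,\tau_y\om)$, which vanishes on the diagonal, the compensator of $A$ is
\[
\int_0^t g(\om_s)\,\md s, \qquad g(\om)\ldef\sum_{x}\om(\{0,x\})\,f(\om,\tau_x\om).
\]
First I will treat $f\ge0$; the general case follows by splitting $f=f^+-f^-$, since both parts satisfy \eqref{eq:assumption-integrability}.

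For $f\ge0$, assumption \eqref{eq:assumption-integrability} says $g\in L^1(\prob)$. The continuous-time ergodic theorem for the stationary ergodic Markov process $(\om_t)$ then gives
\[
\frac1t\int_0^t g(\om_s)\,\md s\;\longrightarrow\;\mean[g]
\]
$\prob_0$-a.s. and in $L^1$.

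It remains to show $\frac1t(A_t-\int_0^t g(\om_s)\,\md s)\to0$. The cleanest route avoids second moments, which may fail. Instead, I apply Birkhoff directly to the unit-time increments
\[
Y_k\ldef A_{k+1}-A_k=\sum_{k<s\le k+1}f(\om_{s-},\om_s).
\]
The sequence $(\om_{[k,k+1]})_k$ of path segments is stationary under $\prob_0$. It is ergodic because it is the shift-orbit of an ergodic Markov process: invariant sets of the path shift are measurable with respect to the tail, and ergodicity of $\prob$ for the Markov semigroup yields triviality of shift-invariant events. Moreover $\mean_0[Y_0]=\mean[g]<\infty$ by \eqref{eq:Levy:system} and stationarity. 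Birkhoff then gives $\frac1n A_n\to\mean[g]$ a.s. and in $L^1$.

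To pass from integer to real $t$, I use monotonicity of $A$ for $f\ge0$: $A_{\lfloor t\rfloor}\le A_t\le A_{\lceil t\rceil}$. This also gives the continuous-time $L^1$ convergence, since $\frac1tA_t$ is sandwiched between two $L^1$-convergent sequences with the same limit. For signed $f$, I subtract the two nonnegative limits.

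The step I expect to be the main obstacle is justifying ergodicity of the discrete shift on path segments from ergodicity of the environment process. I would first try the standard argument: a shift-invariant bounded functional $H$ of the path satisfies $H=\mean_0[H\mid\cF_t]\circ$ (Markov property) $=h(\om_t)$ for a harmonic $h$. Stationarity and reversibility force $h$ to be invariant under the semigroup, hence constant by Lemma~\ref{lemma:inv:environment_process}.

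A lesser technical point is the integrability needed to apply the L\'evy system identity to $|f|$. This follows from the finiteness of $\mean_0[\sum_{s\le t}|f|]=t\,\mean[\sum_x\om(\{0,x\})|f(\om,\tau_x\om)|]$, obtained by monotone convergence on truncations.
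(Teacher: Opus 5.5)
Your argument is correct relative to the paper's inputs and is essentially the paper's proof: the L\'evy system identity for the mean, Birkhoff's theorem for the unit-time increments $\sum_{k<s\le k+1} f(\tau_{X_{s-}}\om,\tau_{X_s}\om)$, then passage to real $t$. The differences are minor. You handle the fractional part by splitting $f=f^+-f^-$ and sandwiching by monotonicity, where the paper applies Borel--Cantelli to $G=\sum_{0<s\le 1}|f(\tau_{X_{s-}}\om,\tau_{X_s}\om)|$. You also justify, via reversibility, the ergodicity of the unit-time shift that the paper uses tacitly, and your continuous-time ergodic theorem for the compensator is never actually needed.

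Be aware, though, that both proofs take ergodicity of $\prob$ itself from Lemma~\ref{lemma:inv:environment_process}. That lemma fails whenever $\prob(\Omega_0)<1$, because $\{0\in\mathcal C_\infty\}$ is invariant for $(\tau_{X_t}\om)_{t\ge0}$. For example, in supercritical percolation, $f=\indicator_{\{\om\neq\tilde\om\}}$ gives the limit $0$ on the positive-probability event that the origin is isolated, not $\mean[\mu^{\om}(0)]$. So the constant limit is really only justified under $\PP_0$ with $\EE$ in place of $\mean$, or when $\prob(\Omega_0)=1$.
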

\begin{proof}
Notice that the L\'evy system theorem \eqref{eq:Levy:system} implies that
  \begin{align*}
    \sum_{0 < s \leq t, X_{s-}\ne X_s} f(\tau_{X_{s-}} \om, \tau_{X_s} \om )
    \,-\,
    \int_{(0,t]}
    \sum_{x \in \mathbb{Z}^{d}} \om(\{X_{s-},x\})\,
    f(\tau_{X_{s-}} \om, \tau_{x} \om )\,
    \md{s}
  \end{align*}
  is a $P_0^{\om}$-martingale $\prob$-a.s. Fubini's theorem together with the stationarity of the process as seen from the particle yield
  \begin{align}\label{eq:ergodic:limit}
    \E_0\!\bigg[
    \sum_{0 < s \leq t} f(\tau_{X_{s-}} \om, \tau_{X_s} \om )
    \bigg]
    % &\,=\,
    % \E_0\!\bigg[
    % \int_{(0,t]}
    % \sum_{y \in \mathbb{Z}^{d}} \om(\{X_{s-},y\})\,
    % f(\tau_{X_{s-}} \om, \tau_{y} \om )\,
    % \md{s}
    % \bigg]
    % \\
    \;=\;
    t\,
    \mean\!\bigg[
    \sum_{x \in \mathbb{Z}^{d}} \om(\{0, x\})\, f(\om, \tau_x \om)
    \bigg].
  \end{align}
  Set $F(\om) \ldef \sum_{0 < s \leq 1} f(\tau_{X_{s-}} \om, \tau_{X_s} \om)$ to lighten notation.  
  Repeating the above argument for $|f|$ instead of $f$ we notice that $F \in L^1(\prob_0)$. In particular, an application of Birkhoff's ergodic theorem yields
  \begin{align*}
    \lim_{t \to \infty} \frac{1}{t}
    \sum_{k=0}^{\lfloor t \rfloor - 1} F(\tau_{X_k} \om)
    &\;=\;
    % \lim_{t \to \infty}
    % \bigg(
    % 1 - \frac{t - \lfloor t \rfloor}{t}
    % \bigg)
    % \frac{1}{\lfloor t \rfloor}
    % \sum_{k=0}^{\lfloor t \rfloor - 1} F(\tau_{X_k} \om)
    % \;=\;
    \lim_{t \to \infty} \frac{1}{\lfloor t \rfloor}
    \sum_{k=0}^{\lfloor t \rfloor-1} F(\tau_{X_{k}} \om)
    \\
    % &\underset{n \to \infty}{\;\longrightarrow\;}
    &\;=\;
    \E_0\!\big[F(\om)\big]
    \overset{\eqref{eq:ergodic:limit}}{\;=\;}
    \mean\!\bigg[
    \sum_{x \in \mathbb{Z}^{d}} \om(\{0, x\})\, f(\om, \tau_x \om)
    \bigg]
  \end{align*}
  in  $L^1(\prob_0)$ and $\prob_0$-a.s.
  Next we write
  \begin{align}\label{eq:ergodic:split}
    \frac{1}{t}
    \sum_{0 < s \leq t} f(\tau_{X_{s-}} \om, \tau_{X_s} \om)
    \;=\;
    % \\
    % &\mspace{36mu}=\;
    % \Big(
    % 1 - \frac{t - \lfloor t \rfloor}{t}
    % \Big)
    % \frac{1}{\lfloor t \rfloor}
    \frac{1}{t}
    \sum_{k=0}^{\lfloor t \rfloor-1} F(\tau_{X_{k}} \om)
    \,+\,
    \frac{1}{t}
    \sum_{\lfloor t \rfloor < s \leq t}\mspace{-4mu}
    f(\tau_{X_{s-}} \om, \tau_{X_s} \om).
  \end{align}
  Thus, it remains to show that the second term on the right-hand side of \eqref{eq:ergodic:split} vanishes $\prob_0$-a.s.~and in $L^1(\prob_0)$ as $t \to \infty$.  For this purpose,
  set $G(\om) \ldef \sum_{0 < s \leq 1} |f(\tau_{X_{s-}} \om, \tau_{X_s} \om)|$.  Since, by using again the stationarity of the process as seen from the particle,
  \begin{align*}
    \sum_{k=1}^{\infty}
    \p_0\!\big[j\, G(\tau_{X_k} \om) > k\big]
    \;=\;
    \sum_{k=1}^{\infty}
    \p_0\!\big[j\, G(\om) > k\big]
    \;\leq\;
    j\E_0\!\big[G(\om)\big]
    \;<\;
    \infty,
  \end{align*}
  an application of the first Borell-Cantelli lemma yields that
  \begin{align}\label{eq:ergodic:borell-cantelli}
    \p_0\!\big[
    \limsup_{k \to \infty}\, \Big\{G(\tau_{X_k} \om) > \frac{k}{j}\Big\}
    \bigg]
    \;=\;
    0,
    \qquad \forall\, j \in \bbN.
  \end{align}
  Since
  \begin{align*}
    &\Big\{
    \lim_{t \to \infty}
    \Big|
    \frac{1}{t}\,
    {\textstyle \sum_{\lfloor t \rfloor < s \leq t}}\,
    f(\tau_{X_{s-}} \om, \tau_{X_s} \om)
    \Big|
    \;=\;
    0
    \Big\}^{\!c}\\
    &\;\subset\;
    \Big\{
    \lim_{t \to \infty} \frac{1}{\lfloor t \rfloor}\,
    G(\tau_{X_{\lfloor t \rfloor}} \om) = 0
    \Big\}^{\!c}
    \;\subset\;
    \bigcup_{j=1}^{\infty}\,
    \limsup_{k \to \infty}\, \Big\{ G(\tau_{X_k} \om) > \frac{k}{j} \Big\},
  \end{align*}
  we finally obtain that, in view of \eqref{eq:ergodic:borell-cantelli}, the second term on the right-hand side of \eqref{eq:ergodic:split} vanishes $\prob_0$-a.s. as $t \to \infty$.  The convergence in $L^1(\prob_0)$ is immediate because
  \begin{align*}
    % \lim_{t \to \infty}
    \E_0\!\bigg[\,
    \bigg|
    \frac{1}{t} \sum_{\lfloor t \rfloor < s \leq t}\mspace{-5mu}
    f(\tau_{X_{s-}}\om, \tau_{X_s} \om)
    \bigg|\,
    \bigg]
    \;\leq\;
    % \lim_{t \to \infty}
    \frac{1}{t}
    \mean\!\bigg[
    \sum_{x \in \mathbb{Z}^{d}} \om(\{0,x\})\, |f(\om, \tau_x \om)|
    \bigg]
    \underset{t \to \infty}{\;\longrightarrow\;}
    0.
  \end{align*}
  Thus both the a.s.~and the $L^1$ statement of the lemma follows.
\end{proof}
%%%%%%%%%%%%%%%% 
%%%%%%%%%%%%%%%%%%%%%%%%%%%%%%%%%%%%%%%%%%%%%%%%%%%%%%%%%%%%%%%%%%%%%%%%%%%%%%%%%%%%%%% 
\begin{lemma}\label{lemma:stat-seq-ergodic-thm-implies-uniform-conv}
  Let $(F_n)_{n\in\N}$ be a stationary sequence of random variables so that $\E[|F_1|]<\infty$ and
  \begin{equation}\label{eq:appendix1}
    \frac{1}{n}\sum_{k=1}^n F_k 
    \;\xrightarrow[n\to\infty]{}
     \E[F_1] \,\prob\text{-a.s.}%\ and in $L^1(\prob)$}.
  \end{equation}
  Then, for any $T>0$ we have
  \begin{equation}
    \sup _{0\le t\le T}\big |  \frac 1 n \sum_{k=1}^{\lfloor nt\rfloor} F_k - t \E[F_1] \big|
     \;\xrightarrow[n\to\infty]{}
     0 \,\prob\text{-a.s.}  \end{equation}
If additionally
  \begin{equation}
    \big |  \frac 1 n \sum_{k=1}^{n} |F_k| -  \E[|F_1|] \big|
     \;\xrightarrow[n\to\infty]{}
     0 \,\text{ in }L^1(\prob),  
     \end{equation}
then 
 \begin{equation}
    \sup _{0\le t\le T}\big |  \frac 1 n \sum_{k=1}^{\lfloor nt\rfloor} F_k - t \E[F_1] \big|
     \;\xrightarrow[n\to\infty]{}
     0 \,\text{ in }L^1(\prob).  \end{equation}
\end{lemma}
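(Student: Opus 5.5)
Write $S_m := \sum_{k=1}^m F_k$ and $\mu := \E[F_1]$. The idea is the standard Pólya/Dini-type argument: pointwise convergence of $S_m/m$ to $\mu$ upgrades to uniform convergence in $t$ on $[0,T]$, because the map $t\mapsto \lfloor nt\rfloor/n$ is close to the identity.

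\emph{Almost sure statement.} Fix an $\omega$ in the full-measure event where $S_m/m\to\mu$, and fix $\eta>0$. Choose $m_0$ such that $|S_m - m\mu|\le \eta m$ for all $m\ge m_0$, and set $K:=\max_{m<m_0}|S_m-m\mu|$, which is finite. For $t\in[0,T]$ put $m=\lfloor nt\rfloor$. Then
\[
\Big|\tfrac1n S_{m} - t\mu\Big|
\le \tfrac1n |S_m - m\mu| + |\mu|\,\big|\tfrac{m}{n}-t\big|
\le \tfrac{1}{n}\max(K,\eta m) + \tfrac{|\mu|}{n}
\le \tfrac{K}{n} + \eta T + \tfrac{|\mu|}{n}.
\]
Taking the supremum over $t$, letting $n\to\infty$ and then $\eta\downarrow0$ gives the claim.

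\emph{$L^1$ statement.} I would use the same splitting, but now in expectation. For fixed $m_0$,
\[
\sup_{t\le T}\Big|\tfrac1n S_{\lfloor nt\rfloor}-t\mu\Big|
\le \tfrac{1}{n}\sum_{k=1}^{m_0}|F_k| + \tfrac{m_0|\mu|}{n}
+ T\sup_{m_0\le m\le nT}\Big|\tfrac{S_m}{m}-\mu\Big| + \tfrac{|\mu|}{n}.
\]
The first two terms have expectation at most $2m_0\E|F_1|/n\to0$. The difficulty is the middle term: $\E[\sup_{m\ge m_0}|S_m/m-\mu|]$ need not be finite without a maximal inequality. To avoid this, I would use uniform integrability instead. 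Set $Y_n:=\sup_{t\le T}|\tfrac1nS_{\lfloor nt\rfloor}-t\mu|$. Then
\[
0\le Y_n\le \tfrac1n\sum_{k=1}^{\lfloor nT\rfloor}|F_k| + T|\mu| =: Z_n .
\]
By the extra hypothesis, applied along $\lfloor nT\rfloor$ (via stationarity, or by rescaling), $Z_n\to T\E|F_1|+T|\mu|$ in $L^1$. Hence $(Z_n)$ is uniformly integrable, and therefore so is $(Y_n)$. Combined with $Y_n\to0$ almost surely from the first part, Vitali's theorem gives $Y_n\to0$ in $L^1$.

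The main obstacle is this $L^1$ upgrade, namely obtaining uniform integrability of the supremum. The domination by $Z_n$ together with the $L^1$ convergence of $\tfrac1n\sum|F_k|$ is exactly what supplies it. One technical point remains: $\lfloor nT\rfloor/n$ should be replaced by $T$ times the averages over $\lfloor nT\rfloor$ terms. This is harmless, since $L^1$ convergence of $\tfrac1N\sum_{k\le N}|F_k|$ as $N\to\infty$ along all integers covers $N=\lfloor nT\rfloor$.
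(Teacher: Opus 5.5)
Your proof is correct and follows essentially the same route as the paper. For the almost sure part you use a P\'olya-type splitting, where your constant $K$ for $m<m_0$ plays the role of the paper's small-$t$ regime. For the $L^1$ part you use the same domination $Y_n\le \frac1n\sum_{k\le\lfloor nT\rfloor}|F_k|+T|\mu|$, obtain uniform integrability from the assumed $L^1$ convergence of the averages of $|F_k|$, and conclude with Vitali's theorem.
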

\begin{proof}
  We first show the convergence holds almost surely. Fix $T>0$ and $\delta>0$. Since \eqref{eq:appendix1} holds a.s., there are random $M,N\in\N$ so that
  \begin{align*}
    \left  |  \frac 1 n \sum_{k=1}^{n} F_k \right|\le M \text{ for all } n\ge 1
  \end{align*}
  and for all $n\ge N$
  \begin{align*}
    \left  |  \frac 1 n \sum_{k=1}^{n} F_k - \E[F_1] \right|<\frac{\delta}{3T}.
  \end{align*}
  Let $\epsilon \coloneqq\frac{\delta}{4(|\E[F]| + M)}>0$.
  For $0<t\le \epsilon$, we have for all $n$
  \begin{align*}
    \left |  \frac {1} {n} \sum_{k=1}^{\lfloor nt\rfloor} F_k -  t\E[F] \right |  =
    t\left |  \frac{\lfloor nt\rfloor}{\lfloor nt\rfloor} \frac {1} {\lfloor nt\rfloor } \sum_{k=1}^{\lfloor nt\rfloor} F_k -  \E[F] \right |
    \le \epsilon (M+|\E[F]|)
    < \delta.
  \end{align*}
  On the other hand, for all $t\in[ \epsilon,T]$ and for all $n$ large enough so that $N\le \epsilon n$ and $\frac{1}{n} |\E[F_1] |< \frac{\delta}{2}$ we have
  \begin{align*}
    \bigg  |  \frac {1} {n } \sum_{k=1}^{\lfloor nt\rfloor} F_k - t\E[F_1] \bigg |
    \le
    \frac{\lfloor nt\rfloor}{n}  \bigg  |   \frac {1} {\lfloor nt\rfloor } \sum_{k=1}^{\lfloor nt\rfloor} F_k -  \E[F_1] \bigg  | 
    +
    \big  |  (t - \frac{\lfloor nt\rfloor}{n} ) \E[F_1] \big  | 
    \le
    t\frac{\delta}{3T} +     \frac{1}{n}  |\E[F_1] | < \delta.
  \end{align*}
  Hence, for all such $n$
  \begin{align*}
    \sup _{0\le t\le T} \bigg |  \frac {1} {n} \sum_{k=1}^{\lfloor nt\rfloor} F_k - t \E[F_1] \bigg |< \delta
  \end{align*}
  and we obtained the $\prob$-a.s.\ convergence. To get the convergence in $L^1(\prob)$, first note that  
  \begin{align}\label{eq:UI-domination}
    \sup _{0\le t\le T} \bigg |  \frac {1} {n} \sum_{k=1}^{\lfloor nt\rfloor} F_k - t \E[F_1] \bigg |
    &\le
    \frac {1} {n} \sum_{k=1}^{\lfloor nT\rfloor}  |F_k|  + T \E[|F_1|] 
%    \\
%  &  \le 
%  \left| \frac {1} {n} \sum_{k=1}^{\lfloor nT\rfloor}  |F_k| - T \E[|F_1|] \right| + 2 T \E[|F_1|]
.
  \end{align}
By assumption, 
$
  \{\frac 1 n \sum_{k=1}^{n} |F_k|\}_{n\ge 1} 
$ converges  in $L^1(\prob)$ as $n\to\infty$. Hence also does $
  \{\frac 1 n \sum_{k=1}^{\lfloor nT\rfloor} |F_k|  + T \E[|F_1|] \}_{n\ge1} 
$, 
and in particular is uniformly integrable.
%  \begin{align*}
%   \mean\left[ \left| \frac {1} {n} \sum_{k=1}^{\lfloor nT\rfloor}  |F_k| - T \E[|F_1|] \right|\right] \xrightarrow[n\to\infty]{}0.  
%  \end{align*}
By (\ref{eq:UI-domination}) we obtain that $\left\{\sup _{0\le t\le T} \bigg |  \frac {1} {n} \sum_{k=1}^{\lfloor nt\rfloor} F_k - t \E[F_1] \bigg |\right\}_{n\ge 1}$ is also uniformly integrable. With the convergence $\prob$-almost surely, Vitali's convergence theorem (cf. \cite[Theorem 5.5.2.]{Durrett2010PTE4}) gives us also the convergence in $L^1(\prob)$, as required. 
\end{proof}
%%%%%%%%%%%%%%%%%%%%%% 
% 
\begin{lemma}\label{lem:ergodic in uniform norm}
  Suppose that the assumptions of Lemma~\ref{lemma:ergodic} holds. Then, for all $T>0$
  \begin{align*}
    \sup_{0 < t \leq T}
    \bigg|
    \frac{1}{n} \sum_{0 < s \leq tn} f(\tau_{X_{s-}} \om, \tau_{X_s} \om )
    - t \mean\!\big[\sum_{x \in \mathbb{Z}^{d}} \om(\{0,x\}) f(\om, \tau_x \om) \big]
    \bigg|
    \;\xrightarrow[n\to\infty]{}
    0 \end{align*}
  in  $L^1(\prob_0)$ and $\prob_0$-a.s.
\end{lemma}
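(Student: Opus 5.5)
The plan is to deduce the statement from Lemma~\ref{lemma:ergodic} together with Lemma~\ref{lemma:stat-seq-ergodic-thm-implies-uniform-conv}, working with integer time blocks. Set
\[
F_k(\om) \ldef \sum_{k-1 < s \leq k} f(\tau_{X_{s-}} \om, \tau_{X_s} \om), \qquad G_k(\om) \ldef \sum_{k-1 < s \leq k} |f(\tau_{X_{s-}} \om, \tau_{X_s} \om)|,
\]
and write $c_f \ldef \mean[\sum_{x} \om(\{0,x\}) f(\om,\tau_x\om)]$. Three facts are needed.
\begin{itemize}
\item By the Markov property and stationarity of the environment process (Lemma~\ref{lemma:inv:environment_process}), $(F_k)_{k\ge1}$ is a stationary sequence under $\prob_0$; indeed $F_k = F_1\circ\tau_{X_{k-1}}$ in the notation of the proof of Lemma~\ref{lemma:ergodic}.
\item By \eqref{eq:ergodic:limit} and the assumption \eqref{eq:assumption-integrability}, $\E_0[|F_1|]\le \E_0[G_1]<\infty$ and $\E_0[F_1]=c_f$.
\item Lemma~\ref{lemma:ergodic}, applied at integer times to $f$ and to $|f|$ (the latter also satisfies \eqref{eq:assumption-integrability}), gives $\frac1n\sum_{k=1}^n F_k\to c_f$ $\prob_0$-a.s., and $\frac1n\sum_{k=1}^n G_k\to \E_0[G_1]$ $\prob_0$-a.s. and in $L^1(\prob_0)$.
\end{itemize}

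One point needs checking before invoking Lemma~\ref{lemma:stat-seq-ergodic-thm-implies-uniform-conv}. Its $L^1$ hypothesis concerns $\frac1n\sum|F_k|$, whereas we control the dominating sums $\sum G_k$. The only use of that hypothesis is uniform integrability of the right-hand side of \eqref{eq:UI-domination}, and this domination holds equally with $G_k\ge|F_k|$ in place of $|F_k|$. So I will rerun that proof with $G_k$. Alternatively, $|F_k|$ is itself stationary with finite mean and satisfies Birkhoff's theorem, so $L^1$ convergence also follows from Birkhoff directly. Either route yields
\[
\sup_{0\le t\le T}\Big|\frac1n\sum_{k=1}^{\lfloor nt\rfloor}F_k - t c_f\Big|\to 0 \quad \text{$\prob_0$-a.s. and in } L^1(\prob_0).
\]

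The remaining step is to pass from the integer grid to continuous time. For $t\le T$,
\[
\frac1n\sum_{0<s\le tn} f - \frac1n\sum_{k=1}^{\lfloor nt\rfloor}F_k \;=\; \frac1n\sum_{\lfloor nt\rfloor < s\le nt} f ,
\]
whose absolute value is at most $\frac1n G_{\lfloor nt\rfloor+1}$. Hence the supremum over $t\in[0,T]$ of this remainder is bounded by
\[
\frac1n\max_{1\le k\le \lfloor nT\rfloor+1} G_k .
\]
In addition, $|t c_f-\lfloor nt\rfloor c_f/n|\le |c_f|/n$, which is negligible.

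The main obstacle is showing that $\frac1n\max_{k\le nT+1}G_k\to0$ both almost surely and in $L^1$.
\begin{itemize}
\item \emph{Almost surely.} Since $\frac1m\sum_{k\le m}G_k$ converges, $G_m/m = \frac1m\sum_{k\le m}G_k-\frac{m-1}{m}\cdot\frac{1}{m-1}\sum_{k\le m-1}G_k\to0$, and the elementary fact that $a_m/m\to0$ implies $\max_{k\le m}a_k/m\to0$ gives the claim. This is the same mechanism as the Borel--Cantelli argument in the proof of Lemma~\ref{lemma:ergodic}.
\item \emph{In $L^1$.} The quantity $\frac1n\max_{k\le nT+1}G_k$ is dominated by $\frac1n\sum_{k\le nT+1}G_k$. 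This family converges in $L^1$ and is therefore uniformly integrable, so Vitali's theorem upgrades the almost sure convergence to $L^1(\prob_0)$ convergence.
\end{itemize}
Combining the three pieces with the triangle inequality gives the lemma.
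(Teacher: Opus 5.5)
Your proposal is correct and follows the paper's proof: the same split into integer time blocks plus a remainder, Lemma~\ref{lemma:stat-seq-ergodic-thm-implies-uniform-conv} applied to the stationary block sums, and the bound of the remainder by $\frac1n\max_{k\le \lfloor nT\rfloor+1}G_k$. The differences are minor. You prove that this maximum vanishes directly (almost surely from $G_m/m\to0$, and in $L^1$ by domination by the Ces\`aro averages plus Vitali), where the paper cites Lemma~2.30 of \cite{KLO12}; you also explicitly repair the mismatch between $|F_k|$ and $G_k$ in the $L^1$ hypothesis of Lemma~\ref{lemma:stat-seq-ergodic-thm-implies-uniform-conv}, which the paper passes over.
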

\begin{proof}
  As in the proof of Lemma~\ref{lemma:ergodic}, set $F(\om) \ldef \sum_{0 < s \leq 1} f(\tau_{X_{s-}} \om, \tau_{X_s} \om)$, and write
  \begin{align}\label{eq:ergodic:split2}
    \frac{1}{n}
    \sum_{0 < s \leq nt} f(\tau_{X_{s-}} \om, \tau_{X_s} \om)
    \;=\;
    % \\
    % &\mspace{36mu}=\;
    % \Big(
    % 1 - \frac{t - \lfloor t \rfloor}{t}
    % \Big)
    % \frac{1}{\lfloor t \rfloor}
    \frac{1}{n}
    \sum_{k=0}^{\lfloor nt \rfloor-1} F(\tau_{X_{k}} \om)
    \,+\,
    \frac{1}{n}
    \sum_{\lfloor nt \rfloor < s \leq nt}\mspace{-4mu}
    f(\tau_{X_{s-}} \om, \tau_{X_s} \om).
  \end{align}

  We shall first treat the second term. Let $X_k\coloneqq \sum_{k < s \leq k+1} \mspace{-4mu} |f(\tau_{X_{s-}} \om, \tau_{X_{s}} \om)|.$
  Then $X_k$ is a stationary sequence, $X_0\ge0$ $\prob_0$-a.s., $\E_0[X_0]<\infty$ and moreover
  \begin{align*}
    \sup_{0 < t \leq T}\frac{1}{n} \bigg|
    \sum_{\lfloor nt \rfloor < s \leq nt}\mspace{-4mu}
    f(\tau_{X_{s-}} \om, \tau_{X_s} \om) \bigg|
    \;\le\;
    \frac{1}{n} \max\{X_0,X_1,...,X_{\lfloor nT\rfloor +1}\}.
  \end{align*}
  By Lemma 2.30 of \cite{KLO12} the right hand side converges to zero, both in  $L^1(\prob_0)$ and $\prob_0$-a.s. Hence, using \eqref{eq:ergodic:limit} it is left to show the following limit holds both in $L^1(\prob_0)$ and $\prob_0$-a.s.
  \begin{align*}
    \sup_{0 < t \leq T}
    \bigg|
    \frac{1}{n}\sum_{k=0}^{\lfloor nt \rfloor-1} F(\tau_{X_{k}} \om)
    - t \E_0\!\big[F(\om)\big]
    \bigg|
   \;\xrightarrow[n\to\infty]{} 0,
  \end{align*}
  which is an application of Lemma~\ref{lemma:stat-seq-ergodic-thm-implies-uniform-conv} to the stationary sequence $F_k\coloneqq F(\tau_{X_{k}} \om)$ using the assumption \eqref{eq:assumption-integrability} and Lemma~\ref{lemma:ergodic}.
\end{proof}
\begin{coro}\label{cor:conditional-ergodic-in-suo-norm} Suppose that the assumptions of Lemma~\ref{lemma:ergodic} holds with $f(\om, \tilde \om)$ replaced by $f(\om, \tilde \om)\,\indizero$. Then,  the assertion of Lemma~\ref{lem:ergodic in uniform norm}, modified so that the expectation $\mean$ is replaced by $\EE$, holds also when the convergence is taken to be with respect to $L^1(\PP)$ and $\PP$-a.s.
 \end{coro}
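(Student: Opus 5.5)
The plan is to transfer Lemma~\ref{lem:ergodic in uniform norm} from $\prob$ and $\prob_0$ to the conditioned measures $\PP$ and $\PP_0$. The mechanism is that the event $\Omega_0=\{0\in\cC_\infty\}$ has positive probability. I will also use that, on this event, the walk and the environment process never leave $\{\tau_x\om : x\in\cC_\infty(\om)\}$.

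I will apply Lemma~\ref{lem:ergodic in uniform norm} (under $\prob_0$) to the function $g(\om,\tilde\om) := f(\om,\tilde\om)\indicator_{\{0\in\cC_\infty(\om)\}}$. Its hypothesis is exactly the assumed integrability \eqref{eq:assumption-integrability} for $g$, and $g$ vanishes on the diagonal. The lemma then gives $\prob_0$-a.s.\ and $L^1(\prob_0)$ convergence of
\begin{align*}
\sup_{0<t\le T}\Big|\frac1n\sum_{0<s\le tn} g(\tau_{X_{s-}}\om,\tau_{X_s}\om) - t\,\mean\Big[\sum_x \om(\{0,x\})g(\om,\tau_x\om)\Big]\Big|
\end{align*}
to zero.

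Next I will identify the terms on $\Omega_0$. Under $P_0^\om$ with $\om\in\Omega_0$, we have $X_{s-}\in\cC_\infty(\om)$ for all $s$. Since $\cC_\infty(\tau_x\om)=\cC_\infty(\om)-x$ and the infinite cluster is unique, this gives $0\in\cC_\infty(\tau_{X_{s-}}\om)$. Hence along the trajectory $g(\tau_{X_{s-}}\om,\tau_{X_s}\om)=f(\tau_{X_{s-}}\om,\tau_{X_s}\om)$, and the empirical sum equals the one for $f$. For the limit, I use
\begin{align*}
\mean\Big[\sum_x\om(\{0,x\})f(\om,\tau_x\om)\indizero\Big]=\prob(\Omega_0)\,\EE\Big[\sum_x\om(\{0,x\})f(\om,\tau_x\om)\Big].
\end{align*}

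The main obstacle is that this constant carries the factor $\prob(\Omega_0)$, whereas the corollary wants the limit $\EE[\cdot]$ itself. The naive transfer above only gives convergence to $\prob(\Omega_0)\,\EE[\cdot]$, so I do not want to conclude from it directly.

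The cleaner route is to redo the proof of Lemma~\ref{lemma:ergodic} and Lemma~\ref{lem:ergodic in uniform norm} with $\PP$ in place of $\prob$. The only inputs there are stationarity and ergodicity of the environment process. Lemma~\ref{lemma:inv:environment_process}, in its conditioned form (cf.\ \cite[Prop.~2.1]{ADS15}, where $\PP$ is stationary and ergodic for $\tau_{X_t}\om$ started on $\Omega_0$), supplies exactly these. With them, the identity \eqref{eq:ergodic:limit} becomes
\begin{align*}
\EE_0\Big[\sum_{0<s\le t}f\Big]=t\,\EE\Big[\sum_x\om(\{0,x\})f(\om,\tau_x\om)\Big],
\end{align*}
via the Lévy system and stationarity under $\PP$. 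Birkhoff's theorem for $F(\tau_{X_k}\om)$ then gives the $\PP_0$-a.s.\ and $L^1(\PP_0)$ limit. Integrability under $\PP$ follows from the assumed $\prob$-integrability divided by $\prob(\Omega_0)>0$.

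Finally, the boundary term is handled as before via \cite[Lemma 2.30]{KLO12}, and uniformity in $t$ follows from Lemma~\ref{lemma:stat-seq-ergodic-thm-implies-uniform-conv}. Convergence $\PP_0$-a.s.\ is equivalent to convergence for $\PP$-a.e.\ $\om$, $P_0^\om$-a.s., which is the form the corollary states.
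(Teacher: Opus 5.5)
Your proof is correct, and it takes a different route from the paper. The paper applies Lemma~\ref{lem:ergodic in uniform norm} under $\prob_0$ to $f\indizero$. It then notes, as you do, that a walk started in $\cC_\infty$ never leaves it, and finally ``divides by $\prob(0\in\cC_\infty(\om))$'' to pass to $\PP_0$ with centering $t\,\EE[\cdot]$.

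You were right not to trust that last step. Dividing the $L^1(\prob_0)$ bound by $\prob(\Omega_0)$ does legitimately transfer convergence to $L^1(\PP_0)$ and $\PP_0$-a.s. However, it leaves the centering at $t\,\prob(\Omega_0)\,\EE[\cdot]$, because only the constant gets rescaled, not the empirical sum.

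The mismatch also points to a deeper problem. Lemma~\ref{lemma:ergodic} under $\prob_0$ relies on ergodicity of $(\tau_{X_t}\om)$ under $\prob$, which fails when $0<\prob(\Omega_0)<1$, since $\Omega_0$ is invariant for the environment process. Concretely, for $f\indizero$ the empirical sum vanishes identically off $\Omega_0$. So the asserted $\prob_0$-a.s.\ limit $t\,\prob(\Omega_0)\,\EE[\cdot]$ cannot hold unless it is zero.

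Your route reruns the L\'evy-system identity, Birkhoff, the maximal lemma of \cite{KLO12} and Lemma~\ref{lemma:stat-seq-ergodic-thm-implies-uniform-conv} directly under $\PP$. It therefore uses the correct input, namely stationarity and ergodicity of the environment process under $\PP$, and yields the stated constant $\EE[\cdot]$. The cost is redoing the argument instead of quoting the lemma. The gain is a valid proof in exactly the degenerate case where the paper's shortcut breaks.

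Two inputs should be made explicit rather than cited.

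\begin{enumerate}
\item Ergodicity under $\PP$ is not literally \cite[Prop.~2.1]{ADS15}, which works with positive conductances. It follows from Assumption~\ref{ass:general}(i),(ii). Let $A\subset\Omega_0$ be invariant and set $\tilde A:=\{\om:\tau_x\om\in A\text{ for some }x\in\cC_\infty(\om)\}$. Since $\cC_\infty(\tau_y\om)=\cC_\infty(\om)-y$, the set $\tilde A$ is shift-invariant, so $\prob(\tilde A)\in\{0,1\}$. Moreover $\tilde A\cap\Omega_0=A$ up to null sets, because the walk reaches every site of $\cC_\infty(\om)$ with positive probability at any positive time. Hence $\PP(A)\in\{0,1\}$.
\item Birkhoff's theorem for $k\mapsto F(\tau_{X_k}\om)$ needs ergodicity of the time-one shift on path space under $\PP_0$. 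This follows from ergodicity of the continuous-time process by reversibility, since $e^{\cL}g=g$ forces $\cL g=0$. The paper leaves this point implicit as well.
\end{enumerate}
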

\begin{proof}
   Applying Lemma~\ref{lem:ergodic in uniform norm} to the function 
   \[
   f(\om, \tilde \om)\,\indizero,
   \]
   we obtain, for all $T>0$,
  \begin{align*}
    \sup_{0 < t \leq T}
    \bigg|
    \frac{1}{n} \sum_{0 < s \leq tn} f(\tau_{X_{s-}} \om, \tau_{X_s} \om )\,\indicator_{0\in\mathcal{C}_\infty(\tau_{X_{s-}}\omega)}
    - t \mean\!\big[\sum_{x \in \mathbb{Z}^{d}} \om(\{0,x\}) f(\om, \tau_x \om) \,\indicator_{0\in\mathcal{C}_\infty(\omega)} \big]
    \bigg|
 \xrightarrow[n\to\infty]{}
    0 
  \end{align*}
  in  $L^1(\prob_0)$ and $\prob_0$-a.s. 
  Note that, starting at the origin the walk stays in the initial cluster at all times, that is $\mathcal{C}_0(\tau_{X_t}\om)=\mathcal{C}_0(\omega)$ for all $t\ge 0$. Using the uniqueness of the infinite cluster we have in particular that 
  the events $\{0\in\mathcal{C}_\infty(\omega)\}$ and $\{0\in\mathcal{C}_\infty(\tau_{X_{s-}}\omega)\}$ coincide $\prob_0$-a.s. 
  Dividing by $\prob({0\in\mathcal{C}_\infty(\omega)})$ we therefore obtain
\begin{align*}
    \sup_{0 < t \leq T}
    \bigg|
    \frac{1}{n} \sum_{0 < s \leq tn} f(\tau_{X_{s-}} \om, \tau_{X_s} \om )
    - t \EE\!\big[\sum_{x \in \mathbb{Z}^{d}} \om(\{0,x\}) f(\om, \tau_x \om)  
    \big]
    \bigg|
     \;\xrightarrow[n\to\infty]{} 0
  \end{align*}
  in  $L^1(\PP_0)$ and $\PP_0$-a.s., as required.
  \end{proof}

\begin{lemma}\label{cor:ergodic_p-var}
  Let $\Psi,\Xi\in L^2_{\mathrm{cov}}$ and let $T>0$ and $p>2$.
  For every $(s,t)\in\Delta_{[0,T]}$ we set
  \begin{align*}
    A^n_{s,t}
    \;\coloneqq\;
    \frac{1}{n}   \cQ_{ns,nt}(\Psi,\Xi) - (t-s)  \langle \Psi,\Xi \rangle_{L^2_\mathrm{cov}(\PP)},
  \end{align*}
  where
  \begin{align*}
    \cQ_{s,t}(\Psi,\Xi)
    \;\coloneqq\;
    Q_{s,t}(\Psi(\omega,X_\cdot),\Xi(\omega,X_\cdot)).
  \end{align*}
  Then, 
  \begin{align*}
    \big \| A^n \big\|_{p/2\text{-}\mathrm{var},\, [0,T]}
     \;\xrightarrow[n\to\infty]{}
 0
    \text{ in $L^1(\PP_0)$ and}
    %	\text{ moreover }
    % \big \| A^n \big\|_{\infty\text{-}\mathrm{var},\, [0,T]}
    % \to 0
    \;\; \PP_0\text{-a.s.}
  \end{align*}
\end{lemma}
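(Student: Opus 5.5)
The plan is to reduce the $p/2$-variation statement to uniform convergence of a one-parameter process, combined with a uniform $1$-variation bound. Since $A^n_{s,t}$ is additive in $(s,t)$, we can write $A^n_{s,t}=a^n_t-a^n_s$ with
\[
a^n_t\;\coloneqq\;\frac1n\cQ_{0,nt}(\Psi,\Xi)-t\langle\Psi,\Xi\rangle_{L^2_{\mathrm{cov}}(\PP)} .
\]
For $q=p/2>1$ we then use the interpolation inequality
\[
\|a\|_{q\text{-var},[0,T]}^{q}\;\le\;\|a\|_{1\text{-var},[0,T]}\,\big(2\|a\|_{\mathrm{unif},[0,T]}\big)^{q-1},
\]
which follows from $|a_{s,t}|^q\le |a_{s,t}|\,(\sup|a_{s,t}|)^{q-1}$ summed over a partition. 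It therefore suffices to show two things:
\begin{enumerate}[(a)]
\item $\|a^n\|_{\mathrm{unif},[0,T]}\to0$, $\PP_0$-a.s.\ and in $L^1(\PP_0)$;
\item $\|a^n\|_{1\text{-var},[0,T]}$ is a.s.\ bounded in $n$, and its moments are controlled well enough to obtain $L^1$ convergence.
\end{enumerate}

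For (a), we write $\cQ_{0,nt}(\Psi,\Xi)=\sum_{0<u\le nt} f(\tau_{X_{u-}}\om,\tau_{X_u}\om)$ with
\[
f(\om,\tilde\om)\;=\;\Psi(\om,x)\,\Xi(\om,x)\qquad\text{when }\tilde\om=\tau_x\om .
\]
This uses the cocycle property: the increment $\Psi(\om,X_u)-\Psi(\om,X_{u-})$ equals $\Psi(\tau_{X_{u-}}\om,X_u-X_{u-})$. Strictly speaking $f$ should be defined through the jump $x$, which is legitimate because the Lévy system argument of Lemma~\ref{lemma:ergodic} only sees pairs $(\om,\tau_x\om)$. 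The integrability condition \eqref{eq:assumption-integrability} holds by Cauchy–Schwarz:
\[
\mean\Big[\sum_x\om(\{0,x\})\,|\Psi\Xi|\Big]\;\le\;\|\Psi\|_{L^2_{\mathrm{cov}}}\,\|\Xi\|_{L^2_{\mathrm{cov}}}.
\]
Corollary~\ref{cor:conditional-ergodic-in-suo-norm} (the conditional version of Lemma~\ref{lem:ergodic in uniform norm}) then gives $\sup_{t\le T}|a^n_t|\to0$, both $\PP_0$-a.s.\ and in $L^1(\PP_0)$, with limit drift $\EE[\sum_x\om(\{0,x\})\Psi\Xi]=\langle\Psi,\Xi\rangle_{L^2_{\mathrm{cov}}(\PP)}$.

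For (b), we bound
\[
\|a^n\|_{1\text{-var},[0,T]}\;\le\;\frac1n\sum_{0<u\le nT}|f|\big(\tau_{X_{u-}}\om,\tau_{X_u}\om\big)\;+\;T\,\big|\langle\Psi,\Xi\rangle\big| .
\]
Applying the same ergodic lemma to $|f|$, this bound converges $\PP_0$-a.s.\ and in $L^1(\PP_0)$ to $T\,\EE[\sum_x\om(\{0,x\})|\Psi\Xi|]+T|\langle\Psi,\Xi\rangle|$. In particular it is a.s.\ bounded, and uniformly integrable as a convergent family in $L^1$. Combined with (a) and the interpolation inequality, this gives
\[
\|A^n\|_{p/2\text{-var}}\;\le\;\|a^n\|_{1\text{-var}}^{2/p}\,\big(2\|a^n\|_{\mathrm{unif}}\big)^{1-2/p}\;\to\;0\qquad\PP_0\text{-a.s.}
\]

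For $L^1$ convergence, the main point is uniform integrability. Young's inequality gives
\[
\|A^n\|_{p/2\text{-var}}\;\le\;\frac2p\,\|a^n\|_{1\text{-var}}\;+\;\Big(1-\frac2p\Big)\,2\|a^n\|_{\mathrm{unif}},
\]
and both terms on the right are uniformly integrable, the first by (b) and the second by (a). Vitali's theorem then upgrades the almost sure convergence to $L^1(\PP_0)$.

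The main obstacle, though it is minor, is the bookkeeping in step (a): making sure $f$ is well-defined as a function of the environment pair rather than of the jump. This is fine whenever distinct jumps $x$ give distinct environments $\tau_x\om$, that is, $\prob$-a.s.\ for non-periodic environments. The alternative is to rerun the proof of Lemma~\ref{lemma:ergodic} with the jump-indexed function $g(\om,x)=\Psi(\om,x)\Xi(\om,x)$ directly, which needs only the Lévy system formula and stationarity.
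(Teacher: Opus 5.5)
Your proof is correct and is essentially the paper's argument: uniform convergence of $a^n$ from the conditional ergodic lemma in uniform norm, an a.s.\ bounded $1$-variation, and interpolation; your direct bound by the ergodic average of $|f|$ replaces the paper's monotonicity-plus-polarization step, and your exponent $2/p$ together with the Young/Vitali uniform-integrability argument is more careful than the paper's $L^1$ step, which only records an $L^1$ bound on the $1$-variation and calls it tightness. For periodic environments the paper instead enlarges $\Omega$ by i.i.d.\ Bernoulli edge labels so that Step~1 applies verbatim; your jump-indexed rerun of Lemma~\ref{lemma:ergodic} also works, but its Birkhoff step needs ergodicity of the segment sequence $\bigl(\tau_{X_{k+s}}\om,\,X_{k+s}-X_k\bigr)_{s\in[0,1]}$, not just stationarity, and this holds because bounded invariant functions of that chain reduce to invariant functions of the environment process.
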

\begin{proof}
  \textbf{Step 1.} Assume first that $\omega$ is non-periodic $\prob$-a.s., that is $\prob(\tau_x\omega=\omega)=0$ for all $x\in\mathbb{Z}^{d}$. Therefore $(\tau_{X_t}\omega,X_0)$ uniquely defines $(\omega, X_t)$. Indeed, $X_t = \sum_{x\in\mathbb{Z}^{d}}x\indicator_{\tau_x\omega=\tau_{X_t}\omega}$. 
  In particular, the function 
  \begin{align*}
    \eta (\omega,\omega')\ldef \sum_{x\in\mathbb{Z}^{d}}x\indicator_{\tau_x\omega=\omega'}
  \end{align*}
  is well-defined and moreover, the function
  \begin{align*}
    f (\omega,\omega') \coloneqq
    \Psi\big(\omega, \eta (\omega,\omega') \big)\,
    \Xi\big(\omega,\eta (\omega,\omega')\big)\indizero
  \end{align*}
  satisfies Assumption \ref{eq:assumption-integrability}.
  Note also that we have
  \begin{align*}
    \mean\!\bigg[
    \sum_{x \in \mathbb{Z}^{d}} \om(\{0,x\})\, f(\om, \tau_x \om)
    \bigg]
    \;=\; \mean\!\bigg[\sum_{x \in \mathbb{Z}^{d}} \om(\{0,x\}) \Psi(\om,x)\,\Xi(\om,x) \indizero \bigg] 
  \end{align*}
so that 
    \begin{align*}
    \EE\!\bigg[
    \sum_{x \in \mathbb{Z}^{d}} \om(\{0,x\})\, f(\om, \tau_x \om)
    \bigg]
    \;=\;
    \langle \Psi,\Xi \rangle_{L^2_\mathrm{cov}(\PP)}.
  \end{align*}
Moreover,
\begin{align*}
    \sum_{0 < s \leq tn} f(\tau_{X_{s-}} \om, \tau_{X_s} \om )
    \;=\;
    \cQ_{0,tn}(\Psi,\Xi)\indizero.  
  \end{align*}
  Applying Lemma~\ref{lem:ergodic in uniform norm}, we get
  \begin{align*}
    \sup_{0 < t \leq T}
    \bigg|
    \frac{1}{n} \cQ_{0,tn}(\Psi,\Xi)\indizero 
    - t \mean\!\big[\sum_{x \in \mathbb{Z}^{d}} \om(\{0,x\}) \Psi(\om,x)\,\Xi(\om,x) \indizero\big]
    \bigg|
    \;\xrightarrow[n \to \infty]\;
    0  
  \end{align*}
  in  $L^1(\prob_0)$ and $\prob_0$-a.s.
  Dividing by $\prob(0\in\mathcal{C}_\infty(\omega))$ yields 
  \begin{align*}
    \big \| A^n \big\|_{\infty\text{-}\mathrm{var},\, [0,T]}
     \;\xrightarrow[n\to\infty]{}
 0 
   \; \text{ in $L^1(\PP_0)$ and}
    \; \PP_0\text{-a.s.}
  \end{align*}
  Set 
  \begin{align*}
    \cQ^n_{s,t}(\Psi ,\Xi )\ldef \frac{1}{n}   \cQ_{ns,nt}(\Psi ,\Xi ).
  \end{align*}
  Note that $\cQ_{s,t} (\Psi, \Xi )$ is monotone in $t$ whenever $\Psi=\Xi$ and we get in this case
  \begin{align*}
    \Big \| \cQ^n(\Psi ,\Xi ) \Big\|_{1\text{-}\mathrm{var},\, [0,T]}
    \;=\;
    \frac{1}{n}   \cQ_{0,nT} (\Psi,\Xi ) 
    \;=\; \Big \| \cQ^n(\Psi ,\Xi ) \Big\|_{\infty\text{-var},\, [0,T]}.
  \end{align*}
  % 
  % To end the $L^1(\prob_0)$ convergence, it is enough to show tightness of
  %	\begin{align*}
  %   \Big \| Q^n(\Psi ,\Xi )  \Big\|_{p/2\text{-}\mathrm{var},\, [0,T]}.\end{align*}
  Since $Q_{s,t}(\cdot,\cdot)$ is a symmetric form. By polarization, for general $\Psi,\Xi$ it holds
  \begin{align*}
    \Big\|   \cQ^n (\Psi,\Xi)  \Big\|_{1\text{-}\mathrm{var},\, [0,T]}
    \le
    8 \Big \| \cQ^n (\Psi ,\Psi ) \Big\|_{\infty\text{-var},\, [0,T]} 
    + \Big \|
    \cQ^n (\Xi ,\Xi )  \Big\|_{\infty\text{-var},\, [0,T]}.
  \end{align*}
  since the two terms converge $\PP_0\text{-a.s.}$ their supremum over $n$ is bounded by some constant $C(\omega)$ $\PP_0\text{-a.s.}$.
  Therefore
  \begin{align*}
    \Big\|   A^n (\Psi,\Xi)  \Big\|_{1\text{-}\mathrm{var},\, [0,T]}
    &\le
    \Big\|   \cQ^n (\Psi,\Xi)  \Big\|_{1\text{-}\mathrm{var},\, [0,T]} 
    +
    T|\langle \Psi,\Xi \rangle_{L^2_\mathrm{cov}(\PP)}|\prob(0\in\mathcal{C}_\infty(\omega))\\
    & \le C(\omega) +T|\langle \Psi,\Xi \rangle_{L^2_\mathrm{cov}(\PP)}|\prob(0\in\mathcal{C}_\infty(\omega))
    =:\tilde C(\omega).
  \end{align*}
  Since for any $H$ and $p>2$
  \begin{align*}
    \big \| H \big\|_{p/2\text{-}\mathrm{var},\, [0,T]}
    \le
    \big \| H \big\|_{1\text{-}\mathrm{var},\, [0,T]}
    \big \|H \big\|_{\infty\text{-}\mathrm{var},\, [0,T]}^{\frac{p-2}{p}},
  \end{align*}
  we get
  \begin{align*}
    \Big \|  A^n \Big\|_{p/2\text{-}\mathrm{var},\, [0,T]}
    \le
    \tilde C(\omega) \,
    \Big \|  A^n \Big\|_{\infty\text{-}\mathrm{var},\, [0,T]}^{\frac{p-2}{p}}	
     \;\xrightarrow[n\to\infty]{\;\PP_0\text{-a.s.}\;}
    0.
  \end{align*}

  To see the convergence in $L^1(\PP_0)$ it is enough to show the tightness of the $p/2$ variation norms.

  Using the L\'evy system theorem \eqref{eq:Levy:system}, stationarity and Fubini's theorem, we obtain
  \begin{align*}
    \mean_0\! \Big[\frac{1}{n}   \cQ_{0,nT} (\Psi,\Xi )\indizero\Big] 
    &\;=\;
    \mean_0\! \Big[\frac{1}{n}   \int_0^{nT}\sum_{x\in\mathbb{Z}^{d}}\tau_{X_s}\omega(0,x)\Psi(\omega,x) \Xi(\omega,x) \indizero \md s \Big]. 
  \end{align*}  
  Hence
  \begin{align*}
    \EE_0\! \Big[\frac{1}{n}   \cQ_{0,nT} (\Psi,\Xi )\Big] 
    &\;=\;
    \EE_0\! \Big[\frac{1}{n}   \int_0^{nT}\sum_{x\in\mathbb{Z}^{d}}\tau_{X_s}\omega(0,x)\Psi(\omega,x) \Xi(\omega,x)]\\
    &\;=\;
    T \langle\Xi,\Psi\rangle_{L_{\mathrm{cov}}^2(\PP)}.
  \end{align*}
  Therefore
  \begin{align*}
    \EE_0 \Big[\Big\|  \cQ^n(\Psi,\Xi)  \Big\|_{1\text{-}\mathrm{var},\, [0,T]}\Big]
    \le
    8 T (\|\Xi\|_{L_{\mathrm{cov}(\PP)}^2}^2 + \|\Psi\|_{L_{\mathrm{cov}(\PP)}^2}^2).
  \end{align*}
  and the desired tightness with respect to
  $\| \cdot \|_{p/2\text{-}\mathrm{var},\, [0,T]}$	is implied. 

  \textbf{Step 2.} Assume that $\omega$ is $x$-periodic for some $x\ne 0$. This can be treated by enlarging the environment as in \cite[Execise 2.5]{Bi11}. Here are the details.
  Enlarge the space $\Omega$ of environments: Let $\Lambda=\{0,1\}^{E^d}$ and $E^d=\{\{x,y\}: |x-y|=1, x,y\in\mathbb{Z}^d\}$ is the edge set of the Euclidean lattice taken with product Bernoulli(1/2) measure $P$ and a corresponding expectation $E$. Take $\tilde\Omega=\Omega\times\Lambda$, with the product measure $\prob\times P$. Then $\prob\times P$ is stationary and ergodic with respect to translations $\{\tau_x : x \in \mathbb{Z}^{d}\}$ of $\,\mathbb{Z}^{d}$, where  
  $\tau_x(\omega,\lambda) := (\tau_x\omega,\tau_x\lambda)$ for 
  $(\omega,\lambda)\in \tilde\Omega$.
  Then, a version of Lemma~\ref{lemma:inv:environment_process} holds in these settings, that is $t\mapsto \tilde\omega_t:=(\tau_{X_t}\omega,\tau_{X_t}\lambda)$ is a reversible ergodic Markov process on $\tilde\Omega$ with respect to $\prob\times P$. 
  Note that $(\omega,\lambda)$ is non-periodic $\prob\times P$-a.s, that is 
  \[
  \prob\times P ( \tau_x(\omega,\lambda)= (\omega,\lambda))=0\quad \text{ for all } x\in\mathbb{Z}^d\backslash\{0\}.  
  \]
  Therefore Lemmas \ref{lemma:ergodic} and \ref{lem:ergodic in uniform norm} apply when $\Omega$ is replaced by $\tilde\Omega$, and hence also Corollary~\ref{cor:conditional-ergodic-in-suo-norm}.  
  We can now go back to \textbf{Step 1} with the modification $\tilde f$ of $f$ as follows. 
  \begin{align*}
    \tilde f ((\omega,\lambda),(\omega',\lambda')) \coloneqq
    \Psi\big(\omega, \eta (\lambda,\lambda') \big)\,
    \Xi\big(\omega,\eta (\omega,\omega')\big)\indizero.
  \end{align*}
  Therefore we obtain 
  \begin{align*}
    \big \| A^n \big\|_{p/2\text{-}\mathrm{var},\, [0,T]}
   \;\xrightarrow[n\to\infty]{} 
   0
    \text{ in $L^1(\PP_0\times P)\;$  and}
    \; \PP_0\times P\text{-a.s.}
  \end{align*}
   In particular the convergence holds also $\PP_0\text{-a.s.}$. 
   We are left to show the convergence also holds in $L^1(\PP_0)$. Note that for a fixed $\omega$
    \begin{align*}
    \sum_{0 < s \leq tn} \tilde f(\tau_{X_{s-}} (\om,\lambda), \tau_{X_s} (\om,\lambda) )
    \;=\;
    \cQ_{0,tn}(\Psi,\Xi)
    \indizero\, \indicator_{\lambda\text{ is non-periodic}}.  
  \end{align*}
Independence then yields 
  \begin{align*}
    \EE_0 [\big \| A^n \big\|_{p/2\text{-}\mathrm{var},\, [0,T]}]   
    =
    \EE_0\times E \bigg[\big \| A^n \big\|_{p/2\text{-}\mathrm{var},\, [0,T]}\,|\,\lambda\text{ is non-periodic}\bigg]
    \;\xrightarrow[n\to\infty]{} 
    0.
    \end{align*}
In particular, the convergence holds also in $L^1(\PP_0)$ and the proof is completed.
\end{proof}

We close the appendix with a formulation of Slutsky's Theorem \cite{Slutsky1925} that fits our settings. 
\begin{theorem}(Slutsky's Theorem in Skorohod Topology)\label{thm:slutsky}
  Let $\mathbf{X}^n=(X^n,\bbX^n)$ and $\mathbf{Y}^n=(Y^n,\bbY^n)$ be sequences of random elements of $D\ldef D([0,T],\mathbb{R}^d\times \mathbb{R}^{d\times d})$. Suppose that
  $\mathbf{X}^n \Rightarrow \mathbf{X}$ in $D$, where $\mathbf{X}=(X,\bbX)$ has continuous sample paths almost surely 
  and $\mathbf{Y}^n \xrightarrow{\;\mathbb P\;} (0,f)$ in D, where $f\in C([0,T],\mathbb{R}^d)$ is deterministic and continuous.
  Then
  \begin{align*}
    \mathbf{X}^n  + \mathbf{Y}^n  \;\Rightarrow\; \mathbf{X} + (0,f) = (X,\bbX+f) \quad\text{in } D.
  \end{align*}
\end{theorem}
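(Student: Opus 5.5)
The plan is to derive the statement from the ordinary Slutsky theorem via the Skorohod representation theorem, together with the fact that adding a continuous deterministic function is a continuous operation on $D$ at continuous limit points. The first step is to upgrade the hypotheses to joint convergence. Because $\mathbf{Y}^n \to (0,f)$ in probability and the limit $(0,f)$ is deterministic, the pair $(\mathbf{X}^n,\mathbf{Y}^n)$ converges in law to $(\mathbf{X},(0,f))$ in the product space $D\times D$, where each factor carries the $J_1$ topology. This is the classical Slutsky/Billingsley result: convergence in law of one coordinate plus convergence in probability of the other to a constant gives joint convergence in law. Since $D$ with $J_1$ is Polish (Billingsley metric $d^\circ$), we can then use the Skorohod representation to obtain versions $\tilde{\mathbf{X}}^n\to\tilde{\mathbf{X}}$ and $\tilde{\mathbf{Y}}^n\to(0,f)$ almost surely in $J_1$, all on one probability space.

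The second step is pathwise. Fix an outcome where both convergences hold and where $\tilde{\mathbf{X}}$ is continuous. Convergence in $J_1$ to a continuous limit is the same as uniform convergence, so $\|\tilde{\mathbf{X}}^n-\tilde{\mathbf{X}}\|_{\mathrm{unif},[0,T]}\to0$. Likewise $(0,f)$ is continuous, so $\|\tilde{\mathbf{Y}}^n-(0,f)\|_{\mathrm{unif},[0,T]}\to0$. The triangle inequality then gives $\|\tilde{\mathbf{X}}^n+\tilde{\mathbf{Y}}^n-(\tilde X,\tilde{\bbX}+f)\|_{\mathrm{unif},[0,T]}\to0$. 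Taking $\lambda=\mathrm{id}$ in the Skorohod metric shows that uniform convergence implies $J_1$ convergence, so the sum converges almost surely in $D$. Almost sure convergence implies convergence in law, and since the representation preserves marginal laws and addition is measurable, this yields $\mathbf{X}^n+\mathbf{Y}^n\Rightarrow(X,\bbX+f)$.

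The main obstacle is that addition is \emph{not} continuous on $D\times D$ in $J_1$ in general, because the jumps of the two summands may be matched by different time changes. Continuity of the limits is exactly what removes this difficulty, by reducing everything to the uniform norm. One must also take care that the joint convergence in the first step is stated for the product topology and not for $J_1$ on the concatenated $D([0,T],\R^{2(d+d^2)})$. The product topology is all the argument needs, because the pathwise step handles each coordinate separately. A last small point is that the statement writes $f\in C([0,T],\R^d)$, whereas the $\bbX$-slot is $\R^{d\times d}$-valued; the argument should be read with $f$ taking values in the second component's space.
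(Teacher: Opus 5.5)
Your proposal is correct and takes essentially the same route as the paper's sketch. Both arguments first get joint convergence of $(\mathbf{X}^n,\mathbf{Y}^n)$ in $D\times D$ from the determinism of the second limit, then use that addition is continuous at pairs of continuous paths because $J_1$-convergence to a continuous limit is uniform; the only difference is that you go through the Skorohod representation and argue pathwise where the paper applies the continuous mapping theorem directly, and your remark that $f$ should be $\mathbb{R}^{d\times d}$-valued is a correct reading of the statement.
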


Remark about notation: by 
$\mathbf{X}=(X,\bbX) \in D\ldef D([0,T],\mathbb{R}^d\times \mathbb{R}^{d\times d})$, 
we mean that $\mathbf{X_t}=(X_t,\bbX_t)\in \mathbb{R}^d\times \mathbb{R}^{d\times d}$ for all $t\in[0,T]$ and is a c\`adl\`ag function. By the sum of the functions we mean a point-wise sum, e.g.
\begin{align*}
  (\mathbf{X}^n  + \mathbf{Y}^n)_t = \mathbf{X}^n_t  + \mathbf{Y}^n_t =(X_t+Y_t, \bbX_t +\bbY_t)
  \quad \text{    
    and
  } 
  \quad
  (X,\bbX+f)_t=(X_t,\bbX_t+f(t)).
\end{align*}

We shall now give a sketch of the proof, see also \cite[Theorem B.1]{orenshtein2021rough}.
%%%%%%%%% 
%%%%%%%%%%%%%%% 
\begin{proof}[Sketch of proof] 
  First note $\mathbf{Y}^n\xrightarrow{\;\mathbb P\;} (0,f)$ in probability in the uniform topology by continuity. Then there is a joint convergence $(\mathbf{X}^n,\mathbf{Y}^n)\Rightarrow (\mathbf{X}^n,(0,f))$ in $D\times D$. Then observing the function $(\mathbf{X},\mathbf{Y})\mapsto \mathbf{X}+\mathbf{Y}$ is continuous at the point $(\mathbf{X},\mathbf{Y})$ whenever both $\mathbf{X}$ and $\mathbf{Y}$ are continuous functions since uniform convergence implies convergence in $D\times D$. Then conclude by the continuous mapping theorem using continuity of addition at continuous pairs.
\end{proof}

\bibliographystyle{plain}
\bibliography{literature}

\end{document}